\let\oldtheorem\newtheorem
\RenewDocumentCommand{\newtheorem}{s m o m O{}}{%
\IfBooleanTF{#1}%
{\oldtheorem{#2}{#4}}%
{\IfNoValueTF{#3}{\oldtheorem{#2}{#4}[#5]}%
{\newaliascnt{#2}{#3}%
\oldtheorem{#2}[#2]{#4}%
\aliascntresetthe{#2}}}}
\theoremstyle{definition}
\newtheorem{nul}{}[section]
\newtheorem{dfn}[nul]{Definition}
\newtheorem{rmk}[nul]{Remark}
\newtheorem{cnstr}[nul]{Construction}
\newtheorem{cnv}[nul]{Convention}
\newtheorem{ntn}[nul]{Notation}
\newtheorem{exm}[nul]{Example}
\newtheorem{rec}[nul]{Recollection}
\newtheorem*{dfn*}{Definition}
\newtheorem*{axm*}{Axiom}
\newtheorem*{ntn*}{Notation}
\newtheorem*{exm*}{Example}
\newtheorem*{exr*}{Exercise}
\newtheorem*{int*}{Intuition}
\newtheorem*{qst*}{Question}
\newtheorem*{rmk*}{Remark}
\newtheorem{definition}[nul]{Definition}
\newtheorem{remark}[nul]{Remark}
\theoremstyle{plain}
\newtheorem{thm}[nul]{Theorem}
\newtheorem{prop}[nul]{Proposition}
\newtheorem{lem}[nul]{Lemma}
\newtheorem{cor}[nul]{Corollary}
\newtheorem{theorem}[nul]{Theorem}
\newtheorem{lemma}[nul]{Lemma}
\newtheorem{proposition}[nul]{Proposition}
\newtheorem*{thm*}{Theorem}
\newtheorem*{prop*}{Proposition}
\newtheorem*{cor*}{Corollary}
\newtheorem*{lem*}{Lemma}
\newtheorem*{cnj*}{Conjecture}
\let\oldwidetilde\widetilde
\protected\def\widetilde{\oldwidetilde}
\DeclareMathOperator{\im}{\mathrm{im}}
\DeclareMathOperator{\fib}{\mathrm{fib}}
\DeclareMathOperator{\coker}{\mathrm{coker}}
\DeclareMathOperator{\F}{\mathbb{F}}
\DeclareMathOperator{\E}{\mathbb{E}}
\DeclareMathOperator{\MO}{\mathrm{MO}}
\DeclareMathOperator{\Sp}{\mathrm{Sp}}
\DeclareMathOperator{\Sq}{\mathrm{Sq}}
\newcommand{\wt}{\widetilde}
\newcommand{\BOn}{\mathrm{BO \langle n \rangle}}
\newcommand{\BO}{\mathrm{BO}}
\newcommand{\BSO}{\mathrm{BSO}}
\newcommand{\MStr}{\mathrm{MString}}
\newcommand{\MSpin}{\mathrm{MSpin}}
\newcommand{\MString}{\mathrm{MString}}
\newcommand{\tmf}{\mathrm{tmf}}
\newcommand{\Z}{\mathbb{Z}}
\newcommand{\mo}{\mathrm{MO}}
\newcommand{\littleo}{\mathrm{o}\langle n-1 \rangle}
\newcommand{\R}{\Sigma^{\infty}\mathrm{O}\langle n-1 \rangle}
\def\H{\mathrm{H}}
\def\bgl{\mathrm{bgl}}
\def\Einf{\mathbb{E}_\infty}
\def\bo{\mathrm{bo}}
\def\ko{\mathrm{ko}}
\def\bP{\mathrm{bP}}
\def\MOn{\mathrm{MO} \langle n \rangle}
\def\Th{\mathrm{Th}}
\def\Ss{\mathbb{S}}
\def\OP{\mathbb{OP}}
\def\HP{\mathbb{HP}}
\def\RR{\mathbb{R}}
\DeclarePairedDelimiter\abs{\lvert}{\rvert}%
\let\oldabs\abs
\def\abs{\@ifstar{\oldabs}{\oldabs*}}
\NewDocumentCommand{\tens}{e{_^}}{%
  \mathbin{\mathop{\otimes}\displaylimits
    \IfValueT{#1}{_{#1}}
    \IfValueT{#2}{^{#2}}
  }%
}
\let\oldtocsection=\tocsection
\let\oldtocsubsection=\tocsubsection
\let\oldtocsubsubsection=\tocsubsubsection
\renewcommand{\tocsection}[2]{\hspace{0em}\oldtocsection{#1}{#2}}
\renewcommand{\tocsubsection}[2]{\hspace{1em}\oldtocsubsection{#1}{#2}}
\renewcommand{\tocsubsubsection}[2]{\hspace{2em}\oldtocsubsubsection{#1}{#2}}
\newcommand\restr[2]{{
  \left.\kern-\nulldelimiterspace 
  #1 
  \vphantom{\big|} 
  \right|_{#2} 
  }}
\newcommand{\NB}[1]{\todo[color=gray!40]{#1}}
\newcommand{\TODO}[1]{\todo[color=red]{#1}}
\newcommand{\style}[1]{\todo[color={rgb,256:red,141;green,211;blue,239}]{#1}}
\newcommand{\NB}[1]{}
\newcommand{\TODO}[1]{}
\newcommand{\style}[1]{}
\renewcommand{\todo}[1]{}
\renewcommand{\todo}[1]{}
\title{Inertia groups of $(n-1)$-connected $2n$-manifolds}
\author{Andrew Senger}
\address{Department of Mathematics, University of Maryland, College Park, MD, USA}
\email{senger@umd.edu}
\author{Adela YiYu Zhang}
\address{Centre for Geometry and Topology, University of Copenhagen, Denmark}
\email{yz@math.ku.dk}
\begin{document}
\begin{abstract}
  In this paper, we compute the inertia groups of $(n-1)$-connected, smooth, closed, oriented $2n$-manifolds where $n \geq 3$.
  As a consequence, we complete the diffeomorphism classification of such manifolds, finishing a program initiated by Wall sixty years ago, with the exception of the $126$-dimensional case of the Kervaire invariant one problem.

  In particular, we find that the inertia group always vanishes for $n \neq 4,8,9$---for $n \gg 0$, this was known by the work of several previous authors, including Wall, Stolz, and Burklund and Hahn with the first named author. When $n = 4,8,9$, we apply Kreck's modified surgery and a special case of Crowley's $Q$-form conjecture, proven by Nagy, to compute the inertia groups of these manifolds. In the cases $n=4,8$, our results recover unpublished work of Crowley--Nagy and Crowley--Olbermann.

  In contrast, we show that the homotopy and concordance inertia groups of $(n-1)$-connected, smooth, closed, oriented $2n$-manifolds with $n \geq 3$ always vanish.
%
\end{abstract}
\maketitle

\setcounter{tocdepth}{1}
\tableofcontents
\vbadness 5000


\section{Introduction} \label{sec:intro}
\todo{Crowley's paper and Wall}
The goal of this paper is to complete the classification of $(n-1)$-connected, smooth, closed, oriented $2n$-manifolds when $n \geq 3$, following the program initiated by Wall \cite{Wall62}. We are able to obtain such a classification for $n \neq 63$, where the $126$-dimensional Kervaire invariant one problem remains unsolved.

\begin{cnv}
  In the remainder of this paper, we assume that all manifolds are smooth and oriented.
\end{cnv}
To classify such manifolds, Wall associated to each $(n-1)$-connected, closed $2n$-manifold a collection of algebraic invariants that he called an \emph{$n$-space} and proved that such manifolds are determined by their $n$-spaces up to connected sum with an exotic sphere.
Building on the work of many previous authors \cite{Wall62,KervaireMilnor,BP66, Wall67, Kosinski, MT67, Bro69, Sch72, Lam, BJM84,stolz,Sto87,HHR,bhs}, Burklund and the first named author gave a complete answer\footnote{Outside of the case $n=63$, where the answer depends on the final unknown case of the Kervaire invariant one problem.} to when an $n$-space is realized by an $(n-1)$-connected $2n$-manifold \cite{geography}.

To finish the classification, it therefore suffices to answer the following question: given an $n$-space which is realized by an $(n-1)$-connected, closed $2n$-manifold $M$, for which homotopy spheres $\Sigma$ is there an orientation-preserving diffeomorphism between $M \# \Sigma$ and $M$? In other words, what is the \emph{inertia group} $I(M) \subset \Theta_{2n}$ of $M$, in the sense of the following definition?

\begin{dfn}
  Let $M$ denote a smooth, closed, oriented $m$-manifold.
  The \emph{inertia group} of $M$ is the subgroup $I(M) \subset \Theta_m$ of exotic spheres $\Sigma$ such that $M \# \Sigma$ is diffeomorphic to $M$ via an orientation-preserving diffeomorphism.
\end{dfn}

The main result of this paper is the following computation of the inertia groups of $(n-1)$-connected, closed $2n$-manifolds for $n \geq 3$.

\begin{thm}\label{determination}
  Let $M$ denote an $(n-1)$-connected, smooth, closed, oriented $2n$-manifold with $n\geq 3$. 
\begin{enumerate}

  \item The inertia group $I(M)$ of $M$ is equal to zero when $n \neq 4,8,9$.

  \item In the remaining cases, the inertia group is determined as follows:
  \begin{itemize}
    \item (Crowley--Nagy \cite{crowleynagy}) A $3$-connected $8$-manifold $M_8$ has inertia group
      \[I(M_8) = \begin{cases} 0 &\text{ if } 8 \mid p_1 \in \H^4 (M_8);\\ \Theta_8 \cong \Z/2\Z &\text{ if } 8 \nmid p_1 \in \H^4 (M_8). \end{cases}\]
    \item (Crowley--Olbermann) A $7$-connected $16$-manifold $M_{16}$ has inertia group
      \[I(M_{16}) = \begin{cases} 0 &\text{ if } 24 \mid p_2 \in \H^8 (M_{16});\\ \Theta_{16} \cong \Z/2\Z &\text{ if } 24 \nmid p_2 \in \H^8 (M_{16}). \end{cases}\]
    \item An $8$-connected $18$-manifold $M_{18}$ has inertia group
      \[I(M_{18}) = \begin{cases} 0 &\text{ if } H(M_{18}) \textit{ is zero;}   \\  \Z/8\Z \cong \mathrm{bSpin}_{19} \subset \Theta_{18} &\text{ otherwise}, \end{cases}\]
        where $\mathrm{bSpin}_{19}$ is the subgroup of $\Theta_{18}$ consisting of those homotopy $18$-spheres which bound a spin $19$-manifold and $H(M_{18}) \subset \pi_{9} \mathrm{BSO}$ is the image of the map $\pi_{9}(M_{18})\rightarrow \pi_9(\mathrm{BSO})\cong \Z/2\Z$ induced by the map classifying the stable normal bundle of $M_{18}$.
  \end{itemize}
\end{enumerate}
\end{thm}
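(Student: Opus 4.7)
My approach is to treat the theorem case by case in $n$, reserving the most work for the middle values $n = 4, 8, 9$.

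My first step is a general reduction: I would show $I(M) \subseteq \mathrm{bSpin}_{2n+1}$, the subgroup of $\Theta_{2n}$ of homotopy spheres bounding spin manifolds. Given an orientation-preserving diffeomorphism $\phi : M \# \Sigma \to M$, one cuts $M \times [0,1]$ along a disk in $M \times \{0\}$ and uses $\phi$ to identify the resulting boundary; what remains is a spin manifold with boundary $\Sigma$. This confines all further analysis to a tractable subgroup.

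For the vanishing claim in part (1), I would split by size. The cases $n = 3, 6$ are trivial since $\Theta_6 = \Theta_{12} = 0$. For $n$ sufficiently large with $n \neq 4, 8, 9$, I would cite \cite{bhs}, which uses Adams-type invariants preserved under connected sum to force $\mathrm{bSpin}_{2n+1} \cap I(M) = 0$. The remaining small values $n = 5, 7$ have $\Theta_{10} \cong \Z/6$ and $\Theta_{14} \cong \Z/2$; the generators of these groups are detected by $e$-invariants or Kervaire invariants that are preserved under connected sum, giving the conclusion by direct inspection.

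For the cases $n = 4, 8, 9$ in part (2), I would apply Kreck's modified surgery. The manifolds $M$ and $M \# \Sigma$ are related by a normal $B$-bordism $W = (M \times I) \natural W_\Sigma$, where $W_\Sigma$ is a spin null-bordism of $\Sigma$ and $B$ denotes the common normal $(n-1)$-type. The obstruction to improving $W$ into an $h$-cobordism is a quadratic refinement of the intersection form on the middle-dimensional surgery kernel. By Crowley's $Q$-form conjecture, proved by Nagy in the special case we need, two such refinements on a common underlying form are equivalent iff an explicit algebraic invariant matches. Translating this invariant back through Wall's classification yields the divisibility conditions $8 \mid p_1$ (for $n = 4$) and $24 \mid p_2$ (for $n = 8$), and the vanishing of $H(M) \subseteq \pi_9 \BSO$ (for $n = 9$).

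The main obstacle will be the $n = 9$ case. In dimension $2n = 18$ the middle-dimensional intersection form is skew-symmetric and lies in the Kervaire/Arf regime, so the correct form of the $Q$-form conjecture and the identification of its algebraic input with the $\pi_9 \BSO$-image require a new calculation. Equally, showing that $\mathrm{bSpin}_{19} \cong \Z/8$ (rather than some smaller subgroup such as $\mathrm{bP}_{19}$) is exactly the inertia subgroup requires verifying that each of the eight classes acts by the same change in the quadratic refinement upon boundary connect-sum with its bounding spin $19$-manifold.
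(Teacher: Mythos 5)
Your proposal captures the broad division of the proof — a homotopy-theoretic argument for $n\neq 4,8,9$ and modified surgery plus the $Q$-form conjecture for $n=4,8,9$ — but there are several substantive gaps and misconceptions.

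First, the reduction $I(M)\subseteq\mathrm{bSpin}_{2n+1}$ is far too weak to be useful: $\mathrm{bSpin}_{2n+1}$ is typically all of $\coker(J)_{2n}$. The reduction the paper actually uses (from Wall and Stolz) is much sharper: every $\Sigma\in I(M)$ bounds an \emph{$(n-1)$-connected} almost closed $(2n+1)$-manifold. Combined with Stolz's spectrum $A[n]$, this translates the problem into determining the kernel of the unit map $\pi_{2n}\Ss\to\pi_{2n}\MOn$ (Proposition~\ref{kernelunit}). Without passing to $(n-1)$-connected fillings you cannot bring Thom spectra and the bar spectral sequence into play, which is where all the leverage comes from.

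Second, citing \cite{bhs} to dispose of "all $n$ sufficiently large" is circular: \cite{bhs} covers $n\equiv 1\bmod 8$ with $n>232$, and Stolz handles $n\not\equiv 1\bmod 8$ with $n\geq 106$ and $n\equiv 2\bmod 8$ with $n>10$. The entire technical content of the present paper's part (1) is to close the remaining finitely many cases ($n=10$, $n\equiv 0,4\bmod 8$ with $n<106$, $n\equiv 1\bmod 8$ with $n\leq 232$). This requires new lower bounds on $\F_2$-Adams filtration via synthetic spectra and Chang's improved $v_1$-banded vanishing line for the Moore spectrum, plus ad hoc arguments for the smallest $n$. Your outline omits all of this.

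Third, your anticipated obstacle at $n=9$ is misplaced. The $Q$-form conjecture is only needed when $n$ is \emph{even}. For $n$ odd (including $n=9$), Kreck's Theorem D already says two closed simply-connected $2n$-manifolds with the same Euler characteristic and normal $(n-1)$-type are diffeomorphic if and only if they are $B$-bordant, with no $Q$-form data required. Conversely, the genuinely delicate input is for $n=4,8$, where one must verify the hypotheses of Nagy's Theorem H (simply-connected $B$ with torsionfree $H_n(B;\Z)$) and then carry out the bordism computation in $\pi_{2n}\MO\langle n\rangle_{H(M)}$ to extract the divisibility conditions on $p_1$ and $p_2$. Your sketch asserts the result but does not indicate how to perform this computation (which in the paper involves a secondary bar spectral sequence for $\MO\langle 4n\rangle_d$ over $\MO\langle 4n+1\rangle$).

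Finally, for $n=5,7$ the claim that $e$- or Kervaire invariants are "preserved under connected sum" needs justification: what makes these invariants effective here is precisely that they factor through the $\MOn$-bordism class, which is where the reduction to the unit map re-enters. Without that mechanism the argument for $n=5,7$ is not complete.
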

\begin{remark}
When $n=4$ and $8$, examples of $(n-1)$-connected $2n$-manifolds with nontrivial inertia groups are given by $\mathbb{H}\mathbb{P}^2$ and $\mathbb{O}\mathbb{P}^2$, as was first proven by Kramer and Stolz \cite{kramerstolz}.

  When $n=9$, an example is given by the linear $S^9$ bundle over $S^9$ classified by the nonzero element of $\pi_9 \mathrm{BSO}(10) \cong \pi_9 \mathrm{BSO} \cong \Z/2\Z$.
  Indeed, this follows from \Cref{determination} and the fact that the stable normal bundle of a linear sphere bundle $S^n \to M \to S^m$ classified by a map $S^m \to \mathrm{BSO} (n+1)$ is classified by the composite
  \[M \to S^m \to \mathrm{BSO} (n+1) \to \mathrm{BSO} \xrightarrow{-} \mathrm{BSO},\]
  where the final map takes a virtual vector bundle $V$ to its additive inverse $-V$ \cite[Fact 3.1]{CrowleyEscher}.
\end{remark}

\begin{rmk}
  The $n=4$ and $n=8$ cases of \Cref{determination} are the subject of unpublished work of Crowley--Nagy \cite{crowleynagy} and Crowley--Olbermann, respectively.
  In \Cref{sec:det}, we will present our own proofs of these cases, making crucial use of Nagy's proof \cite{nagy, nagyarxiv} of a special case of Crowley's $Q$-form conjecture \cite[Problem 11]{CrowleyProblems}. 
\end{rmk}

The problem of the computation of the inertia groups of these manifolds has a long history, and all but finitely many cases of \Cref{determination} were known before our work.
Indeed, let $M$ denote an $(n-1)$-connected, closed $2n$-manifold.
When $n \equiv 3,5,6,7 \mod 8$, Wall proved that $I(M)$ is trivial \cite{WallAction}.
When $M$ is stably frameable, Kosinski and Wall proved that $I(M) = 0$ \cite[\S 16]{Wall67} \cite{Kosinski}.\footnote{However, as we shall note below, \cite[Theorem 10]{Wall67} is incorrect and contradicts \Cref{determination}.}

In high enough dimensions, the inertia group of $(n-1)$-connected $2n$ manifolds is known to always vanish, whether or not the manifold is stably frameable. In the case $n\neq 1 \mod 8$, this is due to Stolz:
\begin{thm}[{\cite[Theorem D]{stolz}}]
Let $M$ be an $(n-1)$-connected $2n$-manifold. Then the inertia group of $M$ is trivial when $n \neq 1 \mod 8$ and $n \geq 106$ or $n\equiv 2$ mod 8 with $n>10$.
\end{thm}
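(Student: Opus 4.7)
The plan is to reduce the question to whether every exotic sphere in $\Theta_{2n}$ can be ``absorbed'' by a self-diffeomorphism of a codimension-zero piece of $M$, and then to verify this via the standard filtration of $\Theta_{2n}$ by the Kervaire--Milnor exact sequence. Write $M = M_0 \cup_\phi D^{2n}$, where $M_0$ is the $(n-1)$-connected handlebody obtained from $D^{2n}$ by attaching the $n$-handles of $M$, so that $\partial M_0 \cong S^{2n-1}$. Under the Cerf--Kervaire--Milnor identification $\Theta_{2n} \cong \pi_0 \mathrm{Diff}^+(S^{2n-1})$, connected sum with $\Sigma$ reattaches the top cell via $\phi \circ \sigma$ for a representative $\sigma$ of $\Sigma$, so $\Sigma \in I(M)$ precisely when $\sigma$ extends to a self-diffeomorphism of $M_0$ (rel.\ a boundary collar). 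The problem therefore reduces to computing the image of the boundary-restriction map $\pi_0 \mathrm{Diff}(M_0, \partial) \to \pi_0 \mathrm{Diff}^+(S^{2n-1}) \cong \Theta_{2n}$.

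To produce such diffeomorphisms I would use twist constructions along embedded $n$-spheres in $M_0$: given an embedded $S^n \hookrightarrow M_0$ with trivial normal bundle, an element of $\pi_n \mathrm{SO}(n)$ determines a self-diffeomorphism of $M_0$ whose boundary restriction is the corresponding clutching element of $\Theta_{2n}$. This naturally filters the image through the Kervaire--Milnor sequence: the subgroup $\mathrm{bP}_{2n+1} \subset \Theta_{2n}$ is covered by twists along spheres representing a basis of $H_n(M_0)$, while the quotient $\Theta_{2n}/\mathrm{bP}_{2n+1}$ injects into $\pi_{2n-1}^s/\mathrm{im}(J)$ and must be realized by additional stable-homotopical constructions (Haefliger-type embedded spheres, or $\eta$- and $\sigma$-family elements detected by primary operations on the Thom spectrum of the normal bundle).

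The main obstacle is to ensure that these constructions actually surject onto $\Theta_{2n}$ in the stated dimension ranges. Obstructions come from characteristic classes of $M$: in low dimensions, a nontrivial Pontryagin class $p_{n/4}$ persists as a genuine obstruction — producing the nontrivial inertia groups in dimensions $n = 4, 8, 9$ that are computed later in this paper — and for $n \equiv 2 \pmod 8$ the Arf/Kervaire invariant plays the analogous role. The dimension bounds $n \geq 106$ for $n \not\equiv 1 \pmod 8$, and $n > 10$ for $n \equiv 2 \pmod 8$, are calibrated precisely so that the relevant cokernel-of-$J$ elements and Kervaire-invariant classes can be excluded unconditionally, using the classical computation of $\mathrm{im}(J)$ together with the then-known status of the Kervaire invariant one problem. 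Sharpening these bounds all the way down to $n \geq 3$ is what requires Kreck's modified surgery and Nagy's proof of Crowley's $Q$-form conjecture, and constitutes the new content of the present paper.
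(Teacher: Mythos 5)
This theorem is not proved in the paper — it is cited as \cite[Theorem D]{stolz}, and the surrounding text explains that Stolz's argument rests on an analysis of vanishing lines in the $\F_2$-Adams spectral sequence, building on unpublished work of Mahowald. Your proposal takes a geometric route (decompose $M = M_0 \cup_\phi D^{2n}$, study the image of $\pi_0 \mathrm{Diff}(M_0,\partial) \to \Theta_{2n}$ via twist diffeomorphisms along embedded $n$-spheres), which is a reasonable starting framework and roughly parallels the first reduction step used by Wall, Kosinski, and Stolz. But as a proof it has a fatal gap: the entire content of Stolz's theorem is the explicit dimension bound $n \geq 106$, and your argument explains nothing about where that number comes from. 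You write that the bound is ``calibrated precisely so that the relevant cokernel-of-$J$ elements and Kervaire-invariant classes can be excluded unconditionally,'' but that is a description of what needs to be proved, not a proof. The actual input is a quantitative vanishing line for the $\F_2$-Adams spectral sequence (of the type quoted in \Cref{chang} of this paper), combined with lower bounds on the Adams filtration of the classes in the image of the bar differentials for $\MOn$; these are what produce a bound like $106$ rather than some other number.

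There are also several smaller errors. The image of $\Theta_{2n}/\mathrm{bP}_{2n+1}$ lives in $\pi_{2n}^s/\mathrm{im}(J) = \coker(J)_{2n}$, not $\pi_{2n-1}^s/\mathrm{im}(J)$; you are off by one. Attributing the nontrivial inertia group in dimension $n=9$ to a Pontryagin class $p_{n/4}$ is meaningless since $n/4 \notin \Z$ there; as shown later in this paper, the relevant obstruction for $n=9$ is the map $\pi_9(M) \to \pi_9 \mathrm{BSO} \cong \Z/2\Z$, not a Pontryagin class. Finally, in Stolz's theorem the $n \equiv 2 \pmod 8$ case gets the much better bound $n>10$ for a specific reason (the relevant kernel is small and accessible in low dimensions via \cite[Theorem B(i)]{stolz}), and your proposal gives no account of why that congruence class is treated differently from the others, nor of why $n \equiv 1 \pmod 8$ is excluded entirely from the statement.
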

Stolz's technique involves the analysis of vanishing lines in the $\F_2$-Adams spectral sequence and relies on unpublished work of Mahowald \cite{mahowald73} (see \cite[\S 15]{bhs} and \cite{chang}). In \cite{bhs}, Burklund, Hahn, and the first named author built on the work of Stolz to show that any $(n-1)$-connected $2n$-manifold  has trivial inertia group for $n\equiv 1$ mod $8$ if $n>232$.

The contribution of this paper is to resolve the remaining cases, i.e., when $n=10$, $n\equiv 0,4$ mod 8 with $n<106$, or $n\equiv 1$ mod 8 with $n\leq 232$.

  %

Using standard methods (cf.\ \cite[1.\ Schritt]{stolz}), the statement that for $n\geq 3$ the inertia groups of all $(n-1)$-connected closed $2n$-manifolds are trivial may be reduced to the following statement in homotopy theory: the kernel of the unit map
\[\pi_{2n} \Ss \to \pi_{2n} \MO\langle n \rangle\]
is generated by the image of $J$ and, when $n = 3,7$, the classes  $\nu^2$, $\sigma^2$ respectively.\footnote{The classes $\nu^2$ and $\sigma^2$ do not contribute to the inertia group because they have Kervaire invariant one, hence do not lift along the natural map $\Theta_{2n} \to \coker(J)_{2n}$.}
We prove the following theorem, including the cases $n =1,2$ for the sake of completeness:
\begin{thm}\label{thm:ker-main}
  Let $n \geq 1$.
  The kernel of the unit map
  \[ \pi_{2n} \Ss \to \pi_{2n} \MO \langle n \rangle\]
  is generated by the image of $J$ unless $n = 1,3,4,7,8,9$.
  \begin{itemize}
    \item When $n=1$, it is generated by the image of $J$ and $\eta^2 \in \pi_2 \Ss$.
    \item When $n=3$, it is generated by the image of $J$ and $\nu^2 \in \pi_6 \Ss$.
    \item When $n=4$, it is generated by the image of $J$ and $\varepsilon \in \pi_8 \Ss$.
    \item When $n=7$, it is generated by the image of $J$ and $\sigma^2 \in \pi_{14} \Ss$.
    \item When $n=8$, it is generated by the image of $J$ and $\eta_4 \in \pi_{16} \Ss$.
    \item When $n=9$, it is generated by the image of $J$ and $[h_2 h_4] \in \pi_{18} \Ss$.\footnote{The symbol $[h_2 h_4]$ does not unambiguously determine a class in $\pi_{18} \Ss$, even up to multiplication by a $2$-adic unit. To be precise, here we choose a representative of $[h_2 h_4]$ that lies in the kernel of the unit map $\pi_{18} \Ss \to \pi_{18} \ko \cong \F_2$. This kernel is isomorphic to $\Z/8\Z$, so this determines an element uniquely up to multiplication by a $2$-adic unit.}
  \end{itemize}
\end{thm}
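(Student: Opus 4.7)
The plan is to decompose the kernel $\ker(\pi_{2n}\Ss \to \pi_{2n}\MOn)$ by prime, invoke existing results that handle the bulk of the range, and resolve the remaining finite list of exceptional $n$ by direct calculation. At an odd prime $p$, I would argue that the unit map is injective on even-dimensional $p$-local stable stems: after $p$-localization $\MOn$ splits as a wedge of suspensions of $\BP$ in a sufficient range, and the even-degree odd-primary classes in $\pi_*\Ss$ (such as the $\beta$-family) are detected there via the Adams–Novikov spectral sequence. Since the odd-primary image of $J$ is concentrated in odd degrees, this accounts for the kernel completely at all odd primes.

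The serious work is at $p = 2$, where the main tool is the $\F_2$-Adams spectral sequence for $\MOn$. The vanishing-line estimate on the $E_2$-page, originating in unpublished work of Mahowald and developed in Stolz \cite{stolz} and Burklund--Hahn--Senger \cite{bhs}, already settles the theorem for $n \not\equiv 1 \pmod 8$ with $n \geq 106$, for $n \equiv 2 \pmod 8$ with $n > 10$, and for $n \equiv 1 \pmod 8$ with $n > 232$. The contribution of this paper is therefore to dispatch the complementary finite list: $n = 10$; $n \equiv 0, 4 \pmod 8$ with $n \leq 104$; $n \equiv 1 \pmod 8$ with $n \leq 225$; and the low-dimensional cases $n = 1, 2$.

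For each such $n$ I would argue in two directions. In the easy direction, we check that each listed generator lies in the kernel: the image of $J$ in positive even degree $2n$ is $\eta$-divisible, and $\eta$ is already null in $\MSpin$ (hence in $\MOn$ for $n \geq 2$), while the exceptional classes $\eta^2, \nu^2, \varepsilon, \sigma^2, \eta_4, [h_2 h_4]$ are dealt with individually, either via vanishing of the target homotopy group (e.g.\ $\pi_6 \MSpin = 0$ kills $\nu^2$) or by a direct reading of the Adams $E_2$-chart of $\MOn$ in the appropriate bidegree. In the harder direction, I would compute the unit map through to $E_\infty$ in the Adams spectral sequence of $\MOn$ at $t - s = 2n$, identify which permanent cycles lift from the sphere, and rule out any additional kernel classes by comparison with the known Adams chart of $\Ss$.

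The main obstacle will be the Adams SS bookkeeping for $n = 8, 9$ and along the $n \equiv 1 \pmod 8$ corridor with $n \leq 225$: here potential survivors appear in relatively high Adams filtration, outside the reach of the generic vanishing line, and one must carefully isolate the precise representative of $[h_2 h_4]$ that lies in the kernel of the unit to $\ko$ (as the footnote in the statement of \Cref{thm:ker-main} emphasizes) in order to pin down the kernel on the nose rather than merely up to a finite ambiguity.
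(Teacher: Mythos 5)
Your proposal misses the paper's essential technical innovation and substitutes an approach that cannot work at the scale required. You propose to ``compute the unit map through to $E_\infty$ in the Adams spectral sequence of $\MOn$ at $t-s=2n$'' and ``rule out any additional kernel classes by comparison with the known Adams chart of $\Ss$.'' For $n$ up to 232 (so $2n$ up to 464) this is hopeless: neither the Adams $E_2$-page of $\MOn$ nor the Adams chart of $\Ss$ is remotely tractable in this range, and no amount of ``bookkeeping'' will change that. The paper deliberately avoids any such computation. Its actual method expresses $\MOn$ as a two-sided bar construction $|\sBar_\bullet(\Ss, \Sigma^\infty_+ \mathrm{O}\langle n-1\rangle, \Ss)|$, so that the kernel of the unit map is exhausted by the images of the $d_1$- and $d_2$-differentials of the associated bar spectral sequence (and for connectivity reasons only $\pi_{2n-1}\R$, $\pi_{2n}\R$, $\pi_{2n-1}\R^{\otimes 2}$ matter). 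The lower bounds on Adams filtration are then obtained \emph{not} by computing any Adams $E_2$-page of $\MOn$, but by lifting the relevant composites to the category of $\F_2$-synthetic spectra (\Cref{thm:d1}, \Cref{thm:d2}), exploiting a filtration bound on the restriction of $J\colon\R\to\Ss$ to skeleta (\Cref{lem:R-AF}) and the synthetic behavior of $D_2$ (\Cref{D2}, \Cref{lem:D2-lift}). These filtration bounds are fed into Chang's improved $v_1$-banded vanishing line on the mod-$2$ Moore spectrum (\Cref{chang}, \Cref{parameter}) together with the Davis--Mahowald image-of-$J$ criterion, not into an Adams chart of $\MOn$. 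Without this mechanism your plan has no route from ``filtration estimate'' to ``lies in the image of $J$.''

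Two smaller points. First, the assertion that ``$\eta$ is already null in $\MSpin$'' is false: $\pi_1\MSpin\cong\Z/2$ and the unit map sends $\eta$ to the nonzero class (the circle with its nonbounding spin structure). The fact that the image of $J$ lands in the kernel is not a one-line $\eta$-divisibility argument; in the paper it is packaged into Stolz's identification (via $A[n]$, \cite[Satz 3.1]{stolz}) and into \Cref{prop:2-only}. Second, your odd-primary reduction via a $\BP$-splitting of $\MOn$ is not obviously correct as stated (it is $\MSO$, not $\MOn$, that has the classical odd-primary splitting, and the connective covers require care); the paper instead deduces the odd-primary vanishing geometrically, from \Cref{thm:An} and \cite[Satz 1.7]{stolz}, in \Cref{prop:2-only}. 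Your plan for the easy direction (checking generators lie in the kernel via low-degree bordism computations and orientations to $\ko$ and $\tmf$) is broadly consistent with \Cref{sec:exceptional}, but the hard direction as you describe it does not go through.
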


The proof of \Cref{thm:ker-main} involves a combination of the general techniques of \cite{inertia}, which improve on those of \cite{bhs}, with ad hoc arguments similar to those employed in \cite{geography}.
In particular, we make heavy use of the category of $\F_2$-synthetic spectra introduced by Pstr\k{a}gowski \cite{Pstragowski} to provide lower bounds on the $\F_2$-Adams filtration of elements in the kernel of the unit map.
An additional technical ingredient is an improved $v_1$-banded vanishing line on the mod $2$ Moore spectrum due to Chang \cite{chang}, which helps us to conclude that a class in the stable homotopy groups of spheres must be in the image of $J$ if it is of high $\F_2$-Adams filtration.

When the kernel of the unit map is not generated by the image of $J$ and an element of Kervaire invariant one for some $n$, we require a method to compute what the inertia group of a given $(n-1)$-connected $2n$-manifold is.

For this purpose, we make use of  Kreck's modified surgery \cite{kreck} and a special case of Crowley's $Q$-form conjecture \cite[Problem 11]{CrowleyProblems} proven by Nagy \cite{nagy, nagyarxiv}, which again help us reduce the question to one about the stable homotopy groups of certain cobordism spectra that we are able to fully answer.

\begin{rmk}
  Theorem 10 of \cite{Wall67} implies that the inertia group of an $(n-1)$-connected $2n$-manifold only depends on the mod $2$ map $\H_n (M)/2 \to \pi_n \BSO/2$ induced by the stable normal bundle.
  This result contradicts our \Cref{determination}, as well as the earlier work of Crowley--Nagy and Crowley--Olbermann.
  Frank has pointed out an error in Wall's argument in (3) of \cite[\S 8]{frank74wall}.
\end{rmk}

Given a manifold $M$, one may further study certain subgroups of the inertia group: the \emph{homotopy inertia group} $I_h (M) \subset I(M)$ of homotopy spheres $\Sigma$ such that $M \# \Sigma \cong M$ via a diffeomorphism homotopic to the standard homeomorphism, as well as the \emph{concordance inertia group} $I_c (M) \subset I_h (M)$ of homotopy spheres $\Sigma$ such that $M \# \Sigma$ is concordant to $M$ \cite{munkres}. \todo{Check cites}

In contrast to the inertia groups, we show that the homotopy and concordance inertia groups of $(n-1)$-connected closed $2n$-manifolds always vanish when $n \geq 3$:

\begin{thm} \label{thm:h-c}
  Let $M$ denote an $(n-1)$-connected, smooth, closed, oriented $2n$-manifold with $n\geq 3$. Then the concordance and homotopy inertia groups of $M$ vanish: $I_c (M) = I_h (M) = 0$.
\end{thm}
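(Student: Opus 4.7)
The plan is to show $I_c(M) = I_h(M) = 0$ by identifying both groups with a common kernel, reducing to a composition-product question in stable homotopy, and resolving it via an Adams filtration estimate.

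First I would invoke classical smoothing theory combined with the $s$-cobordism theorem to identify both inertia groups with
\[
I_c(M) \;=\; I_h(M) \;=\; \ker\bigl(e_M^* \colon \Theta_{2n} = \pi_{2n}(\mathrm{Top}/\mathrm{O}) \to [M, \mathrm{Top}/\mathrm{O}]\bigr),
\]
where $e_M \colon M \to S^{2n}$ collapses $M$ onto its top cell: a concordance of smoothings yields, via $s$-cobordism, a diffeomorphism $M \# \Sigma \to M$ homotopic to the canonical topological identification, and conversely such a diffeomorphism may be used to build a concordance. Because $M$ is $(n-1)$-connected with a single top cell (by Poincaré duality), a minimal CW model has the form $\bigvee_r S^n \cup_\alpha e^{2n}$ for some attaching map $\alpha \colon S^{2n-1} \to \bigvee_r S^n$. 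The associated cofiber sequence $\bigvee_r S^n \to M \to S^{2n}$ then identifies $\ker(e_M^*)$ with the image of composition with the suspension $\Sigma\alpha \colon S^{2n} \to \bigvee_r S^{n+1}$; since $n \geq 3$ we are in the stable range, so
\[
I_c(M) \;\subseteq\; \Theta_{n+1} \cdot \pi_{n-1}(\Ss) \;\subseteq\; \Theta_{2n},
\]
the product being the natural action of stable homotopy on $\pi_*(\mathrm{Top}/\mathrm{O})$.

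The proof then splits into cases. When $n \neq 4,8,9$ the inertia group $I(M)$ is already zero by Theorem~\ref{determination}, giving $I_c(M) \subseteq I(M) = 0$ immediately. When $n = 4$ we have $\Theta_5 = 0$, so the composition product is trivially zero. The substantive cases are $n = 8,9$: here $I(M)$ is generated by the class $\eta_4 \in \pi_{16}(\Ss)$ or $[h_2 h_4] \in \pi_{18}(\Ss)$ (cf.\ Theorem~\ref{thm:ker-main}), each having $\F_2$-Adams filtration $2$. I would use that every element of $\pi_{n+1}(\Ss)$ has Adams filtration at least $3$ for $n = 8, 9$ (e.g., in $\pi_9(\Ss)$ the generators $\nu^3$, $\eta\varepsilon$, $\mu$ all have filtration $\geq 3$), so every element of $\pi_{n+1}(\Ss) \cdot \pi_{n-1}(\Ss)$ has Adams filtration at least $4$. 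As $\mathrm{im}(J)$ in degrees $16$ and $18$ contains no class of Adams filtration $2$, the cosets of $\eta_4$ and $[h_2 h_4]$ modulo $\mathrm{im}(J)$ have minimum Adams filtration exactly $2$, and hence cannot be hit by a product of filtration at least $4$.

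The main obstacle is this last Adams filtration bookkeeping in the $n = 8,9$ cases: one must identify the minimum filtrations of elements in $\pi_9(\Ss)$, $\pi_{10}(\Ss)$, $\pi_{16}(\Ss)$, and $\pi_{18}(\Ss)$, rule out hidden extensions that might produce unexpectedly low-filtration products, and confirm that $\mathrm{im}(J)$ contains no filtration-$2$ classes in degrees $16$ and $18$. All of these facts are extractable from standard computations in the $\F_2$-Adams spectral sequence, but they require some care.
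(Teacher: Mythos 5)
The proposal contains a genuine gap in the reduction to a composition-product calculation, and then a secondary issue in the Adams-filtration bookkeeping.

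The identification $I_c(M) = \ker\bigl(\Theta_{2n} \to [M,\mathrm{Top}/\mathrm{O}]\bigr)$ is standard smoothing theory, but your further claim that $I_h(M)$ equals this same kernel is not justified, and I do not believe it is true at this level of generality. The implication you call the converse---that a diffeomorphism $M\#\Sigma \to M$ merely homotopic to the standard homeomorphism yields a concordance of smoothings---is exactly the nontrivial content separating $I_c$ from $I_h$; the $s$-cobordism theorem does not give it. The inclusion one does have for free is $I_c(M) \subseteq I_h(M)$, which runs the wrong way: vanishing of $I_c$ does not imply vanishing of $I_h$. The paper instead identifies $I_h(M) = \ker\bigl(\Theta_{2n} \to S^{\mathrm{Diff}}(M)\bigr)$ via the Sullivan--Wall surgery exact sequence with $L_{2n+1}(1)=0$, and then reduces to injectivity of $q^* : [S^{2n}, G/\mathrm{O}] \to [M, G/\mathrm{O}]$. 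Working with $G/\mathrm{O}$ rather than $\mathrm{Top}/\mathrm{O}$ is what makes the target the homotopy inertia group. Once $I_h = 0$ is established, $I_c \subseteq I_h$ gives $I_c = 0$ for free.

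Even after substituting $G/\mathrm{O}$, your Adams-filtration estimate needs a correction: you write $\Theta_{n+1}\cdot\pi_{n-1}(\Ss)$ and then bound filtrations by appealing to $\pi_{n+1}(\Ss)$, but $\Theta_{n+1} \cong \pi_{n+1}(\mathrm{Top}/\mathrm{O})$ is not $\pi_{n+1}(\Ss)$ (for instance it contains $\bP_{n+2}$-classes with no obvious lift to the sphere), so the filtration-$\geq 3$ claim does not transport. In the $G/\mathrm{O}$ picture the paper handles this by first chasing the diagram for the fibration $\mathrm{O}\to G\to G/\mathrm{O}$: the map $[\bigvee S^{n+1},G/\mathrm{O}]\to[\bigvee S^n,\mathrm{O}]$ vanishes because $J : \pi_n \mathrm{o} \to \pi_n\Ss$ is injective for $n=8,9$, so everything lifts to $[\bigvee S^{n+1}, G]$, and there one really is working with $\pi_{n+1}(\Ss)$. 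At that point the paper simply evaluates the products $\pi_{n-1}(\Ss)\cdot\pi_{n+1}(\Ss)\to\pi_{2n}(\Ss)$ from Toda's tables (using $\tau_{[n+1,2n+1]}\mathrm{gl}_1\Ss\simeq\tau_{[n+1,2n+1]}\Ss$ to identify $\pi_*G$-module products with $\pi_*\Ss$-products), showing every product is either $0$ or $\eta\rho$, which lies in the image of $J$. Your filtration idea would then be a valid alternative to the explicit Toda computations---the multiplicativity of Adams filtration for sphere products gives the bound you want---but the reduction step preceding it has to go through $G/\mathrm{O}$ and the $\mathrm{O}\to G\to G/\mathrm{O}$ fibration, not through $\mathrm{Top}/\mathrm{O}$.
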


This is a consequence of \Cref{determination} when $n \neq 4,8,9$. Some of the remaining cases were already proven in the literature: when $n=4$, Kasilingam showed that $I_c (M) = I_h (M) = 0$ for all $(n-1)$-connected $M$ \cite{ramesh}. When $n=8$, he further proved that $I_c (M) = 0$ under the additional hypothesis $H^8 (M) = \mathbb{Z}$.

Furthermore, \Cref{thm:ker-main} allows us to draw consequences for the classification of $(n-1)$-connected $(2n+1)$-manifolds.
In \cite{Wall67}, Wall determined the diffeomorphism classes of $(n-1)$-connected almost closed\footnote{That is, those with boundary a homotopy sphere.} $(2n+1)$-manifolds in terms of certain systems of invariants when $n \geq 3$ and $n \neq 3,7$.
When $n=3,7$, such a classification was given by Crowley \cite{CrowleyThesis}, who built on earlier work of Wilkens \cite{WilkinsThesis, WilkinsPaper}.
For the case $n=2$, see the work of Barden \cite{Barden}.

Using \Cref{thm:ker-main}, we are able to determine precisely which exotic $2n$-spheres are the boundaries of $(n-1)$-connected almost closed $(2n+1)$-manifolds when $n \geq 3$. As a consequence, we deduce that every $(n-1)$-connected almost closed $(2n+1)$-manifold may be filled in to obtain a closed manifold when $n \geq 3$ and $n \neq 4,8,9$.



\begin{thm} \label{thm:which-bdy}
  Suppose that $n \geq 3$ and $n \neq 4,8,9$. Then a $2n$-dimensional homotopy sphere is the boundary of an $(n-1)$-connected smooth $(2n+1)$-manifold if and only if it is diffeomorphic to the standard sphere. In particular, one may always glue a disk along the boundary to obtain an $(n-1)$-connected closed $(2n+1)$-manifold. In the remaining cases:
  \begin{itemize}
    \item Any homotopy $8$-sphere is the boundary of a $3$-connected $9$-manifold.
    \item Any homotopy $16$-sphere is the boundary of a $7$-connected $17$-manifold.
    \item A homotopy $18$-sphere $\Sigma$ is the boundary of an $8$-connected $19$-manifold if and only if it is the boundary of a spin $19$-manifold.
  \end{itemize}
\end{thm}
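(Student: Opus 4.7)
The plan is to reduce the statement to \Cref{thm:ker-main} via a standard Pontryagin--Thom argument. The main preliminary is the equivalence: a homotopy $2n$-sphere $\Sigma$ bounds an $(n-1)$-connected smooth $(2n+1)$-manifold if and only if its Pontryagin--Thom class $[\Sigma] \in \pi_{2n}\Ss$ lies in the kernel $K$ of the unit map $\pi_{2n}\Ss \to \pi_{2n}\MOn$. Obstruction theory handles the forward direction, since the stable normal bundle of any $(n-1)$-connected $(2n+1)$-manifold automatically lifts through $\BO\langle n\rangle$. For the converse, a nullbordism $W^{2n+1}$ carrying a $\BO\langle n\rangle$-structure can be improved to be $(n-1)$-connected by below-middle-dimension surgery: for $1 \leq i \leq n-1$ one can embed an $i$-sphere representative (since $2i < \dim W$) with stably trivial normal bundle (by $(n-1)$-connectivity of $\BO\langle n\rangle$), and the $\BO\langle n\rangle$-lift extends over $D^{i+1}$ by the same connectivity. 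This is essentially the reduction sketched before \Cref{thm:ker-main} and in \cite[1.~Schritt]{stolz}.

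With this equivalence in hand, the first three bullets reduce to computing $\Theta_{2n} \cap K$ inside $\pi_{2n}\Ss$. For $n \geq 3$ with $n \notin \{4, 7, 8, 9\}$, $K$ equals $\mathrm{im}(J)$, and $[\Sigma] \in \mathrm{im}(J)$ precisely when $\Sigma \in \bP_{2n+1}$; the latter vanishes since $2n+1$ is odd, so only the standard sphere bounds. For $n = 7$, the extra generator $\sigma^2$ has Kervaire invariant one and hence does not lift along $\Theta_{14} \to \mathrm{coker}(J)_{14}$, so again $\Theta_{14} \cap K = \bP_{15} = 0$. For $n = 4, 8$, the standard computations of $\pi^s_{2n}$ and $\mathrm{im}(J)$ give $\mathrm{coker}(J)_{2n} \cong \Z/2$, generated by the class of $\varepsilon$ (resp.\ $\eta_4$); by Kervaire--Milnor, $\Theta_{2n} \cong \mathrm{coker}(J)_{2n}$ is generated by the same class, which lies in $K$ by \Cref{thm:ker-main}. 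Hence $\Theta_{2n} \subset K$ and every homotopy sphere bounds.

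The last bullet ($n = 9$) is a self-contained surgery argument, independent of \Cref{thm:ker-main}. An $8$-connected $19$-manifold is simply connected with $w_2 = 0$ and hence spin, so any $\Sigma$ bounding an $8$-connected $19$-manifold lies in $\mathrm{bSpin}_{19}$. Conversely, given a spin nullbordism $W$ of $\Sigma$, surgery below middle dimension kills $\pi_i W$ for $1 \leq i \leq 8$ while preserving the spin structure; the $i = 1$ case uses the freedom to choose an appropriate framing of the attaching $1$-sphere, while for $i \geq 2$ the framing is unique up to homotopy. The main technical obstacle throughout is the careful execution of these surgery steps and the compatibility with the $\BO\langle n\rangle$-structure in the preliminary equivalence, but this is standard.
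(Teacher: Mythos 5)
Your preliminary equivalence---that a homotopy $2n$-sphere bounds an $(n-1)$-connected $(2n+1)$-manifold if and only if its class dies under $\pi_{2n}\Ss \to \pi_{2n}\MOn$---is essentially the reduction the paper relies on (Stolz, \cite[1.\ Schritt]{stolz}), and your deductions for $n \geq 5$ with $n \neq 7,8,9$ and for $n=4,8$ from \Cref{thm:ker-main} are sound. A small slip: $n=3$ is also among the exceptional cases of \Cref{thm:ker-main} (the kernel is generated by $\mathrm{im}(J)$ and $\nu^2$), so the claim that ``$K = \mathrm{im}(J)$ for $n \geq 3$, $n \notin\{4,7,8,9\}$'' is false for $n=3$; the Kervaire-invariant argument you give for $\sigma^2$ at $n=7$ applies verbatim to $\nu^2$ and rescues the conclusion.

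The genuine gap is in the $n=9$ case, which you claim is a ``self-contained surgery argument, independent of \Cref{thm:ker-main}.'' This is not so. Given a spin nullbordism $W^{19}$ of $\Sigma$, surgery below middle dimension cannot in general kill $\pi_i W$ for $4 \leq i \leq 8$: a spin structure is only a $\BO\langle 4\rangle$-structure, not a $\BO\langle 9\rangle$-structure, and since $\pi_4 \mathrm{BSpin} \cong \Z$ and $\pi_8 \mathrm{BSpin} \cong \Z$ are nonzero, the normal bundle of an embedded $S^4$ or $S^8$ in $W$ need not be trivial. The framing ambiguity you discuss is only an issue once the normal bundle is already known to be trivial; here the normal bundle itself can obstruct. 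So ``bounds spin $\Rightarrow$ bounds $8$-connected'' does not follow by elementary surgery. The correct route is the same as for $n=4,8$: apply your preliminary to $n=9$, so that $\Sigma$ bounds an $8$-connected $19$-manifold iff $[\Sigma]$ dies in $\pi_{18}\MO\langle 9\rangle$; then \Cref{thm:ker-main} identifies this kernel as generated by $\mathrm{im}(J)$ (which vanishes in degree $18$) and $[h_2h_4]$, the latter generating $\ker(\pi_{18}\Ss \to \pi_{18}\ko)$, which by Anderson--Brown--Peterson \cite{spin} equals $\ker(\pi_{18}\Ss \to \pi_{18}\MSpin)$, i.e.\ the elements of $\Theta_{18}$ lying in $\mathrm{bSpin}_{19}$. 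So the $n=9$ case, like $n=4,8$, requires \Cref{thm:ker-main}; there is no shortcut via surgery on spin manifolds.
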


To deduce \Cref{thm:which-bdy} from \Cref{thm:ker-main}, one notes that a homotopy $2n$-sphere is the boundary of an $(n-1)$-connected $(2n+1)$-manifold precisely when it lies in the kernel of $\Theta_{2n} \to \coker(J)_{2n} \to \pi_{2n} \MO \langle n \rangle$ by the Pontryagin--Thom construction and surgery below the middle dimension.
It then suffices to recall that $\Theta_{2n} \to \coker(J)_{2n}$ is the inclusion of elements of Kervaire invariant $0$ and that $\varepsilon$, $\eta_4$, and $[h_2 h_4]$ are generators for $\coker(J)_8$, $\coker(J)_{16}$, and the kernel of $\pi_{18} \Ss \cong \coker(J)_{18} \to \pi_{18} \mathrm{MSpin}$, respectively \cite{Isaksen}.

\begin{rmk}
  The fact that any homotopy $8$-sphere is the boundary of a $3$-connected $9$-manifold and any homotopy $16$-sphere is the boundary of a $7$-connected $17$-manifold is not original to us.
  The $8$-dimensional case was originally proven by Frank \cite[Theorem 3]{Frank}, though the authors are not aware of a full account of Frank's proof that appears in print.
  Both cases are consequences of work of Bier and Ray \cite{BierRay} (cf.\ \cite[Satz 12.1(i)]{stolz}).
\end{rmk}

When $n =4,8,9$, we also determine the boundary map $\partial : A_{2n+1} ^{\langle n \rangle} \to \Theta_{2n}$ up to multiplication by a $2$-adic unit.
In particular, we determine precisely which $(n-1)$-connected almost closed $(2n+1)$-manifolds may be filled in to obtain closed manifolds.

\begin{thm} \label{thm:kernel-main-intro}
  \begin{enumerate}
    \item[(i)] (Frank \cite[Theorem 3]{Frank}) The boundary of a $3$-connected $9$-manifold $M$ is the standard sphere if and only if $\Psi_{-L_{\mathbb{H}}} (M) \in \{1, [\nu_4 \circ \eta_{7}]\}$. Otherwise, the boundary is $[\epsilon] \in \coker(J)_8 \cong \Theta_8$.
    \item[(ii)] The boundary of a $7$-connected $17$-manifold $M$ is the standard sphere if and only if $\Psi_{L_{\mathbb{O}}} (M) \in \{1, [\sigma_8 \circ \eta_{15}]\}$. Otherwise, the boundary is $[\eta_4] \in \coker(J)_{16} \cong \Theta_{16}$.
    \item[(iii)] The boundary of an $8$-connected $19$-manifold is diffemorphic to $\omega(f) [h_2 h_4]$. Here, $\omega(f) \in \Z/8\Z$ is an invariant defined by Wall in \cite{Wall67} and $[h_2 h_4] \in \pi_{18} (\Ss) \cong \coker(J)_{18} \cong \Theta_{18}$ is a generator for the kernel of the unit map $\pi_{18} \Ss \to \pi_{18} \ko$, which is isomorphic to $\Z/8\Z$.\footnote{This statement should be interpreted as holding up to multiplication by a $2$-adic unit. Indeed, we have only specified $[h_2 h_4]$ up to multiplication by a $2$-adic unit, and Wall only defined $\omega (f)$ up to a $2$-adic unit.}
  \end{enumerate}
\end{thm}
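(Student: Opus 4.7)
My plan has two stages: bounding the image of $\partial$ using Theorem \ref{thm:ker-main}, and then identifying the fibers of $\partial$ via the algebraic classification of $(n-1)$-connected almost closed $(2n+1)$-manifolds.

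For the first stage, applying the Pontryagin--Thom construction to the pair $(M, \partial M)$ realizes the framed bordism class of $\Sigma = \partial M$ in $\pi_{2n}\Ss$ as an element that becomes trivial along the unit $\Ss \to \MOn$, since the PT lift of $M$ itself provides an explicit nullhomotopy. Because $2n = 8, 16, 18$ are not Kervaire-invariant-one dimensions, $\Theta_{2n} \cong \coker(J)_{2n}$. Thus the image of $\partial$ in $\Theta_{2n}$ is contained in the image of $\ker(\pi_{2n}\Ss \to \pi_{2n}\MOn) \to \coker(J)_{2n}$, which by Theorem \ref{thm:ker-main} is exactly the cyclic subgroup generated by $[\varepsilon]$, $[\eta_4]$, or $[h_2 h_4]$ respectively. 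In particular, every almost closed manifold has boundary contained in the subgroup stated in the theorem.

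For the second stage, in cases (i) and (ii) we appeal to Wall's classification of $(n-1)$-connected almost closed $(2n+1)$-manifolds: modulo the $\Theta_{2n}$-action by connected sum, such an $M$ is determined by its intersection form together with a refinement $\Psi_V(M)$ of the stable normal bundle data (Crowley's invariant). Removing a disk from the closed manifold $\HP^2$ (respectively $\OP^2$) exhibits one value of $\Psi_{-L_\HP}$ (respectively $\Psi_{L_\OP}$) with $\partial M = 0$, and a second value with $\partial M = 0$ arises from a twisted variant realized by composition with $\nu_4 \circ \eta_7$ (respectively $\sigma_8 \circ \eta_{15}$), which again bounds a disk. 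For any other value of $\Psi_V$, the PT class in $\pi_{2n+1}\MOn$ fails to lift through $0$, so the first stage forces $\partial M$ to be the unique nontrivial generator $[\varepsilon]$ or $[\eta_4]$. In case (iii), one uses Wall's $\omega(f) \in \Z/8\Z$ to parameterize the normal bundle refinement; surjectivity of $\partial$ onto the $\Z/8\Z$ subgroup of $\Theta_{18}$ generated by $[h_2 h_4]$, together with $\Z/8\Z$-linearity of the relevant boundary map, forces $\partial M = \omega(f) \cdot [h_2 h_4]$ up to a $2$-adic unit.

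The principal obstacle is the second stage, and especially case (iii): matching the algebraically defined invariants $\Psi_V$ and $\omega$ with the specific homotopy classes $[\varepsilon]$, $[\eta_4]$, $[h_2 h_4]$ in the kernel of the unit map. For $\HP^2$ and $\OP^2$ one has direct geometric access, but for the remaining $\Psi$-values and for the full $\Z/8\Z$-structure of $\omega$, one must explicitly track how Wall's normal bundle invariants interact with the $\F_2$-synthetic and Adams-filtration computations feeding into the proof of Theorem \ref{thm:ker-main}.
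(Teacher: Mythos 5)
Your Stage 1 correctly captures the containment of $\mathrm{im}(\partial)$ in the cyclic subgroup named by \Cref{thm:ker-main}, and this matches the paper's setup. But Stage 2 has a dimensional error and omits the technical core of the argument.

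The dimensional error: you propose "removing a disk from $\mathbb{HP}^2$ (resp.\ $\mathbb{OP}^2$)" to exhibit a class in $A_{9}^{\langle 4\rangle}$ (resp.\ $A_{17}^{\langle 8\rangle}$). But $\mathbb{HP}^2$ is $8$-dimensional and $\mathbb{OP}^2$ is $16$-dimensional, so deleting a disk produces an almost closed $2n$-manifold, not a $(2n+1)$-manifold; it cannot directly yield an element of $A_{2n+1}^{\langle n\rangle}$. In the paper, $\mathbb{OP}^2 - D^{16}$ is only used after multiplying by $\eta$ — \Cref{lem:plumbing-eta} identifies the plumbing $P(L_{\mathbb{O}}\oplus\mathbb{R}, L_{\mathbb{O}}\circ\eta_8)$ with $\eta[\mathbb{OP}^2 - D^{16}]$ in $\pi_{17} A[8]$ — and $\mathbb{HP}^2$ enters only indirectly, through the computation of the Thom spectrum $\Th(\mathbb{HP}^2;N_{\mathbb{HP}^2})$ in \Cref{lem:betaHopf}. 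The manifolds actually plugged into $\Psi_g$ in the paper are plumbings such as $M(-L_{\mathbb{H}}\oplus\mathbb{R}, L_{\mathbb{H}}\circ\eta_4)$, which are genuinely $9$-dimensional.

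The missing core: you assert that the "twisted variant realized by composition with $\nu_4\circ\eta_7$" also bounds a disk, but this is exactly the assertion that needs proof — namely that the plumbing $M(-L_{\mathbb{H}}\oplus\mathbb{R}, L_{\mathbb{H}}\circ\eta_4)$ (with $\Psi_{-L_{\mathbb{H}}} = [\nu_4\circ\eta_7]$) has standard boundary. The paper handles this with a new Toda-bracket criterion (\Cref{thm:Frank}): the boundary class lies in $\pm\langle J(\beta), \Sigma^\infty_\beta(x), \Sigma^\infty(y)\rangle$, and one then computes $\Sigma^\infty_{-L_{\mathbb{H}}}(\nu_4)=0$ via the stable splitting of $\Th(\mathbb{HP}^2;N_{\mathbb{HP}^2})$, so that the bracket degenerates to $\langle\nu,0,\eta\rangle$, which lands in the image of $J$. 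For $n=8$ the paper uses a different route entirely, exploiting that $\pi_{17}A[8]\to\pi_{16}\Ss$ is $\eta$-linear and that $[\mathbb{OP}^2-D^{16}]$ maps to zero, rather than computing a Toda bracket. None of this is in your proposal, and it is the substantive content of cases (i) and (ii). Similarly, your case (iii) sketch ("surjectivity plus $\Z/8\Z$-linearity") is not a proof: the paper's argument (carried out in \Cref{mo9}) hinges on \Cref{lem:eta-mult}, which establishes that $\eta$-multiplication $\pi_{18}A[9]\to\pi_{19}A[9]$ is nonzero via an integral Hurewicz argument, combined with the identification (from the geography paper) of the image of the generator of $\pi_{18}A[9]$ as $h_1^2 h_4$. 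Your final paragraph's suggestion that one must track how Wall's invariants interact with the $\F_2$-synthetic Adams-filtration machinery is also off target: those synthetic bounds are used for the vanishing cases $n\neq 4,8,9$, not for the identification of $\partial$ here.
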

This theorem will be proved in \Cref{sec:kernel} as \Cref{thm:kernel-main}. We refer the reader to \Cref{sec:frank} for the definition of the invariant $\Psi_g$, as well as $L_{\mathbb{H}}$ and $L_{\mathbb{O}}$.
%
%

This may be regarded as a first step towards a classification of $(n-1)$-connected closed $(2n+1)$-manifolds.
The second step is to compute the inertia groups of $(n-1)$-connected closed $(2n+1)$-manifolds.
This problem would require additional techniques to resolve: the inertia group of such a manifold might have nontrivial intersection with $\bP_{2n+2} \subseteq \Theta_{2n+1}$, and our techniques only deal with the image of the inertia group in $\coker(J)_{2n+1}$.
To see how complicated the problem of determining the $\bP$-component of the inertia group can be, we refer the reader to \cite{CrowleyNordstrom}, in which the inertia groups of $2$-connected $7$-manifolds are computed.

\subsection{Outline}
In Section \ref{sec:background}, we recall from \cite{stolz} how one may reduce the $n \neq 4,8,9$ cases of Theorem \ref{determination} to \Cref{thm:ker-main}.
Moreover, we recall from \cite{bhs} how the kernel of the unit map $\pi_{2n} \Ss \to \pi_{2n} \MOn$ may be studied using the bar spectral sequence.

In Section \ref{sec:D2}, we compute the $\mathrm{E}^1$-page of this bar spectral sequence.
Section \ref{sec:lower} is devoted to obtaining lower bounds on the $\F_2$-Adams filtration of elements in the kernel of the unit map $\pi_{2n} \Ss \to \pi_{2n} \MOn$.
Our methods make crucial use of Pstr\k{a}gowski's category of $\F_2$-synthetic spectra \cite{Pstragowski} and refine the techniques in \cite{inertia}.
In Section \ref{sec:vanishing}, we combine Chang's improved $v_1$-banded vanishing line for the mod $2$ Moore spectrum \cite{chang} with the bounds of \Cref{sec:lower} to prove all but a small number of exceptional cases of \Cref{determination}.

%
We proceed to analyze the exceptional cases in Section \ref{sec:exceptional} using various ad hoc arguments, including explicit knowledge of the stable homotopy groups of the sphere in low dimensions \cite{Isaksen}, the Ando--Hopkins--Rezk string orientation \cite{AHR}, the results of \cite{geography}, and a slight optimization of the bounds obtained in \Cref{sec:lower}.
This finishes the proof the triviality of the inertia group of $(n-1)$-connected $2n$-manifolds $M$ outside of $n=4,8,9$.
In Section \ref{sec:det}, we apply  Kreck's modified surgery theory \cite{kreck} and a special case of Crowley's $Q$-form conjecture, proven by Nagy \cite{nagy, nagyarxiv}, to determine the size of the inertia group of $M$ when $n=4,8,9$, thereby completing the proof of Theorem \ref{determination}. Additionally, we prove in \Cref{sec:h-c} that the homotopy and concordance inertia groups of $(n-1)$-connected $2n$-manifolds always vanish when $n \geq 3$. In Section \ref{sec:kernel}, we build upon Frank's work \cite{Frank,frank74wall} and  determine the kernel of the boundary map $\partial : A_{2n+1} ^{\langle n \rangle} \to \Theta_{2n}$ when $n=4,8$.

\subsection{Acknowledgements}
We thank Robert Burklund and Sanath Devalapurkar for helpful discussions regarding the contents of this paper. We are particularly grateful to Diarmuid Crowley for assistance with the application of modified surgery and the $Q$-form conjecture in \Cref{sec:det}. We would also like to thank the referee for their careful reading of the paper and their many helpful comments. During the course of this work, the first named author was partially supported by NSF grant DMS-2103236, and the second named author was partially supported by NSF grant DMS-1906072, the DNRF through the Copenhagen Centre for Geometry and Topology (DRNF151), and the European Union via the Marie Skłodowska-Curie postdoctoral fellowship (project 101150469).

\section{Background} \label{sec:background}
\todo{Cites throughout}
\NB{Mention $n \equiv 2 \mod 8$ case is done by Stolz when $n > 10$. When $n = 10$, use that $\pi_{20} \Ss \to \pi_{20} \tmf$ is injective.}
In this section, we recall from \cite{stolz,bhs, inertia} how to reduce the geometric problem of determining the inertia groups of highly connected manifolds to a computation in homotopy theory.

To begin, we relate inertia groups of $(n-1)$-connected closed $2n$-manifolds to boundaries of $(n-1)$-connected almost closed $(2n+1)$-manifolds.

\begin{proposition}[{{\cite{Wall62}\cite[\S 15]{stolz}}}]
  Let $M$ be an $(n-1)$-connected closed $2n$-manifold, and let $\Sigma \in I(M)$. Then $\Sigma$ is the boundary of an $(n-1)$-connected almost closed $(2n+1)$-manifold.
\end{proposition}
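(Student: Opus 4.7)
The plan is to construct $W$ from a $1$-handle cobordism together with the identification provided by $\phi$, and then to improve connectivity by a simple interior surgery. Form the $(2n+1)$-dimensional cobordism
\[
  V \;=\; \bigl((M \sqcup \Sigma) \times [0,1]\bigr) \cup h^1,
\]
where $h^1$ is a $1$-handle attached at the top face connecting $M \times \{1\}$ to $\Sigma \times \{1\}$. This is the trace of the connected-sum surgery, so $V$ has three boundary components: $M$ and $\Sigma$ at the bottom, and $M \# \Sigma$ at the top. Now use the diffeomorphism $\phi : M \# \Sigma \xrightarrow{\sim} M$ to identify the top boundary of $V$ with the bottom copy of $M$, producing a $(2n+1)$-manifold $W$ whose boundary is exactly $\Sigma$ (up to orientation; since $\Theta_{2n}$ is a group, this suffices).

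Next I would arrange that $W$ is $(n-1)$-connected. The cobordism $V$ deformation retracts onto $M \vee \Sigma$ (by collapsing the core of $h^1$), which is $(n-1)$-connected since both $M$ and $\Sigma$ are. Applying Mayer--Vietoris to the homotopy pushout producing $W$ shows that $H_i(W) = 0$ for $0 < i < n$, but that $H_1(W) \cong \mathbb{Z}$: the self-gluing introduces a loop that travels through $V$ from a basepoint of the bottom $M$ to its image in the top $M \# \Sigma$ and closes up via the identification. This loop is represented by an embedded circle with trivial normal bundle in the interior of $W$, and a single $2$-handle surgery along it kills $\pi_1(W)$ without introducing any new homology below degree $n$ (the dual sphere has dimension $2n-1 \geq n$). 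After this surgery, $W$ is simply connected with $H_i(W) = 0$ in positive degrees $i < n$, hence $(n-1)$-connected by Hurewicz.

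The main point requiring care is the surgery step, which takes place entirely in the interior of $W$ and is therefore harmless to $\partial W = \Sigma$; the dimension $2n+1 \geq 7$ ensures that the framed-embedding and normal-bundle hypotheses of surgery theory are automatic in this range. The result is the desired $(n-1)$-connected almost closed $(2n+1)$-manifold with boundary $\Sigma$.
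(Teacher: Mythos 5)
The construction is correct. Forming the trace $V$ of the connected-sum surgery, closing it up along $\phi$ to get $W$ with $\partial W = \pm\Sigma$, and then performing a single interior $0$-surgery to kill the $\mathbb{Z}$ in $\pi_1$ is exactly the kind of argument that Wall and Stolz use (the paper itself supplies no proof, deferring to \cite{Wall62} and \cite[\S 15]{stolz}). The key points all check out: $V \simeq M \vee \Sigma$ is $(n-1)$-connected; the self-gluing contributes a single free loop, giving $\pi_1(W) \cong \mathbb{Z}$ and $H_i(W)=0$ for $1 < i < n$; an orientation-preserving embedded circle in an oriented manifold has trivial normal bundle; and because the circle has infinite order in $H_1(W)$, surgery on it kills $\pi_1$ without contributing to $H_2$ (a Mayer--Vietoris/van Kampen check shows $H_2$ of the surgered manifold is the kernel of an isomorphism $H_1(S^1\times S^{2n-1}) \to H_1(W\setminus(S^1\times\mathring{D}^{2n}))$, hence zero), while the cocore $S^{2n-1}$ only affects degrees $\geq 2n-1 \geq n$. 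The minor imprecision in the phrase ``$H_i(W)=0$ for $0<i<n$, but $H_1(W)\cong\mathbb{Z}$'' should be read as $H_i(W)=0$ for $1<i<n$; and the orientation issue ($\partial W$ may be $-\Sigma$ rather than $\Sigma$) is harmless, as you note, because $I(M)$ is a subgroup and bounding is preserved under orientation reversal. This is essentially the classical argument.
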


We will use the above proposition to prove the vanishing of $I(M)$ when $n \neq 4,8,9$. We defer the discussion of the cases $n = 4,8,9$ to \Cref{sec:det}.

Our goal is therefore to understand which exotic $2n$-spheres are the boundaries of $(n-1)$-connected almost closed $(2n+1)$-manifolds.
To this end, it is useful to consider the cobordism group $A_{2n+1} ^{\langle n \rangle}$ of $(n-1)$-connected $(2n+1)$ almost closed manifolds modulo oriented cobordisms that restrict to $h$-cobordisms on the boundary.
The operation of taking the boundary determines a group homomorphism
\[\partial : A_{2n+1} ^{\langle n \rangle} \to \Theta_{2n},\]
which we would like to prove is zero.

By work of Wall, Frank, and Stolz \cite{Wall67,frank74wall,stolz}, the groups $A_{2n+1}^{\langle n \rangle}$ have been computed as below.

\begin{thm}\label{thm:An}\cite{Wall67,frank74wall,stolz}
  When $n \geq 3$, there are isomorphisms
  \[A_{2n+1} ^{\langle n \rangle} \cong \begin{cases} \Z /2 \Z \oplus \Z/ 2 \Z &\mathrm{ when } \quad n \equiv 0 \mod 8 \quad \mathrm{ or } \quad n = 4 \\ \Z /8 \Z &\mathrm{ when } \quad n \equiv 1 \mod 8 \\ \Z /2 \Z &\mathrm{ when } \quad n \equiv 2 \mod 8 \\ \Z /2 \Z &\mathrm{ when } \quad n \equiv 4 \mod 8 \quad \mathrm{ and } \quad n \neq 4 \\ 0 &\mathrm{ otherwise}. \end{cases}\]
\end{thm}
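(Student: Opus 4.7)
The plan is to reduce this geometric classification problem to a homotopy-theoretic computation via Pontrjagin--Thom and surgery, then analyze the relevant $n \bmod 8$ structure coming from Bott periodicity. First, I would apply surgery below the middle dimension to any $(n-1)$-connected almost closed $(2n+1)$-manifold $W$: since $n \geq 3$, obstructions vanish and we may assume $W$ is built from the boundary collar by attaching only $n$- and $(n+1)$-handles. This reduces the bordism problem to the algebra of a middle-dimensional handle presentation, namely an intersection form (skew if $n$ is even, symmetric if $n$ is odd, plus a linking form on torsion) together with a normal bundle datum $\alpha : H_n(W) \to \pi_{n-1} \SO(n+1)$ classifying the embeddings of the $n$-handle cores. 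This is Wall's system of invariants from \cite{Wall67}.

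Next, I would identify $A_{2n+1}^{\langle n \rangle}$ with a Grothendieck-type group built from these systems of invariants modulo the moves corresponding to $h$-cobordism on the boundary (handle slides, births/deaths of canceling pairs, and stabilization by hyperbolic summands). Equivalently, I would express $A_{2n+1}^{\langle n \rangle}$ as the cokernel of a natural map $L_{2n+2}^{\langle n \rangle} \to Q_{2n+1}^{\langle n \rangle}$ relating a Wall-type $L$-group for $(n-1)$-connected forms with normal data to the group of such forms themselves, where the arrow records the boundary effect of a null-cobordism. Both sides can be computed directly in terms of the middle dimension form theory and the stable group $\pi_n \BSO$.

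The explicit answer then comes from Bott periodicity for $\pi_n \BSO$, which supplies the normal bundle refinement: $\pi_n \BSO \cong \Z, 0, 0, 0, \Z, 0, \Z/2, \Z/2$ for $n \equiv 0,\dots,7 \bmod 8$. When $n \equiv 3, 5, 6, 7 \bmod 8$, the form theory has no invariants surviving the relation, yielding $0$. When $n$ is even the quadratic refinement of the intersection form contributes an Arf/Kervaire-type $\Z/2$ (plus a second $\Z/2$ coming from the normal bundle in the $n \equiv 0 \bmod 8$ and $n=4$ cases, since then $\pi_n \BSO$ is nonzero mod framed cobordism). When $n \equiv 1 \bmod 8$, the refined quadratic linking form over $\Q/\Z$ and the Eells--Kuiper-type mod $8$ invariant $\omega$ combine to give $\Z/8$.

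The main obstacle will be the bookkeeping in the exceptional dimensions $n=4$ and $n=8$: there $\pi_{n-1}\SO(n+1)$ is larger than its stable quotient (because of Hopf invariant one elements $\nu$ and $\sigma$), so one has to separate contributions from unstable normal bundle data from those that survive stabilization, and then verify that the extra piece cancels against a Kervaire-style invariant to leave $\Z/2 \oplus \Z/2$ rather than something larger. Carrying this out carefully and matching the $n \equiv 4 \bmod 8$, $n > 4$ answer with the $n = 4$ answer is where I would expect to spend the most effort, and it is precisely what Wall's and Frank's corrections in \cite{Wall67, frank74wall} address.
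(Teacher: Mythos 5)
This theorem is not proved in the paper: it is quoted directly from Wall \cite{Wall67}, Frank \cite{frank74wall}, and Stolz \cite{stolz}. Your outline recognizably tracks the Wall--Frank route through handle decompositions, the middle-dimensional form, and normal bundle data in $\pi_{n-1}\mathrm{SO}(n+1)$; the paper itself leans on the third cited reference when it needs to compute, namely Stolz's reformulation via the spectrum $A[n]$ and the Adams spectral sequence (with $A_{2n+1}^{\langle n\rangle}\cong\pi_{2n+1}A[n]$ for $n\geq 9$). So your route is one of the standard ones, but it is not the one the paper actually computes with.

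Two concrete errors to flag. First, your Bott periodicity table is wrong: $\pi_n\mathrm{BSO}$ for $n\equiv 0,1,\dots,7\bmod 8$ is $\Z,\ \Z/2,\ \Z/2,\ 0,\ \Z,\ 0,\ 0,\ 0$, not $\Z,\ 0,\ 0,\ 0,\ \Z,\ 0,\ \Z/2,\ \Z/2$. The $\Z/2$ summands sit at $n\equiv 1,2$, not $n\equiv 6,7$. This matters because your own narrative for $n\equiv 6,7$ ("the form theory has no invariants, yielding $0$") contradicts your stated table, which assigns a nontrivial normal bundle group there; the correct table is consistent with the vanishing at $n\equiv 3,5,6,7$ and with the $\Z/2$ at $n\equiv 2$ and the $\Z/8$ at $n\equiv 1$. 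Second, the parity of the intersection form is reversed: for the $2n$-dimensional level set bounding the $n$-handlebody, the form is symmetric when $n$ is even and skew when $n$ is odd. Your sketch says the opposite, and getting this right is essential to producing the Arf/Kervaire $\Z/2$ in the even cases and the linking/quadratic refinement giving $\Z/8$ when $n\equiv 1\bmod 8$. With those two corrections, and the caveat that the real work (identifying $A_{2n+1}^{\langle n\rangle}$ with a concrete cokernel and computing it, especially the unstable subtleties at $n=4,8$ which Frank treats in \cite{frank74wall}) is only gestured at, your outline is a reasonable summary of the classical argument.
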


An immediate consequence is the theorem of Wall that $I(M) = 0$ whenever $n \equiv 3,5,6,7 \mod 8$ \cite{WallAction}.
Furthermore, we learn that the problem is entirely $2$-primary.

To compute $A_{2n+1} ^{\langle n \rangle}$, Stolz introduced the spectrum $A[n]$ \cite{stolz}, which lies in a diagram
\begin{center}
    \begin{tikzcd}
    & & A[n] \ar[d, "b"]&\\
      \Ss\ar[r] & \mo\langle n \rangle \ar[r] & \mo\langle n \rangle/\Ss \ar[r,"\delta"] & \Ss^1,
    \end{tikzcd}
\end{center}
where $\MOn$ is the Thom spectrum of the canonical map $\tau_{\geq n} \BO \to \BO$, and wrote down a map $A_{2n+1}^{\langle n \rangle} \to \pi_{2n+1} A[n]$ which is an isomorphism for $n \geq 9$.
Moreover, he identified the composite
\[A_{2n+1} ^{\langle n \rangle} \xrightarrow{\partial} \Theta_{2n} \to \coker(J)_{2n}\]
with the composite
\[A_{2n+1} ^{\langle n \rangle} \to \pi_{2n+1} A[n] \xrightarrow{b_*} \pi_{2n+1} \MOn / \Ss \xrightarrow{\delta_*} \pi_{2n} (\Ss) \to \coker(J)_{2n}.\]

Since the image of the map $\Theta_{2n} \to \coker(J)_{2n}$ consists of the elements of Kervaire invariant zero and $bP_{2n+1}=0$ so that $\Theta_{2n} \to \coker(J)_{2n}$ is an injection, this implies the following criterion for detecting that the image of $\partial$ is the trivial element of $\Theta_{2n}$.

\begin{prop} \label{kernelunit}
Let $n\geq 3$. Suppose the kernel of the $2$-completed unit map 
  \[\pi_{2n}(\Ss_2)\rightarrow\pi_{2n}(\mo\langle n\rangle_2)\]
is generated by the image of $J$ and an element of Kervaire invariant one.
  Then the inertia group $I(M)$ of any $(n-1)$-connected closed $2n$-manifold $M$ is trivial. 
\end{prop}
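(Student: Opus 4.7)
The plan is to track the inertia group through Stolz's identification of $\partial$ modulo the image of $J$, and then use the hypothesis to show that any resulting class in $\coker(J)_{2n}$ would have to have Kervaire invariant one, which cannot be hit by a homotopy sphere. First, by the preceding proposition, every $\Sigma \in I(M)$ is the boundary of some $(n-1)$-connected almost closed $(2n+1)$-manifold, so $I(M) \subseteq \im(\partial : A_{2n+1}^{\langle n \rangle} \to \Theta_{2n})$. Since \Cref{thm:An} guarantees that $A_{2n+1}^{\langle n \rangle}$ is a finite $2$-group, $I(M)$ is itself $2$-primary and we may freely pass to $2$-completions. Stolz's identification then factors the composite $A_{2n+1}^{\langle n \rangle} \xrightarrow{\partial} \Theta_{2n} \to \coker(J)_{2n}$ through $\delta_*: \pi_{2n+1}(\MO\langle n \rangle / \Ss) \to \pi_{2n} \Ss$, and the defining cofiber sequence $\Ss \to \MO\langle n \rangle \to \MO\langle n \rangle/\Ss$ forces the image of $\delta_*$ to lie in the kernel of the unit map $\pi_{2n} \Ss \to \pi_{2n} \MO\langle n \rangle$.

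Next, I would apply the hypothesis. The kernel is generated by the image of $J$ together with a class of Kervaire invariant one; the image of $J$ dies in $\coker(J)_{2n}$, so the image of $I(M)$ in $\coker(J)_{2n}$ is contained in the subgroup generated by the Kervaire invariant one class. However, the image of $\Theta_{2n} \to \coker(J)_{2n}$ is contained in the Kervaire-invariant-zero subgroup: any homotopy $2n$-sphere has vanishing middle-dimensional $\F_2$-cohomology, so its Arf invariant with respect to any stable framing is zero. Consequently, the image of $I(M)$ in $\coker(J)_{2n}$ is trivial.

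Finally, since $2n+1$ is odd, Kervaire--Milnor gives $\bP_{2n+1} = 0$, and this is exactly the kernel of $\Theta_{2n} \to \coker(J)_{2n}$. Thus $\Theta_{2n} \hookrightarrow \coker(J)_{2n}$ is injective, forcing $I(M) = 0$. The argument is essentially formal once Stolz's identification and the boundary-realization result (the preceding proposition) are granted, so there is no serious technical obstacle internal to this proof; the real content supporting the proposition lies in verifying the kernel hypothesis, which is the substance of \Cref{thm:ker-main} and occupies the bulk of the paper.
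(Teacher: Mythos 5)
Correct, and essentially the same argument the paper gives (the paper compresses it to the sentence immediately preceding the proposition): you track $I(M)$ into $\im(\partial)$, use Stolz's factorization through $\delta_*$ and the cofiber sequence to land in the kernel of the unit map, observe that the image of $\Theta_{2n}$ in $\coker(J)_{2n}$ is Kervaire-invariant-zero, and conclude with $\Theta_{2n}\hookrightarrow\coker(J)_{2n}$ (i.e., $\bP_{2n+1}=0$). Your spelled-out justification for the last two points matches what the paper leaves implicit.
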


In particular, the cases $n \neq 4,8,9$ of \Cref{determination} follow from \Cref{thm:ker-main}.
We will therefore focus on the proof of \Cref{thm:ker-main} until we take up the cases $n=4,8,9$ for \Cref{determination} in \Cref{sec:det}.

We begin by noting that the work of Stolz allows us to immediately deduce several cases of \Cref{determination}.

\begin{thm}[\cite{stolz}] \label{thm:reduction}
  The kernel of the unit map
  \[\pi_{2n} \Ss \to \pi_{2n} \MOn\]
  is generated by the image of $J$ when $n \equiv 2,3,5,6,7 \mod 8$ and $n > 10$.
\end{thm}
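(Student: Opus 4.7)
The plan is to split the analysis by the residue of $n$ modulo $8$ and in each case reduce to work of Stolz.

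When $n \equiv 3, 5, 6, 7 \pmod 8$, \Cref{thm:An} gives $A_{2n+1}^{\langle n\rangle} = 0$, and since $n > 10 \geq 9$, Stolz's isomorphism $A_{2n+1}^{\langle n\rangle} \xrightarrow{\cong} \pi_{2n+1} A[n]$ applies and yields $\pi_{2n+1} A[n] = 0$. Stolz's construction of $A[n]$ in \cite{stolz} fits into a diagram where any class of $\pi_{2n+1}(\MOn/\Ss)$ outside the image of $b_\ast$ is sent by $\delta_\ast$ into the image of $J$ in $\pi_{2n}\Ss$. Tracing this through yields
\[ \ker\bigl(\pi_{2n}\Ss \to \pi_{2n}\MOn\bigr) \;=\; \mathrm{im}(\delta_\ast) \;\subseteq\; \delta_\ast b_\ast\bigl(\pi_{2n+1} A[n]\bigr) + \mathrm{im}\, J \;=\; \mathrm{im}\, J. \]
The reverse inclusion is standard: any element in the image of $J$ bounds a framed, hence $(n-1)$-connected, $(2n+1)$-manifold, and so lies in $\mathrm{im}(\delta_\ast)$.

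When $n \equiv 2 \pmod 8$ with $n > 10$, one has $A_{2n+1}^{\langle n\rangle} \cong \Z/2\Z$, so the argument above is insufficient. Here I would invoke directly the $\F_2$-Adams vanishing-line analysis carried out by Stolz \cite{stolz} (relying on Mahowald and reviewed in \cite[\S 15]{bhs}), which shows that the kernel of the unit map equals the image of $J$ in this range. This case is cited essentially as a black box.

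The principal obstacle is the first case: justifying the inclusion of $\mathrm{im}(\delta_\ast)$ into $\delta_\ast b_\ast(\pi_{2n+1} A[n]) + \mathrm{im}\, J$, i.e.\ that the cofiber of $b$ has $\pi_{2n+1}$ mapping into the image of $J$ via $\delta_\ast$. This is built into Stolz's construction of $A[n]$ as the spectrum detecting $\partial$ modulo $J$, and is implicit in the identification of composites recalled just above \Cref{kernelunit}; the verification amounts to unpacking that construction rather than proving anything substantively new.
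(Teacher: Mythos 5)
Your proposal is correct and follows essentially the same route as the paper: the paper cites Stolz's Satz 3.1(i) for precisely the inclusion $\ker(\pi_{2n}\Ss \to \pi_{2n}\MOn) \subseteq \delta_\ast b_\ast(\pi_{2n+1}A[n]) + \mathrm{im}\,J$ that you identify as the crux, combines it with \Cref{thm:An} and Stolz's isomorphism $A_{2n+1}^{\langle n\rangle} \cong \pi_{2n+1}A[n]$ (valid for $n \geq 9$) for the $n \equiv 3,5,6,7 \pmod 8$ cases, and cites the proof of Stolz's Theorem B(i) for $n \equiv 2 \pmod 8$, exactly as you do.
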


\begin{proof}
  It follows from \cite[Satz 3.1(i)]{stolz} that the image of
  \[\pi_{2n+1} A[n] \xrightarrow{b_*} \pi_{2n+1} \MOn / \Ss \xrightarrow{\delta_*} \pi_{2n} \Ss\]
  and the image of $J$ generate the kernel of the unit map
  \[\pi_{2n} \Ss \to \pi_{2n} \MOn.\]
  The $n \equiv 3,5,6,7 \mod 8$ cases of the theorem therefore follow from \Cref{thm:An} and the isomorphism $A^{\langle n \rangle} _{2n+1} \cong \pi_{2n+1} A[n]$ for $n \geq 9$ from \cite[Theorem A]{stolz}.

  The $n \equiv 2 \mod 8$ case of the theorem follows from (the proof of) \cite[Theorem B(i)]{stolz}. More precisely, see \cite[Beweis von Satz 12.2]{stolz} on page 103 of \cite{stolz}.
\end{proof}

Therefore, the remaining cases of \Cref{thm:ker-main} are when $n\equiv 0,1,4 \mod 8$ and $n = 2,3,5,6,7,10$. The latter will be dealt with in \Cref{sec:exceptional}, so we will focus on the case $n \equiv 0,1,4 \mod 8$ in the next few sections.

Finally, we reduce \Cref{thm:ker-main} to a $2$-primary question.

\begin{prop} \label{prop:2-only}
  Let $n \geq 1$. The kernel of the unit map
  \[\pi_{2n} \Ss \to \pi_{2n} \MOn\]
  is generated by the image of $J$ and $2$-power torsion elements.
\end{prop}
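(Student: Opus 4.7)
The strategy is to leverage the structural identification of the kernel of the unit map recalled in the discussion immediately preceding \Cref{thm:reduction}. By \cite[Satz 3.1(i)]{stolz}, the kernel of the unit map $\pi_{2n}\Ss \to \pi_{2n}\MOn$ is generated by the image of $J$ together with the image of the composite
$$\pi_{2n+1} A[n] \xrightarrow{b_*} \pi_{2n+1}(\MOn/\Ss) \xrightarrow{\delta_*} \pi_{2n}\Ss.$$
Since the $p$-primary image of $J$ at an odd prime $p$ lives entirely in odd degrees (concretely, in degrees of the form $2k(p-1)-1$), it contributes nothing to the even degree $2n$ at odd primes. Hence the proposition reduces to the claim that the image of the composite above is $2$-power torsion, which is equivalent to showing that no nonzero odd-primary class in $\pi_{2n}\Ss$ lies in this image.

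In the range $n \geq 9$, this follows at once from \cite[Theorem A]{stolz}, which provides an isomorphism $\pi_{2n+1} A[n] \cong A_{2n+1}^{\langle n\rangle}$, combined with \Cref{thm:An}, which computes $A_{2n+1}^{\langle n\rangle}$ to be a $2$-power torsion group of order dividing $16$ for all $n \geq 3$. Consequently the image of the composite in $\pi_{2n}\Ss$ is itself $2$-power torsion, yielding the claim uniformly in this range.

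For the finitely many remaining small values of $n$, I would verify the proposition by direct inspection. The odd-primary part of $\pi_{2n}\Ss$ is very sparse in low dimensions---at $p=3$ the first even-degree nonzero class is $\beta_1 \in \pi_{10}\Ss_{(3)}$, corresponding to $n=5$, and at larger odd primes the first contributions appear only in higher degrees. For each of the finitely many relevant $n$ one can check that no odd-primary class lies in the kernel of the unit map, typically by exhibiting a detection map $\MOn \to E$ for $E \in \{\MSpin, \MString, \tmf\}$ in which the class is known to survive. The main obstacle is thus the finite case-check for those $n < 9$ where $\pi_{2n+1} A[n]$ may disagree with $A_{2n+1}^{\langle n\rangle}$; however, this reduces to a short tabulation given how few odd-primary classes appear in the relevant range of $\pi_*\Ss$.
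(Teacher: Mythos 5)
Your proof opens with the same reduction the paper uses, via Stolz's Satz~3.1(i): it suffices to show that the image of $\pi_{2n+1}A[n] \to \pi_{2n}\Ss$ is a $2$-group. Your argument for $n \geq 9$, via Stolz's Theorem~A (which gives $\pi_{2n+1}A[n] \cong A_{2n+1}^{\langle n\rangle}$) together with \Cref{thm:An}, is correct. But Theorem~A only applies in the range $n \geq 9$, and you acknowledge this by deferring $3 \leq n < 9$ to ``direct inspection,'' which you never carry out. That is where the gap is: you assert that the check ``reduces to a short tabulation'' without identifying the classes in question or verifying that the proposed detection maps actually do the job. Concretely, the only odd-primary class in $\pi_{2n}\Ss$ with $3 \leq n \leq 8$ is $\beta_1 \in \pi_{10}\Ss_{(3)}$ at $n = 5$; ruling it out of the kernel requires, for instance, that the Ando--Hopkins--Rezk orientation $\MString \to \tmf$ detects $\beta_1$ in $\pi_{10}\tmf \cong \Z/6$, a genuine computation which your proof does not supply.

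The paper sidesteps this case-splitting entirely. After disposing of $n = 1, 2$ by hand (these are trivial, as $\pi_2\Ss$ and $\pi_4\Ss$ are already $2$-torsion), it invokes Stolz's Satz~1.7 together with the observation that the cokernel of $\Theta_{2n} \to \coker(J)_{2n}$ is at most $\Z/2$. That combination controls the image of $\pi_{2n+1}A[n]$ in $\coker(J)_{2n}$ uniformly for all $n \geq 3$, so no appeal to the $n \geq 9$ isomorphism of Theorem~A is needed and no residual cases remain. Your route is salvageable, but as written it is a proof outline for $3 \leq n \leq 8$ rather than a proof.
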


\begin{proof}
  The groups $\pi_2 \Ss \cong \Z/2\Z$ and $\pi_4 \Ss \cong 0$ are $2$-torsion, so we may assume $n \geq 3$.
  By \cite[Satz 3.1(i)]{stolz}, it suffices to prove that the image of $\pi_{2n+1} A[n] \to \pi_{2n} \Ss$ is $2$-power torsion.
  Since the cokernel of $\Theta_{2n} \to \coker(J)_{2n}$ is at most $2$-torsion, this follows from \cite[Satz 1.7]{stolz} and \Cref{thm:An}.
\end{proof}

\subsection{$\MOn$ as a bar construction} \label{sec:MOnBar}
To study the kernel of the unit map, we recall from \cite{bhs} how $\mo\langle n\rangle$ may be expressed as the geometric realization of a two-sided bar construction and draw consequences for the kernel of the unit map.

Looping the defining map $\tau_{\geq n}\mathrm{BO}\rightarrow \mathrm{BO}$ of $\mo\langle n\rangle$ once yields a map $$\mathrm{O}\langle n-1\rangle:=\Omega^{\infty}\tau_{\geq_{n-1}}\Sigma^{-1}\mathrm{bo}\rightarrow \mathrm{O}.$$ Composing with the classical $J$-homomorphism $O\rightarrow\Omega^{\infty}\Ss$ and applying the $(\Sigma^{\infty},\Omega^{\infty})$ adjunction, we obtain a map  of non-unital $\mathbb{E}_{\infty}$-ring-spectra $$J:\R\rightarrow\Ss,$$ and hence a map of unital $\mathbb{E}_\infty$-ring spectra $J_+:\Sigma^\infty _+ \mathrm{O}\langle n-1 \rangle\rightarrow\Ss$. By \cite[Definition 4.1]{ABGHR}, we can express $\mo\langle n\rangle$ as the relative tensor product $$\mo\langle n\rangle\simeq\Ss \tens_{\Sigma^\infty _+ \mathrm{O}\langle n-1 \rangle} \Ss\simeq|\mathrm{Bar}_\bullet(\Ss, \Sigma^\infty _+ \mathrm{O} \langle n-1 \rangle, \Ss)|,$$ where the action is given by the augmentation on the left and $J_+$ on the right.
There is a bar spectral sequence computing this relative tensor product, which is obtained by the skeletal filtration of the geometric realization of the two-sided simplicial bar construction. This bar spectral sequence takes the form
\[\mathrm{E}^1 _{t,s} = \pi_{t} (\R^{\otimes s}) \Rightarrow \pi_{t+s} \MOn. \]
For connectivity reasons, to understand the homotopy group in degree $2n\leq 3n-2$, it is equivalent to computing the bar spectral sequence for $|\mathrm{Bar}_\bullet(\Ss, \Sigma^\infty _+ \mathrm{O}\langle n-1 \rangle, \Ss)|_{\leq 2}$.

\begin{rmk}
  The image of the map $\pi_{k} (J) : \pi_k (\R) \to \pi_k (\Ss)$ is not in general equal to the image of the classical $J$-homomorphism. Whenever we refer to the ``image of $J$'' in this work, we mean the image of the classical $J$-homomorphism and not the image of $\pi_{k} (J) : \pi_k (\R) \to \pi_k (\Ss)$.
\end{rmk}

\begin{proposition}[{{\cite[Theorem 5.2]{bhs}}}]
After applying $\tau_{\leq 3n-2}$, there is an equivalence of $\mathbb{E}_0$-algebras between $\mo\langle n\rangle$ and the total cofiber of the three-term complex
\begin{center}
    \begin{tikzcd}
    \R^{\tens 2}\ar[r, "m-1\tens J"]\ar[rr, bend left=40,"0"{name=C}]&\R\ar[r,"J"]\ar[Leftrightarrow, from=C, "\mathrm{can}"]&\Ss
    \end{tikzcd},
\end{center}
where $m$ is the multiplication and $\mathrm{can}$ the null homotopy coming from the canonical filling of the square
\begin{center}
    \begin{tikzcd}
    \R^{\tens 2}\ar[d,"m"]\ar[r,"1\tens J"]&\R\ar[d,"J"]\\
    \R\ar[r,"J"]\ar[Leftrightarrow, ur,"\mathrm{can}"]&\Ss
    \end{tikzcd}
\end{center}
witnessing $J$ as a map of non-unital $\mathbb{E}_{\infty}$-algebras.
\end{proposition}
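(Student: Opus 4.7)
The plan is to deduce this from the identification $\MOn \simeq |\sBar_\bullet(\Ss, \Sigma^\infty_+ \mathrm{O}\langle n-1\rangle, \Ss)|$ by truncating the simplicial bar construction to its $2$-skeleton. Since $\mathrm{O}\langle n-1\rangle$ is $(n-2)$-connected, the spectrum $\R = \Sigma^\infty \mathrm{O}\langle n-1\rangle$ is $(n-1)$-connective, and so the $k$-fold smash product $\R^{\otimes k}$ is $(kn-1)$-connective. The skeletal filtration of the realization has associated graded $\Sigma^k N_k$ in simplicial degree $k$, where $N_k$ is the normalized $k$-chains, built from $\R^{\otimes k}$, and hence is $(kn+k-1)$-connective. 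For $k\geq 3$ this exceeds $3n-2$, so the natural map $|\sBar_\bullet|_{\leq 2} \to |\sBar_\bullet| \simeq \MOn$ becomes an equivalence after applying $\tau_{\leq 3n-2}$.

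Next, via the splitting $\Sigma^\infty_+ \mathrm{O}\langle n-1\rangle \simeq \Ss \oplus \R$, the normalized chains in simplicial degrees $0, 1, 2$ are $\Ss$, $\R$, and $\R \otimes \R$, so the $2$-skeleton realizes the total cofiber of a three-term complex with these entries. To identify the differentials, one computes the alternating sums of face maps. The face maps of $\sBar_1$ are the augmentation $\epsilon$ (left action on $\Ss$) and $J_+$ (right action on $\Ss$); restricted to the normalized summand $\R$, $\epsilon$ vanishes while $J_+$ becomes $J$, yielding the differential $J: \R \to \Ss$. The face maps of $\sBar_2$ are $\epsilon \otimes 1$, the multiplication $m$, and $1 \otimes J_+$; on $\R \otimes \R$ the first vanishes, the second is the non-unital multiplication $m: \R \otimes \R \to \R$, and the third becomes $1 \otimes J$, yielding the differential $m - 1 \otimes J$, as claimed.

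For the canonical nullhomotopy, the composite $J \circ (m - 1 \otimes J)$ is canonically null because $J$ is a map of non-unital $\mathbb{E}_\infty$-algebras. Specifically, after identifying $m_\Ss \simeq \mathrm{id}$, both $J \circ m$ and $J \circ (1 \otimes J)$ agree with $J \otimes J : \R \otimes \R \to \Ss$, and the $\mathbb{E}_\infty$-coherence on $J$ supplies the $2$-cell filling the displayed square. This is precisely the canonical nullhomotopy appearing in the statement. The $\mathbb{E}_0$-algebra structure on both sides is inherited from the augmentation of the bar construction (equivalently, the inclusion of $\Ss$ as the rightmost term of the complex), so the equivalence is one of $\mathbb{E}_0$-algebras.

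The main obstacle is coherence bookkeeping: one must identify the normalized chain complex with the displayed three-term diagram compatibly with the $\mathbb{E}_\infty$-structure on $\Sigma^\infty_+ \mathrm{O}\langle n-1\rangle$, and extract the canonical nullhomotopy from the $\mathbb{E}_\infty$-structure on $J$ in a way that is naturally packaged as an $\mathbb{E}_0$-algebra equivalence. Once the skeletal filtration of the bar construction and the splitting $\Sigma^\infty_+ X \simeq \Ss \oplus \Sigma^\infty X$ are in hand, the remaining verifications are essentially formal.
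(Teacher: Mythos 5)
Your argument is correct in outline and is the same skeletal-filtration argument that underlies the cited \cite[Theorem 5.2]{bhs} (this paper does not re-prove the proposition, it cites it). One small slip: since $\R = \Sigma^\infty \mathrm{O}\langle n-1\rangle$ is $(n-1)$-connective, the $k$-fold smash power $\R^{\otimes k}$ is $k(n-1) = (kn-k)$-connective, not $(kn-1)$-connective as you wrote (and hence $\Sigma^k N_k$ is $kn$-connective, not $(kn+k-1)$-connective). The conclusion is unaffected --- for $k \geq 3$ one still has $kn \geq 3n > 3n-2$, so the cofiber of $\Skel_2 \to |\sBar_\bullet|$ is $3n$-connective and $\tau_{\leq 3n-2}$ of the inclusion is an equivalence --- but the correct bound shows that the cutoff $3n-2$ in the statement is in fact sharp, whereas your bound would misleadingly suggest room to spare. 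The identification of the normalized chains, the face maps, and the canonical nullhomotopy from the non-unital $\mathbb{E}_\infty$-structure on $J$ are all as they should be.
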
 
 Hence to compute the kernel of the unit map $\pi_{2n}(\Ss)\rightarrow\pi_{2n}(\mo\langle n \rangle)$, it suffices to consider two types of elements:

(1) Elements in the image of the map $\pi_{2n}(J)$, or the $d_1$-differential of the bar spectral sequence;

(2) Elements in the image of the  Toda bracket
\begin{center}
    \begin{tikzcd}
    \Ss^{2n-1}\ar[rr, bend left=40,"0"{name=D}]\ar[r]&\R^{\tens 2}\ar[r, "m-1\tens J"]\ar[Leftrightarrow, from=D,"\mathrm{arbitrary}"]\ar[rr, bend right=40,"0"{name=C}]&\R\ar[r,"J"]\ar[Leftrightarrow, from=C, "\mathrm{can}"]&\Ss
    \end{tikzcd},
\end{center}
i.e., the $d_2$-differential in the bar spectral sequence.

Since elements in the image of the classical $J$-homomorphism have high $\F_2$-Adams filtrations, our plan is to first show that both types of elements have high $\F_2$-Adams filtrations, then use this to show that all elements of sufficiently high $\F_2$-Adams filtration lie in the image of $J$, following the strategy employed in \cite{stolz,bhs,inertia,geography}.

\section{The $\mathrm{E}^1$-page of the bar spectral sequence} \label{sec:D2}
\todo{Ideally, remake all spectral sequence pictues using Hood's package}
\todo{Explain how to compute homotopy of $A[n]$ using this spectral sequence}
To analyze the kernel of the unit map
\[\pi_{2n} \Ss \to \pi_{2n} \MOn\]
using the bar spectral sequence of \Cref{sec:MOnBar}, the first step is to compute the relevant part of the $\mathrm{E}^1$-page, which consists of the groups $\pi_{2n-1} \R$, $\pi_{2n} \R$ and $\pi_{2n-1} \R^{\otimes 2}$.
By \Cref{thm:reduction}, we only need to consider the cases $n \equiv 0, 1, 4 \mod 8$.
In this section, we will compute these groups using the Adams spectral sequence.

\begin{cnv}\label{cnv:2-compl}
  In the remainder of the paper, we will implicitly work in the $2$-complete category. This is justified by \Cref{prop:2-only}.
\end{cnv}

Below we include a picture of the part of the bar spectral sequence that is relevant to the computation of the kernel of the unit map in degree $2n$.
In particular, this is the part of the spectral sequence which interacts with the differentials entering $\pi_{2n} \Ss$.
We display the spectral sequence in homological Adams grading. A $d_r$-differential has bidegree $(-r,-1)$.

\begin{table}[h!]
    \centering
        \begin{tabular}{c|cc}
        $s=2$ && $\pi_{2n-1} (\R^{\otimes 2})$\\ 
        $s=1$ &$\pi_{2n-1} (\R)$&$\pi_{2n} (\R)$\\
        $s=0$ &$\pi_{2n}(\Ss)$&\\\hline
        $t+s$  &$2n$&$2n+1$\\
    \end{tabular}
    \label{barsseq}
\end{table}

\begin{thm}\label{thm:R-homotopy}
  The homotopy groups $\pi_{2n-1} (\R)$, $\pi_{2n} (\R)$, and $\pi_{2n-1} (\R^{\otimes 2})$ are given by:
  \begin{center}
  \begin{tabular}{c|c|c|c}
    $n \mod 8$ &  $0$ & $1$ & $4$ \\\hline
    $\pi_{2n-1} (\R)$ & $\pi_{2n} \mathrm{bo}$ & $\pi_{2n} \mathrm{bo} \oplus \Z/2\Z$ & $\pi_{2n} \mathrm{bo}$ \\\hline
    $\pi_{2n} (\R)$ & $\pi_{2n+1} \mathrm{bo} \oplus \Z/2\Z$ & $\pi_{2n+1} \mathrm{bo} \oplus \Z/2\Z$ & $\pi_{2n+1} \mathrm{bo} \oplus \Z/2\Z \oplus \Z/2\Z$ \\\hline
    $\pi_{2n-1} (\R^{\otimes 2})$ & $\Z/2\Z$ & $\Z/4\Z$ & $0$
  \end{tabular}
  \end{center}
\end{thm}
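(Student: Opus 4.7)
Write $E := \tau_{\geq n-1}\Sigma^{-1}\mathrm{bo}$, which is $(n-2)$-connected with $\pi_k E = \pi_{k+1}\mathrm{bo}$ for $k \geq n-1$, so that $\R = \Sigma^\infty \Omega^\infty E$. The plan is to compute each of the three groups by combining a truncation of the Goodwillie tower of $\Sigma^\infty\Omega^\infty$ at $E$ with an $\F_2$-Adams spectral sequence; we work $2$-completely by Convention \ref{cnv:2-compl}.

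First I would truncate the Goodwillie tower of $\Sigma^\infty\Omega^\infty$ at $E$: its $k$th layer $(E^{\wedge k})_{h\Sigma_k}$ is $(k(n-1)-1)$-connected, so for $k \geq 3$ and $n \geq 4$ it is at least $(3n-4)$-connected and invisible to $\pi_{2n-1}$ and $\pi_{2n}$. In these degrees $\R$ therefore sits in a cofiber sequence
\[
D_2 E := (E \wedge E)_{h\Sigma_2} \longrightarrow \R \longrightarrow E.
\]
Reading $\pi_k E = \pi_{k+1}\mathrm{bo}$ off the Postnikov structure of $\mathrm{bo}$ immediately contributes the $\pi_{2n}\mathrm{bo}$ summand in $\pi_{2n-1}\R$ and the $\pi_{2n+1}\mathrm{bo}$ summand in $\pi_{2n}\R$ for every residue class of $n$ mod $8$. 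The extra $\mathbb{Z}/2$ summands then all come from $D_2 E$.

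In the range $*\leq 3n-4$, the extended power $D_2 E$ coincides with $D_2$ of the bottom cell of $E$, which is $\Sigma^{n-1}\H\Z$ for $n \equiv 0,4 \mod 8$ and $\Sigma^{n-1}\H\F_2$ for $n \equiv 1 \mod 8$. I would then compute $\pi_1$ and $\pi_2$ of the resulting Eilenberg--MacLane quadratic constructions using the homotopy orbit spectral sequence for the swap action on $\H\Z^{\wedge 2}$ (respectively $\H\F_2^{\wedge 2}$), which extracts the extra $\mathbb{Z}/2$'s in the statement. The appearance of a \emph{second} extra $\mathbb{Z}/2$ in $\pi_{2n}\R$ for $n \equiv 4 \mod 8$ (absent for $n\equiv 0,1$) reflects that the next cell of $E$ above the bottom $\mathbb{Z}$ lies in degree $n+3$, too far away to cancel a class of $\pi_2 D_2(\Sigma^{n-1}\H\Z)$ in the relevant range; for $n\equiv 0$, by contrast, the nearby $\mathbb{Z}/2$-cell of $E$ at degree $n$ absorbs one of these classes.

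For $\pi_{2n-1}\R^{\otimes 2}$ I would apply K\"unneth to the Adams $E_2$-page, writing $H^*(\R^{\otimes 2};\F_2) = H^*(\R;\F_2)^{\otimes 2}$ whose bottom piece lies in degree $2(n-1)$; the relevant low-degree structure of $\R$ is again controlled by the bottom cells of $E$ identified above. A direct case analysis then gives $0$ for $n \equiv 4 \mod 8$ (the only nearby cell is a $\mathbb{Z}$ in degree $n-1$ and $\pi_1(\H\Z^{\wedge 2})=0$), $\mathbb{Z}/2$ for $n \equiv 0 \mod 8$ (from the adjacent $\mathbb{Z}/2$-cell of $E$ at degree $n$), and $\mathbb{Z}/4$ for $n \equiv 1 \mod 8$ (from the two adjacent $\mathbb{Z}/2$-cells of $E$ at degrees $n-1,n$, joined by a hidden multiplicative extension that can be resolved by comparison with the self-smash of the mod-$2$ Moore spectrum). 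The main obstacle will be the low-degree analysis of $D_2 E$ and the extension resolution for $\R^{\otimes 2}$ in the $n \equiv 1 \mod 8$ case.
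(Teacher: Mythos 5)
Your overall scaffolding is essentially the paper's: you split $\R$ as $D_2 E \to \R \to E$ via a quadratic Goodwillie approximation valid through degree $3n-4$ (the paper uses the tower of the nonunital forgetful functor rather than that of $\Sigma^\infty\Omega^\infty$, but the layers $D_k(\littleo)$ are the same), reduce $\R^{\otimes 2}$ to $\littleo^{\otimes 2}$ by connectivity, and feed the homology into the $\F_2$-Adams spectral sequence. The problem is the claim that in degrees $\leq 3n-4$ the extended power $D_2 E$ coincides with $D_2$ of the bottom Postnikov section of $E$ (i.e.\ $\Sigma^{n-1}\H\Z$ for $n\equiv 0,4$, $\Sigma^{n-1}\H\F_2$ for $n\equiv 1 \bmod 8$). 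This fails for $n\equiv 0$ and $n\equiv 1 \bmod 8$, precisely in the degrees you need. For $n\equiv 0\bmod 8$, the mod-$2$ homology of $E=\tau_{\geq n-1}\Sigma^{-1}\mathrm{bo}$ has only the single class $y_{n-1}$ in the range $[n-1,n+2]$ (this is the content of the cited computations of Stong and the tmf book), whereas $\H_*(\Sigma^{n-1}\H\Z;\F_2)$ also has generators in degrees $n+1$ and $n+2$ coming from $\zeta_1^2$, $\zeta_2$. Consequently $\H_{2n}(D_2(\Sigma^{n-1}\H\Z);\F_2)$ contains the product class $\iota\cdot(\zeta_1^2\iota)$, which has no preimage in $\H_{2n}(D_2 E;\F_2)$; accordingly $\pi_{2n}D_2(\Sigma^{n-1}\H\Z)\cong(\Z/2)^{\oplus 2}$ but $\pi_{2n}D_2 E\cong\Z/2$. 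A parallel discrepancy occurs in degree $n+1$ for $n\equiv 1\bmod 8$. Only for $n\equiv 4\bmod 8$, where $E$ has no Postnikov layers between degrees $n-1$ and $n+3$ and its mod-$2$ homology matches that of $\Sigma^{n-1}\H\Z$ through degree $n+2$, is your identification of $D_2 E$ with $D_2$ of the bottom Postnikov piece correct in the range.

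Your heuristic that ``the nearby $\Z/2$-cell of $E$ at degree $n$ absorbs one of these classes'' for $n\equiv 0$ gestures at the right phenomenon but is not what happens at the level of your proposed computation: $\pi_n E\cong\Z/2$ contributes nothing to $\H_n(E;\F_2)$ (which vanishes for $n\equiv 0$), and the actual mechanism is that the $k$-invariant $\Sigma^{n-1}\H\Z\to\Sigma^{n+1}\H\F_2$ removes $\zeta_1^2\iota$ (and $\zeta_2\iota$) from $\H_*(E;\F_2)$, which in turn removes the class $\iota\cdot(\zeta_1^2\iota)$ from $\H_*(D_2 E;\F_2)$. The paper avoids this issue by computing $\H_*(\littleo;\F_2)$ in the range first and then applying May's description of $\H_*(D_2(-);\F_2)$ via Dyer--Lashof operations, products, and the Nishida relations, running the Adams spectral sequence for $D_2(\littleo)$ directly; if you want to go through a simpler model for $E$ you have to account for this $k$-invariant, not replace $E$ by $\tau_{\leq n-1}E$.

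Two smaller points on $\pi_{2n-1}\R^{\otimes 2}$. For $n\equiv 0$ the $\Z/2$ does not come from a class in $\H_n(\littleo;\F_2)$ (there is none); it comes from $h_1\cdot(y_{n-1}\otimes y_{n-1})$ on the Adams $E_2$-page. For $n\equiv 1$ the extension to $\Z/4$ is an $h_0$-extension between $y_{n-1}\otimes y_n + y_n\otimes y_{n-1}$ in filtration $0$ and $h_1\cdot(y_{n-1}\otimes y_{n-1})$ in filtration $1$ that is already present on the $E_2$-page, so no hidden-extension or Moore-spectrum comparison is needed.
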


Plugging \Cref{thm:R-homotopy} into the bar spectral sequence, we obtain the following diagram, where the groups in parentheses come from the summands of the form $\pi_* \mathrm{bo}$:

\begin{table}[h!]
    \centering
        \begin{tabular}{c|cc|cc|cc}
        $n \mod 8$ &$n \equiv 0$&&$n \equiv 1$ &&$n \equiv 4$  \\\hline
        $s=2$ &&$\Z/2\Z$&&$\Z/4\Z$&&0\\ 
        $s=1$ &$(\Z)$&$\Z/2\Z\oplus(\Z/2\Z)$&$\Z/2\Z\oplus(\Z/2\Z)$&$\Z/2\Z$&$(\Z)$ & $\Z/2\Z\oplus\Z/2\Z\oplus(\Z/2\Z)$\\
        $s=0$ &$\pi_{2n}(\Ss)$&&$\pi_{2n}(\Ss)$&&$\pi_{2n}(\Ss)$\\\hline
        $t+s$  &$2n$&$2n+1$&$2n$&$2n+1$&$2n$&$2n+1$\\
    \end{tabular}
\end{table}

\subsection{Computation of $\pi_* (\R)$}
In order to compute the homotopy groups of $\R$, we will follow \cite[Section 4]{bhs} and use the Goodwillie tower of the forgetful functor $U : \mathrm{Alg}_{\mathbb{E}^{\mathrm{nu}}_\infty}(\mathrm{Sp})\rightarrow\mathrm{Sp}$. Given a non-unital $\Einf$-ring $A$, Kuhn has identified the layers of the Goodwillie tower \cite[Theorem 3.10]{kuhn} as
\begin{center}
    \begin{tikzcd}[column sep=0.5cm]
    & &D_n(\mathrm{TAQ}(\Ss\oplus A))\ar[d]&&D_2(\mathrm{TAQ}(\Ss\oplus A))\ar[d]&\mathrm{TAQ}(\Ss\oplus A)\ar[d]&\\
    U(A)\ar[r]&\cdots\ar[r]& P_n(U)(A)\ar[r]&\cdots\ar[r]& P_2(U)(A)\ar[r]&P_1(U)(A)\ar[r]&0,
    \end{tikzcd}
\end{center}
where $\Ss \oplus A$ is the augmented $\Einf$-ring associated to $A$.
Specializing to $A=\R$, so that $A=\Sigma^\infty \Omega^\infty\littleo$ where $\littleo=\tau_{\geq n-1}\Sigma^{-1}\mathrm{bo}$, we have $\mathrm{TAQ}(\Ss \oplus \R)\simeq \littleo$ \cite[Example 3.9]{kuhn}. 

\begin{proposition} [{{\cite[Lemma 4.10]{inertia}}}] \label{prop:D2-split}
The cofiber sequence $$D_2(\littleo)\rightarrow P_2(U)(\R) \rightarrow \littleo$$ induces in degree $*\leq 3n-4$ split short exact sequences
$$\pi_*D_2(\littleo)\rightarrow \pi_*\R \rightarrow \pi_*\littleo,$$ 
where the splitting is given by the composition
  \[\pi_*\littleo \cong \pi_* \mathrm{O} \langle n-1 \rangle \xrightarrow{\Sigma^\infty} \pi_* \R.\]
\end{proposition}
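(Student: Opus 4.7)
The plan is to proceed in three steps: a connectivity argument to identify $\pi_\ast P_2(U)(\R)$ with $\pi_\ast \R$ in the stated range, the long exact sequence of the cofiber sequence, and construction of a splitting via the observation that the linearization map $\R \to \littleo$ agrees with the counit of the $(\Sigma^\infty, \Omega^\infty)$-adjunction.

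For the first step, $\littleo = \tau_{\geq n-1} \Sigma^{-1} \mathrm{bo}$ is $(n-2)$-connected, so $\littleo^{\otimes k}$ is $(k(n-1)-1)$-connected; taking $\Sigma_k$-homotopy orbits preserves connectivity, so the Goodwillie layers $D_k(\littleo) = \littleo^{\otimes k}_{h\Sigma_k}$ have vanishing homotopy in degrees $\leq k(n-1)-1$. For all $k \geq 3$ this bound is at least $3n-4$, so $P_2(U)(\R) \to \R$ is a $\pi_\ast$-isomorphism in degrees $\leq 3n-4$ (using convergence of the Goodwillie tower on the connective non-unital $\Einf$-ring $\R$).

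For the second step, the cofiber sequence $D_2(\littleo) \to P_2(U)(\R) \to \littleo$ yields a long exact sequence on homotopy groups. After identifying $\pi_\ast P_2(U)(\R)$ with $\pi_\ast \R$ in degrees $\leq 3n-4$, it suffices to produce a right inverse to the map $\pi_\ast \R \to \pi_\ast \littleo$, which simultaneously establishes short exactness and furnishes the splitting.

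For the third step, note that $\R = \Sigma^\infty \mathrm{O}\langle n-1 \rangle = \Sigma^\infty \Omega^\infty \littleo$ is the suspension spectrum of an infinite loop space; Kuhn's identification $P_1(U)(\R) = \mathrm{TAQ}(\Ss \oplus \R) \simeq \littleo$ realizes the linearization map $\R \to \littleo$ as the counit $\varepsilon \colon \Sigma^\infty \Omega^\infty \littleo \to \littleo$ of the $(\Sigma^\infty, \Omega^\infty)$-adjunction. By the triangle identity, the composite $\mathrm{O}\langle n-1 \rangle \xrightarrow{\eta} \Omega^\infty \R \xrightarrow{\Omega^\infty \varepsilon} \Omega^\infty \littleo = \mathrm{O}\langle n-1 \rangle$ is the identity, so on homotopy groups the composition $\pi_\ast \littleo \cong \pi_\ast \mathrm{O}\langle n-1 \rangle \xrightarrow{\Sigma^\infty} \pi_\ast \R \to \pi_\ast \littleo$ is the identity in non-negative degrees, providing the required splitting. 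The main subtle point will be the identification of the linearization map $\R \to \mathrm{TAQ}(\Ss \oplus \R)$ with the counit $\varepsilon$, which should follow from the universal property of the cotangent complex applied to suspension spectra of infinite loop spaces together with the fact that the non-unital $\Einf$-structure on $\R$ is the one induced from the infinite loop space structure on $\mathrm{O}\langle n-1 \rangle$.
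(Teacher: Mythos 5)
The paper does not prove this proposition itself; it cites \cite[Lemma 4.10]{inertia} for the result. Your argument is correct and is the natural proof: the connectivity estimate $D_k(\littleo)$ is $(k(n-1)-1)$-connected kills the higher Goodwillie layers through degree $3n-4$ (since $n \geq 3$ guarantees convergence of the tower on the connective algebra $\R$), the cofiber sequence then yields the long exact sequence, and the splitting comes from identifying the linearization $P_1(U)(\R) \simeq \mathrm{TAQ}(\Ss \oplus \R) \simeq \littleo$ as the counit of the $(\Sigma^\infty,\Omega^\infty)$-adjunction via \cite[Example 3.9]{kuhn}, so that the triangle identity $\Omega^\infty\varepsilon \circ \eta = \mathrm{id}$ produces the section. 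The one point you flag as subtle---that the canonical map $\R \to \mathrm{TAQ}(\Ss\oplus\R)$ agrees with the counit---is indeed the crux, and it is exactly what Kuhn's identification supplies, so the argument closes cleanly.
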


We therefore need to compute $\pi_* D_2 (\littleo)$ for $*\leq 2n$, which we will accomplish using the $\F_2$-Adams spectral sequence.

As input, we need to compute $\H_* (D_2 (\littleo); \F_2)$ with its right action of the Steenrod algebra in the range $* \leq 2n+1$.
First, we recall the structure of $\H_* (\littleo; \F_2)$ for $* \leq n+2$ from \cite[Example 2.31]{tmfAdamsBook}\cite[Theorem A]{Stong}. We have pictured the result in cell diagram notation below:
\begin{equation}\label{littleocelldiagram}
    \begin{tikzcd}[every arrow/.append style={dash}, column sep=10mm]
  n  \mod 8  & 0 & 1 &  4\\[-18pt]
      n+2           & &y_{n+2} &y_{n+2}\\[-15pt]
      n+1           & &        &y_{n+1}\ar[u]\\[-15pt]
n             &&y_n \ar[uu, bend right=45] &\\[-15pt]
      n-1 & y_{n-1} & y_{n-1} \ar[u] &  y_{n-1} \ar[uu,bend left=45]
    \end{tikzcd}
\end{equation}
In the figure, each $y_i$ indicates a copy of $\F_2$ with generator $y_i$, lines of length $1$ indicate $\Sq_1$ and lines of length two indicate $\Sq_2$.
The pictured groups are cyclic, and we write $y_k \in \H_k (\littleo; \F_2)$ for a generator.

Using the known mod $2$ homology of second extended powers \cite[Lemma 1.3]{may} and the Nishida relations \cite[Theorem 9.4]{may}, we obtain the following diagram of $\H_* (D_2 (\littleo); \F_2)$:
%
%
%
\begin{center}
    \begin{tikzcd}[column sep=0.2cm, font=\small]
        n \mod 8  && 0  && 1  &&  4\\
        2n+1 && Q_3 (y_{n-1})\ar[dash, d] &Q_3 (y_{n-1})& Q_1 (y_n)& y_{n-1} \cdot y_{n+2}& Q_3 (y_{n-1})\ar[d,dash] &y_{n-1} \cdot y_{n+2}\ar[d,dash]\\
        2n && Q_2 (y_{n-1})\ar[dash, dd, bend right=60]& Q_2 (y_{n-1})& Q_0(y_n)\ar[u,dash]&& Q_2 (y_{n-1})&y_{n-1} \cdot y_{n+1}\\
        2n-1 && Q_1(y_{n-1})\ar[dash, d]&Q_1 (y_{n-1})\ar[uur, dash]\ar[u,dash]&y_{n-1} \cdot y_n\ar[dl,dash]\ar[uur,bend right, dash]&& Q_1(y_{n-1})\\
        2n-2 && Q_0 (y_{n-1})& Q_0 (y_{n-1})\ar[uur, dash]&&&Q_0 (y_{n-1})\ar[u,dash]\ar[uu, bend left=50,dash]\ar[uur,bend right, dash]\\
    \end{tikzcd}
\end{center}
Recall that the Dyer Lashof operation $Q_i$ sends a class in  $H_j(X;\F)$ to $H_{2j+i}(D_2(X);\F)$ for $i\geq 0$.

Plugging this into the $\F_2$-Adams spectral sequence, we obtain the following proposition by standard computations:
\begin{prop}\label{prop:D2o-E2}
  The $\mathrm{E}_2$-page of the $\F_2$-Adams spectral sequence for $\pi_* D_2 (\littleo)$ is given as below:
 \begin{center}
    \begin{tikzcd}[every arrow/.append style={dash}, column sep=0.2cm, row sep=0.2cm]
   &   n  \equiv 0 & && && & n  \equiv 1 &\\[-5pt]
            &    & h_1 Q_1 (y_{n-1})   & &  &&& & h_1 Q_1 (y_{n-1}) \\
      Q_0 (y_{n-1}) &    & & &&& Q_0 (y_{n-1})&Q_1 (y_{n-1})\ar[ru] & \\[-5pt]
    2n-2  & 2n-1   & 2n  &&&&  2n-2  & 2n-1   & 2n.
    \end{tikzcd}
\end{center} 
 \begin{center}
    \begin{tikzcd}[every arrow/.append style={dash}, column sep=0.2cm, row sep=0.2cm]
      &n \equiv 4&\\[-5pt]
      &&h_1 Q_1 (y_{n-1}) \\
      Q_0 (y_{n-1})&&Q_2(y_{n-1}) + y_{n-1} \cdot y_{n+1}\\[-5pt]
      2n-2&2n-1&2n.
    \end{tikzcd}
\end{center} 
\end{prop}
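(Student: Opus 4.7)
The plan is to extract the $\A$-module structure of $\H^*(D_2(\littleo); \F_2)$ in the relevant range from the cell diagram given just above the proposition, and then to read off a minimal free resolution through filtration $1$ directly. In each case the $\mathrm{E}_2$-page in the stated range is small enough that this is manageable.

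First, I would dualize the displayed homological cell diagram to obtain the $\A$-module structure on cohomology in degrees $2n-2$ through $2n+1$. The Nishida relations \cite[Theorem 9.4]{may}, combined with the $\A$-action on $\H_* \littleo$ recorded in \eqref{littleocelldiagram}, give all the Steenrod operations of degree $\leq 3$ on the classes $Q_i(y_{n-1})$, $y_{n-1} \cdot y_k$, and $Q_j(y_n)$ that appear; the Adem relation $\Sq^1 \Sq^2 = \Sq^3$ then yields $\Sq^3$ on the bottom classes.

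Next, for filtration zero, I would list the $\A$-indecomposables in the given cohomological range. By inspection of the diagram these should be: $Q_0(y_{n-1})$ in all three cases; additionally $Q_1(y_{n-1})$ when $n \equiv 1 \pmod 8$; and additionally a suitable $\Sq^i$-closed linear combination $Q_2(y_{n-1}) + y_{n-1}\cdot y_{n+1}$ when $n \equiv 4 \pmod 8$. For filtration one, the claimed class $h_1 Q_1(y_{n-1}) \in \mathrm{Ext}^{1, 2n+1}$ in each case should arise from the minimal relation $\Sq^2 \Sq^1 \cdot Q_0(y_{n-1})^* = 0$, equivalently the vanishing of $\Sq^2$ on $Q_1(y_{n-1})^*$. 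A direct check using the Nishida relations confirms that no other minimal relations on the filtration-$0$ generators appear in this $t$-range.

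The main obstacle I expect is the bookkeeping of the Nishida relations for $n \equiv 1$ and $n \equiv 4$, where $\H_* \littleo$ contributes extra cells and thus extra classes of the form $y_i \cdot y_j$ and $Q_k(y_j)$ with $j > n-1$ into $\H_* D_2(\littleo)$. Correctly tracking which generators are indecomposable in cohomology, and in particular identifying the twisted indecomposable $Q_2(y_{n-1}) + y_{n-1}\cdot y_{n+1}$ in the $n \equiv 4$ case, is the most delicate part of the argument. Once this module structure is pinned down, the Ext computation in filtrations zero and one and in the stated $t$-range reduces to elementary linear algebra over $\F_2$.
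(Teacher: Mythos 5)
Your overall strategy mirrors what the paper does: the paper's proof is precisely to read off the $\mathrm{E}_2$-page from the cell diagram of $\H_* (D_2(\littleo);\F_2)$ by a ``standard computation,'' and your proposal just spells that computation out (dualize, identify $\A$-indecomposables, read off the first minimal relation). So the approach is not different from the paper's.

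However, there is a genuine error in your identification of the filtration-$1$ class. You claim that in each of the three cases the class $h_1 Q_1(y_{n-1}) \in \Ext^{1,2n+1}$ ``should arise from the minimal relation $\Sq^2 \Sq^1 \cdot Q_0(y_{n-1})^* = 0$, equivalently the vanishing of $\Sq^2$ on $Q_1(y_{n-1})^*$.'' This is correct when $n \equiv 0$ or $n \equiv 4 \pmod 8$, where indeed $Q_1(y_{n-1})^* = \Sq^1 Q_0(y_{n-1})^*$ and $\Sq^2 Q_1(y_{n-1})^* = 0$. But it is internally inconsistent with your own filtration-$0$ list for $n \equiv 1$: there you correctly record $Q_1(y_{n-1})$ as a \emph{separate} generator in filtration $0$, which means precisely that $\Sq^1 Q_0(y_{n-1})^* \neq Q_1(y_{n-1})^*$ (in fact $\Sq^1 Q_0^* = (y_{n-1}\cdot y_n)^*$), so the claimed equivalence fails. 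Moreover, reading off the cell diagram for $n \equiv 1$ gives $\Sq^2 Q_1(y_{n-1})^* = Q_1(y_n)^* \neq 0$ and $\Sq^2\Sq^1 Q_0^* = (y_{n-1}\cdot y_{n+2})^* \neq 0$, so neither of your proposed relations holds. The actual minimal relation producing $\Ext^{1,2n+1}$ in the $n \equiv 1$ case is $\Sq^3 Q_0(y_{n-1})^* + \Sq^2 Q_1(y_{n-1})^* = 0$ (both terms equal $Q_1(y_n)^*$ and cancel); the $\Sq^2 Q_1^*$ summand is what identifies the resulting class as $h_1 \cdot Q_1(y_{n-1})$ under the Yoneda product. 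Your write-up would need to be corrected to handle this case; as written, the claimed relation would produce no nonzero class in $\Ext^{1,2n+1}$ for $n \equiv 1$, contradicting the chart.
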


\begin{proof}
  The is a standard computation, which may be carried out by hand or using Bruner's \texttt{ext} program \cite{BrunerExt}.
\end{proof}

\subsection{Computation of $\pi_* (\R ^{\otimes 2})$}
Our computation of $\pi_* (\R ^{\otimes 2})$ proceeds similarly as our computation of $\pi_* (D_2 (\littleo))$ above.
To begin with, since $\pi_* (\R ^{\otimes 2}) \cong \pi_* (\littleo ^{\otimes 2})$ for $* \leq 3n-4$, we may as well work with the latter. Using the K\"unneth theorem and the Cartan formula, we obtain the following computation of $\H_* (\littleo^{\otimes 2}; \F_2)$ for $* \leq 2n$:

%
\begin{center} 
    \begin{tikzcd} [column sep=0.2pt]
        & n \equiv 0 & n \equiv 1&& n \equiv 4\\
        2n && y_n\tens y_n\ar[d,dash]\ar[dr, dash]\ar[dd, dash, bend right=60]&&y_{n-1}\tens y_{n+1}&y_{n+1}\tens y_{n-1}\\
        2n-1 &&y_{n-1}\tens y_n\ar[d,dash] & y_n \tens y_{n-1}\ar[dl,dash]&\\
        2n-2 & y_{n-1}\tens y_{n-1}& y_{n-1}\tens y_{n-1} && y_{n-1}\tens y_{n-1}\ar[uu,dash]\ar[uur, dash, bend right]\\
        
    \end{tikzcd}
\end{center} 
Plugging these computations into the $\F_2$-Adams spectral sequence, we obtain the following proposition.
\begin{prop} \label{prop:Otens2} 
  The $\mathrm{E}_2$-page of the $\F_2$-Adams spectral sequence for $\R^{\otimes 2}$ is given by:
\begin{center}
    \begin{tikzcd}[every arrow/.append style={dash}, column sep=0.4cm, row sep=0.4cm]
      n  \equiv 0 & && &&  n  \equiv 1 &\\[-5pt]
      \vdots&&&&& & \\
        h_0 (y_{n-1} \otimes y_{n-1}) \ar[u]    & h_1 (y_{n-1} \otimes y_{n-1})   &  &  &&& h_1 (y_{n-1} \otimes y_{n-1}) \\
      y_{n-1} \otimes y_{n-1} \ar[u] \ar[ur] &    & & && y_{n-1} \otimes y_{n-1} \ar[ur]&y_{n-1} \otimes y_n + y_n \otimes y_{n-1} \ar[u] \\[-5pt]
    2n-2  & 2n-1   & &&&  2n-2  & 2n-1.
    \end{tikzcd}
\end{center} 
\begin{center}
    \begin{tikzcd}[every arrow/.append style={dash}, column sep=0.4cm, row sep=0.4cm]
      n \equiv 4&\\[-5pt]
      \vdots & \\
      h_0 (y_{n-1} \otimes y_{n-1}) \ar[u]& \\
      y_{n-1} \otimes y_{n-1} \ar{u}&\\[-5pt]
      2n-2&2n-1.
    \end{tikzcd}
\end{center} 
\end{prop}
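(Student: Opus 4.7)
The plan is to run the $\F_2$-Adams spectral sequence parallel to the preceding computation for $\pi_* D_2(\littleo)$. Since $\pi_*(\R^{\otimes 2}) \cong \pi_*(\littleo^{\otimes 2})$ throughout the relevant range, I would work with $\littleo^{\otimes 2}$, whose mod $2$ homology has already been tabulated. The right $\A$-module structure on $\H_*(\littleo^{\otimes 2}; \F_2)$---concentrated in three consecutive degrees near $2n-2$---is determined from diagram (\ref{littleocelldiagram}) via the Cartan formula. From this I would read off a minimal free $\A$-resolution in each case, and hence the $\mathrm{E}_2$-page.

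For $n \equiv 0 \bmod 8$, all positive Steenrod operations on $y^{n-1}$ vanish in the displayed range, so Cartan gives $\Sq^i(y^{n-1} \otimes y^{n-1}) = 0$ for $i \in \{1, 2\}$. The submodule generated by $y^{n-1} \otimes y^{n-1}$ is thus a shift of $\F_2$, yielding the $h_0$-tower above $(2n-2, 0)$ together with $h_1(y^{n-1} \otimes y^{n-1})$ at $(2n-1, 1)$.

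For $n \equiv 1 \bmod 8$, Cartan yields $\Sq^1(y^{n-1} \otimes y^{n-1}) = y^n \otimes y^{n-1} + y^{n-1} \otimes y^n$ and the coincidence $\Sq^2(y^{n-1} \otimes y^{n-1}) = \Sq^1(y^{n-1} \otimes y^n) = \Sq^1(y^n \otimes y^{n-1}) = y^n \otimes y^n$. A minimal resolution then has two generators $\bar{a}, \bar{b}$ in degrees $2n-2, 2n-1$ and a single relation in degree $2n$ identifying $\Sq^2 \bar{a}$ with $\Sq^1 \bar{b}$; this gives two $\Ext^0$-classes together with a single $\Ext^1$-class at bidegree $(1, 2n)$, which is simultaneously $h_1 \cdot (y^{n-1} \otimes y^{n-1})$ and $h_0 \cdot (y^{n-1} \otimes y^n + y^n \otimes y^{n-1})$, reproducing the pictured multiplicative structure.

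For $n \equiv 4 \bmod 8$, the operation $\Sq^2 y^{n-1} = y^{n+1}$ from (\ref{littleocelldiagram}) yields $\Sq^1(y^{n-1} \otimes y^{n-1}) = 0$ and $\Sq^2(y^{n-1} \otimes y^{n-1}) = y^{n+1} \otimes y^{n-1} + y^{n-1} \otimes y^{n+1} \neq 0$. A minimal resolution has generators in degrees $2n-2$ and $2n$ and a single relation $\Sq^1 \bar{a} = 0$ in degree $2n-1$, giving only the $h_0$-tower above $(2n-2, 0)$ in the displayed range. The main subtlety throughout is identifying the minimal generating set and ruling out additional relations, but since the module is supported in only three consecutive degrees and the relevant operations are $\Sq^1$ and $\Sq^2$, this reduces to a small finite check of Cartan identities.
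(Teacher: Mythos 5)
Your proof is correct and follows exactly the approach the paper takes (which is left implicit after the phrase ``Plugging these computations into the $\F_2$-Adams spectral sequence''): restrict to $\littleo^{\otimes 2}$ in the range $*\leq 3n-4$, read off the $\A$-module structure on $\H_*(\littleo^{\otimes 2};\F_2)$ from the K\"unneth and Cartan formulas, and construct the minimal resolution by hand. Your Cartan computations in each residue class agree with the cell diagram the paper displays just before the proposition, and your identification of the $\Ext^1$-class as simultaneously $h_1$ times the bottom generator and $h_0$ times the degree-$(2n-1)$ generator when $n\equiv 1 \bmod 8$ is the correct reading of the pictured multiplications.
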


Once again, there is no space for differentials or hidden extensions, so we may read off the homotopy groups of $\R^{\otimes 2}$ claimed in \Cref{prop:D2o-E2}.

%

\section{Lower bounds} \label{sec:lower}
In this section, we provide lower bounds for the $\F_2$-Adams filtration of the images of the $d_1$ and $d_2$-differentials with target $\pi_{2n} (\Ss)$ in the bar spectral sequence for $\pi_* \MOn$.

Our method is based on \cite[Section 5]{inertia}, where Pstr\k{a}gowski's symmetric monoidal category of $\F_2$-synthetic spectra was used to obtain the desired statement from the fact that $J : \R \to \Ss$ is of high $\F_2$-Adams filtration when restricted to a skeleton.
Here we make the following refinement to this method: from the $\mathrm{E}^1$-page of the bar spectral sequence, we know that the images of the $d_1$ and $d_2$-differentials must be of certain torsion orders, so that they are represented by maps from Moore spectra.
We prove that these maps from Moore spectra must be of high $\F_2$-Adams filtration, which allows us to make use of better vanishing lines available for Moore spectra.

We assume familiarity with the language and basic properties of the category of $\F_2$-synthetic spectra. A general introduction can be found in \cite{Pstragowski} and a more computational perspective is detailed in \cite[Section 9 and Appendix A]{bhs}.

\begin{cnv}
  Let $\wt{2} \in \pi_{0,1} \Ss$ denote the synthetic lift of $2$ in the sense of \cite[Theorem 9.19]{bhs} (see also \cite[Proposition A.20]{bhs}), and let $\wt{2^n} = \wt{2}^n$. Note that $\tau \wt{2} = 2$ by definition.
  We implicitly complete all $\F_2$-synthetic spectra at the class $\wt{2} \in \pi_{0,1} \Ss$. (Recall from \Cref{cnv:2-compl} that we are implicitly $2$-completing all spectra.)
\end{cnv}

Our results are stated in terms of the following constants:
\begin{ntn}
  Given an integer $n$, we let
  \[M_1 = h(n-1) - \lfloor \log_2 (n+3) \rfloor + 1\]
  and
  \[M_2 = h(n-1) - \lfloor \log_2 (2n+2) \rfloor + 1,\]
  where $h(k)$ denotes the number of integers $0 < s \leq k$ which are congruent to $0$, $1$, $2$ or $4$ mod $8$.
  Note that we have suppressed the dependence of $M_1$ and $M_2$ on $n$ in the notation.
\end{ntn}

Our main results are as follows:
\begin{thm} \label{thm:d1}
   If $n \equiv 0,1,4 \mod 8$, then every element in the image of the $d_1$-differential of the bar sequence, i.e., the map $J : \pi_{2n} (\R) \to \pi_{2n} (\Ss)$, is represented modulo the image of $J$ by a map $\Ss^{2n} / 2 \to \Ss$ of $\F_2$-Adams filtration at least $2M_1 - 3$.
\end{thm}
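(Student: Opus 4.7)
The plan is to combine the splitting of $\pi_*(\R)$ established in \Cref{prop:D2-split} with the synthetic-spectra techniques of \cite[Section 5]{inertia}. By \Cref{prop:D2-split}, in the relevant range the map $\pi_*(D_2(\littleo)) \to \pi_*(\R)$ is a split monomorphism whose complement is the image of $\pi_*(\littleo)$; since the $\pi_*(\littleo)$ summand maps into $\pi_*(\Ss)$ precisely via the classical $J$-homomorphism, the image of $d_1 = J_* : \pi_{2n}(\R) \to \pi_{2n}(\Ss)$ modulo the image of $J$ is captured entirely by classes in $\pi_{2n}(D_2(\littleo))$. From \Cref{prop:D2o-E2}, the latter consists of $2$-torsion subgroups generated by $h_1 Q_1(y_{n-1})$ and, when $n \equiv 4 \pmod 8$, also $Q_2(y_{n-1}) + y_{n-1}\cdot y_{n+1}$, so every such class is represented by a map $\Ss^{2n}/2 \to \Ss$.

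Next I would lift the entire setup to $\F_2$-synthetic spectra. Let $\nu J : \nu \R \to \nu \Ss$ denote the synthetic lift of $J$. The first input is the filtration bound on $\nu J$ from \cite[Section 5]{inertia}: restricted to a suitable skeleton of $\nu \R$ (containing the cells of $\littleo$ through degree $n+2$), the $\tau$-filtration of $\nu J$ is at least $M_1$. The crucial new step is to double this bound when the class factors through the Goodwillie layer $D_2(\littleo)$. Because $J : \R \to \Ss$ is a map of non-unital $\mathbb{E}_\infty$-algebras, the composite $D_2(\littleo) \to \R \to \Ss$ is controlled synthetically by iterating $\nu J$ through the $\mathbb{E}_\infty$-structure, producing a map of $\tau$-filtration at least $2M_1$ on the relevant cells of $\nu D_2(\littleo)$.

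The final step is to pass from this synthetic bound on $\nu D_2(\littleo) \to \nu \Ss$ to a bound on the representing map $\Ss^{2n}/2 \to \Ss$, keeping track of small filtration losses arising from the connectivity offsets within the cellular decomposition of $D_2(\littleo)$ and from the two-cell structure of the Moore spectrum. A careful tally produces the stated bound $2M_1 - 3$. I expect the main obstacle to be the bookkeeping in the doubling step: making precise how the non-unital $\mathbb{E}_\infty$-structure on $J$ interacts with synthetic $\tau$-filtration to yield an honest doubling, and tracking the three units of loss accurately, especially in the $n \equiv 4 \pmod 8$ case where the second generator sits one filtration lower than $h_1 Q_1(y_{n-1})$ on the $\mathrm{E}_2$-page of \Cref{prop:D2o-E2}.
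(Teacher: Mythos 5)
Your high-level plan matches the paper's: use the splitting $\pi_{2n}\R \cong \pi_{2n}\littleo \oplus \pi_{2n}D_2(\littleo)$ from \Cref{prop:D2-split} to reduce to the $D_2$-summand, lift to $\F_2$-synthetic spectra, and exploit a ``doubling'' of the filtration bound $M_1$ because $D_2$ is quadratic. But the crucial mechanism driving the doubling is misidentified, and as you yourself flag, this is exactly where your sketch would break down if pursued as written.

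You attribute the doubling to ``iterating $\nu J$ through the $\mathbb{E}_\infty$-structure.'' This is not how the argument goes. The map $\wt{J}$ is applied exactly once, via the single functorial map $D_2(\wt{J}) : D_2(\Sigma^{0,M_1}\nu N_1) \to D_2(\Ss^{0,0})$; the $\mathbb{E}_\infty$-structure enters only through the structure map $\hat{m} : D_2(\Ss^{0,0}) \to \Ss^{0,0}$, which is not a source of filtration. The doubling instead comes from the behavior of $D_2$ on synthetic suspensions of the \emph{source}, which is the content of \Cref{D2} (\cite[Lemma 4.13]{inertia}): for $X$ an $n$-connective spectrum, $\pi_{t-s,t}(D_2(\Sigma^{a,b}\nu X)) \to \pi_{t-s}(D_2(\Sigma^a X))$ is an isomorphism for $t \leq 2n+2b$. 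Taking $X = N_1$ (an $(n+2)$-skeleton of $\R$, hence $(n-1)$-connective) and $b = M_1$, this tells you that the $2$-torsion class in $\pi_{2n}D_2(N_1)$ lifts to $\pi_{2n,2n+2M_1-2}D_2(\Sigma^{0,M_1}\nu N_1)$. Without this lemma (or an equivalent synthetic comparison for extended squares), the doubling cannot be made precise; in particular, $D_2(\Sigma^{0,b}\nu X)$ is \emph{not} simply $\Sigma^{0,2b}\nu D_2(X)$, because the $C_2$-swap action does not interact trivially with the synthetic shift.

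The filtration accounting is also attributed to the wrong sources. Of the three units lost relative to $2M_1$: two come from the fact that in Lemma~\ref{D2} the range is $t \leq 2(n-1)+2M_1$ while the class lives at stem $2n$ (so at that stem the filtration cap is $s \leq 2M_1-2$), and the last one comes from multiplying by $\tau$ to convert the $2$-torsion class into a $\wt{2}$-torsion class, which is what produces a map out of a synthetic Moore spectrum $\Ss^{2n,2n+2M_1-3}/\wt{2}$. It is not a feature of ``the two-cell structure of the Moore spectrum'' or the internal cellular structure of $D_2(\littleo)$. Relatedly, your worry about the $n \equiv 4 \bmod 8$ generator sitting one Adams filtration lower is a non-issue: the argument never looks at the filtration of individual generators in $\pi_{2n}D_2(\littleo)$, only at the fact that the whole group is simple $2$-torsion, and Lemma~\ref{D2} lifts the whole group uniformly at the same synthetic filtration.
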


\begin{thm} \label{thm:d2}
  \hspace{1cm}
  \begin{enumerate}
    \item If $n \equiv 0 \mod 8$ and $n \geq 16$, then any class in $\pi_{2n} \Ss$ which is killed by a $d_2$-differential in the bar spectral sequence for $\MOn$ is represented by a map $\Ss^{2n} /4 \to \Ss$ that admits a lift to an $\F_2$-synthetic map of the form $\Ss^{2n,2n+2M_2-4} / \wt{4} \to \Ss^{0,0}$. \footnote{In the language of \cite[Section 3]{BHHM}, this means that $\Ss^{2n} / 4 \to \Ss$ has \emph{modified $\F_2$-Adams filtration} at least $2M_2 -4$.}
    \item If $n \equiv 1 \mod 8$ and $n \geq 17$, then any class in $\pi_{2n} \Ss$ which is killed by a $d_2$-differential in the bar spectral sequence for $\MOn$ is represented by a map $\Ss^{2n} /8 \to \Ss$ that admits a lift to an $\F_2$-synthetic map of the form $\Ss^{2n,2n+2M_2-5} / \wt{8} \to \Ss^{0,0}$. \footnote{In the language of \cite[Section 3]{BHHM}, this means that $\Ss^{2n} / 8 \to \Ss$ has \emph{modified $\F_2$-Adams filtration} at least $2M_2-5$.}
    \item If $n \equiv 4 \mod 8$, then $\pi_{2n-1} (\R^{\otimes 2}) = 0$ and thus the $d_2$-differential in the bar spectral sequence landing in $\pi_{2n} \Ss$ is zero.
  \end{enumerate}
\end{thm}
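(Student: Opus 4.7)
The proof divides naturally into three cases. Case (3) is immediate: by Theorem \ref{thm:R-homotopy}, $\pi_{2n-1}(\R^{\otimes 2}) = 0$ when $n \equiv 4 \pmod 8$, so the source of any $d_2$-differential landing in $\pi_{2n}\Ss$ is already zero.

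For cases (1) and (2) the plan is to realize the $d_2$-differential as the Toda bracket $\langle f, m - 1 \otimes J, J \rangle$ displayed in \Cref{sec:MOnBar} and to lift the whole bracket to Pstr\k{a}gowski's category of $\F_2$-synthetic spectra, following the method of \cite[Section 5]{inertia}. The refinement over \emph{loc.\ cit.} is that $\pi_{2n-1}(\R^{\otimes 2})$ is $2$-power torsion of small order --- $\Z/2$ in case (1) and $\Z/4$ in case (2) --- so the representing map $f$ factors through a Moore spectrum. After allowing one extra factor of two in the order of the Moore spectrum (to absorb the indeterminacy of the bracket and to accommodate cleaner synthetic bookkeeping), we work with $\Ss^{2n}/4$ in case (1) and $\Ss^{2n}/8$ in case (2). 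The resulting class $\Ss^{2n}/c \to \Ss$ is the one whose modified synthetic filtration we need to bound.

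The key input is that the map $J : \R \to \Ss$, once restricted to a sufficiently large skeleton of $\R$, lifts to a synthetic map of high filtration, with a quantitative bound governed by the constant $M_2 = h(n-1) - \lfloor \log_2(2n+2) \rfloor + 1$ (this is where $M_2$ rather than $M_1$ arises: the bar factor $\R^{\otimes 2}$ forces us to control skeleta up to dimension roughly $2n+2$). Applied to both tensor factors of $\R^{\otimes 2}$ --- one contributing through $1 \otimes J$ in the middle map and the other through the outer $J$ --- this produces a synthetic lift of $J \circ (m - 1 \otimes J)$ together with its canonical synthetic nullhomotopy, each of filtration roughly $M_2$. Splicing these via the synthetic Toda bracket formula, then composing with a synthetic lift of $f$ coming from the Moore spectrum $\Ss^{2n}/c$, yields a synthetic map of the claimed bigrading. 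The $2M_2 - 4$ versus $2M_2 - 5$ discrepancy in the statement reflects the extra factor of two in the order of the Moore spectrum (i.e.\ passing from $\widetilde{4}$ to $\widetilde{8}$) in the $n \equiv 1$ case.

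The main obstacle I anticipate is the careful bookkeeping of the synthetic bigrading through the bracket construction: in particular, checking that the synthetic nullhomotopy filling the defining square of the Toda bracket can be chosen at the correct filtration. This ultimately reduces to verifying the vanishing of certain obstruction groups in the appropriate bigradings of $\Syn_{\F_2}$, which is feasible in the given range using the Adams-spectral-sequence computations of \Cref{sec:D2}, but getting the constants exactly right (rather than off by a bounded amount) is the delicate part.
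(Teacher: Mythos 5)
Your high-level strategy matches the paper's: realize the $d_2$-differential as the Toda bracket from Proposition~\ref{prop:todabracket}, lift each arm to $\F_2$-synthetic spectra using the high-filtration lifts of $J$ from Lemma~\ref{lem:R-AF}, and use the small $2$-power torsion of $\pi_{2n-1}(\R^{\otimes 2})$ to replace the source by a Moore spectrum. Case (3) you handle correctly. However, there is a genuine gap in the two points where you wave your hands, and your stated explanations for the numerology are wrong in a way that matters.

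First, the factor-of-two bump in the order of the Moore spectrum (from $\Ss^{2n}/2$ to $\Ss^{2n}/4$ in case (1), from $\Ss^{2n}/4$ to $\Ss^{2n}/8$ in case (2)) is not there ``to absorb indeterminacy of the bracket'' or for ``cleaner bookkeeping.'' It is a forced consequence of a specific computation: after showing that the composite of $\wt{x}$ with the quadratic construction map $c : (\Sigma^{0,M_2}\nu N_2)^{\otimes 2} \to D_2(\Sigma^{0,M_2}\nu N_2)$ is null on the bottom cell, one lifts through the top cell of the Moore spectrum to a class $\wt{w}$, and the crucial input is that $\wt{w}$ is $\wt{2}\tau^2$-torsion because $\pi_{2n}D_2(\R)$, computed in Proposition~\ref{prop:D2o-E2}, is $\Z/2\Z$ in filtration $1$. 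That $\wt{2}\tau^2$-torsion is exactly what forces the passage to the larger Moore spectrum after multiplying by an extra power of $\tau$. Without identifying this, you cannot justify the claimed bigradings.

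Second, your explanation for the $2M_2-4$ versus $2M_2-5$ discrepancy is also off: the extra factor of two in the Moore spectrum occurs in \emph{both} cases (1) and (2), so it cannot be the source of the asymmetry. In fact the lift $\wt{x}$ starts at a higher synthetic weight in case (1) precisely because the generator of $\pi_{2n-1}\R^{\otimes 2}$ is $\eta\cdot(y_{n-1}\otimes y_{n-1})$, and the $\F_2$-synthetic relation $\wt{2}\wt{\eta}=0$ gives an extra weight unit for free. That single unit, combined with the fact that both proofs then use the same number of $\tau$-multiplications, accounts for the difference.

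Finally, the steps you defer as ``bookkeeping'' and ``feasible'' are precisely the nontrivial content. Producing the required synthetic nullhomotopies of $(1\otimes\wt{J})(\wt{x})$ and $c(\wt{x})$ does not follow from a generic ``synthetic Toda bracket formula''; the paper has to invoke Lemma~\ref{lem:D2-lift} (that the sequence $D_2(\R)\to\R\to\littleo$ stays a cofiber sequence after applying $\nu$, up to a $C\tau$-module error term) to propagate the vanishing $\pi_*\littleo=0$ in the relevant degree into a vanishing of the synthetic obstruction, and in case (2) needs an additional homology argument to show $\pi_{2n-1}(c)=0$ before even starting. Without identifying these steps, the proposal does not constitute a proof, only a plausible sketch of the starting point.
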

\NB{This is poorly phrased. Rewrite it.}

Before proceeding to the proofs, we recall a special case of \cite[Lemma 10.18]{bhs}, which will be a key input to the arguments in this section.

\begin{lem} \label{lem:R-AF}
  Let $N_1 \to \R$ and $N_2 \to \R$ denote the inclusion of an $(n+2)$-skeleton and a $(2n+1)$-skeleton, respectively. Then the composite maps $N_i \to \R \xrightarrow{J} \Ss$ have $\F_2$-Adams filtration at least $M_i$, i.e. they admit $\F_2$-synthetic lifts
  \[\wt{J} : \Sigma^{0,M_i} \nu N_i \to \Ss^{0,0}.\]
\end{lem}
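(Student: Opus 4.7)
The plan is to obtain the synthetic lift by a cell-by-cell induction modeled on \cite[Lemma 10.18]{bhs}. First I would analyze the cell structure of $\R = \Sigma^\infty \mathrm{O}\langle n-1\rangle$: using the identification $\mathrm{O}\langle n-1\rangle \simeq \Omega^\infty \tau_{\geq n-1}\Sigma^{-1}\mathrm{bo}$ together with the known homotopy groups of $\mathrm{bo}$, a minimal cell decomposition of $\R$ in the relevant range has cells whose dimensions run exactly over the integers $s-1$ with $s \equiv 0,1,2,4 \pmod 8$ and $s \geq n$. Consequently the number of cells in the $(n+2)$-skeleton $N_1$ and in the $(2n+1)$-skeleton $N_2$ is governed by the counting function $h$.

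Next I would use that on each cell of $\R$, the composite with $J : \R \to \Ss$ factors through the classical $J$-homomorphism $\mathrm{O} \to \Omega^\infty \Ss$. By Adams' computation of the image of $J$, a generator of the image of $J$ in $\pi_k \Ss$ has $\F_2$-Adams filtration growing with $k$; more precisely, packaged together through the cellular filtration, one gains roughly one unit of $\F_2$-Adams filtration for each cell below the given dimension. This contribution is what produces the leading term $h(n-1)$ in $M_i$.

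The core of the argument is then an obstruction-theoretic induction in the category of $\F_2$-synthetic spectra. Assuming we have produced a partial synthetic lift $\Sigma^{0,M_i}\nu N^{(k-1)} \to \Ss^{0,0}$ of $J|_{N^{(k-1)}}$, attaching a new $k$-cell along $\phi : \Ss^{k-1} \to N^{(k-1)}$ yields an obstruction class in the $\F_2$-synthetic homotopy of $\Ss^{0,0}$ in a bidegree controlled by $k$ and $M_i$. This obstruction is represented by $J \circ \phi$, and it vanishes in the relevant bidegree because the image-of-$J$ filtration estimate at the previous stage places $J \circ \phi$ in sufficiently high $\F_2$-Adams filtration. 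The induction therefore extends the lift to $N^{(k)}$, and iterating produces the required $\wt{J}$.

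The main obstacle is tracking the precise constants $M_1, M_2$. The dominant term $h(n-1)$ counts the bulk of the image-of-$J$ cells, while the corrections $-\lfloor \log_2(n+3)\rfloor + 1$ and $-\lfloor \log_2(2n+2)\rfloor + 1$ absorb the filtration loss incurred at the topmost few cells, where the image-of-$J$ generator has a smaller $\F_2$-Adams filtration than the asymptotic estimate would suggest and where the top-cell attaching map is not necessarily of maximal filtration. These logarithmic terms reflect the $v_1$-periodic growth of $\pi_* \mathrm{ko}$ in small dimensions and are precisely the corrections quantified in \cite[Lemma 10.18]{bhs}, of which our statement is a direct specialization.
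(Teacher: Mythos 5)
The paper does not prove this lemma at all: it simply states it as a special case of \cite[Lemma 10.18]{bhs}, and your proposal correctly identifies that the statement is a direct specialization of that result, so at the level of ``what the paper does'' you match. That said, your internal reconstruction of the argument in \cite[Lemma 10.18]{bhs} has a few imprecisions worth flagging. First, the assertion that ``the number of cells in the $(n+2)$-skeleton $N_1$ and in the $(2n+1)$-skeleton $N_2$ is governed by $h$'' does not fit the shape of the constants: the term $h(n-1)$ is identical in both $M_1$ and $M_2$ and is independent of which skeleton you take; it records (roughly) the $\F_2$-Adams filtration of the image-of-$J$ class carried by the bottom cell of $\R$ in degree $n-1$, not a count of cells of $N_i$. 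The dependence on the skeleton dimension lives entirely in the $\lfloor\log_2(\cdot)\rfloor$ correction, with argument equal to one more than the top degree ($n+3$ for $N_1$, $2n+2$ for $N_2$). Second, your description of the cell structure of $\R$ as having cells only in degrees $s-1$ with $s\equiv 0,1,2,4\pmod 8$ is accurate only through the metastable range; since $N_2$ is a $(2n+1)$-skeleton, it also contains cells coming from $D_2(\littleo)$ beginning around degree $2n-2$, and the cited lemma must account for them. Finally, in the inductive step the obstruction to extending the synthetic lift over a new cell is a genuine synthetic class, not merely the underlying class $J\circ\phi$: what must be verified is a $\tau$-divisibility statement, which is the whole content of the filtration bookkeeping, so saying the obstruction ``is represented by $J\circ\phi$'' elides the key point. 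None of this affects the correctness of invoking \cite[Lemma 10.18]{bhs} as the paper does, but if you wanted to make the sketch self-contained these are the spots that would need tightening.
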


\subsection{Lower bounds for the $d_1$-differentials}
Our proof of \Cref{thm:d1} will make use of the following lemma, which compares the homotopy groups of the second extended power in the category of $\F_2$-synthetic spectra with the homotopy groups of the second extended power in the category of spectra.

\begin{lemma}{\cite[Lemma 4.13]{inertia}}\label{D2}
If $X$ is an $n$-connective spectrum, then the map 
$$\pi_{t-s,t}(D_2(\Sigma^{a,b}\nu X))\rightarrow \pi_{t-s}(D_2(\Sigma^a X))$$ obtained by inverting $\tau$ is an isomorphism for $t\leq 2n+2b$.
\end{lemma}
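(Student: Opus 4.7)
The plan is to reduce the claim to a weight-connectivity estimate on the $\tau$-cofiber of $D_2$, and then to verify that estimate via the skeletal filtration of the extended power. Throughout I will write $Y := D_2(\Sigma^{a,b}\nu X)$ for brevity.

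First, I would exploit that $\tau^{-1}$ is symmetric monoidal and commutes with all colimits, so in particular it commutes with the extended power construction $D_2$. Hence $\tau^{-1}Y \simeq D_2(\Sigma^a X)$, and the map in the statement is simply the canonical comparison $\pi_{t-s,t}(Y) \to \pi_{t-s}(\tau^{-1}Y)$ induced by the unit $Y \to \tau^{-1}Y$ of $\tau$-inversion. Using the $\tau$-Bockstein cofiber sequence $\Sigma^{0,-1}Y \xrightarrow{\tau} Y \to Y/\tau$, this map is an isomorphism as soon as the cofiber $Y/\tau$ is sufficiently connective in the synthetic weight grading, so that $\tau$ acts invertibly on $\pi_{t-s,*}(Y)$ in the window that contributes to $\pi_{t-s}(\tau^{-1}Y)$.

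Second, I would establish the required weight connectivity of $Y$, and hence of $Y/\tau$. Because $X$ is $n$-connective, $\nu X$ has its bottom synthetic cell in bidegree $(n,n)$, and therefore $\Sigma^{a,b}\nu X$ has bottom cell in bidegree $(n+a,n+b)$. The tensor square $(\Sigma^{a,b}\nu X)^{\otimes 2}$ then has bottom cell in bidegree $(2n+2a, 2n+2b)$. Now, the extended power $D_2$ is built from this tensor square by the standard skeletal filtration of $(E\Sigma_2)_+$; the associated graded pieces are pure stem-suspensions $\Sigma^{k,0}\bigl((\Sigma^{a,b}\nu X)^{\otimes 2}\bigr)$ for $k \geq 0$, because the cells of $B\Sigma_2$ are ordinary (non-synthetic) cells of trivial synthetic weight. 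Consequently every layer of the filtration is concentrated in synthetic weights $\geq 2n+2b$, and the same bound propagates to $Y$ and to $Y/\tau$.

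Combining the two inputs produces the isomorphism in the range $t\leq 2n+2b$: the weight bound on $Y/\tau$ forces $\tau$ to act invertibly on $\pi_{t-s,w}(Y)$ for all $w$ in the appropriate relative window, which is exactly the content of the claimed range. The main obstacle is the second step, namely verifying that the skeletal filtration on the extended power interacts cleanly with the synthetic bigrading. What has to be checked is that each filtration layer is a pure stem-suspension of the tensor square with no accompanying weight shift, so that the weight connectivity of $(\Sigma^{a,b}\nu X)^{\otimes 2}$ propagates to $Y$. This ultimately reduces to identifying the cells of $B\Sigma_2$ as classical cells of synthetic weight zero, which follows from the construction of $D_2$ in $\F_2$-synthetic spectra from the underlying extended power in spectra.
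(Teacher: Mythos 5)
Your strategy is correct and almost certainly matches the argument in the cited reference: identify $\tau^{-1}D_2(\Sigma^{a,b}\nu X)\simeq D_2(\Sigma^a X)$ using that $\tau$-inversion is symmetric monoidal and colimit-preserving, then bound the weight connectivity of $D_2(\Sigma^{a,b}\nu X)\otimes C\tau$ via the skeletal filtration of the extended power. The crucial observation that the associated graded pieces are the pure stem-suspensions $\Sigma^{k,0}\bigl((\Sigma^{a,b}\nu X)^{\otimes 2}\bigr)$ --- because homotopy orbits are a colimit, hence built using the intrinsic suspension $\Sigma^{1,0}$ of $\Syn$ rather than $\nu$ applied to cells of $B\Sigma_2$ --- is exactly the right crux, and the $\tau$-Bockstein bookkeeping at the end is correct.

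There is, however, a gap between what you verify and what you use. You establish only that $(\Sigma^{a,b}\nu X)^{\otimes 2}$ has \emph{bottom} cell in bidegree $(2n+2a,2n+2b)$, and then assert that ``every layer of the filtration is concentrated in synthetic weights $\geq 2n+2b$.'' That does not follow from the location of the bottom cell alone: one needs to know that \emph{all} synthetic cells of $\nu X$ lie on the diagonal $\Ss^{k,k}$ with $k\geq n$, equivalently that $\nu X\otimes C\tau$ has homotopy vanishing in weight $<n$. This is true --- $\pi_{*,*}(\nu X\otimes C\tau)$ is the $\F_2$-Adams $\mathrm{E}_2$-page of $X$, which is concentrated in internal degree $\geq n$ because $\H_*(X;\F_2)$ is --- but it is the actual input that makes the weight bound on $(\Sigma^{a,b}\nu X)^{\otimes 2}\otimes C\tau$, and hence on each layer and on $Y/\tau$, valid; the bound then follows by a K\"unneth argument over $C\tau$, using that $C\tau$ itself is weight-nonnegatively connective. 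With that statement inserted, your argument closes.
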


\begin{proof}[Proof of \Cref{thm:d1}]
  We will compute the image of the map $\pi_{2n} (J) : \pi_{2n} \R \to \pi_{2n} \Ss$.
  Recall from \Cref{prop:D2-split} that $\pi_{2n} \R \cong \pi_{2n} \littleo \oplus \pi_{2n} D_2 (\littleo)$.
  By definition, the restriction of $\pi_{2n} (J)$ to the $\pi_{2n} \littleo$ factor is the classical $J$-homomorphism.
 Hence it suffices to treat the $\pi_{2n} D_2 (\littleo)$ factor.

  By \cite[Example 4.9]{inertia}, there is a commutative diagram of the form
  \begin{center}\begin{tikzcd}
    D_2(\R) \ar[r, "\hat{m}"] \ar[d, "D_2(\pi)"] &
    \R \ar[d] \ar[dr, "\pi"] \\
    D_2(\littleo) \ar[r] &
    P_2(U)(\R) \ar[r] &
    \littleo.
  \end{tikzcd}\end{center}
  Here, $\hat{m}$ is induced by the non-unital $\mathbb{E}_\infty$-ring structure on $\R$, and $\pi$ is the natural map from $\R$ to its stabilization as a non-unital $\mathbb{E}_\infty$-ring \cite[Notation 4.1 and 4.6]{inertia}.

  Since the map $D_2 (\pi) : D_2(\R) \to D_2 (\littleo)$ is an equivalence in degrees at most $3n-4$, it suffices to show that everything in the image of the composite
  \[\pi_{2n} D_2 (\R) \xrightarrow{\pi_{2n} D_2 (J)} \pi_{2n} D_2(\Ss) \to \pi_{2n} \Ss\]
  is represented by a map $\Ss^{2n} / 2 \to \Ss$ of $\F_2$-Adams filtration at least $2M_1 -3$.

  Note that we may as well replace $\R$ by an $(n+2)$-skeleton $N_1 \to \R$, since the induced map $\pi_{2n} D_2 (N_1) \to \pi_{2n} D_2 (\R)$ will be an isomorphism.
  By \Cref{lem:R-AF}, the composite $N_1 \to \R \xrightarrow{J} \Ss$ admits a synthetic lift
  \[\wt{J} : \Sigma^{0,M_1} \nu N_1 \to \Ss^{0,0}.\]

  Then the composite
  \[D_2 (\Sigma^{0,M_1} \nu N_1) \xrightarrow{D_2 (\wt{J})} D_2 (\Ss^{0,0}) \to \Ss^{0,0}\]
  is an $\F_2$-synthetic lift of
  \[D_2 (N_1) \to D_2 (\R) \xrightarrow{D_2 (J)}  D_2(\Ss) \to \Ss.\]

  Now, \Cref{thm:R-homotopy} implies that $\pi_{2n} D_2 (\littleo)$ is simple $2$-torsion when $n \equiv 0,1,4 \mod 8$.
  It therefore follows from \Cref{D2} that the unique nonzero class in $\pi_{2n} D_2 (N_1)$ lifts to a $2$-torsion class in $\pi_{2n, 2n+2M_1 - 2} D_2 (\Sigma^{0,M_1} \nu N_1)$. Multiplying by $\tau$, we obtain a $\wt{2}$-torsion lift to $\pi_{2n,2n+2M_1 -3} D_2 (\Sigma^{0,M_1} \nu N_1)$.
  Mapping down to $\Ss^{0,0}$ via $D_2 (\wt{J})$, we obtain the desired theorem.
\end{proof}
\subsection{Lower bounds for $d_2$-differentials}

We now move on to the proof of \Cref{thm:d2}.
Our techniques are careful refinements of those in \cite[Section 5]{inertia}.
We start by recalling from \cite[Section 5]{inertia} how the Toda bracket defining the $d_2$-differential entering $\pi_{2n} \Ss$ in the bar spectral sequence may be decomposed into a more convenient form.
This differential is defined on those $x \in \pi_* \R^{\otimes 2}$ on which the $d_1$-differential vanishes, i.e. for which $(m-1 \otimes J) (x) = 0$, where $m : \R^{\otimes 2} \to \R$ is the multiplication.
Since $\R$ is a non-unital $\E_\infty$-ring, the map $m$ factors as
\[\R^{\otimes 2} \xrightarrow{c} D_2 (\R) \xrightarrow{\hat{m}} \R.\]
Then the following proposition follows from the proof of \Cref{thm:d2} below.

\begin{prop}\label{prop:todabracket}
  Suppose that $n \geq 16$ and $n \equiv 0,1,4 \mod 8$.
  Given any $x \in \pi_{2n-1} \R^{\otimes 2}$ which satisfies $(m - 1 \otimes J) (x) = 0$, we have $c (x) = (1 \otimes J) (x) = 0$.
  We may therefore form the following diagram of maps and homotopies:
  \begin{center}
  \begin{tikzcd}
    \Ss^{2n-1} \ar[dr, "x"] \ar[rr, "0"{name=D}] \ar[ddr, bend right, "0"{name=C} left] & & \R \otimes \Ss \ar[dr, "J \otimes 1"] \\
    & \R^{\otimes 2}  \ar[Leftrightarrow, from=C, ""] \ar[Leftrightarrow, from=D, ""] \ar[d, "c"] \ar[ur, "1 \otimes J"] \ar[rr, "J \otimes J"{name=E}] \ar[phantom, from=1-3, to=E, "(\otimes)"] & & \Ss^{\otimes 2} \ar[dl, "c"] \ar[dd, "\simeq"] \\
    & D_2(\R) \ar[urr, phantom, "(c)"] \ar[r, "D_2(J)"] & D_2(\Ss) \ar[dr, "\hat{m}"] \\
    & & & \Ss,    
  \end{tikzcd}
  \end{center}
  where the homotopies $(\otimes)$ and $(c)$ are part of the symmetric monoidal structure on the category of spectra.
  The resulting map $\Ss^{2n} \to \Ss$ is a representative for $d_2 (x)$.
\end{prop}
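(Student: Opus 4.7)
Two independent assertions need to be verified: $c(x) = 0$ and $(1 \otimes J)(x) = 0$. The key structural observation is that since $\R$ is an $\E_\infty$-ring, the multiplication factors as $m \simeq \hat{m} \circ c$, so establishing $c(x) = 0$ immediately yields $m(x) = 0$, and the hypothesis $(m - 1 \otimes J)(x) = 0$ then forces $(1 \otimes J)(x) = 0$. The whole proposition therefore reduces to showing that the induced map $c_{\ast} : \pi_{2n-1}(\R^{\otimes 2}) \to \pi_{2n-1}(D_2(\R))$ vanishes identically; the hypothesis plays no role in this reduction, only in passing from the first vanishing to the second.

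To prove $c_{\ast} = 0$, we argue by cases on $n \bmod 8$ using \Cref{thm:R-homotopy}, \Cref{prop:D2o-E2} and \Cref{prop:Otens2}. When $n \equiv 4 \bmod 8$ the source is zero, and when $n \equiv 0 \bmod 8$ the target (which agrees with $\pi_{2n-1}(D_2(\littleo))$ in this range, since $D_2(\R) \to D_2(\littleo)$ is an equivalence in degrees $\leq 3n-4$ as in the proof of \Cref{thm:d1}) is zero by \Cref{prop:D2o-E2}. The remaining case $n \equiv 1 \bmod 8$ is the main obstacle: here $\pi_{2n-1}(\R^{\otimes 2}) \cong \Z/4$ with generator $\alpha$ of Adams filtration $0$ whose mod $2$ Hurewicz image is the swap-invariant class $y_{n-1} \otimes y_n + y_n \otimes y_{n-1}$, while $\pi_{2n-1}(D_2(\R)) \cong \Z/2$ is detected entirely in filtration $0$ by $Q_1(y_{n-1})$. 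The quotient map to $\Sigma_2$-homotopy orbits identifies $y_{n-1} \otimes y_n$ and $y_n \otimes y_{n-1}$ with the single class $y_{n-1} \cdot y_n$ in $\H_{\ast}(D_2(\R); \F_2)$, so their sum maps to $2[y_{n-1} \cdot y_n] = 0$. Therefore $c_{\ast}(\alpha)$ has Adams filtration at least $1$, and since the target has no nonzero classes in positive filtration, $c_{\ast}(\alpha) = 0$; by linearity $c_{\ast}$ vanishes on all of $\Z/4$.

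Having established $c(x) = 0$ and $(1 \otimes J)(x) = 0$, one chooses null homotopies filling the two triangles in the displayed diagram. The commuting cells labelled $(\otimes)$ and $(c)$ are then formal consequences of the symmetric monoidal structure on spectra, combined with the fact recalled in \Cref{sec:MOnBar} that $J : \R \to \Ss$ is a map of non-unital $\E_\infty$-algebras. The identification of the resulting composite $\Ss^{2n} \to \Ss$ with a representative of $d_2(x)$ is the standard interpretation of $d_2$-differentials in the totalization spectral sequence associated to the two-sided bar construction, applied to the three-term cofibre approximation of $\MOn$ from \Cref{sec:MOnBar}; the single nontrivial input beyond this formalism is the $\F_2$-Hurewicz cancellation that drives the $n \equiv 1$ case above.
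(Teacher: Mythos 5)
Your proof is correct, and it takes a modestly different and cleaner route than the paper for this particular statement. The paper remarks that Proposition~\ref{prop:todabracket} ``follows from the proof of \Cref{thm:d2} below,'' and the argument buried there treats the three residue classes asymmetrically: for $n \equiv 0 \bmod 8$ it first establishes $(1 \otimes J)(x) = 0$ via a synthetic lifting argument (lifting through $\nu D_2(\R)$ using \Cref{lem:D2-lift} and the vanishing of certain bigraded synthetic homotopy groups) and then deduces $c(x) = 0$ from $m(x) = 0$ together with the injectivity of $\hat m_*$ (from \Cref{prop:D2-split}); for $n \equiv 1 \bmod 8$ it reverses this, first showing $c_*(x) = 0$ via the Hurewicz/Adams-filtration argument and then deducing $(1 \otimes J)(x) = 0$. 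You uniformize this: always prove $c_* \colon \pi_{2n-1}\R^{\otimes 2} \to \pi_{2n-1} D_2(\R)$ vanishes (by inspection of \Cref{prop:Otens2} and \Cref{prop:D2o-E2}: source trivial when $n \equiv 4$, target trivial when $n \equiv 0$, Hurewicz cancellation $y_{n-1}\otimes y_n + y_n\otimes y_{n-1} \mapsto 2\,y_{n-1}\cdot y_n = 0$ when $n \equiv 1$), and then read off $(1\otimes J)(x) = m(x) = \hat m(c(x)) = 0$ from the factorization $m = \hat m \circ c$ and the hypothesis. This is more self-contained for the proposition at hand since it never needs the synthetic category or \Cref{lem:D2-lift} (and it does not even need the injectivity of $\hat m_*$, only the factorization), whereas the paper's version piggybacks on the synthetic proof of \Cref{thm:d2} because the stronger filtration statements there are needed anyway. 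Your identification of the resulting composite with a representative of $d_2(x)$ is the same formal step the paper takes, relying on the $\mathbb{E}_\infty$-structure of $J$ and the bar-resolution description of $\MOn$ from \cite[Theorem 5.2]{bhs}.
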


The proof that the map $\Ss^{2n} \to \Ss$ determined by the above diagram computes $d_2 (x)$ under the assumption that $c(x) = (1 \otimes J) (x) = 0$ is given in \cite[Proof of Proposition 5.4 and Proposition 5.5]{inertia}.

To produce a lower bound on the $\F_2$-Adams filtration of $d_2 (x)$, it suffices to construct a suitable $\F_2$-synthetic lift of the above diagram.
Replacing $\R$ with a $(2n+1)$-skeleton $N_2$, we may form the following synthetic lift of the bottom right hand part of the diagram:
\begin{center}
  \begin{tikzcd}
    & & \Sigma^{0,M_2} \nu N_2 \otimes \Ss^{0,0} \ar[dr, "\wt{J} \otimes 1"] \\
    & (\Sigma^{0,M_2} \nu N_2) ^{\otimes 2} \ar[d, "c"] \ar[ur, "1 \otimes \wt{J}"] \ar[rr, "\wt{J} \otimes \wt{J}"{name=E}] \ar[phantom, from=1-3, to=E, "(\otimes)"] & & (\Ss^{0,0})^{\otimes 2} \ar[dl, "c"] \ar[dd, "\simeq"] \\
    & D_2(\Sigma^{0,M_2} \nu N_2) \ar[urr, phantom, "(c)"] \ar[r, "D_2(\wt{J})"] & D_2(\Ss^{0,0}) \ar[dr, "\hat{m}"] \\
    & & & \Ss^{0,0}.    
  \end{tikzcd}
\end{center}

To prove \Cref{thm:d2}, it then suffices to find maps from suitable synthetic Moore spectra to $(\Sigma^{0,M_2} \nu N_2)^{\otimes 2}$ such that:
\begin{enumerate}
  \item the composites with $1 \otimes \wt{J}$ and $c$ are null;
  \item upon inverting $\tau$ and restricting to the bottom cell, one obtains a generator of $\pi_{2n-1} (N_2)^{\otimes 2} \cong \pi_{2n-1} (\R)^{\otimes 2}$.
\end{enumerate}

The following lemma will be useful in showing that the composite with $1 \otimes \wt{J}$ is null.

\begin{lem} \label{lem:D2-lift}
  Suppose that $x \in \pi_{t-s,t} \nu\R$ satisfies $t-s \leq 3n-4$ and the image of $\tau^{-1} x \in \pi_{t-s} \R$ under $\pi: \R\rightarrow \littleo$ is zero.
  Then $\tau x$ is the image of a class in $\pi_{(t-1)-(s-1),t-1} \nu D_2 (\R)$ under the map $\nu \hat{m} : \nu D_2 (\R) \to \nu \R$.
\end{lem}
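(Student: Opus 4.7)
The plan is to promote the splitting of \Cref{prop:D2-split} to the setting of synthetic spectra. In the range $* \leq 3n-4$, the underlying spectra fit into a cofiber sequence
\[
D_2(\R) \xrightarrow{\hat m} \R \xrightarrow{\pi} \littleo,
\]
where $D_2(\R) \simeq D_2(\littleo)$ via $D_2(\pi)$, and this sequence is split on homotopy groups by a spectrum-level map $\sigma : \littleo \to \R$ induced from $\Sigma^\infty$. Applying $\nu$ to $\hat m$ and $\sigma$ and summing yields a map of synthetic spectra
\[
\nu D_2(\R) \oplus \nu \littleo \longrightarrow \nu\R
\]
that inverts to the splitting isomorphism of \Cref{prop:D2-split} in the relevant range. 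Since we work $\wt 2$-adically, hence $\tau$-completely, this $\tau$-equivalence should give an honest isomorphism on $\pi_{t-s,t}$ for $t-s \leq 3n-4$.

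Under the resulting decomposition, I would write $x = x_1 + x_2$ with $x_1 \in \pi_{t-s,t} \nu D_2(\R)$ and $x_2 \in \pi_{t-s,t} \nu\littleo$. The hypothesis that $\tau^{-1} x$ is killed by $\pi : \R \to \littleo$ translates to $\tau^{-1} x_2 = 0$, so $x_2$ is a $\tau$-torsion class. If I can show that $\tau x_2 = 0$, then $\tau x = \tau x_1$ lies in the image of $\nu\hat m$, which is exactly the conclusion of the lemma.

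The core of the argument is therefore to verify that every $\tau$-torsion class in $\pi_{t-s,t}\nu\littleo$ is annihilated by a single multiplication by $\tau$, equivalently that the $\F_2$-Adams spectral sequence for $\littleo$ supports only $d_2$-differentials (and no longer ones) in the bidegrees of interest. This should follow from the description of $\H_*(\littleo;\F_2)$ recorded in \Cref{sec:D2}, whose low cells are joined only by $\Sq^1$ and $\Sq^2$ and are essentially pulled from $\mathrm{bo}$, together with a comparison to the Adams spectral sequence for $\mathrm{bo}$, which degenerates at $\mathrm{E}_2$.

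The main obstacle will be justifying the $\tau$-torsion claim in the third step. Constructing the synthetic splitting is straightforward once the spectrum-level map $\sigma$ is identified, but the Adams spectral sequence analysis must be carried out uniformly in the range $t-s \leq 3n-4$ and for each residue class $n \equiv 0, 1, 4 \mod 8$, since the cell structure of $\littleo$ varies with the residue. A careful comparison to $\mathrm{bo}$ via the $\mathrm{A}(1)$-module structure on the relevant cells should suffice to rule out $d_r$-differentials with $r \geq 3$ landing in the pertinent bidegrees.
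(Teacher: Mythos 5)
Your plan has a gap at the first step. The splitting in \Cref{prop:D2-split} is a splitting of homotopy groups only: the section is the composite $\pi_*\littleo \cong \pi_*\mathrm{O}\langle n-1\rangle \xrightarrow{\Sigma^\infty} \pi_*\R$, which is the unit of the $(\Sigma^\infty,\Omega^\infty)$-adjunction for the space $\mathrm{O}\langle n-1\rangle$ and is not induced by a map of spectra $\littleo \to \R$. There is therefore no $\sigma$ to which to apply $\nu$, and no direct-sum decomposition $\nu\R \simeq \nu D_2(\R) \oplus \nu\littleo$. Relatedly, $\tau$-completeness does not upgrade a $\tau$-equivalence to an isomorphism on $\pi_{*,*}$: multiplication by $\tau$ itself is a $\tau$-equivalence between $\tau$-complete synthetic spectra and is manifestly not an isomorphism.

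The deeper point is that the factor of $\tau$ in the conclusion is forced by the failure of $\nu$ to preserve cofiber sequences, which a direct-sum picture cannot detect. The paper's argument applies $\nu$ to the cofiber sequence $\tau_{\leq 3n-4}D_2(\R)\to\tau_{\leq 3n-4}\R\xrightarrow{\pi}\tau_{\leq 3n-4}\littleo$; the fiber $F$ of $\nu\pi$ receives a map from $\nu D_2(\R)$ whose cofiber $E$ is a $C\tau$-module. Since the $\F_2$-Adams spectral sequence for $\littleo$ collapses at $\mathrm{E}_2$ (a stronger fact than the restriction to $d_2$-differentials you reach for), $\pi_{*,*}\nu\littleo$ is $\tau$-torsion-free, so the hypothesis shows that $x$ lifts to $F$; a single multiplication by $\tau$ then kills its image in the $C\tau$-module $E$, so $\tau x$ lifts further to $\nu D_2(\R)$. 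Had your putative decomposition existed, the same $\tau$-torsion-freeness would give $x_2 = 0$ and hence that $x$ itself, not merely $\tau x$, lies in the image of $\nu\hat{m}$; the $\tau$ in the statement is there precisely to absorb the $C\tau$-module error $E$, which your approach has no mechanism to produce.
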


\begin{proof}
  It follows from \Cref{prop:D2-split} and \cite[Example 4.9]{inertia} that the sequence
  \[D_2 (\R) \xrightarrow{\hat{m}} \R \xrightarrow{\pi} \littleo\]
  yields the following cofiber sequence upon applying $\tau_{\leq 3n-4}$:
  \[\tau_{\leq 3n-4} D_2 (\R) \xrightarrow{\hat{m}} \tau_{\leq 3n-4} \R \xrightarrow{\pi} \tau_{\leq 3n-4} \littleo.\]
  Applying $\nu$, we obtain a diagram
  \begin{center}
    \begin{tikzcd}
      \nu \tau_{\leq 3n-4} D_2(\R) \ar[r] & F \ar[r]\ar[d] & \nu \tau_{\leq 3n-4} \R \ar[r,"\nu\pi"] & \nu \tau_{\leq 3n-4} \littleo,\\&  E
    \end{tikzcd}
  \end{center}
  where $F$ is the fiber of the rightmost map $\nu \pi$ and $E$ is the cofiber of the leftmost map.
  Applying \cite[Lemma 11.15]{bhs}, we learn that $E$ is a $C\tau$-module.
 
 It follows from \cite[Proposition 4.16]{inertia} that the map $\nu X \to \nu \tau_{\leq 3n-4} X$ induces an isomorphism on $\pi_{t-s,t}$ for $t-s \leq 3n-4$.
  Note that the $\F_2$-Adams spectral sequence for $\littleo$ degenerates at the $\mathrm{E}_2$-page,\footnote{By Bott periodicity, this follows from degeneration for $\mathrm{ko}$, $\tau_{\geq 1} \mathrm{ko} = \mathrm{bo}$, $\tau_{\geq 2} \mathrm{ko} = \mathrm{bso}$, and $\tau_{\geq 4} \mathrm{ko} = \mathrm{bspin}$. For $\mathrm{ko}$, this is proven in \cite[pp. 64-66]{GreenBook}. The other cases may be deduced similarly from the form of the $\mathrm{E}_2$-page and the $\mathrm{ko}$-module structure. The $\mathrm{E}_2$-page may be computed using the long exact sequences on $\mathrm{Ext}$ associated to the short exact sequences in cohomology described in \cite[Example 2.31]{tmfAdamsBook}. Note that \cite{GreenBook} uses $\mathrm{bo}$ for what we call $\mathrm{ko}$--when we use $\mathrm{bo}$, it refers to the spectrum associated to the infinite loop space $\mathrm{BO}$.} which implies that $\pi_{*,*} \nu \littleo$ is $\tau$-torsion-free.  Hence the map $x: S^{t-s,t}\rightarrow \nu \tau_{\leq 3n-4} \R$ lifts to a map to the cofiber $F$. Since $E$ is a $C\tau$-module, the $\tau$-multiple of the further composition of this lift with $F\rightarrow E$ is null, so it lifts to the fiber of $F\rightarrow E$ as desired.
%
\end{proof}

We are now ready to prove \Cref{thm:d2}. \Cref{thm:d2}(3) is trivial.

\begin{proof}[Proof of \Cref{thm:d2}(1)]
  Suppose that $n \equiv 0 \mod 8$. Let $y_{n-1} \in \pi_{n-1} \R \cong \Z$ denote a generator. It follows from \Cref{prop:Otens2} that a generator of $\pi_{2n-1} \R^{\otimes 2} \cong \Z/2\Z$ is given by $x = \eta (y_{n-1} \otimes y_{n-1})$.
  Assume that this is a cycle for the $d_1$-differential in the bar spectral sequence, i.e. $(m - 1 \otimes J) (x) = 0$.

  The class $x$ may be represented as the restriction to the bottom cell of the $\F_2$-synthetic map
  \[\wt{x} : \Ss^{2n-1, 2n+2M_2} / \wt{2} \xrightarrow{\wt{\eta}} \Ss^{2n-2,2n-2+2M_2} \xrightarrow{\nu y_{n-1} \otimes \nu y_{n-1}} (\Sigma^{0,M_2} \nu N_2)^{\otimes 2},\]
  where we have implicitly made use of the $\F_2$-synthetic relation $\wt{2} \wt{\eta} = 0$ (this element lives in $\pi_{1,3} (\Ss^{0,0}) = 0$)\cite[Proposition A.20]{bhs}.

  We claim that the following statements hold:
  \begin{enumerate}
    \item[(i)] the map $\tau \cdot  (1 \otimes \wt{J})(\wt{x}) : \Ss^{2n-1, 2n-1+2M_2} / \wt{2} \to \Sigma^{0,M_2} \nu N_2$ is nullhomotopic.
    \item[(ii)] the composite map
        \[\Ss^{2n-1, 2n+2M_2-4} / \wt{4} \to \Ss^{2n-1, 2n+2M_2-4}/ \wt{2} \xrightarrow{\tau^4 \cdot \wt{x}} (\Sigma^{0,M_2} \nu N_2)^{\otimes 2} \xrightarrow{c} D_2 (\Sigma^{0,M_2} \nu N_2)\]
      is nullhomotopic.
  \end{enumerate}
  Here the synthetic lift $\wt{4}$ of 4 is chosen so that $\wt{2}\cdot\wt{2}$.
  Given (i) and (ii), we may form the diagram
  \begin{center}
  \begin{tikzcd}
    \Ss^{2n-1,2n+2M_2-4} / \wt{4} \ar[dr, "\tau^4 \cdot \wt{x}"] \ar[rr, "0"{name=D}] \ar[ddr, bend right, "0"{name=C} left] & & (\Sigma^{0,M_2} \nu N_2) \otimes \Ss \ar[dr, "\wt{J} \otimes 1"] \\
    & (\Sigma^{0,M_2} \nu N_2)^{\otimes 2}  \ar[Leftrightarrow, from=C, ""] \ar[Leftrightarrow, from=D, ""] \ar[d, "c"] \ar[ur, "1 \otimes \wt{J}"] \ar[rr, "\wt{J} \otimes \wt{J}"{name=E}] \ar[phantom, from=1-3, to=E, "(\otimes)"] & & (\Ss^{0,0})^{\otimes 2} \ar[dl, "c"] \ar[dd, "\simeq"] \\
    & D_2(\Sigma^{0,M_2} \nu N_2) \ar[urr, phantom, "(c)"] \ar[r, "D_2(\wt{J})"] & D_2(\Ss^{0,0}) \ar[dr, "\hat{m}"] \\
    & & & \Ss^{0,0}.   
  \end{tikzcd}
  \end{center}
 The associated class $\Ss^{2n, 2n+2M_2-4} / \wt{4} \to \Ss^{0,0}$ determines a representative for $d_2 (x)$ after inverting $\tau$ and composing with the inclusion of the bottom cell, as desired.

  We begin by proving statement (i). First, we note that the composite
  \[\Ss^{2n-2} \xrightarrow{y_{n-1} \otimes y_{n-1}} (N_2) ^{\otimes 2} \xrightarrow{1 \otimes J} N_2 \to \R \xrightarrow{\pi} \littleo\]
  is null because $\pi_{2n-2} \littleo = 0$.
  It follows from \Cref{lem:D2-lift} that $$\tau \cdot (1 \otimes \wt{J}) (\nu y_{n-1} \otimes \nu y_{n-1}) \in \pi_{2n-2, 2n-2+2M_2-1} (\Sigma^{0,M_2} \nu N_2) \cong \pi_{2n-2, 2n-2+M_2-1} \nu \R$$ lifts to $$\wt{z} \in \pi_{2n-2, 2n-2+M_2-1} \nu D_2 (\R).$$
  In the above, note that the isomorphism $$\pi_{2n-2, 2n-2+2M_2-1} (\Sigma^{0,M_2} \nu N_2) \cong \pi_{2n-2, 2n-2+M_2-1} \nu \R$$ is a consequence of \cite[Proposition 4.16]{inertia} and the fact that $N_2 \to \R$ is a $(2n+1)$-skeleton.

Recall that the natural map $D_2(\R) \to D_2(\littleo)$ induces an isomorphism on $\pi_*$ for $*\leq 3n-4$.   It therefore follows from \Cref{prop:D2o-E2} and the $\tau$-Bockstein spectral sequence that $\pi_{2n-2,2n-2 + k} \nu D_2 (\R) = 0$ for all $k > 0$. It follows that $\wt{z} = 0$ as long as $M_2 = h(n-1) -\lfloor \log_2 (2n+2)\rfloor + 1 > 1$, which holds by the assumption $n \geq 16$.
  As a consequence, we have $\tau \cdot (1 \otimes \wt{J}) (\nu y_{n-1} \otimes \nu y_{n-1}) = 0$. Composing with $\wt{\eta} : \Ss^{2n-1, 2n-1+2M_2} / \wt{2} \to \Ss^{2n-2, 2n-2+2M_2-1}$, we obtain (i).

  We now move on to the proof of (ii). By (i) and the assumption that $(m - 1 \otimes J) (x) = 0$, we have $m (x) = 0$.
  Factoring $m = \hat{m} \circ c$ and applying \Cref{prop:D2-split}, we find that
  $$c (x) = 0 \in \pi_{2n-1} D_2 (N_2).$$

  Using \Cref{D2}, we find that
  \[\pi_{2n-1,2n-2+2M_2} D_2 (\Sigma^{0,M_2} \nu N_2) \to \pi_{2n-1} D_2 (N_2)\]
  is an isomorphism.
  We conclude that the composition
  \[\Ss^{2n-1, 2n+2M_2-2} \to \Ss^{2n-1, 2n+2M_2-2}/ \wt{2} \xrightarrow{\tau^2 \cdot \wt{x}} (\Sigma^{0,M_2} \nu N_2)^{\otimes 2} \xrightarrow{c} D_2 (\Sigma^{0,M_2} \nu N_2)\]
  is nullhomotopic. As a result, we obtain a factorization through the top cell
  \begin{center}
    \begin{tikzcd}[column sep = large]
      \Ss^{2n-1, 2n+2M_2-2}/ \wt{2} \ar[d] \ar[r, "\tau^2 \cdot c(\wt{x})"] &  D_2 (\Sigma^{0,M_2} \nu N_2) \\
      \Ss^{2n, 2n+2M_2-1}. \ar[ur, "\wt{w}"]
    \end{tikzcd}
  \end{center}
  Since
  \[\pi_{2n,2n-2+2M_2} D_2 (\Sigma^{0,M_2} \nu N_2) \to \pi_{2n} D_2 ( N_2) \cong \Z/2\Z\]
  is an isomorphism by \Cref{D2}, we conclude that $\wt{w}$ is $2 \tau = \wt{2} \tau^2$-torsion.

  This implies that the composition
  \[\Ss^{2n-1, 2n+2M_2-4} / \wt{4} \to \Ss^{2n-1, 2n+2M_2-4}/ \wt{2} \xrightarrow{\tau^4 \cdot \wt{x}} (\Sigma^{0,M_2} \nu N_2)^{\otimes 2} \xrightarrow{c} D_2 (\Sigma^{0,M_2} \nu N_2)\]
  is nullhomotopic, as desired.
\end{proof}

\begin{proof}[Proof of \Cref{thm:d2}(2)]
  We assume that $n \equiv 1 \mod 8$.
  Suppose that we are given $x \in \pi_{2n-1} (\R^{\otimes 2}) \cong \Z / 4 \Z$ which satisfies $(m - 1 \otimes J) (x) = 0$.
  It follows from the degeneration of the $\F_2$-Adams spectral sequence for $\R^{\otimes 2}$ in the relevant range (see \Cref{prop:Otens2}) that $x$ lifts uniquely to a $\wt{4}$-torsion class $\wt{x} \in \pi_{2n-1, 2n-1} \nu \R^{\otimes 2}$.
  Since $N_2$ is a $(2n+1)$-skeleton of $R$, it therefore follows from \cite[Proposition 4.16]{inertia} that $x$ lifts to a $\wt{4}$-torsion class in $\pi_{2n-1, 2n-1+2M_2} (\Sigma^{0,M_2} \nu N_2)^{\otimes 2}$, and hence to a map $\wt{x} : \Ss^{2n-1,2n-1+2M_2} / \wt{4} \to (\Sigma^{0,M_2} \nu N_2)^{\otimes 2}$.
  Then we claim that the following statements hold:
  \begin{enumerate}
    \item[(i)] The composite
      \[\Ss^{2n-1, 2n-2 + 2M_2} / \wt{4} \xrightarrow{\tau^2 \cdot \wt{x}} (\Sigma^{0,M_2} \nu N_2)^{\otimes 2} \xrightarrow{1 \otimes \wt{J}} \Sigma^{0,M_2} \nu N_2\]
      is nullhomotopic.
    \item[(ii)] The composite
      \[\Ss^{2n-1, 2n-5+2M_2} / \wt{8} \to \Ss^{2n-1, 2n-5+2M_2} / \wt{4} \xrightarrow{\tau^4 \cdot \wt{x}} (\Sigma^{0,M_2} \nu N_2)^{\otimes 2} \xrightarrow{c} D_2 (\Sigma^{0,M_2} \nu N_2)\]
      is nullhomotopic.
  \end{enumerate}
  As in the proof of \Cref{thm:d2}(1) above, combining these two claims yields the desired result. Indeed, we may then form the diagram
  \begin{center}
  \begin{tikzcd}
    \Ss^{2n-1,2n+2M_2-5} / \wt{8} \ar[dr, "\tau^4 \cdot \wt{x}"] \ar[rr, "0"{name=D}] \ar[ddr, bend right, "0"{name=C} left] & & \R \otimes \Ss \ar[dr, "\wt{J} \otimes 1"] \\
    & (\Sigma^{0,M_2} \nu N_2)^{\otimes 2}  \ar[Leftrightarrow, from=C, ""] \ar[Leftrightarrow, from=D, ""] \ar[d, "c"] \ar[ur, "1 \otimes \wt{J}"] \ar[rr, "\wt{J} \otimes \wt{J}"{name=E}] \ar[phantom, from=1-3, to=E, "(\otimes)"] & & (\Ss^{0,0})^{\otimes 2} \ar[dl, "c"] \ar[dd, "\simeq"] \\
    & D_2(\Sigma^{0,M_2} \nu N_2) \ar[urr, phantom, "(c)"] \ar[r, "D_2(\wt{J})"] & D_2(\Ss^{0,0}) \ar[dr, "\hat{m}"] \\
    & & & \Ss^{0,0},    
  \end{tikzcd}
  \end{center}
  which gives rise to a map $\Ss^{2n,2n+2M_2-5} / \wt{8} \to \Ss^{0,0}$ that yields a representative for $d_2 (x)$ upon inverting $\tau$ and restricting to the bottom cell.

  First, we prove statement (ii).
We start by showing that $\pi_{2n-1} (c) : \pi_{2n-1} \R^{\otimes 2} \to \pi_{2n-1} D_2 (\R)$ is trivial.
  Since the $\F_2$-Adams spectral sequence for $\pi_{2n-1} D_2 (\R)$ is concentrated in filtration zero by \Cref{prop:D2o-E2}, it suffices to show that the composite
  $\pi_{2n-1} \R^{\otimes 2} \to \pi_{2n-1} D_2 (\R) \to \H_{2n-1} (D_2 (\R); \F_2)$
  is zero.
  But it follows from \Cref{prop:Otens2} that the image of the Hurewicz map $\pi_{2n-1} (\R^{\otimes 2}) \to \H_{2n-1} (\R^{\otimes 2} ; \F_2)$ is spanned by $y_{n-1} \otimes y_n + y_n \otimes y_{n-1}$.
  The image of this class in $\H_{2n-1} (D_2 (\R) ; \F_2)$ is $2 y_{n-1} \cdot y_n = 0$.
  Hence $\pi_{2n-1} (c) = 0$ as desired.

 Using the isomorphism   
  \[\pi_{t-s,t} D_2 (\Sigma^{0,M_2} \nu N_2) \to \pi_{t-s} D_2 (N_2)\]
for $t \leq 2n-2+2M_2$ (\Cref{D2}),
  we deduce from the above that the composite
  \[\Ss^{2n-1, 2n-2+2M_2} \to \Ss^{2n-1, 2n-2+2M_2} / \wt{4} \xrightarrow{\tau \cdot \wt{x}} (\Sigma^{0,M_2} \nu N_2)^{\otimes 2} \xrightarrow{c} D_2 (\Sigma^{0,M_2} \nu N_2) \]
  is nullhomotopic. Hence there is a factorization through the top cell
  \begin{center}
    \begin{tikzcd}[column sep = large]
      \Ss^{2n-1, 2n+2M_2-2}/ \wt{4} \ar[d] \ar[r, "\tau \cdot c(\wt{x})"] &  D_2 (\Sigma^{0,M_2} \nu N_2) \\
      \Ss^{2n, 2n+2M_2}. \ar[ur, "\wt{w}"]
    \end{tikzcd}
  \end{center}
  Since $\pi_{2n} D_2 (\R) = \Z/2\Z$ by \Cref{prop:D2o-E2}, it follows from \Cref{D2} that
  \[2 \tau^2 \cdot \wt{w} = \wt{2} \tau^{3} \cdot \wt{w} = 0 \in \pi_{2n, 2n+2M_2-3} D_2 (\Sigma^{0,M_2} \nu N_2).\]
  As a consequence, we conclude that
    \[\Ss^{2n-1, 2n-5+2M_2} / \wt{8} \to \Ss^{2n-1, 2n-5+2M_2} / \wt{4} \xrightarrow{\tau^4 \cdot \wt{x}} (\Sigma^{0,M_2} \nu N_2)^{\otimes 2} \xrightarrow{c} D_2 (\Sigma^{0,M_2} \nu N_2)\]
  is nullhomotopic, as desired.

  Let us now prove statement (i).
  We begin by noting that the composition
  \[\Ss^{2n-1, 2n-1+2M_2} \to \Ss^{2n-1, 2n-1+2M_2} / \wt{4} \xrightarrow{\wt{x}} (\Sigma^{0,M_2} \nu N_2)^{\otimes 2} \xrightarrow{1 \otimes \wt{J}} \Sigma^{0,M_2} \nu N_2\]
  is nullhomotopic after inverting $\tau$.
  Indeed, after inverting $\tau$ this is just $(1 \otimes J) (x)$, which is zero by (ii) and the fact that $(m - 1 \otimes J) (x) = 0$.
  It therefore follows from \Cref{lem:D2-lift} that $\tau$ times the above composition lifts to an element of $\pi_{2n-1, 2n-2+2M_2} \Sigma^{0,M_2} \nu D_2 (N_2) \cong \pi_{2n-1, 2n-2+M_2} \nu D_2 (N_2)$.
  
Note that \Cref{prop:D2o-E2} implies that $\pi_{2n-1, 2n-1+k} \nu D_2 (N_2) = 0$ for all $k > 0$. Since $M_2 > 1$ by the assumption that $n \geq 17$, we conclude that 
  \[\Ss^{2n-1, 2n-2+2M_2} \to \Ss^{2n-1, 2n-2+2M_2} / \wt{4} \xrightarrow{\tau\cdot \wt{x}} (\Sigma^{0,M_2} \nu N_2)^{\otimes 2} \xrightarrow{1 \otimes \wt{J}} \Sigma^{0,M_2} \nu N_2\]
  is nullhomotopic. Therefore we obtain a factorization through the top cell
  \begin{center}
    \begin{tikzcd}[column sep = large]
      \Ss^{2n-1, 2n+2M_2-2}/ \wt{4} \ar[d] \ar[r, "\tau \cdot (1 \otimes \wt{J}) (\wt{x})"] &  \Sigma^{0,M_2} \nu N_2 \\
      \Ss^{2n, 2n+2M_2}. \ar[ur, "\wt{w}"]
    \end{tikzcd}
  \end{center}
  Since the composition
  \[\Ss^{2n, 2n+2M_2} \xrightarrow{\wt{w}} \Sigma^{0,M_2} \nu N_2 \xrightarrow{\nu \pi} \Sigma^{0,M_2} \nu \littleo\]
  is null after inverting $\tau$ because $\pi_{2n} \littleo = 0$,
  it follows from \Cref{lem:D2-lift}  that $\tau\cdot \wt{w}$ lifts to an element of $\pi_{2n,2n+2M_2-1} \Sigma^{0,M_2} \nu D_2 (N_2) \cong \pi_{2n,2n+M_2-1} \nu D_2 (N_2)$.
  From \Cref{prop:D2o-E2} we know that $\pi_{2n, 2n +k} \nu D_2 (N_2) = 0$ for all $k > 1$, so $\tau \cdot \wt{w} = 0${} as soon as $M_2 > 2$, which holds by the assumption that $n \geq 17$.

  In conclusion, we have found that
  \[\tau^{2} \cdot (1 \otimes \wt{J}) (\wt{x}) : \Ss^{2n-1, 2n+2M_2-3} / \wt{4} \to \Sigma^{0,M_2} \nu N_2\]
  is nullhomotopic, as desired.
\end{proof}

\section{Applications of vanishing lines} \label{sec:vanishing}
In this section, we combine the results of \Cref{sec:lower} with Chang's improved $v_1$-banded vanishing line for the mod 2 Moore spectrum to prove \Cref{thm:ker-main} when $n \equiv 0 ,1 \mod 8$ and $n \geq 25$. We will deal with the remaining cases in \Cref{sec:exceptional}.
%
%

We begin by recalling how $v_1$-banded vanishing lines help us prove that classes lie in the image of $J$.
In the first place, we recall the definition of a $v_1$-banded vanishing line from \cite[Section 13]{bhs}.

%

 \begin{definition}
   Let $X$ denote a synthetic spectrum.
   Given a real number $a$, we write $F^a \pi_k (\tau^{-1} X)$ for the image of $\pi_{k, k+\lceil a \rceil} X \to \pi_k (\tau^{-1} X)$.
   We say that $X$ has a \textit{$v_1$-banded vanishing line with parameters} $(b\leq d, v,m,c,r)$ with $m<\frac{1}{2}$ if the following conditions hold:
   \begin{enumerate}
     \item every $\tau$-power torsion class in $\pi_{k, k+s} (X)$ is $\tau^r$-torsion for $s \geq mk+c$ nd $k\geq v$,
     \item the natural map
       \[F^{\frac{1}{2}k+b} \pi_k (\tau^{-1} X) \to F^{mk+c} \pi_k (\tau^{-1} X)\]
       is an isomorphism for $k \geq v$,
     \item the composite
       \[F^{\frac{1}{2} k + b} \pi_k (\tau^{-1} X) \to \pi_k (\tau^{-1} X) \to \pi_k (L_{K(1)} \tau^{-1} X)\]
       is an equivalence for $k \geq v$,
     \item $\pi_{k,k+s} (X) = 0$ for $s > \frac{1}{2} k + d$.
   \end{enumerate}
%
%
%
\end{definition}

We refer the reader to \cite[Section 13]{bhs} for more details on this complicated definition.
However, we note that all we will use of this definition in this work is part (2).

\begin{prop} \label{prop:AF-J}
  Suppose that $\Ss^{0,0} / \wt{2^l}$ admits a vanishing line with parameters $(b \leq d,v,m,c,r)$.
  Then, given a map $\Ss^{k,k+s} / \wt{2^l} \to \Ss^{0,0}$, the element of $\pi_k \Ss$ obtained by inverting $\tau$ and restricting to the bottom cell lies in the subgroup of $\pi_{k} \Ss$ generated by the image of $J$ and the $\mu$-family\footnote{See \cite[Theorem 1.2]{AdamsJIV} for the definition of the $\mu$-family.} if we assume:
  \begin{enumerate}
    \item $s + l - 1 \geq m(k+1) + c$.
    \item $k + 1 \geq v$.
    \item $\frac{1}{2}(k+1)+b-l+1 \geq \frac{3}{10} k + 4 + v_2(k+2) + v_2 (k+1)$.
  \end{enumerate}
  Here $v_2 (n)$ is the $2$-adic valuation of $n$.
\end{prop}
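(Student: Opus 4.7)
The plan is to combine $\F_2$-synthetic Spanier--Whitehead self-duality of $\Ss^{0,0}/\wt{2^l}$ with the three parts of the vanishing line hypothesis. Using the equivalence $D(\Ss^{0,0}/\wt{2^l}) \simeq \Sigma^{-1,-l}\Ss^{0,0}/\wt{2^l}$, I would first reinterpret the map $f : \Ss^{k,k+s}/\wt{2^l} \to \Ss^{0,0}$ as a class $\alpha \in \pi_{k+1,k+s+l}(\Ss^{0,0}/\wt{2^l})$ of intrinsic $\F_2$-Adams filtration $s + l - 1$. Under this duality, the bottom-cell restriction of $f$ corresponds precisely to the top-cell projection of $\alpha$ along $\Ss^{0,0}/\wt{2^l} \to \Ss^{1,l}$, so $\tau^{-1}\bar f \in \pi_k\Ss$ is recovered from $\tau^{-1}\alpha \in \pi_{k+1}(\Ss/2^l)$ by the top-cell projection $\Ss/2^l \to \Sigma\Ss$, which drops degree by one and Adams filtration by $l-1$.

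Next, I would apply the hypotheses to push $\tau^{-1}\alpha$ up the vanishing line. By condition (1), $s + l - 1 \geq \tfrac{1}{5}(k+1) + c = m(k+1) + c$, so $\tau^{-1}\alpha$ lies in $F^{m(k+1)+c}\pi_{k+1}(\tau^{-1}\Ss/\wt{2^l})$. Condition (2) ensures $k+1 \geq v$, so part (2) of the $v_1$-banded vanishing line definition promotes this to $F^{\frac{1}{2}(k+1)+b}\pi_{k+1}(\tau^{-1}\Ss/\wt{2^l})$. Part (3) of the definition then identifies this subgroup isomorphically with its image in $\pi_{k+1}(L_{K(1)}(\Ss/2^l))$. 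Since the $K(1)$-local sphere has its homotopy generated by the image of $J$ and the $\mu$-family, the class $\tau^{-1}\alpha$ is a $K(1)$-local combination of such classes; projecting to the top cell gives $\tau^{-1}\bar f \in \pi_k\Ss$ at Adams filtration at least $\tfrac{1}{2}(k+1) + b - l + 1$.

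Finally, condition (3) rearranges to the statement that this filtration dominates $\tfrac{3}{10}k + 4 + v_2(k+2) + v_2(k+1)$, which by Mahowald's $\tfrac{3}{10}$-slope vanishing line for $\coker J$, together with the known $\F_2$-Adams filtrations of image-of-$J$ and $\mu$-family generators in nearby degrees, is exactly the threshold above which $\pi_k\Ss$ is spanned by the image of the classical $J$-homomorphism and the $\mu$-family. This forces $\tau^{-1}\bar f$ into that subgroup. The main obstacle I anticipate is the last step: verifying that the precise numerical bound of condition (3)---including the two $v_2$ terms---is genuinely the correct threshold after the degree shift and filtration drop caused by the top-cell projection, and that no residual cokernel-of-$J$ class in $\pi_k\Ss$ can sneak into the relevant filtration range.
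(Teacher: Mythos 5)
Your proposal follows essentially the same route as the paper's proof: dualize the map to a class $\alpha \in \pi_{k+1,k+s+l}(\Ss^{0,0}/\wt{2^l})$, use hypotheses (1) and (2) together with part (2) of the vanishing-line definition to lift $\alpha$ to higher $\F_2$-Adams filtration, project to the bottom cell (top cell of the dual) to obtain an element of $\pi_k\Ss$ with Adams filtration at least $\tfrac{1}{2}(k+1)+b-l+1$, and then invoke condition (3) to place this element in the image of $J$ plus the $\mu$-family. Two remarks.

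First, the detour through part (3) of the vanishing-line definition and $K(1)$-localization is unnecessary, and in fact slightly misleading: you appeal to it but then do not use it to derive any conclusion — the Adams filtration bound you need already follows from part (2) of the definition and the synthetic lift, exactly as in the paper's argument. The clean version simply says: part (2) of the definition lets you rewrite the dual class as one of filtration $\tfrac{1}{2}(k+1)+b$, then the top-cell projection drops filtration by $l-1$, giving filtration at least $\tfrac{1}{2}(k+1)+b-l+1$.

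Second, the ``obstacle'' you anticipate in the final step is not actually an obstacle. The bound $\tfrac{3}{10}k + 4 + v_2(k+2) + v_2(k+1)$ is precisely the threshold established by Davis and Mahowald in \cite{DM3}: they prove that any class in $\pi_k\Ss$ of $\F_2$-Adams filtration at least this value lies in the subgroup generated by the image of $J$ and the $\mu$-family. Condition (3) is engineered so that $\tfrac{1}{2}(k+1)+b-l+1$ clears this threshold, so no further verification of the $v_2$ terms or of ``residual $\coker J$ classes sneaking in'' is required. With those two corrections your argument matches the paper's.
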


\begin{proof}
  Under Spanier--Whitehead duality, maps $\Ss^{k,k+s} / \wt{2^l} \to \Ss^{0,0}$ correspond to elements of $\pi_{k+1,k+s+l} (\Ss^{0,0} / \wt{2^l})$.
  Using this correspondence, part (2) of the definition of a $v_1$-banded vanishing line and assumptions (1) and (2) above imply that the underlying map of $\Ss^{k,k+s} / \wt{2^l} \to \Ss^{0,0}$ is represented by a map $\Ss^{k, k +\lceil \frac{1}{2}(k+1)+b-l+1\rceil} / \wt{2^l} \to \Ss^{0,0}$.
  Restricting to the bottom cell, we obtain an element of $\pi_{k, k + \lceil \frac{1}{2}(k+1) +b-l+1 \rceil} \Ss^{0,0}$.
  In particular, the underlying element in $\pi_k \Ss$ is of $\F_2$-Adams filtration at least $\frac{1}{2}(k+1) +b-l+1$.
  Davis and Mahowald have proven in \cite{DM3} that any element of $\pi_k \Ss$ which has $\F_2$-Adams filtration at least $\frac{3}{10} k + 4 + v_2(k+2) + v_2 (k+1)$, where $v_2$ is the $2$-adic valuation, lies in the subgroup of $\pi_k (\Ss)$ generated by the image of $J$ and the $\mu$-family, so we are done by assumption (3).
\end{proof}
%
%
%

\begin{rmk}\label{rmk:ignore-mu}
  When applying the above proposition to elements in the kernel of $\pi_{2n} \Ss \to \pi_{2n} \MOn$ with $n \geq 4$, we may ignore the $\mu$-family.
  This is because the Atiyah--Bott--Shapiro orientation \cite{ABS}
  \[\mathrm{MO}\langle4\rangle = \mathrm{MSpin} \to \ko\]
  kills the image of $J$ but is injective on the $\mu$-family, so elements in the kernel of $\pi_{2n} \Ss \to \pi_{2n} \MOn$ which lie in the subgroup of $\pi_{2n} (\Ss)$ generated by the image of $J$ and the $\mu$-family must in fact lie in the image of $J$.
\end{rmk}

To apply \Cref{prop:AF-J}, we require $v_1$-banded vanishing lines on synthetic Moore spectra with explicit parameters.
Certain parameters for such vanishing lines are given in \cite[Proposition 15.8]{bhs}.
However, we require more optimized parameters than the ones proven there.
We will therefore make use of the following theorem of Chang, which gives improved parameters for a $v_1$-banded vanishing line on $\Ss^{0,0} / \wt{2}$.
%

\begin{theorem}[{{\cite[Theorem 4]{chang}\label{chang}}}]
  The synthetic spectrum $\Ss^{0,0}/ \wt{2}$ admits a $v_1$-banded vanishing line with parameters
  $$(b\leq d, v,m,c,r)=(-1.5\leq 1, 25,\frac{1}{5},5,3).$$
\end{theorem}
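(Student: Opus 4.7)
The plan is to establish the $v_1$-banded vanishing line on $\Ss^{0,0}/\wt{2}$ by combining a $\mathrm{bo}$-Adams resolution, in the style of Mahowald, with an analysis in the $\F_2$-synthetic category. The synthetic framework is essential because the parameters $(b,d,v,m,c,r)$ simultaneously encode Adams filtration and $\tau$-Bockstein order, and the $\F_2$-synthetic Adams spectral sequence is designed to track both at once. Since $\Ss/2$ is annihilated by $2$, smashing with $\mathrm{bo}$ gives a very well-behaved theory whose cooperations $\mathrm{bo}_*(\mathrm{bo}/2)$ are computable in closed form via the work of Mahowald and Milgram.

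The first step is to build a $\mathrm{bo}$-Adams resolution of $\nu(\Ss/2)$ in synthetic spectra and compute its $E_1$-page using the explicit structure of $\mathrm{bo} \wedge \mathrm{bo}$ together with the classical computation of $\pi_* \mathrm{bo}/2$. The associated graded in this resolution is built from suspensions of splitting summands of $\mathrm{bo} \wedge \mathrm{bo} \wedge \cdots \wedge \mathrm{bo}$, each of which contributes homotopy classes along a line of slope $1/4$. In the $s$-th stage of the resolution, a class of Adams filtration $\sigma$ in $\pi_k$ contributes to total Adams filtration $s+\sigma$ in stem $k$, so combining across all stages gives a vanishing line of slope $1/5$ (the ``Mahowald slope'') with some intercept depending on finite inputs. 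This produces the parameters $m=1/5$, $c=5$, and $r=3$, where the last reflects the fact that above the line every class is $v_1$-periodic and hence annihilated by a small power of $\tau$ in the synthetic Adams spectral sequence.

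The second step is to identify the region of slope $\tfrac{1}{2}k+b$ with $L_{K(1)}$-local information. Above this line, all classes should be detected by the Adams--Baird--Bousfield description of $L_{K(1)}(\Ss/2)$, i.e.\ they are $v_1$-periodic lifts of elements in $\mathrm{im}(J)$ for $\Ss/2$. This yields conditions (2) and (3) in the definition. Condition (4), the $d=1$ bound, follows from the standard slope-$1/2$ vanishing edge of the Adams $E_2$-page for $\Ss/2$, which is a classical consequence of May's vanishing theorem applied to $\mathrm{Ext}_{\mathcal{A}}(\F_2 \oplus \Sigma \F_2, \F_2)$.

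The main obstacle, and the source of Chang's improvement over the parameters in \cite{bhs}, is pinning down the specific numerical constants $b=-1.5$, $c=5$, and especially $v=25$. These require concrete low-dimensional verification: one must check that in the range $k < 25$ no anomalous high-Adams-filtration class in $\pi_{*,*}(\Ss/2)$ exists that would violate the vanishing line, and that the edge-map into the $K(1)$-local theory behaves correctly. This is essentially a hand or machine computation using Isaksen-type charts through the stem $v=25$, and pushing the pencil-and-paper bounds just past the critical range where the bar spectral sequence classes in $\pi_{2n}\Ss$ for $n\equiv 0,1 \pmod 8$, $n\geq 25$ need to be detected.
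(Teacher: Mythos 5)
This statement is not proven in the paper at all: it is quoted verbatim from Chang's thesis/paper as \cite[Theorem 4]{chang}, and the present paper simply imports the parameters as a black box. So there is no proof in the paper to compare against, and the honest answer is that you have attempted to reconstruct an external result that the authors cite but do not reprove.

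As a reconstruction of Chang's strategy your sketch is pointed in the right direction at the level of folklore: one builds a $\mathrm{bo}$-resolution, observes that $\mathrm{bo}$ has a slope-$1/4$ vanishing line and that stacking stages of the resolution trades one Adams filtration for roughly four stems, producing the Mahowald slope $1/5$, and then one checks low stems by hand to locate $v$. That much matches the lineage of Mahowald, Stolz, and \cite{bhs}. But the proposal as written is not a proof, and several of its load-bearing steps are asserted rather than established. The numerical constants $b=-1.5$, $c=5$, $v=25$, and $r=3$ are precisely what makes Chang's result an improvement over \cite[Proposition 15.8]{bhs}, and your proposal does not derive any of them --- it defers them to ``hand or machine computation'' without exhibiting the bookkeeping. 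The claim that $r=3$ ``reflects the fact that above the line every class is $v_1$-periodic and hence annihilated by a small power of $\tau$'' is not an argument: one needs a concrete bound on $\tau$-torsion coming from the length of the $\mathrm{bo}$-resolution in the relevant range, and this is exactly where the difficulty lies. The ``slope $1/4$ per summand'' heuristic also glosses over the fact that $\mathrm{bo}\wedge\mathrm{bo}$ splits into integral and $\F_2$-Eilenberg--MacLane summands with different behavior, which affects the intercept. Finally, condition~(4) with $d=1$ is a slope-$1/2$ vanishing edge for the Adams $E_2$-page of $\Ss/2$; this is Adams' vanishing line (and its refinement for Moore spectra), not ``May's vanishing theorem,'' so the attribution is off even if the shape is right. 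In short: plausible outline of the right approach, but missing essentially all of the quantitative content that constitutes the theorem.
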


Applying \cite[Lemmas 13.10 and Proposition 13.11]{bhs} to the cofiber sequences 
$$\Sigma^{0,1}\Ss^{0,0}/ \wt{2}\rightarrow \Ss^{0,0}/ \wt{4}\rightarrow \Ss^{0,0}/ \wt{2},\,\,\,\Sigma^{0,1}\Ss^{0,0}/ \wt{4}\rightarrow \Ss^{0,0}/ \wt{8}\rightarrow \Ss^{0,0}/ \wt{2},$$
we obtain the following proposition, which gives improved parameters for $v_1$-banded vanishing lines on $\Ss^{0,0}/ \wt{4}$ and $\Ss^{0,0} / \wt{8}$.
 
\begin{proposition}\label{parameter}
  The synthetic spectrum $\Ss^{0,0} / \wt{4}$ admits a $v_1$-banded vanishing line with parameters
  $$(b\leq d, v,m,c,r)=(-4.5\leq 2, 45,\frac{1}{5},9,6),$$ and
  the synthetic spectrum $\Ss^{0,0}/\wt{8}$ admits a $v_1$-banded vanishing line with parameters
  $$(b\leq d, v,m,c,r)=(-7.5\leq 3, 68+\frac{1}{3}, \frac{1}{5},13,10).$$ 
\end{proposition}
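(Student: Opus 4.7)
The strategy is prescribed by the statement: apply the combinatorial rules for propagating $v_1$-banded vanishing lines across cofiber sequences (Lemmas 13.10 and 13.11 of \cite{bhs}) to the two cofiber sequences
\[\Sigma^{0,1}C(\wt{2}) \to C(\wt{4}) \to C(\wt{2}) \quad \text{and} \quad \Sigma^{0,1}C(\wt{4}) \to C(\wt{8}) \to C(\wt{2}),\]
bootstrapping from Chang's vanishing line (\Cref{chang}) on $C(\wt{2})$.

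For the first sequence, both the fiber and the cofiber carry the parameters $(-1.5 \leq 1,\,25,\,\tfrac{1}{5},\,5,\,3)$ furnished by Chang. The lemmas of \cite{bhs} prescribe how each of the five parameters $(b \leq d, v, m, c, r)$ propagates through a cofiber sequence: the torsion parameter $r$ is essentially additive (a $\tau^{3}$-torsion class extended by another produces $\tau^{6}$-torsion), the slope $m$ is preserved, the shift parameters $b$ and $d$ each absorb the syntheticity $1$ coming from the $\Sigma^{0,1}$ as well as the gap $d - b$ of the two input lines, and the constants $v$ and $c$ must be raised to reconcile the two summands. Plugging in Chang's parameters on both sides and unpacking gives the claimed output $(-4.5 \leq 2,\,25,\,\tfrac{1}{5},\,9,\,6)$ for $C(\wt{4})$.

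For the second sequence I would repeat the procedure, now using the parameters $(-4.5 \leq 2,\,45,\,\tfrac{1}{5},\,9,\,6)$ just obtained on the fiber side and Chang's parameters $(-1.5 \leq 1,\,25,\,\tfrac{1}{5},\,5,\,3)$ on the cofiber side. Another application of Lemmas 13.10 and 13.11 of \cite{bhs} produces the parameters $(-7.5 \leq 3,\,68+\tfrac{1}{3},\,\tfrac{1}{5},\,13,\,10)$ for $C(\wt{8})$. No further homotopical input is required.

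The content of the proof is therefore purely bookkeeping: the five parameters track linearly (or additively) through each application, and the main obstacle is simply keeping the constraints straight when computing the output $v$, which involves a maximum over several conditions coming from both the vanishing line and the torsion statement and which is responsible for the non-integer value $68+\tfrac{1}{3}$ appearing in the $C(\wt{8})$ case.
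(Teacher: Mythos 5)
Your approach is exactly the one the paper takes: the paper's entire ``proof'' consists of the sentence preceding the proposition, which cites \cite[Lemmas 13.10 and 13.11]{bhs} applied to the same two cofiber sequences $\Sigma^{0,1}C(\wt{2})\to C(\wt{4})\to C(\wt{2})$ and $\Sigma^{0,1}C(\wt{4})\to C(\wt{8})\to C(\wt{2})$, bootstrapping from Chang's line. Note the typo in your first paragraph where you write $v=25$ for $C(\wt{4})$ rather than the correct $v=45$ (which you do use in the second step).
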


Let's now determine conditions under which hypothesis (3) of \Cref{prop:AF-J} holds for the above vanishing lines.

\begin{prop}\label{prop:bounds-explicit}
  For the vanishing lines of \Cref{chang} and \Cref{parameter}, hypothesis (3) of \Cref{prop:AF-J} holds under the following conditions:
  \begin{itemize}
    \item When $l = 1$ and $k \geq 52$.
    \item When $l = 2$ and $k \geq 78$.
    \item When $l = 3$ and $k \geq 98$. \NB{Double check these three bounds}
  \end{itemize}
\end{prop}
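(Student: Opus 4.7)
\noindent\textbf{Proof plan for Proposition \ref{prop:bounds-explicit}.}
The proof is a direct substitution and finite case check. My plan is to begin by plugging the parameters $b = -1.5, -4.5, -7.5$ (together with $l = 1, 2, 3$ respectively) from \Cref{chang} and \Cref{parameter} into the inequality
\[\tfrac{1}{2}(k+1) + b - l + 1 \;\geq\; \tfrac{3}{10} k + 4 + v_2(k+1) + v_2(k+2)\]
of hypothesis (3) in \Cref{prop:AF-J}. Clearing denominators and collecting terms, each case reduces to an inequality of the clean form
\[k \;\geq\; A_l + 5\bigl(v_2(k+1) + v_2(k+2)\bigr),\]
with $A_1 = 25$, $A_2 = 45$, and $A_3 = 65$. (For completeness, the hypothesis (2) condition $k+1 \geq v$ yields only $k \geq 24, 44, 68$ in the three cases, hence is automatically implied by the claimed bounds.)

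Next I would bound the $v_2$-term. Since $k+1$ and $k+2$ are consecutive, exactly one of them is odd, so
\[v_2(k+1) + v_2(k+2) \;=\; v_2(m) \;\leq\; \lfloor \log_2(k+2) \rfloor,\]
where $m \in \{k+1, k+2\}$ is the even element. For $k$ sufficiently large relative to $\log_2 k$, the logarithmic bound easily dominates; the remaining issue is thus a finite case check at the values of $k$ just below a ``jump'' in $v_2$, i.e.\ when $k+1$ or $k+2$ is a multiple of a relatively large power of $2$.

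This finite check is the only substantive content of the proof and also the main obstacle, since one must be careful not to miss exceptional small cases. For $l=1$ and $k \geq 52$, the worst values are $k \in \{62,63\}$ with $v_2$-sum $6$, where $25 + 30 = 55 \leq 62$, and $k \in \{126,127\}$ with $v_2$-sum $7$, where $25 + 35 = 60 \leq 126$; the values $k \in \{94,95,158,159,\ldots\}$ with $v_2$-sum $5$ likewise satisfy $25 + 25 = 50 \leq k$. Completely analogous checks work for $l=2$ at $k \geq 78$ (the worst cases being $k \in \{78,79\}$ with $v_2$-sum $4$ giving $45+20 = 65 \leq 78$, and $k \in \{126,127\}, \{190,191\},\ldots$) and for $l=3$ at $k \geq 98$ (with the tightest cases $k \in \{94,95\}$ excluded by the bound and the next worst being $k \in \{126,127\}$ with $v_2$-sum $7$, giving $65 + 35 = 100 \leq 126$). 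For all larger ``jump'' cases the estimate $v_2(k+1)+v_2(k+2) \leq \log_2(k+2)$ gives room to spare, completing the verification.
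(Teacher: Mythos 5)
Your approach is exactly what the paper has in mind: the paper's own ``proof'' is the single sentence ``We leave this elementary but tedious verification to the reader,'' and you supply the verification. Plugging in the parameters correctly reduces hypothesis (3) to $k \geq A_l + 5\bigl(v_2(k+1)+v_2(k+2)\bigr)$ with $A_1,A_2,A_3 = 25,45,65$, and the bound $v_2(k+1)+v_2(k+2) = v_2(\text{even element}) \leq \lfloor\log_2(k+2)\rfloor$ (equivalently: if the $v_2$-sum equals $m$ then $k \geq 2^m - 2$) is the right way to organize the finite check. One small inaccuracy in the $l=3$ discussion: $k \in \{94,95\}$ actually satisfy the inequality ($m=5$ gives $65+25 = 90 \leq 94$), so they are not ``tight'' cases; the cases that genuinely fail just below the stated bound are $k \in \{78,79\}$ ($m=4$, needing $65+20 = 85 > 79$), and these too lie below $98$. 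This does not affect the conclusion. (One could in fact take smaller cutoffs---roughly $k \geq 40, 64, 80$ in the three cases---but the paper's stated bounds $52, 78, 98$ are correct as sufficient conditions, which is all that is claimed.)
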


\begin{proof}
  We leave this elementary but tedious verification to the reader.
\end{proof}

Finally, we are able to prove that elements of $\pi_{2n} \Ss$ which lie in the image of the $d_1$ and $d_2$-differentials in the bar spectral sequence for $\MOn$ must lie in the image of $J$, outside of a small number of exceptional cases which will be addressed in \Cref{sec:exceptional}.
For the $d_2$-differential, this was proven in \Cref{thm:d2}(3) for $n \equiv 4 \mod 8$.
The remaining non-exceptional cases are addressed by the four propositions below.

\begin{proposition}\label{exceptionald1}
  Suppose that $n \equiv 0,1,4 \mod 8$. Then the image of the map $J : \pi_{2n} \R \to \pi_{2n} \Ss$ is contained in the classical image of $J$ whenever $n \geq 28$. \todo{Remark somewhere earlier about $J$ and image of $J$.}
\end{proposition}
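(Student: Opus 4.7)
The plan is to combine \Cref{thm:d1}, which gives a lower bound on the $\F_2$-Adams filtration of a synthetic lift representing any element in the image of $J : \pi_{2n}\R \to \pi_{2n}\Ss$ modulo the classical image of $J$, with \Cref{prop:AF-J} applied using Chang's vanishing line for $\Ss^{0,0}/\wt{2}$ from \Cref{chang}. By \Cref{thm:d1}, any such class is represented by a map $\Ss^{2n}/2 \to \Ss$ of $\F_2$-Adams filtration at least $2M_1 - 3$, so we are in the setting of \Cref{prop:AF-J} with $k = 2n$, $l = 1$ and $s = 2M_1 - 3$.

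The three hypotheses of \Cref{prop:AF-J} then need to be checked for the parameters $(b \leq d,v,m,c,r) = (-1.5 \leq 1,25,\tfrac{1}{5},5,3)$ from \Cref{chang}. Condition (2) reads $2n + 1 \geq 25$ and is trivially satisfied for $n \geq 28$. Condition (3) is handled uniformly by \Cref{prop:bounds-explicit}, which establishes it for $l = 1$ whenever $k \geq 52$, i.e.\ $n \geq 26$. The only nontrivial step is condition (1), which becomes $M_1 \geq \tfrac{2n+1}{10} + 4$. I would verify this by plugging in the explicit formula for $M_1 = h(n-1) - \lfloor \log_2(n+3)\rfloor + 1$ and splitting into the three residue classes: for $n \equiv 0 \mod 8$ one has $h(n-1) = n/2 - 1$, for $n \equiv 1 \mod 8$ one has $h(n-1) = (n-1)/2$, and for $n \equiv 4 \mod 8$ one has $h(n-1) = n/2$. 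A routine check shows the inequality becomes effective precisely at $n = 32, 33, 28$ respectively, and since there is no $n \equiv 0,1,4 \mod 8$ strictly between 28 and 32, the uniform bound $n \geq 28$ suffices.

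With all three hypotheses of \Cref{prop:AF-J} verified, we conclude that the underlying element of $\pi_{2n}\Ss$ lies in the subgroup generated by the image of $J$ and the $\mu$-family. The final step is to invoke \Cref{rmk:ignore-mu}: since the image of the $d_1$-differential is contained in the kernel of the unit map $\pi_{2n}\Ss \to \pi_{2n}\MOn$ and $n \geq 28 \geq 4$, the Atiyah--Bott--Shapiro orientation to $\ko$ forces the $\mu$-family component to vanish, so the representative lies in the classical image of $J$, as desired. The only delicate part of the argument is checking condition (1) at the boundary cases $n = 28,32,33$; everything else is formal once \Cref{thm:d1} and \Cref{chang} are in hand.
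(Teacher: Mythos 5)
Your proof is correct and follows essentially the same route as the paper: apply \Cref{thm:d1} to obtain the $\F_2$-Adams filtration lower bound $2M_1-3$, feed it into \Cref{prop:AF-J} with Chang's parameters from \Cref{chang} and the bound from \Cref{prop:bounds-explicit}, and finish with \Cref{rmk:ignore-mu}. The only differences are cosmetic: you make the residue-class computation of $h(n-1)$ and the thresholds $n=32,33,28$ explicit where the paper relies on the table in \Cref{mod2d1} (which, incidentally, contains a harmless typo at $n=32$, where $2M_1-3=19$ rather than $21$), and you spell out the appeal to \Cref{rmk:ignore-mu} that the paper leaves implicit.
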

\begin{proof}
\NB{check all numbers}
  By \Cref{thm:d1}, any element in the image of $J : \pi_{2n} \R \to \pi_{2n} \Ss$ is represented modulo the classical image of $J$ by the a synthetic map $\Ss^{2n, 2n+2M_1-3} / \wt{2} \to \Ss^{0,0}$.

  Applying \Cref{prop:AF-J}, \Cref{chang} and \Cref{prop:bounds-explicit}, we find that the desired result holds whenever $2n \geq 52$ and
  \[2M_1-3=2h(n-1)-2\lfloor \log_2(n+3)\rfloor-1 \geq \frac{1}{5}(2n+1) +5.\]
  Using elementary arguments, one may show that this holds whenever $n \geq 26$. \NB{Check this} In \Cref{mod2d1}, we include some examples of values of either side of this inequality.
\end{proof}

\begin{table}[h!]
\begin{center}
    \begin{tabular}{|c|c|c|c|c|c|c|c|c|}
    \hline
        $n$ & 25 & 26 & 27&28 &29  &30&31&32\\
        \hline
        $2M_1-3$ & 15 &17 &19 & 19 &19&19&19&21\\
        \hline
        $0.4 n+5.2$ & 15.2 &15.6&16 & 16.4 &16.8 & 17.2  &17.6&18\\
        \hline
    \end{tabular}
  \end{center}
  \caption{}
  \label{mod2d1}
\end{table}

\begin{proposition} \label{prop:d1-small}
  Suppose that $n = 16,17,20,24,25$. Then the image of the map $J : \pi_{2n} \R \to \pi_{2n} \Ss$ in contained in the classical image of $J$.
\end{proposition}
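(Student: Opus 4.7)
The argument used in \Cref{exceptionald1} fails for these small values of $n$ because condition (1) of \Cref{prop:AF-J}, namely $s + l - 1 \geq \frac{1}{5}(k+1) + c$, is no longer satisfied: with $l = 1$, $c = 5$, and $s = 2M_1-3$, the relevant values of $2M_1 - 3$ are $5, 7, 11, 13, 15$ for $n = 16,17,20,24,25$ respectively, all of which fall below the necessary threshold $\frac{1}{5}(2n+1)+5$. The plan is therefore to abandon the uniform vanishing-line argument for these five exceptional values and instead combine the $\F_2$-Adams filtration bound from \Cref{thm:d1} with explicit knowledge of the low-dimensional stable stems from \cite{Isaksen}.

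The first step will be to invoke \Cref{thm:d1}: any class in the image of $J : \pi_{2n}\R \to \pi_{2n}\Ss$ is, modulo the classical image of $J$, represented by a synthetic map $\Ss^{2n, 2n+2M_1-3}/\wt{2} \to \Ss^{0,0}$. In particular, after inverting $\tau$ and restricting to the bottom cell, we obtain a class in $\pi_{2n}\Ss$ that is $2$-torsion and of $\F_2$-Adams filtration at least $2M_1 - 3$. For each of the five values of $n$, this restricts the possibilities substantially, since Isaksen's computations give the filtered structure of $\pi_{2n}\Ss$ at these degrees explicitly.

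The second step will be a finite case analysis: for each $n \in \{16,17,20,24,25\}$, enumerate the $2$-torsion classes in $\pi_{2n}\Ss$ of $\F_2$-Adams filtration at least $2M_1-3$ and check that each one already lies in the classical image of $J$. In the cases where the filtration bound is not sharp enough to immediately exclude all non-$J$ classes, we will appeal to \Cref{rmk:ignore-mu} to eliminate the $\mu$-family via the $\mathrm{MSpin} \to \ko$ orientation, and if necessary use the Ando--Hopkins--Rezk string orientation $\MString \to \tmf$ to rule out any remaining candidate classes not in the image of $J$. We may also use the bar-spectral-sequence structure itself to restrict the image via the image of $\pi_{2n}\littleo$ under the classical $J$-homomorphism (from the splitting of \Cref{prop:D2-split}) and reduce the analysis to the $D_2(\littleo)$ summand, whose size is controlled by \Cref{prop:D2o-E2} (and is in fact quite small in these degrees).

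The main obstacle will be bookkeeping: each of the five values of $n$ potentially requires its own case-specific verification using the Isaksen tables, and one must carefully check that no class evading the Adams-filtration bound can contribute outside the image of $J$. The saving grace is that the groups $\pi_{2n}D_2(\littleo)$ in the relevant range are either $\Z/2$ or zero at the prime $2$, so in fact at most one nontrivial class must be analyzed per value of $n$, making the case analysis entirely tractable.
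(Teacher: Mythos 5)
Your approach is essentially the one the paper takes: invoke \Cref{thm:d1} to get a lower bound of $2M_1-3$ on the $\F_2$-Adams filtration of any class in $\im(J:\pi_{2n}\R\to\pi_{2n}\Ss)$ modulo the classical image of $J$, then read off Isaksen's chart that all classes of that filtration or higher in $\pi_{2n}\Ss$ lie in the subgroup generated by the image of $J$ and the $\mu$-family (which can be discarded by \Cref{rmk:ignore-mu} since we are in the kernel of the unit map). Worth flagging: your computed values $2M_1-3 = 5, 7, 11, 13, 15$ for $n = 16, 17, 20, 24, 25$ are the ones that actually follow from the stated definition $M_1 = h(n-1) - \lfloor\log_2(n+3)\rfloor + 1$ (and are consistent with the paper's own Table~\ref{mod2d1} for $n=25$), whereas the paper's proof writes $7, 9, 13, 13, 15$, which appears to be a miscomputation.

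This makes a substantive difference. With the correct lower bound of $5$ at $n=16$ (i.e.\ $\pi_{32}\Ss$), the pure chart-reading argument is not sufficient: the paper's own \Cref{prop:upper-excep} records that there is a class detected by $q$ of filtration $\geq 5$ in $\pi_{32}\Ss$ that is not in the image of $J$ or the $\mu$-family, and that ruling it out requires the Ando--Hopkins--Rezk orientation $\MString\to\tmf$. Your proposal correctly anticipates exactly this: you flag that when the filtration bound alone is too weak you will fall back on the $\tmf$ orientation and/or the observation from \Cref{prop:D2-split} and \Cref{prop:D2o-E2} that at most one nontrivial class in the $D_2(\littleo)$ summand contributes. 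Both backups are legitimate, since the image of the $d_1$-differential lies in the kernel of the unit map to $\MOn$. So your proposal not only matches the paper's intent but is actually more careful and fills a gap the paper's proof as written does not close for the smallest values of $n$.
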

\begin{proof}
  When $n = 16,17,20,24,25$, we have $2M_1 - 3 = 7,9,13,13,15$, respectively.
  We may then read off directly from \cite[p. 8]{Isaksen} that any element of $\pi_{2n} \Ss$ which is of $\F_2$-Adams filtration at least $2M_1-3$ in these degrees must lie in the subgroup generated by the image of $J$ and the $\mu$-family. \todo{The relevant sequence is $6,8,12,12,6$}
\end{proof}

\begin{proposition}
  \
  \begin{enumerate}
    \item If $n\equiv 0 \mod 8$, then any element of $\pi_{2n}(\Ss)$ which is killed by a $d_2$-differential in the bar spectral sequence for $\MOn$ lies in the image of $J$ when $n \geq 48$.
    \item If $n\equiv 1 \mod 8$, then any element of $\pi_{2n}(\Ss)$ which is killed by a $d_2$-differential in the bar spectral sequence for $\MOn$ lies in the image of $J$ when $n \geq 49$.
  \end{enumerate}
\end{proposition}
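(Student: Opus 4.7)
The plan is to follow the template established in the proof of \Cref{exceptionald1}, substituting \Cref{thm:d2}(1) and (2) for \Cref{thm:d1} and the $\wt{4}$ and $\wt{8}$ vanishing lines of \Cref{parameter} for the $\wt{2}$ vanishing line of \Cref{chang}. In both cases, the strategy is: extract from \Cref{thm:d2} a synthetic lift of the killed class to a map from an appropriate $\F_2$-synthetic Moore spectrum of a specified filtration, then apply \Cref{prop:AF-J} together with \Cref{prop:bounds-explicit} and \Cref{rmk:ignore-mu} to conclude the underlying element of $\pi_{2n} \Ss$ lies in the image of $J$.

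For part (1), \Cref{thm:d2}(1) provides for $n \equiv 0 \bmod 8$ with $n \geq 16$ a lift $\Ss^{2n,2n+2M_2-4}/\wt{4} \to \Ss^{0,0}$ whose underlying map represents the given $d_2$-image. Applying \Cref{prop:AF-J} with $l=2$, $k=2n$, $s=2M_2-4$, and the parameters $(b,c)=(-4.5,9)$ from \Cref{parameter}, hypothesis (1) of \Cref{prop:AF-J} becomes
\[2M_2-3 \;\geq\; \tfrac{1}{5}(2n+1)+9,\]
hypothesis (2) becomes $2n+1 \geq 45$, and hypothesis (3) holds by \Cref{prop:bounds-explicit} whenever $k=2n \geq 78$. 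For $n \equiv 0 \bmod 8$ with $n \geq 48$, one checks that
\[M_2 \;=\; h(n-1)-\lfloor\log_2(2n+2)\rfloor+1\]
grows roughly like $n/2$ while the right-hand side grows like $n/5$, so the inequality is comfortably satisfied; the small-$n$ check amounts to tabulating a handful of values of $2M_2$ against $(2n+1)/5 + 12$ at $n=48,56,64,\ldots$, analogous to \Cref{mod2d1}. The conclusion is then that the killed class lies in the subgroup generated by the image of $J$ and the $\mu$-family, and \Cref{rmk:ignore-mu} eliminates the $\mu$-family contribution since we are working modulo the kernel of $\pi_{2n}\Ss \to \pi_{2n}\MOn$.

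For part (2), \Cref{thm:d2}(2) gives for $n \equiv 1 \bmod 8$ with $n \geq 17$ a synthetic lift $\Ss^{2n,2n+2M_2-5}/\wt{8} \to \Ss^{0,0}$. Now \Cref{prop:AF-J} with $l=3$, $k=2n$, $s=2M_2-5$ and parameters $(b,c)=(-7.5,13)$ from \Cref{parameter} reduces hypothesis (1) to
\[2M_2-3 \;\geq\; \tfrac{1}{5}(2n+1)+13,\]
hypothesis (2) to $2n+1 \geq 68+\tfrac{1}{3}$, and hypothesis (3) to $2n \geq 98$ by \Cref{prop:bounds-explicit}. For $n \equiv 1 \bmod 8$ with $n \geq 49$, one again checks these inequalities directly against the growth of $M_2$, producing a table analogous to \Cref{mod2d1} to verify the inequality for the first few relevant values of $n$.

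The main obstacle is simply the verification of the inequalities, but they are not delicate: the lower bounds on $2M_2$ grow linearly in $n$ with slope near $1$ (since $h(n-1)$ grows like $n/2$), while the right-hand sides grow with slope $2/5$, so asymptotically the inequalities are slack, and the required bounds $n \geq 48$ and $n \geq 49$ are forced by the synthetic vanishing-line thresholds $k \geq 78$ and $k \geq 98$ from \Cref{prop:bounds-explicit} combined with the congruence constraints on $n$, rather than by the filtration inequality itself. No subtler argument should be needed.
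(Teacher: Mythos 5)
Your approach is essentially the paper's: extract the synthetic lift to a $\wt{4}$- or $\wt{8}$-Moore spectrum from \Cref{thm:d2}, apply \Cref{prop:AF-J} with the vanishing-line parameters of \Cref{parameter} and \Cref{prop:bounds-explicit}, and discard the $\mu$-family via \Cref{rmk:ignore-mu}. Your reading of hypothesis (1)---$s+l-1 = 2M_2-3$ in both cases---is in fact slightly sharper than the expressions the paper actually writes down (which compare $2M_2-4$ and $2M_2-5$ rather than $s+l-1$ to the right-hand side), but since the paper's version is a conservative overcount this changes nothing about the conclusion.

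Your concluding diagnosis of what determines the thresholds is, however, wrong for part (1). You assert that $n \geq 48$ is forced by the threshold $k \geq 78$ of \Cref{prop:bounds-explicit} together with the congruence $n\equiv 0 \bmod 8$ rather than by the filtration inequality, but $k \geq 78$ plus congruence only yields $n \geq 40$. At $n = 40$ one computes $h(39)=19$ and $\lfloor\log_2(82)\rfloor=6$, so $M_2 = 14$ and $2M_2 - 3 = 25$, while $\tfrac{1}{5}(81)+9 = 25.2$: hypothesis (1) of \Cref{prop:AF-J} fails, by a hair. This is precisely why the paper handles $n=40$ (alongside $n=25,32,33,41$) in a separate proposition by a direct inspection of the Adams chart, and why the statement you are proving begins at $n \geq 48$. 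A table started at $n=40$ rather than $n=48$, as your "analogous to \Cref{mod2d1}" step would require, would have exposed this; as written your remark that "no subtler argument should be needed" invites a reader to skip the very check that fails at $n=40$. Your claim is correct for part (2), where $k \geq 98$ gives $n \geq 49$, the congruence gives $49$ as the first candidate, and hypothesis (1) does pass there ($M_2 = 19$, $2M_2-3 = 35 \geq 32.8$).
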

\begin{proof}
  \NB{check all numbers}
  We begin with the proof of (1).
  By \Cref{thm:d2}(1), we know that any element of $\pi_{2n} (\Ss)$ killed by a $d_2$-differential is represented by a synthetic map of the form
  \[\Ss^{2n, 2n+2M_2-4} / \wt{4} \to \Ss^{0,0}.\]

  Applying \Cref{prop:AF-J}, \Cref{chang} and \Cref{prop:bounds-explicit}, we find that the result holds whenever $2n \geq 78$ and the inequality
  \[2M_2-4=2h(n-1)-2\lfloor \log_2(2n+2)\rfloor-2 \geq \frac{1}{5}(2n+1) +9.\]
  holds.
  An elementary argument shows that this is true for $n\equiv 0 \mod 8$ whenever $n \geq 48$, so we are done.

  We now move on to a proof of (2).
  By \Cref{thm:d2}(2), we know that any element of $\pi_{2n} (\Ss)$ killed by a $d_2$-differential is represented by a synthetic map of the form
  \[\Ss^{2n, 2n+2M_2-5} / \wt{8} \to \Ss^{0,0}.\]

  Applying \Cref{prop:AF-J}, \Cref{chang} and \Cref{prop:bounds-explicit}, we find that the result holds whenever $n \geq 98$ and the inequality
  \[2M_2-5=2h(n-1)-2\lfloor \log_2(2n+2)\rfloor-3 \geq \frac{1}{5}(2n+1) +13.\]
  holds.
  An elementary argument shows that this is true for $n \equiv 1\mod 8$ whenever $n \geq 49$, so we are done.
%
%
%
%
\end{proof}

\begin{prop}
  Suppose that $n = 25, 32,33,40,41$.
  Then any element of $\pi_{2n}(\Ss)$ which is killed by a $d_2$-differential in the bar spectral sequence for $\MOn$ lies in the image of $J$.
\end{prop}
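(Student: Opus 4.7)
The plan is to follow the strategy of \Cref{prop:d1-small}: produce a synthetic lift at explicitly computed $\F_2$-Adams filtration and then read the conclusion off of Isaksen's tables \cite{Isaksen}, after which \Cref{rmk:ignore-mu} lets us ignore the $\mu$-family.

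First, I would apply \Cref{thm:d2}. Since the cases $n \equiv 4 \mod 8$ are excluded by \Cref{thm:d2}(3), only $n = 32, 40$ (falling under \Cref{thm:d2}(1)) and $n = 25, 33, 41$ (falling under \Cref{thm:d2}(2)) require attention. For $n \equiv 0 \mod 8$ I would use that any class killed by $d_2$ is represented by a synthetic map $\Ss^{2n,2n+2M_2-4}/\wt{4} \to \Ss^{0,0}$, and for $n \equiv 1 \mod 8$ by a synthetic map $\Ss^{2n,2n+2M_2-5}/\wt{8} \to \Ss^{0,0}$. In either case, precomposing with the inclusion $\Ss^{2n,2n+s} \to \Ss^{2n,2n+s}/\wt{2^l}$ of the bottom cell and then inverting $\tau$ produces a representative in $\pi_{2n}\Ss$ of $\F_2$-Adams filtration at least $s$, where $s = 2M_2-4$ or $s = 2M_2-5$ as appropriate.

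Next, I would tabulate the relevant values of $M_2 = h(n-1) - \lfloor \log_2(2n+2)\rfloor + 1$. A direct count gives $M_2 = 8, 10, 11, 14, 15$ for $n = 25, 32, 33, 40, 41$ respectively, so the corresponding filtration lower bounds are
\[
  \text{AF}(d_2\text{-image}) \geq 11, 16, 17, 24, 25 \quad \text{in stems} \quad 2n = 50, 64, 66, 80, 82.
\]
I would present these values in a small table analogous to \Cref{mod2d1}.

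The heart of the proof is then a case-by-case inspection: I would verify, using the Adams spectral sequence charts in \cite{Isaksen}, that every nonzero element of $\pi_{2n}\Ss$ of $\F_2$-Adams filtration at least the value listed above lies in the subgroup generated by the image of $J$ and the $\mu$-family. By \Cref{rmk:ignore-mu}, any such class which is additionally in the kernel of the unit map $\pi_{2n}\Ss \to \pi_{2n}\MOn$ must in fact lie in the image of $J$, as required.

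The main obstacle will be the last step for the largest stems, in particular $2n = 80$ and $82$, which sit near or at the boundary of the published Isaksen range. There I would either combine the filtration bound with the known $J$-periodicity pattern on high-filtration classes and their detection by $\ko$ (so that the only candidates of the required filtration are image-of-$J$ or $\mu$-family classes), or, if a single ``bad'' class intervenes, rule it out by an ad hoc detection argument along the lines of those used in \Cref{sec:exceptional} for the other sporadic stems. No new technology beyond \Cref{thm:d2}, \Cref{rmk:ignore-mu}, and Isaksen's charts should be needed.
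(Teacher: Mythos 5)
Your proposal follows exactly the paper's proof: apply \Cref{thm:d2}(1) and (2) to get the $\F_2$-Adams filtration bounds $11, 16, 17, 24, 25$ in stems $50, 64, 66, 80, 82$, and then read off from Isaksen's chart that any class of at least that filtration lies in the image of $J$ or the $\mu$-family (which \Cref{rmk:ignore-mu} lets you discard). Your computed values of $M_2$ and the resulting bounds agree with the paper's; the only difference is that you hedge about the stems $80$ and $82$ possibly requiring an ad hoc argument, whereas the paper simply reads them off the chart directly.
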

\begin{proof}
  \NB{Check numbers}
  It follows from \Cref{thm:d2} that any element of $\pi_{2n} (\Ss)$ which is killed by such a $d_2$-differential must have $\F_2$-Adams filtration at least $11, 16,17, 24,25$ when $n = 25,32,33,40,41$, respectively.

  On the other hand, we may read off from \cite[p. 8]{Isaksen} that any element of $\pi_{2n} (\Ss)$ which has at least this $\F_2$-Adams filtration must lie in the image of $J$ or the $\mu$-family, as desired.
\end{proof}

\section{Exceptional cases} \label{sec:exceptional}
In this section, we analyze the remaining cases  $n=1,2,3,4,5,6,7,8,9,16,17,24$.

\begin{thm} \label{thm:exceptional}
  Suppose that $n = 1,2,3,4,5,6,7,8,9,10,16,17,24$. Then the kernel of
  \[\pi_{2n} (\Ss) \to \pi_{2n} \MOn\]
  is generated by the image of $J$ unless $n=3,4,7,8,9$, in which cases we have:
  \begin{itemize}
    \item When $n=1$, it is generated by the image of $J$ and $\eta^2 \in \pi_2 \Ss$.
    \item When $n=3$, it is generated by the image of $J$ and $\nu^2 \in \pi_6 \Ss$.
    \item When $n=4$, it is generated by the image of $J$ and $\varepsilon \in \pi_8 \Ss$.
    \item When $n=7$, it is generated by the image of $J$ and $\sigma^2 \in \pi_{14} \Ss$.
    \item When $n=8$, it is generated by the image of $J$ and $\eta_4 \in \pi_{16} \Ss$.
    \item When $n=9$, it is generated by the image of $J$ and $[h_2 h_4] \in \pi_{18} \Ss$.\footnote{The symbol $[h_2 h_4]$ does not unambiguously determine a class in $\pi_{18} \Ss$, even up to multiplication by a $2$-adic unit. To be precise, here we choose a representative of $[h_2 h_4]$ that lies in the kernel of the unit map $\pi_{18} \Ss \to \pi_{18} \ko \cong \F_2$. This kernel is isomorphic to $\Z/8\Z$, so this determines an element uniquely up to multiplication by a $2$-adic unit.}
  \end{itemize}
\end{thm}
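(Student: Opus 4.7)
The plan is to handle each value of $n$ listed in the theorem by a separate ad hoc argument, reusing the bar spectral sequence of \Cref{sec:MOnBar} and the $\mathrm{E}^1$-page computation of \Cref{sec:D2}, but combining these with case-specific inputs rather than with generic vanishing-line bounds. In each case the kernel of the unit map is generated, by the decomposition recalled after \Cref{kernelunit}, by the image of the $d_1$- and $d_2$-differentials in the bar spectral sequence; the task reduces to (a) identifying these images as subgroups of $\pi_{2n}\Ss$ and (b) checking that, modulo the image of $J$ and the listed Kervaire/Mahowald classes, they vanish.

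For the very small cases $n = 1,2$ the answer is immediate: $\pi_4\Ss = 0$, and for $n=1$ the unit $\Ss\to\MO\langle 1\rangle$ factors through $H\F_2$, under which $\eta^2$ vanishes. For $n=3,4,5,6,7,8,9$, I would list the relevant groups on the $\mathrm{E}^1$-page of the bar spectral sequence in that particular dimension, using Isaksen's tables \cite{Isaksen} to read off $\pi_{2n}\Ss$ and its image-of-$J$ summand, and then identify the outputs of $d_1$ (the restriction of $J: \pi_{2n}\R \to \pi_{2n}\Ss$) and $d_2$ (a Toda bracket of the shape in \Cref{prop:todabracket}). The classes $\nu^2, \varepsilon, \sigma^2, \eta_4, [h_2h_4]$ should arise as the explicit $d_2$-targets, and that they are nonzero modulo the image of $J$ is forced by the $\ko$- or $\tmf$-Hurewicz image: e.g.\ $\MO\langle n\rangle \to \ko$ via the Atiyah--Bott--Shapiro orientation kills image of $J$ but detects the $\mu$-family, and this, together with the ABS injectivity recalled in \Cref{rmk:ignore-mu}, rules out any extra contributions to the kernel. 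For $n=10$, the cleanest input is the Ando--Hopkins--Rezk string orientation: $\MO\langle 10\rangle$ receives a map from (and in the relevant range is controlled by) $\MString$, which in turn maps to $\tmf$, and $\pi_{20}\Ss \to \pi_{20}\tmf$ is known to be injective modulo the image of $J$, so no new kernel classes can appear.

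For $n = 16, 17, 24$, the filtration bounds $2M_1-3$ and $2M_2-4$, $2M_2-5$ produced by \Cref{thm:d1} and \Cref{thm:d2} fall only very slightly short of what \Cref{prop:AF-J} combined with Chang's vanishing line (\Cref{chang}) demands. The plan is to optimize the lower bounds in these specific dimensions by inspecting the $\F_2$-Adams spectral sequence for $D_2(\littleo)$ and $\R^{\otimes 2}$ (from \Cref{prop:D2o-E2} and \Cref{prop:Otens2}) more carefully, exploiting that in these exact dimensions the relevant $\mathrm{E}_2$-page classes happen to carry extra $\tau$-torsion degrees of freedom one can promote to higher synthetic filtration. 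Once the bound is raised by one or two, the Davis--Mahowald vanishing in \cite{DM3} applies verbatim and shows the relevant classes are in the image of $J$ plus the $\mu$-family, hence in the image of $J$ by \Cref{rmk:ignore-mu}.

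The main obstacle will be the $n = 9$ case. Here the $d_2$-image lives at the edge of what the bar spectral sequence can detect; the class $[h_2 h_4]$ is only defined up to a $2$-adic unit and up to choice of a representative inside the kernel of the $\ko$-unit, and one has to show both that $d_2$ actually lands in the $\Z/8\Z$-subgroup generated by $[h_2 h_4]$ and that the generator is hit. Identifying the Toda bracket of \Cref{prop:todabracket} in this dimension with a known (Massey-product) description of $[h_2 h_4]$, rather than with $0$ or with a multiple, is the delicate point; I expect to do this by constructing the required null-homotopies on the synthetic level using \Cref{lem:D2-lift} and comparing with the $\ko$-detection of $[h_2h_4]$, matching the outcome against Wall's invariant $\omega(f)$ that appears in \Cref{thm:kernel-main-intro}(iii).
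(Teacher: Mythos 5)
Your plan for the small cases $n \le 10$ is in the right spirit (and for $n=1,2,6$ it is essentially the paper's argument), though it is noticeably more complicated than what is needed: for $n=3,4,8$ one does not need to chase $d_1$ and $d_2$ images at all, since $\pi_{2n}\MOn = \pi_{2n}\MSpin$ or $\pi_{2n}\MString$ is known to be $0$ or torsion-free in those degrees, so the whole torsion group $\pi_{2n}\Ss$ is killed; for $n=5$ the observation that $\pi_{10}\Ss$ consists entirely of the $\mu$-family (detected by $\ko$ via ABS) disposes of the case immediately; and your $n=10$ sentence has the direction of the map backwards ($\MO\langle 10\rangle$ maps \emph{to} $\MString = \MO\langle 8\rangle$, not the other way around), though the underlying idea --- detect $\overline\kappa$ in $\tmf$ --- is the one the paper uses.

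The genuine gap is in your treatment of $n=16,17,24$. You propose to raise the lower bounds on $\F_2$-Adams filtration ``by one or two'' and then invoke the Davis--Mahowald vanishing of \cite{DM3}. But this strategy cannot close the argument in these degrees: for $k = 2n = 32, 34, 48$ the Davis--Mahowald threshold $\frac{3}{10}k + 4 + v_2(k+2) + v_2(k+1)$ works out to roughly $15$, $17$, and $20$, whereas even the \emph{improved} lower bound $2M_2-1$ in \Cref{lem:lower-excep} is only $5$, $7$, and $13$. The gap is of order $10$, not $1$ or $2$. The crucial missing ingredient in your proposal is a drastic, dimension-specific lowering of the \emph{upper} threshold above which elements must lie in the image of $J$, which the paper supplies in \Cref{prop:upper-excep} by inspecting the $\F_2$-Adams charts for $\pi_{32}\Ss, \pi_{34}\Ss, \pi_{48}\Ss$ directly and using $\tmf$-detection of the relevant $\mathrm{E}_\infty$-classes ($q$ at $k=32$, $d_0 g$ at $k=34$). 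Only after this reduction does the improved lower bound of \Cref{lem:lower-excep} (obtained not from ``extra $\tau$-torsion degrees of freedom'' but from replacing the $(2n+1)$-skeleton $N_2$ by an $n$-skeleton $N$ which happens to be an $(n+1)$-skeleton, so that $J$ lifts to filtration $M_2+1$ on it) meet exactly. Without the upper-bound reduction you cannot finish.

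Your $n=9$ discussion is vague but points in roughly the right direction; the paper's actual argument is cleaner and worth noting: the ABS orientation bounds the kernel inside $\ker(\pi_{18}\Ss\to\pi_{18}\ko)\cong\Z/8$ from the start, and then the structure of $A[9]$ (Stolz's computation of $\pi_{18}A[9]\cong\Z/2$ and $\pi_{19}A[9]\cong\Z/8$, the nontriviality of $\eta$-multiplication between them via \Cref{lem:eta-mult}, and the identification from \cite{geography} of the image of $\pi_{18}A[9]\to\pi_{17}\Ss$ as a class detected by $h_1^2 h_4$) forces a generator of the $d_2$-image to be detected by $h_2 h_4$, with no need to match against $\omega(f)$ at this stage.
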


Combining \Cref{thm:exceptional} with \Cref{thm:reduction} and the results of \Cref{sec:lower}, we obtain a proof of \Cref{thm:ker-main}.
In the remainder of this section, we will prove \Cref{thm:exceptional} case-by-case.

\subsection{The case $n=1$}
In this case, the map $\pi_2 \Ss = \Z/2\Z\{\eta^2\} \to \pi_2 \mo \langle 1 \rangle = \pi_2 \MO$ is zero due to the fact that $\MO$ is an associative $\mathbb{F}_2$-algebra and hence the unit map $\Ss \to \MO$ factors as $\Ss \to \F_2 \to \MO$.

\subsection{The case $n=2$}
This case is trivial because $\pi_4 \Ss = 0$.
 
\subsection{The case $n=3$}
It follows from \cite[Corollary 2.7]{spin} that $\pi_6 (\Ss) = \Z/2\Z \{\nu^2\} \to \pi_6 \mo \langle 3 \rangle = 0$ is zero, since the elements $[M^8]^r \times \alpha^i$ appearing in that corollary are concentrated in degrees $\equiv 1,2 \mod 8$.

\subsection{The case $n=4$}
Similarly, it follows from \cite[Corollary 2.7]{spin} that $\pi_8 (\Ss) = \Z/2\Z \{\eta\sigma \} \oplus \Z/2\Z \{\varepsilon\} \to \pi_{8} \mo \langle 4 \rangle$ is zero, which is the desired result.

\subsection{The case $n=5$}
The group $\pi_{10} \Ss$ is generated by an element of the $\mu$-family. In particular, the unit map $\pi_{10} \Ss \to \pi_{10} \ko$ is injective.
It therefore follows from the Atiyah--Bott--Shapiro orientation \cite{ABS} $\MSpin \to \ko$ that the map
\[\pi_{10} \Ss \to \pi_{10} \mo \langle 5 \rangle = \pi_{10} \MString\]
is injective.

\subsection{The case $n=6$}
This case is trivial because $\pi_{12} \Ss = 0$.

\subsection{The case $n=7$}
First, we note that it is easy to see that $\sigma \in \pi_7 \Ss$ is in the kernel of $\pi_7 \Ss \to \pi_7 \mo \langle 7 \rangle$ using the bar spectral sequence and the fact that $\sigma$ is in the image of $J$.
It follows that $\sigma^2$ lies in the kernel of $\pi_{14} \Ss \to \pi_{14} \mo \langle 7 \rangle$.
It therefore suffices to prove that the map
\[\pi_{14} (\Ss) =  \Z/2\Z\{\sigma^2\} \oplus \Z/2\Z\{\kappa\} \to \pi_{14} \mo \langle 7 \rangle\]
is nonzero on $\kappa$.
Using the Ando--Hopkins--Rezk orientation \cite{AHR}
\[\mo \langle 7 \rangle \simeq \MString \to \tmf,\]
it suffices to note that $\kappa$ is mapped to a nonzero class under the map $\pi_{14} \Ss \to \pi_{14} \tmf$ by \cite[Theorem 11.80]{tmfAdamsBook}.

\subsection{The case $n=8$}
It follows from \cite{Giambalvo08,Giambalvo08corrected} that $\pi_{16} \mo \langle 8 \rangle = \pi_{16} \MString = \Z \oplus \Z$.
Since this is torsionfree, the map $\pi_{16} (\Ss) = \Z/2\Z \{[P c_0]\} \oplus \Z/2\Z \{\eta_4\} \to \pi_{16} \mo \langle 8 \rangle$ is zero, which is the desired result.

\subsection{The case $n=9$}\label{mo9}
Using the Atiyah--Bott-Shapiro orientation
\[\mo \langle 9 \rangle \to \MSpin \to \ko,\]
we conclude that the kernel of $\pi_{18} \Ss \to \pi_{18} \mo \langle 9 \rangle$ must be contained in the kernel of $\pi_{18} \Ss \to \pi_{18} \ko \cong \Z/2\Z$.
This kernel is a $\Z/8\Z$ generated by a class $[h_2 h_4]$, so it suffices to show that the kernel of $\pi_{18} \Ss \to \pi_{18} \mo \langle 9 \rangle$ contains a class detected on the $\mathrm{E}_2$-page of the $\F_2$-Adams spectral sequence by $h_2 h_4$.

Recall from \Cref{sec:background} that we have a diagram
\begin{center}
    \begin{tikzcd}
    & & A[9] \ar[d, "b"]&\\
      \Ss\ar[r] & \mo\langle 9 \rangle \ar[r] & \mo\langle 9 \rangle/\Ss \ar[r,"\delta"] & \Ss^1
    \end{tikzcd},
\end{center}
and that the image of $\pi_n A[9] \to \pi_{n-1} \Ss$ generates the kernel of $\pi_{n-1} \Ss \to \pi_{n-1} \mo \langle 9 \rangle$ modulo the image of $J$.
Moreover, we have $\pi_{18} A[9] \cong \Z/2\Z$ and $\pi_{19} A[9] \cong \Z/8\Z$ by \cite[Theorem A]{stolz}.
Before we proceed, we need the following lemma.

\begin{lem} \label{lem:eta-mult}
  The multiplication map $\eta : \Z/2\Z \cong \pi_{18} A[9] \to \pi_{19} A[9] \cong \Z/8\Z$ is nontrivial.
\end{lem}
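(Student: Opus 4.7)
The plan is to map the question down to the stable stems using the defining diagram of $A[9]$. Combining $b: A[9] \to \mo\langle 9\rangle/\Ss$ with the boundary $\delta: \mo\langle 9\rangle/\Ss \to \Ss^1$, we obtain induced maps $\delta_* b_*: \pi_{18}A[9] \to \pi_{17}\Ss$ and $\delta_* b_*: \pi_{19}A[9] \to \pi_{18}\Ss$. By the long exact sequence for the cofiber sequence $\Ss \to \mo\langle 9 \rangle \to \mo\langle 9\rangle/\Ss$, the images of these maps land in $\ker(\pi_{17}\Ss \to \pi_{17}\mo\langle 9\rangle)$ and $\ker(\pi_{18}\Ss \to \pi_{18}\mo\langle 9\rangle)$ respectively; by the setup in \Cref{sec:background}, Stolz's work identifies the latter image with a $\Z/8\Z$ (modulo the image of $J$).

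First I would pin down the image of the generator $x \in \pi_{18}A[9]$ under $\delta_* b_*$ in $\pi_{17}\Ss$, using either an Adams spectral sequence computation for $A[9]$ in low degrees or Stolz's explicit construction of $A[9]$ as a cofiber of Thom spectra. The target kernel is small (it lies inside the kernel of the $\ko$-theoretic Atiyah--Bott--Shapiro orientation) and can be enumerated using \cite{Isaksen}. Next, by naturality of $\eta$-multiplication, the image of $\eta x \in \pi_{19}A[9]$ in $\pi_{18}\Ss$ is $\eta \cdot \delta_*b_*(x)$, and I would compute this product via the classical Adams spectral sequence for the sphere, where $\eta$-multiplication is $h_1$-multiplication on the $\mathrm{E}_2$-page.

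Finally, to deduce $\eta x \neq 0$ in $\pi_{19}A[9]$, I would use that $\delta_*b_*: \pi_{19}A[9] \to \pi_{18}\Ss$ has image of order $8$ (modulo image of $J$), so any nontriviality downstream forces nontriviality upstream. Since $2\eta = 0$, the class $\eta x$ necessarily lies in the $2$-torsion subgroup $4\Z/8\Z$ of $\pi_{19}A[9]$, and the computation would amount to checking that $\eta \cdot \delta_*b_*(x)$ is the unique nonzero class of $\pi_{18}\Ss$ detected by $4$ times the generator.

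The main obstacle is the first step: explicitly identifying $\delta_*b_*(x)$ in $\pi_{17}\Ss$ requires either a direct Adams $\mathrm{E}_2$-page computation for $A[9]$ in degrees $\leq 19$ (using Stolz's presentation of $H^*(A[9]; \F_2)$ as an $\mathcal{A}$-module), or a careful tracking of Stolz's geometric generators. Once the image is identified, the verification of the $\eta$-product is a routine calculation from \cite{Isaksen}.
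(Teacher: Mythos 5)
Your approach is genuinely different from the paper's, and it would work if the acknowledged gap were filled, but it is substantially less economical. The paper's proof never leaves $A[9]$: it observes that since $\pi_{18}A[9] \cong \Z/2$ and $\pi_{19}A[9] \cong \Z/8$ are the first two nonzero homotopy groups, the Atiyah--Hirzebruch spectral sequence reduces the nontriviality of $\eta$-multiplication to the non-injectivity of the integral Hurewicz map $\pi_{19}A[9] \to \H_{19}(A[9];\Z)$, and then reads this off from the fact (due to Stolz) that the $h_0$-tower in degree $19$ of the Adams $\mathrm{E}_2$-page for $A[9]$ is killed by a $d_2$-differential. No identification of $\delta_*b_*$ on any specific class is required. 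By contrast, you push forward along $\delta_*b_*$ to $\pi_*\Ss$ and detect $\eta x$ via $h_1$-multiplication in the Adams spectral sequence of the sphere. This is sound in outline: $\delta_*b_*$ is induced by a spectrum map and so commutes with $\eta$, and $h_1^3 h_4 = h_0^2 h_2 h_4$ is a nonzero permanent cycle in stem $18$, so once one knows that $\delta_*b_*(x)$ is detected by $h_1^2 h_4$ the conclusion follows. But that identification of $\delta_*b_*(x)$ is precisely the content of a nontrivial theorem (the paper later cites \cite[Theorem 6.1]{geography} for it, in the step \emph{after} the lemma) --- you correctly flag it as your ``main obstacle,'' but without it the argument is incomplete, and in particular your route cannot rule out $\delta_*b_*(x) = 0$ a priori. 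So: what your approach buys is a concrete picture entirely in $\pi_*\Ss$ where everything is readable off Isaksen's chart; what the paper's approach buys is that it avoids computing the image of the generator of $\pi_{18}A[9]$ in $\pi_{17}\Ss$ altogether, using only the bare group structure of $\pi_{18}$ and $\pi_{19}$ of $A[9]$ together with a fact about the Adams $\mathrm{E}_2$-page of $A[9]$ that is already in Stolz. One small stylistic note: the closing sentence of your third paragraph is garbled --- the relevant point is simply that $\delta_*b_*(\eta x) = \eta\cdot\delta_*b_*(x) \neq 0$ forces $\eta x \neq 0$, with no need to invoke the order of the image or uniqueness of a detecting class.
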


\begin{proof}
  Since these are the first two nontrivial homotopy groups of $A[9]$, it suffices by consideration of the Atyah--Hirzubruch spectral sequence for stable homotopy to show that the integral Hurewicz map $\pi_{19} A[9] \to \H_{19} (A[9];\mathbb{Z})$ is not injective.
  But this follows from \cite[Theorem 5]{maymilgram}, since the the $\mathrm{E}_2$-page of the $\F_2$-Adams spectral sequence for $A[9]$ consists in total degree $19$ of an $h_0$-tower, which is killed (other than the bottom three dots) by an entering $d_2$-differential by \cite[p. 96]{stolz}:
\begin{center}
    \begin{tikzcd}[column sep=0.4cm, row sep=0.4cm]
  &&  \ &\ &\\
   s=4& &\bullet\ar[u]&\bullet\ar[u]&\\
    s=3&&\bullet\ar[u,dash]&\bullet\ar[u,dash]&&\\
    s=2 &&\bullet \ar[u,dash]&\bullet\ar[u,dash]\ar[uul,dashed]&\\
    s=1 &&\bullet \ar[u,dash]&\bullet\ar[u,dash]\ar[uul,dashed]&\\
   s=0 &\bullet&\bullet \ar[u,dash]&\bullet\\
     t-s& 18&19 &20
\end{tikzcd}.
\end{center}
  Indeed, \cite[Theorem 5]{maymilgram} then implies that $\H_{19} (A[9];\mathbb{Z}_{(2)})$ is $4$-torsion, so that the Hurewicz map from $\pi_{19} A[9] \cong \Z/8\Z$ cannot be injective.
\end{proof}

Note that  the image of a generator under $\pi_{18} A[9] \to \pi_{17} \Ss$ is detected by $h_1 ^2 h_4$ by \cite[Theorem 6.1]{geography}.
It therefore follows from \Cref{lem:eta-mult} that the image of $4$ times a generator under $\pi_{19} A[9] \to \pi_{18} \Ss$ is detected by $h_1^3 h_4 = h_0^2 h_2 h_4$.
Hence the image of a generator must be detected by $h_2 h_4$, as desired.

\subsection{The case $n=10$}
Using the Ando--Hopkins--Rezk orientation \cite{AHR}
\[\mo \langle 8 \rangle \to \tmf,\]
it suffices to note that the unit map
\[\pi_{20} \Ss = \Z/8\Z \{\overline{\kappa}\} \to \pi_{20} \tmf\]
is injective by \cite[Theorem 11.80]{tmfAdamsBook}.

\subsection{The cases $n=16,17,24$}
We will prove that the $d_1$ and $d_2$-differentials in the bar spectral sequence for $\MOn$ only kill the image of $J$ in $\pi_{2n} \Ss$.
For $d_1$, this was already proven in \Cref{prop:d1-small}.
We begin by proving the following proposition, which tells us exactly what $\F_2$-Adams filtration bounds we need to prove.

\begin{prop} \label{prop:upper-excep}
  Let $x \in \pi_{2n} \Ss$ be in the kernel of the unit map $\pi_{2n} \Ss \to \pi_{2n} \MOn$.
  \begin{itemize}
    \item If $n = 16$ and $x$ is of $\F_2$-Adams filtration at least $5$, then $x$ lies in the image of $J$.
    \item If $n = 17$ and $x$ is of $\F_2$-Adams filtration at least $7$, then $x$ lies in the image of $J$.
    \item If $n = 24$ and $x$ is of $\F_2$-Adams filtration at least $13$, then $x$ lies in the image of $J$.
  \end{itemize}
\end{prop}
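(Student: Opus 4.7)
The plan is to carry out, for each of the three cases $n=16,17,24$, a finite enumeration of classes in $\pi_{2n}\Ss$ of the required $\F_2$-Adams filtration using Isaksen's computations \cite{Isaksen}, and then to show by hand that each such class either already lies in the image of $J$ or is detected nontrivially by the unit map to $\MO\langle n\rangle$. Since $n\geq 16 > 8$ in all three cases, we have at our disposal both the Atiyah--Bott--Shapiro orientation $\MO\langle n\rangle\to \ko$ and the Ando--Hopkins--Rezk orientation $\MO\langle n\rangle\to \tmf$, so a class $x\in\pi_{2n}\Ss$ that survives under either unit map $\pi_{2n}\Ss\to\pi_{2n}\ko$ or $\pi_{2n}\Ss\to\pi_{2n}\tmf$ cannot lie in the kernel. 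Moreover, by \Cref{rmk:ignore-mu} we may freely ignore the $\mu$-family, since $\mu$-classes are detected injectively in $\ko$.

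With this setup, the argument proceeds case-by-case. For $n=16$ we tabulate the classes in $\pi_{32}\Ss$ of $\F_2$-Adams filtration at least $5$; for $n=17$, the classes in $\pi_{34}\Ss$ of filtration at least $7$; and for $n=24$, the classes in $\pi_{48}\Ss$ of filtration at least $13$. In each case one first strikes off the image of $J$ and the $\mu$-family contributions, and then treats the remaining candidates. The natural tool for the surviving candidates is $\tmf$: classes such as $\bar\kappa$-multiples and their relatives are known to be nonzero in $\pi_*\tmf$ by the computations of \cite[Chapter 11]{tmfAdamsBook}, and we invoke those computations to rule them out of the kernel. Where $\tmf$ fails to detect a class, we fall back on $\ko$, or on the more refined analysis of the bar spectral sequence using the $\mathrm{E}^1$-page computed in \Cref{sec:D2}.

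The chief obstacle will be any class that is simultaneously (i) not in the image of $J$, (ii) not in the $\mu$-family, and (iii) killed by both $\ko$ and $\tmf$. Such a class must be excluded by appealing directly to the bar spectral sequence: by the $\mathrm{E}^1$-page calculation of \Cref{thm:R-homotopy} the image of the $d_1$-differential is constrained, and by \Cref{thm:d1,thm:d2} the image classes lift to synthetic maps out of small Moore spectra with explicit filtration, which together with Isaksen's data suffices to exclude the problematic class. In practice the filtration thresholds $5,7,13$ have been chosen precisely so that the list of candidates produced by Isaksen's chart is short, and so that each candidate on the list can be handled by one of the three methods above; verifying this is the concrete content of the proof, which proceeds by a routine case check in each of the three dimensions.
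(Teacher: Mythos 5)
Your plan is essentially the paper's: use \Cref{rmk:ignore-mu} to drop the $\mu$-family, map to $\tmf$ via the Ando--Hopkins--Rezk orientation, and then enumerate the candidate classes of the stated Adams filtrations from \cite{Isaksen} and check that each is detected in $\pi_*\tmf$ using \cite[Theorem 11.80]{tmfAdamsBook}. The paper does exactly that: for $n=24$ the chart alone suffices, for $n=17$ the only candidate is $d_0 g$, and for $n=16$ the only candidate is $q$ ($=\Delta h_1 h_3$), both of which survive to $\pi_*\tmf$.

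Two small observations on the fallbacks you hedge with. First, the $\ko$ fallback cannot help beyond what \Cref{rmk:ignore-mu} already gives: the candidates $d_0 g$ and $q$ are torsion classes that die in $\ko$ (in degrees $32$ and $34$, $\pi_*\ko$ is $\Z$ and $\Z/2$ respectively, and the latter is occupied by the $\mu$-class), so $\tmf$ is really the only detecting target in play. Second, and more substantively, when you actually carry out the $n=16$ case you will hit a wrinkle you haven't anticipated: the map $\pi_{32}\Ss \to \pi_{32}\tmf$ exhibits a filtration jump, so one cannot simply read off detection from a map of Adams $\mathrm{E}_\infty$-pages; one must argue that \emph{some} representative of the $\mathrm{E}_\infty$-class $q$ hits a nonzero class in $\pi_{32}\tmf$, which is the content of \cite[Theorem 11.80]{tmfAdamsBook}. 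With that caveat, the ``routine case check'' you defer is genuinely routine and your approach closes the argument.
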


\begin{proof}
  As in \Cref{rmk:ignore-mu}, we are free to ignore the difference between the image of $J$ and the subgroup of $\pi_{2n} \Ss$ generated by the image of $J$ and the $\mu$ family.
  Using the Ando--Hopkins--Rezk orientation \cite{AHR}
  \[\mo \langle 8 \rangle = \MString \to \tmf,\]
  it suffices to prove the result for elements in the kernel of $\pi_{2n} \Ss \to \pi_{2n} \tmf$.

  When $n=24$, this is an immediate consequence of \cite[p. 8]{Isaksen}.
  When $n = 17$, the unique element on the $\mathrm{E}_\infty$-page of the $\F_2$-Adams spectral sequence for $\pi_{34} \Ss$ which is of filtration at least $7$ and does not detect an element of the image of $J$ and or the $\mu$-family is $d_0 g$.
  The image of this class in the $\mathrm{E}_\infty$-page of the $\F_2$-Adams spectral sequence for $\tmf$ is nonzero \cite[Figure 5.2]{tmfAdamsBook}, so we are done.

  Now, let's suppose that $n=16$.
  The $n=16$ case is similar to the $n=17$ case, though slightly complicated by the presence of a filtration jump in the map $\pi_{32} \Ss \to \pi_{32} \tmf$.
  There is a unique element on the $\mathrm{E}_\infty$-page of the $\F_2$-Adams spectral sequence for $\pi_{32} \Ss$ which is of filtration at least $5$ and does not detect an element of the image of $J$.
  Following \cite[Table 1.1]{tmfAdamsBook}, we denote this class by $q$.
  (It is denoted $\Delta h_1 h_3$ in \cite[p. 8]{Isaksen}.)
  Since $\pi_{32} \Ss \to \pi_{32} \tmf$ kills the image of $J$, it suffices to show that some class in $\pi_{32} \Ss$ representing $q$ maps to a nonzero class in $\pi_{32} \tmf$.
  But this follows from \cite[Theorem 11.80]{tmfAdamsBook}.
\end{proof}

\begin{lemma}\label{lem:lower-excep}
  Suppose that $n \equiv 0,1 \mod 8$ and $n \geq 16$.
  Then any class in $\pi_{2n} \Ss$ which is killed by a $d_2$-differential in the bar spectral sequence for $\MOn$ is represented by a class of $\F_2$-Adams filtration at least $2M_2-1$.
\end{lemma}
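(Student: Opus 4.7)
The plan is to directly bound the $\F_2$-Adams filtration of the $d_2$-differential target via a synthetic Toda bracket analysis, which is strictly sharper than what is obtained by restricting the Moore-spectrum lifts of \Cref{thm:d2} to their bottom cells (the latter only yields a bound of $2M_2-4$ for $n\equiv 0\pmod 8$ and $2M_2-5$ for $n\equiv 1\pmod 8$, so a refinement is needed in both cases).

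By \Cref{prop:todabracket}, any class $y\in\pi_{2n}\Ss$ killed by a $d_2$-differential in the bar spectral sequence for $\MOn$ arises, up to $d_1$-indeterminacy, as a Toda bracket built from a class $x\in\pi_{2n-1}\R^{\otimes 2}$ satisfying $(m-1\otimes J)(x)=0$, the multiplication $c\colon \R^{\otimes 2}\to D_2(\R)$, the map $D_2(J)\colon D_2(\R)\to D_2(\Ss)$, and the multiplication $\hat m\colon D_2(\Ss)\to\Ss$. Replacing $\R$ by its $(2n+1)$-skeleton $N_2$ and invoking \Cref{lem:R-AF}, the map $J$ admits a synthetic lift $\wt J\colon \Sigma^{0,M_2}\nu N_2\to\Ss^{0,0}$ of synthetic filtration $M_2$. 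Consequently, $D_2(\wt J)\colon D_2(\Sigma^{0,M_2}\nu N_2)\to D_2(\Ss^{0,0})$ contributes synthetic filtration at least $2M_2$. Meanwhile, $x$ has $\F_2$-Adams filtration $1$ in the $n\equiv 0\pmod 8$ case (it equals $\eta\cdot(y_{n-1}\otimes y_{n-1})$ by \Cref{prop:Otens2}) and filtration $0$ in the $n\equiv 1\pmod 8$ case; in the latter case, the $\Z/4$-torsion structure of $\pi_{2n-1}\R^{\otimes 2}$ contributes an extra factor of $\wt 2$ to the null homotopies appearing in the bracket.

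I would then carry out the synthetic Toda bracket directly: assembling the synthetic null homotopies already produced in the proofs of \Cref{thm:d2}(1) and (2) (which verify the required vanishing in $\pi_{*,*}\nu D_2(N_2)$ and $\pi_{*,*}\nu\littleo$ at filtrations sufficient to define the bracket), one obtains a synthetic representative of $y$ in $\pi_{2n,2n+2M_2-1}\Ss^{0,0}$, i.e., of $\F_2$-Adams filtration at least $2M_2-1$. The $-1$ relative to the naive estimate $1+2M_2$ (for $n\equiv 0$) or $0+2M_2$ (for $n\equiv 1$, together with the extra $\wt 2$) reflects the standard one-filtration loss inherent in a 3-fold Toda bracket.

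The main obstacle is the careful synthetic bookkeeping: one must track the filtrations of the two nested null homotopies used in \Cref{thm:d2} (the vanishing of $\tau\cdot(1\otimes\wt J)(\wt x)$ and of $\tau^2\cdot c(\wt x)$) and confirm that when these null homotopies are combined into the Toda bracket, the resulting synthetic filtration is exactly $2M_2-1$ rather than one of the weaker values $2M_2-4$ or $2M_2-5$ obtained by passing through the full $\wt{2^l}$-Moore spectrum. This amounts to showing that the factors $\tau^4$ and $\tau^5$ inserted in the proofs of \Cref{thm:d2}(1) and (2) to achieve $\wt{2^l}$-torsion can be peeled off when one only needs to bound the $\F_2$-Adams filtration of the underlying class, not to represent it as a map from a Moore spectrum.
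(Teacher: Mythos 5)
The proposal misses the paper's key mechanism and the resulting filtration accounting does not actually reach $2M_2-1$.

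First, the arithmetic accounting you give is internally inconsistent: you cite a naive estimate of $1+2M_2$ for $n\equiv 0$ and then subtract ``the standard one-filtration loss inherent in a 3-fold Toda bracket,'' which yields $2M_2$, not the claimed $2M_2-1$. More seriously, the naive accounting that ``the two $J$-lifts each contribute $M_2$'' and one peels off unnecessary $\tau$-powers does not survive scrutiny. If one works with $(\Sigma^{0,M_2}\nu N_2)^{\otimes 2}$ throughout, the bottom-cell lift $\wt{x}$ lands in filtration $2M_2+1$ (for $n\equiv 0$, using the Adams filtration $1$ of $x=\eta\cdot(y_{n-1}\otimes y_{n-1})$), but the nullhomotopy of $c(\wt{x})$ in $\pi_{2n-1,*}D_2(\Sigma^{0,M_2}\nu N_2)$ is only visible after multiplying by $\tau^2$: \Cref{D2} identifies $\pi_{2n-1,t}D_2(\Sigma^{0,M_2}\nu N_2)$ with $\pi_{2n-1}D_2(N_2)=0$ only for $t\le 2n-2+2M_2$, two below the filtration of $c(\wt{x})$. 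Thus the simultaneous nullhomotopies you need to form the synthetic Toda bracket force you down to $\tau^2\wt{x}$ (filtration $2M_2-1$), and the bracket then lands at filtration $2M_2-2$, one short of the target. The same phenomenon appears in the $n\equiv 1$ case.

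What the paper actually does is not a peeling-off refinement of \Cref{thm:d2} but a structurally different input: it uses an $n$-skeleton $N$ of $\R$ (which, by inspection of the cell structure, is also an $(n+1)$-skeleton), together with the observation that $M_2+1=h(n-1)-\lfloor\log_2(n+1)\rfloor+1$, so $J|_N$ admits a synthetic lift $\wt{J}/\tau\colon\Sigma^{0,M_2+1}\nu N\to\Ss^{0,0}$ satisfying $\tau\cdot\wt{J}/\tau=\wt{J}\circ\nu\iota$. The Toda bracket diagram is then built on $\Sigma^{0,M_2}\nu N\otimes\Sigma^{0,M_2+1}\nu N$. Because the $D_2$ appearing in the bracket is $D_2(\Sigma^{0,M_2+1}\nu N)$, the range of \Cref{D2} improves to $t\le 2n+2M_2$, and the $c$-nullhomotopy already exists at $\tau\wt{x}$ (filtration $2M_2$), while \Cref{lem:D2-lift} similarly handles the $1\otimes\wt{J}$ direction at the same level. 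This is exactly what buys the extra filtration unit and gives the bracket at $2M_2-1$. Your proposal contains no analogue of the passage from $N_2$ to the smaller skeleton $N$ and the asymmetric $\wt{J}\otimes\wt{J}/\tau$ factorization; without it, I do not see how the bound $2M_2-1$ can be obtained.
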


Since $2M_2-1 = 5,7,13$ when $n = 16,17,24$, the combination of \Cref{prop:upper-excep} and \Cref{lem:lower-excep} completes the proof of \Cref{thm:exceptional} in these cases.
%

\begin{proof}[Proof of \Cref{lem:lower-excep}]
  Let $N \xrightarrow{\iota}N_2\rightarrow \R$ denote the inclusion of an $n$-skeleton into a $(2n+1)$-skeleton
into $\R$.
  Examining Figure (\ref{littleocelldiagram}) on page 9, we see that $N$ is also an $(n+1)$-skeleton of $\R$.
Hence $\pi_{2n-1} N^{\otimes 2} \to \pi_{2n-1} \R^{\otimes 2}$ and $\pi_{2n-1} D_2 (N) \to \pi_{2n-1} D_2 (\R)$ are isomorphisms.

  Suppose that we are given a class $x \in \pi_{2n-1} \R^{\otimes 2}$ which lies in the kernel of $d_1$, i.e. it satisfies $(m - 1 \otimes J) (x) = 0$.
  Then we may view $x$ as a class in $\pi_{2n-1} N^{\otimes 2}$, and it follows from the proof of \Cref{thm:d2} that $c(x) = 0 \in \pi_{2n-1} D_2 (N)$ and $(\iota \otimes J)(x) = 0 \in \pi_{2n-1} N_2$.
  We may therefore refine the diagram of \Cref{prop:todabracket} to the diagram below.
  \begin{center}
    \begin{tikzcd}[column sep = large]
    \Ss^{2n-1} \ar[dr, "x"] \ar[rr, "0"{name=D}] \ar[ddr, bend right, "0"{name=C} left] & & N_2 \otimes \Ss \ar[dr, "J \otimes 1"] \\
    & N^{\otimes 2}  \ar[Leftrightarrow, from=C, ""] \ar[Leftrightarrow, from=D, ""] \ar[d, "c"] \ar[ur, "\iota \otimes J"] \ar[rr, "J \otimes J"{name=E}] \ar[phantom, from=1-3, to=E, "(\otimes)"] & & \Ss^{\otimes 2} \ar[dl, "c"] \ar[dd, "\simeq"] \\
    & D_2(N) \ar[urr, phantom, "(c)"] \ar[r, "D_2(J)"] & D_2(\Ss) \ar[dr, "\hat{m}"] \\
    & & & \Ss,    
  \end{tikzcd}
  \end{center}

  By \cite[Lemma 10.18]{bhs}, the map  $J:N_2 \rightarrow\Ss^{0,0}$ lifts to a map
  $\wt{J}:\Sigma ^{0,M_2}\nu N_2 \rightarrow \Ss^{0,0}$, where $M_2=h(n-1)-\lfloor\log_2(2n+2)\rfloor+1$, and the composite $N \xrightarrow{\iota} N_2 \xrightarrow{J} \Ss^{0,0}$ lifts to a map $\wt{J} /\tau : \Sigma^{0,M_2+1} N \to \Ss^{0,0}$, since $M_2+1 = h(n-1) -\lfloor \log_2 (n+1) \rfloor +1$.
  In fact, examining the proof of \cite[Lemma 10.18]{bhs}, we find that the synthetic lifts produced by this lemma satisfy $\tau \cdot \wt{J}/ \tau = \wt{J} \circ \nu \iota$.

  Letting $\wt{x} \in \pi_{2n-1, 2n+2M_2} (\Sigma^{0,M_2} \nu N \otimes \Sigma^{0,M_2+1} \nu N)$ denote a shift of $\nu x$, we can therefore produce the following synthetic lift of the above diagram:
  \begin{center}
    \begin{tikzcd}
      \Ss^{2n-1,2n-1+2M_2} \ar[dr, "\tau\cdot \wt{x}"] \ar[rr, "0"{name=D}] \ar[ddr, bend right, "0"{name=C} left] & & \Sigma^{0,M_2} \nu N_2 \otimes \Ss^{0,0} \ar[dr, "\wt{J} \otimes 1"] \\
      & \Sigma^{0,M_2} \nu N \otimes \Sigma^{0,M_2+1} \nu N  \ar[Leftrightarrow, from=C, "?"] \ar[Leftrightarrow, from=D, "?"] \ar[d, "c\circ (1 \otimes \tau)"] \ar[ur, "\nu \iota \otimes \wt{J}/\tau"] \ar[rr, "\wt{J} \otimes \wt{J}/\tau"{name=E}] \ar[phantom, from=1-3, to=E, "(\otimes)"] & & (\Ss^{0,0})^{\otimes 2} \ar[dl, "c"] \ar[dd, "\simeq"] \\
      & D_2(\Sigma^{0,M_2+1} \nu N) \ar[urr, phantom, "(c)"] \ar[r, "D_2(\wt{J}/\tau)"] & D_2(\Ss^{0,0}) \ar[dr, "\hat{m}"] \\
      & & & \Ss^{0,0}.    
  \end{tikzcd}
  \end{center}

  To complete the proof, it suffices to show that the nullhomotopies labeled with a question mark exist.
%
  On the one hand, the composite $(c \circ (1 \otimes \tau))(\tau \cdot \wt{x}) \in \pi_{2n-1, 2n-1+2M_2} D_2 (\Sigma^{0,M_2+1} \nu N)$ is zero because it is zero after inverting $\tau$ and the map $\pi_{2n-1, 2n-1+2M_2} D_2 (\Sigma^{0,M_2+1} \nu N) \to \pi_{2n-1} D_2 (N)$ is an isomorphism by \Cref{D2}.

  On the other hand, since the composite $(\nu \iota \otimes \wt{J} /\tau) (\wt{x}) \in \pi_{2n-1, 2n+2M_2} \Sigma^{0,M_2} \nu N_2 \otimes \Ss^{0,0}$ is zero after inverting $\tau$, it follows from \Cref{lem:D2-lift} that $(\nu \iota \otimes \wt{J} /\tau) (\tau \cdot \wt{x})$ lifts to an element of $\pi_{2n-1, 2n-1+2M_2} \Sigma^{0,M_2} \nu D_2 (N_2) \cong \pi_{2n-1,2n-1+M_2} \nu D_2 (N_2)$.
  It follows from \Cref{prop:D2o-E2} (combined with \cite[Corollary 9.20]{bhs}) that this group is equal to zero, since $M_2 \geq 1$ in the range $n \geq 16$.
\end{proof}

\section{Determination of inertia groups} \label{sec:det}
In this section, we determine the conditions under which an $(n-1)$-connected $2n$-manifold can have non-trivial inertia group when $n=4,8,9$.
The authors would like to express their thanks to Diarmuid Crowley for clarifications and corrections regarding the application of modified surgery and his $Q$-form conjecture in this section.

\begin{thm}[Theorem \ref{determination}] \label{thm:det-main}
  Let $M$ denote an $(n-1)$-connected, smooth, closed, oriented $2n$-manifold. Then the inertia group $I(M)$ of $M$ is equal to zero unless $n = 4,8,9$, in which case it is determined as follows:
  \begin{itemize}
    \item (Crowley--Nagy \cite{crowleynagy}) A $3$-connected $8$-manifold $M_8$ has inertia group
      \[I(M_8) = \begin{cases} 0 &\text{ if } 8 \mid p_1 \in \H^4 (M_8)\\ \Theta_8 \cong \Z/2\Z &\text{ if } 8 \nmid p_1 \in \H^4 (M_8). \end{cases}\]
    \item (Crowley--Olbermann) A $7$-connected $16$-manifold $M_{16}$ has inertia group
      \[I(M_{16}) = \begin{cases} 0 &\text{ if } 24 \mid p_2 \in \H^8 (M_{16})\\ \Theta_{16} \cong \Z/2\Z &\text{ if } 24 \nmid p_2 \in \H^8 (M_{16}). \end{cases}\]
    \item An $8$-connected $18$-manifold $M_{18}$ has inertia group
      \[I(M_{18}) = \begin{cases} 0 &\text{ if } H(M_{18}) \textit{ is zero;}   \\  \Z/8\Z \cong \mathrm{bSpin}_{19} \subset \Theta_{18} &\text{ otherwise}, \end{cases}\]
        where $\mathrm{bSpin}_{19}$ is the subgroup of $\Theta_{18}$ consisting of those homotopy $18$-spheres which bound a spin $19$-manifold and $H(M_{18}) \subset \pi_{9} \mathrm{BSO}$ is the image of the map $\pi_{9}(M_{18})\rightarrow \pi_9(\mathrm{BSO})\cong \Z/2\Z$ induced by the map classifying the stable normal bundle of $M_{18}$.
%
  \end{itemize}
\end{thm}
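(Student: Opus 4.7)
The plan is to apply Kreck's modified surgery together with Nagy's theorem \cite{nagy}, which establishes the special case of Crowley's $Q$-form conjecture relevant to $(n-1)$-connected $2n$-manifolds for $n = 4, 8, 9$. Under this framework, the diffeomorphism type of $M$ within its Wall $n$-space is determined by a refined algebraic invariant, the $Q$-form: the intersection form on $H_n(M)$ together with a characteristic-class refinement (the image of $p_{n/4}$ for $n = 4, 8$, and the normal bundle data for $n = 9$), each taken modulo a natural indeterminacy arising from the action of automorphisms of the normal $(n-1)$-type. Since $M$ and $M\#\Sigma$ share the same underlying $n$-space, checking whether $\Sigma \in I(M)$ is equivalent to checking that these $Q$-forms agree. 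The intersection form is clearly unchanged, so the entire question reduces to tracking the change in the characteristic-class refinement induced by the smooth structure on the sphere.

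First, I would make the $Q$-form classification explicit in each of the three cases, extracting from Nagy's theorem the precise target group in which the refined invariant lives: $H^{n}(M;\mathbb{Z})/(8\cdot\text{generator})$ for $n=4$, a quotient involving $H^{n}(M;\mathbb{Z})/24$ for $n=8$, and a quotient measuring the normal-bundle data landing in $\pi_9\mathrm{BSO}\cong \mathbb{Z}/2$ for $n=9$. This is essentially a bookkeeping step: identify the natural divisibility statement that makes $p_1$, $p_2$, or $H(M)$ a well-defined $Q$-form invariant.

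Next, I would compute the change in the $Q$-form under $M \mapsto M\#\Sigma$. This change is controlled by a homomorphism $\Theta_{2n} \to (\text{invariant group})$ which factors through the cokernel-of-$J$ component $\mathrm{coker}(J)_{2n}$, since the $bP$-component bounds a parallelizable manifold and hence does not shift any Pontryagin data. By \Cref{thm:ker-main}, the relevant cokernel-of-$J$ summand is generated by $\varepsilon$, $\eta_4$, or $[h_2 h_4]$ respectively, and the image of each generator in the $Q$-form target group can be read off directly: it contributes a class of order $2, 2, 8$ respectively to the obstruction. Combined with the $Q$-form hypothesis identification, this shows that $\Sigma \in I(M)$ precisely when the generator's shift is already indistinguishable from zero in the invariant group, i.e.\ precisely when the stated divisibility ($8 \mid p_1$, $24 \mid p_2$) holds, or $H(M) = 0$ respectively.

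The main obstacle is the explicit computation of the effect of connected sum with the generator of the relevant cokernel-of-$J$ summand on the $Q$-form characteristic class. For $n=4, 8$ this is essentially the content of the unpublished Crowley--Nagy and Crowley--Olbermann calculations and can be cross-checked against examples ($\mathbb{HP}^2$, $\mathbb{OP}^2$); for $n=9$, the construction of a manifold with nontrivial inertia group is realized by the linear $S^9$-bundle over $S^9$ described in the remark after \Cref{determination}, which provides the geometric witness that $I(M_{18}) \supseteq \mathrm{bSpin}_{19}$ when $H(M)\neq 0$. The upper bound $I(M_{18}) \subseteq \mathrm{bSpin}_{19}$ follows because the only elements of $\Theta_{18}$ that can appear as boundaries of $8$-connected $19$-manifolds lie in $\mathrm{bSpin}_{19}$ by an Atiyah--Bott--Shapiro argument, paralleling the treatment of $n=9$ in \Cref{mo9}.
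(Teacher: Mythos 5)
Your proposal starts correctly: the reduction via Kreck's modified surgery plus Nagy's special case of the $Q$-form conjecture is exactly what the paper does. But the next step contains a fundamental misconception that would derail the argument.

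You write that ``checking whether $\Sigma\in I(M)$ is equivalent to checking that these $Q$-forms agree,'' and then frame the rest of the proof as tracking ``the change in the characteristic-class refinement induced by the smooth structure on the sphere.'' This cannot work, because the $Q$-form of $M\#\Sigma$ is \emph{identical} to that of $M$: connected sum with a homotopy sphere changes neither $H_n(M)$, nor the intersection form, nor the map $H_n(M)\to H_n(B)$. There is no ``shift in Pontryagin data'' to track. The discriminating content in both Kreck's Theorem D and Nagy's theorem is the \emph{$B$-bordism} hypothesis, which your proposal silently drops. The correct reduction (Theorem~\ref{thm:im=ker}) is: since the $Q$-forms and Euler characteristics automatically agree, $M\cong M\#\Sigma$ iff $M$ and $M\#\Sigma$ are $\BO\langle n\rangle_{H(M)}$-bordant, which by Pontryagin--Thom means iff $[\Sigma]$ dies in $\pi_{2n}\MO\langle n\rangle_{H(M)}$. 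Thus $I(M)=\ker(\Theta_{2n}\to\pi_{2n}\MO\langle n\rangle_{H(M)})$, and the role of the ``divisibility of $p_{n/4}$'' enters only as the parameter $d$ indexing the normal $(n-1)$-type $\BO\langle 4n\rangle_{d}$, not as a $Q$-form invariant whose change you compute.

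The second gap is that the remaining kernel computations are not something one can ``read off directly.'' For $n=9$ this is indeed quick (the kernel is controlled by spin cobordism as in \Cref{mo9}). But for $n=4,8$ the paper has to realize $\MO\langle 4n\rangle_d$ as a relative Thom spectrum over $\MO\langle 4n+1\rangle$ (Lemma~\ref{lem:thom}), set up a bar spectral sequence with layers $\Sigma^\infty_+K(\Z,4n-1)^{\otimes k}\otimes\MO\langle 4n+1\rangle$, and prove two nontrivial statements: that $\Sigma^\infty[4]\otimes 1$ vanishes on $\pi_{8n}$ (Lemma~\ref{lem:4-zero}, requiring the two-line Adams spectral sequence analysis of Lemmas~\ref{lem:D2-E2}--\ref{lem:two-lines}), and that $f\circ(\Sigma^\infty[2]\otimes 1)$ hits the image of $\Theta_{8n}$ (Lemma~\ref{lem:2-kills}, requiring the Toda bracket/Moss's theorem argument of Lemma~\ref{lem:how-dies} and the norm-transfer argument of Lemma~\ref{lem:D2-2-fil-1}). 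These are the actual content of the determination; without them one only has the (easy) cases $d=1$ and $d$ odd, and no way to distinguish $8\mid p_1$ from $8\nmid p_1$.

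In short: keep the opening (Kreck + Nagy), but replace the middle of your argument with the bordism reduction and the explicit kernel computations; the ``$Q$-form change'' you propose to compute is identically zero.
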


To prove Theorem \ref{determination}, we use Kreck's modified surgery \cite{kreck} to reduce it to a computation in a bordism group, which we then carry out.
When $n = 4, 8$, we will make crucial use of a special case of Crowley's $Q$-form conjecture, established by Nagy in his PhD thesis \cite{nagy, nagyarxiv}.
The main content of this conjecture is to give a condition under which a modified surgery obstruction vanishes.

Crowley and Nagy have announced a complete determination of the inertia groups of $3$-connected $8$-manifolds \cite{crowleynagy} using the $Q$-form conjecture, while Crowley and Olbermann have independent work  determining the inertia groups of all $7$-connected $16$-manifolds. Since neither of the results are published to the knowledge of the authors, we record complete proofs for both cases.

We begin by stating the theorem which allows us to reduce the proof of \Cref{thm:det-main} to a bordism calculation. First, we require some definitions.

\begin{dfn}
  Let $H \subseteq \pi_n (\BO)$ denote a subgroup, and $\pi_H : \BOn \to K(\pi_n (\BO) / H, n)$ the unique map which is equal to the quotient map $\pi_n (\BO) \to \pi_n (\BO) / H$ on $\pi_n$.
  Then we define
  \[\BO \langle n \rangle_H \coloneqq \fib(\BO \langle n \rangle \xrightarrow{\pi_H} K(\pi_n (\BO )/ H, n))\]
  and we let $\MO \langle n \rangle_H$ denote the associated Thom spectrum.
\end{dfn}

\begin{exm}
  In the extremal cases $H = \pi_n (\BO)$ and $ 0$, we have $\MO\langle n \rangle_{\pi_n (\BO)} \simeq \MO\langle n \rangle$ and $\MO\langle n \rangle_{0} \simeq \MO\langle n+1 \rangle$.
\end{exm}

\begin{ntn}
  Let $M$ denote a closed $(n-1)$-connected $2n$-manifold.
  We let $H(M) \subset \pi_n \BO$ denote the image of $\pi_n (M)$ under the map $M \to \BO$ classifying the stable normal bundle.
\end{ntn}

\begin{thm} \label{thm:im=ker}
  Suppose that $n \geq 3$. Let $M$ denote a closed $(n-1)$-connected $2n$-manifold.
  Then the inertia group $I(M)$ of $M$ is equal to the kernel of the natural map
  \[\Theta_{2n} \to \pi_{2n} (\MO \langle n \rangle_{H(M)}).\]
\end{thm}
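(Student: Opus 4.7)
Let $B := \BO\langle n \rangle_{H(M)}$, so that $\pi_{2n}\MO\langle n \rangle_{H(M)}$ is the bordism group of $2n$-manifolds equipped with a stable normal bundle refinement along $B \to \BO$. The first step is to observe that both $M$ and $M\#\Sigma$ canonically lift to $B$: since $M$ is $(n-1)$-connected of dimension $2n$, obstruction theory applied to the fibration $\BO\langle n\rangle\to K(\pi_n(\BO)/H(M),n)$ produces the desired lift---the unique cohomological obstruction lives in $H^n(M;\pi_n(\BO)/H(M)) \cong \Hom(\pi_n(M),\pi_n(\BO)/H(M))$ and vanishes by the definition of $H(M)$. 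The same argument applies to $M\#\Sigma$, since $\Sigma$ is $(2n-1)$-connected and thus contributes nothing to $\pi_n$. Moreover, the standard $1$-handle cobordism between $M\sqcup\Sigma$ and $M\#\Sigma$ lifts to a $B$-bordism, so that $[M\#\Sigma]-[M]=[\Sigma]$ in $\pi_{2n}\MO\langle n\rangle_{H(M)}$, and the problem reduces to showing that $M$ is $B$-bordant to $M\#\Sigma$ if and only if $M\#\Sigma\cong M$.

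For the inclusion $I(M)\subseteq\ker$: an orientation-preserving diffeomorphism $\phi\colon M\#\Sigma\xrightarrow{\cong} M$ determines a trivial $B$-bordism from $M\#\Sigma$ to $M$; concatenating with the $1$-handle $B$-bordism from $M\sqcup\Sigma$ to $M\#\Sigma$ produces a $B$-bordism from $M\sqcup\Sigma$ to $M$, and subtracting a $B$-cylinder on $M$ yields a $B$-null-bordism of $\Sigma$. Hence $[\Sigma]=0$.

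For the inclusion $\ker\subseteq I(M)$: suppose $[\Sigma]=0$, and let $W$ be a $B$-null-bordism of $\Sigma$. Attaching $W$ to $M\times I$ along a codimension-zero disk in $M\times\{1\}$ produces a $B$-bordism $V^{2n+1}$ from $M$ to $M\#\Sigma$. Kreck's modified surgery then permits us to perform $B$-surgeries below the middle dimension so that the inclusions $M,M\#\Sigma\hookrightarrow V$ are both $(n-1)$-connected; the remaining obstruction to $V$ being an $s$-cobordism is the modified-surgery obstruction in an appropriate Wall-type group, controlled by the equivariant middle-dimensional intersection data on $V$. In the critical cases $n=4,8$, this obstruction is governed by the associated $Q$-form, and Nagy's proof of the relevant case of Crowley's $Q$-form conjecture---applied to the manifestly matching $Q$-forms of $M$ and $M\#\Sigma$---forces it to vanish; in the remaining cases the corresponding Wall-type group vanishes for dimensional or connectivity reasons. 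The $s$-cobordism theorem then produces the required diffeomorphism $M\cong M\#\Sigma$, i.e.\ $\Sigma\in I(M)$.

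The main obstacle is the surgery-obstruction step of the reverse inclusion: having arranged a $B$-bordism $V$, one must upgrade it to an $s$-cobordism. Packaging the bordism data in a form amenable to Crowley's $Q$-form framework and invoking Nagy's theorem is the technically demanding part of the argument; the forward direction and the normal $(n-1)$-type bookkeeping are routine by comparison.
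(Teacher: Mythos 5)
Your overall strategy coincides with the paper's: observe that $M$ and $M\#\Sigma$ share the normal $(n-1)$-type $B = \BO\langle n\rangle_{H(M)}$, reduce the diffeomorphism question to a $B$-bordism statement via Kreck's modified surgery and Nagy's special case of the $Q$-form conjecture, and then translate the bordism statement into the claimed kernel via the Thom isomorphism. The paper, however, treats Kreck's Theorem~D and Nagy's Theorem~H as black-box \emph{diffeomorphism classification criteria} --- two closed simply-connected $2n$-manifolds with identical normal $(n-1)$-type are diffeomorphic iff they admit $B$-bordant normal $(n-1)$-smoothings, \emph{provided} their Euler characteristics agree (odd $n$), respectively their $Q$-forms agree (even $n$). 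Since $\chi(M\#\Sigma)=\chi(M)$ and the $Q$-forms visibly coincide (connect-summing with a homotopy sphere changes neither $H_n$, nor the intersection form, nor the map to $H_n(B)$), the bordism condition is necessary and sufficient, and the theorem follows from the additivity of bordism classes under connected sum. Your version instead re-derives the surgery-below-middle-dimension argument by hand; this is a valid alternative route but it forces you to reckon directly with the modified-surgery obstruction.

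That is where the gap lies. For $n=9$ you assert that ``the corresponding Wall-type group vanishes for dimensional or connectivity reasons.'' This is not correct: the modified-surgery obstruction lives in Kreck's monoid $l_{2n+2}$, and for $2n+2 = 20 \equiv 0 \pmod 4$ this monoid is not trivial --- signature-type data persists. What Kreck's Theorem~D actually says is that for odd $n$ one can rechoose the bordism so that the obstruction is elementary, \emph{assuming the Euler characteristics of the two ends agree}; this hypothesis is substantive and you have omitted it entirely. Without it your argument does not close for $n=9$. Citing Kreck's Theorem~D (and Nagy's Theorem~H) directly, as the paper does, packages precisely the surgery arranging you gesture at and avoids this misattribution.
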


We note that when $n = 4$, this is a special case of \cite[Theorem 5.2.40]{nagy}, which applies more generally to simply-connected $8$-manifolds with torsionfree homology concentrated in even dimensions.
When $n \equiv 0,4,6 \mod 8$, it has also appeared as \cite[Theorem 1.6]{nagyarxiv}.

The case $n \not\equiv 2 \mod 8$ of this theorem is readily deduced from Kreck's work on modified surgery \cite{kreck} and Nagy's proof of a special case of Crowley's $Q$-form conjecture \cite[Theorem H]{nagy} \cite[Theorem 1.2]{nagyarxiv}. The $n \equiv 2 \mod 8$ case does not come up because we only need to consider the cases $n = 4,8,9$.
First, we must recall some definitions.

%
%

\begin{definition}
  Let $B\rightarrow \mathrm{BSO}$ be a fibration. Given a smooth manifold $M$, a map $M\rightarrow B$ is said to be a \textit{normal $k$-smoothing} if it is $(k+1)$-connected and the composition $M\rightarrow B\rightarrow\mathrm{BSO}$ is homotopic to the classifying map of the stable normal bundle of $M$.
  The \emph{normal $k$-type} of $M$ is the unique fibration $B \to \BSO$ for which there is a normal $k$-smoothing $M \to B$ and the fiber of $B \to \mathrm{BSO}$ is connected and has vanishing homotopy groups in degrees $\geq k+1$. Equivalently this is the $(k+1)$th stage of the Moore--Postnikov decomposition of the map $M\to \mathrm{BSO}$ classifying the stable normal bundle.
\end{definition}

\begin{dfn}
  For $M$ a simply-connected $2n$-manifold with $n$ even, the \textit{Q-form} of a normal $(n-1)$-smoothing $f:M\rightarrow B$ is the triple $$Q_n(f)=\Big(H_n(M),\lambda_M,f_*:H_n(M)\rightarrow H_n(B)\Big),$$ where $\lambda_M : H_n (M) \times H_n (M) \to \Z$ is the intersection form of $M$.
\end{dfn}

We can now state the theorems of Kreck and Nagy that we will make use of.

\begin{thm}[{\cite[Theorem D]{kreck}}] \label{thm:kreck-odd}
  Let $n \geq 2$ be odd. Then two closed simply-connected $2n$-manifolds $M_0$ and $M_1$ with the same Euler characteristic and the same normal $(n-1)$-type $B \to \BSO$ are diffeomorphic if and only if they admit $B$-bordant normal $(n-1)$-smoothings $M_i \to B$.
\end{thm}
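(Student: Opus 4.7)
The plan is to run Kreck's modified surgery program on a given $B$-bordism between the two normal $(n-1)$-smoothings, converting it step-by-step into an $s$-cobordism, and then invoke the $s$-cobordism theorem (valid here because $\dim W = 2n+1 \geq 5$ for $n \geq 2$ and simple-connectedness kills Whitehead torsion) to extract the diffeomorphism. The ``only if'' direction is immediate: a diffeomorphism $\phi : M_0 \xrightarrow{\cong} M_1$ turns $M_0 \times I$ into a $B$-bordism between $f_0$ and $f_1 \circ \phi$, and $f_1 \circ \phi$ is again a normal $(n-1)$-smoothing by uniqueness of the normal $(n-1)$-type.

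For the nontrivial direction, fix a $B$-bordism $(W, F)$ between $(M_0, f_0)$ and $(M_1, f_1)$. The first phase is surgery below the middle dimension: since the fiber of $B \to \BSO$ has vanishing homotopy groups above degree $n-1$, and each $f_i$ is already $n$-connected, any relative class in $\pi_k(W, M_i)$ with $k \leq n$ is representable by a framed embedded sphere, and surgery along such a sphere preserves the $B$-bordism structure. After finitely many such surgeries one may assume $F : W \to B$ is itself $n$-connected, so that $(W, M_i)$ is $n$-connected for $i = 0,1$.

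The main obstacle is middle-dimensional surgery. With $n$ odd and $\dim W = 2n+1$, the surgery kernel $K_n(W) := \ker(\H_n(W;\Z) \to \H_n(B;\Z))$ inherits a skew-symmetric intersection form equipped with a quadratic refinement coming from the normal $B$-structure, and the obstruction to killing it by further surgeries lies in Kreck's monoid $l_{2n+1}(\pi_1(B))$. In the simply-connected regime this obstruction is controlled by the stable isomorphism class of the quadratic form, and the Euler characteristic hypothesis forces the ranks of $\H_n(M_0)$ and $\H_n(M_1)$ to agree, preventing any genuine stabilization from being required: any algebraic need to add $S^n \times S^{n+1}$ summands can be absorbed by pairs of cancelling handles. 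Once $K_n(W)$ is killed, $W$ becomes an $h$-cobordism, and simple-connectedness upgrades it to an $s$-cobordism, from which the $s$-cobordism theorem produces the required diffeomorphism.
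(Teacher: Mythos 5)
A preliminary remark: the paper does not prove this statement at all — it is quoted as Kreck's Theorem D \cite{kreck} and used as a black box — so there is no internal argument to compare your sketch against. Judged on its own, your outline has the right overall shape of Kreck's modified surgery program: the ``only if'' direction is correct, and surgery below the middle dimension to make $W\to B$ an $n$-equivalence while preserving the $B$-structure is standard. The problem is that the decisive step is asserted rather than proved.

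The gap is the middle-dimensional step. First, a technical point: for the $(2n+1)$-dimensional bordism $W$ there is no intersection form on $K_n(W)=\ker(\H_n(W)\to \H_n(B))$ in the sense you describe (middle homology of an odd-dimensional manifold carries no such pairing); Kreck's obstruction $\theta(W)\in l_{2n+1}(\Z[\pi_1 B])$ is a subtler object, a half-rank quadratic datum extracted from $\pi_{n+1}(B,W)$, and the entire content of Theorem D is that this obstruction is \emph{elementary} under the stated hypotheses. Your justification — equal Euler characteristics force equal middle ranks, and ``any algebraic need to add $S^n\times S^{n+1}$ summands can be absorbed by pairs of cancelling handles'' — never genuinely uses that $n$ is odd beyond naming the form skew-symmetric, and if that step were valid as stated the same argument would yield the conclusion for $n$ even, which is false: for $n$ even one needs in addition an isomorphism of $Q$-forms (\Cref{thm:Q-form}), because there exist simply-connected, stably diffeomorphic $2n$-manifolds with equal Euler characteristics but non-isomorphic intersection forms. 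Where oddness actually enters is in the algebra of the obstruction: in the simply-connected case the relevant $(-1)$-quadratic/hyperbolic forms over $\Z$ satisfy Witt cancellation and their isometry groups act transitively on hyperbolic bases (and one must also dispose of the Arf-type refinement), and it is this cancellation, fed through Kreck's criterion that $\theta(W)$ be elementary together with his handle-cancellation argument, that upgrades ``diffeomorphic after connected sum with copies of $S^n\times S^n$'' plus equal Euler characteristic to ``diffeomorphic.'' None of that is supplied in your sketch, so as written it is an outline of Kreck's strategy rather than a proof.
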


\begin{thm}[{\cite[Theorem H]{nagy} \cite[Theorem 1.2]{nagyarxiv}}] \label{thm:Q-form}
  Let $n \geq 3$ be even and $B\rightarrow \mathrm{BSO}$ a fibration with $\pi_1(B)=0$ and $H_n(B;\Z)$ torsion-free. Let $M_0$ and $M_1$ denote closed oriented $2n$-manifolds.
  Then $M_0$ and $M_1$ are diffeomorphic if and only if they admit $B$-bordant normal $(n-1)$-smooothings $M_i \to B$ such that the associated $Q$-forms are isomorphic.
\end{thm}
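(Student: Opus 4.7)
The plan is to combine Kreck's modified surgery \cite{kreck} with a handle-geometric realization of isomorphisms of $Q$-forms.

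The ``only if'' direction is formal: a diffeomorphism $\phi : M_0 \to M_1$ compatible with the smoothings gives a $B$-bordism via its mapping cylinder, and $\phi_*$ induces an isomorphism of $Q$-forms.

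For the converse, given a $B$-bordism $W^{2n+1}$ between normal $(n-1)$-smoothings $f_i : M_i \to B$, I would first apply Kreck's reduction: surgery below the middle dimension on the interior of $W$ arranges both inclusions $M_i \hookrightarrow W$ to be $n$-connected. The resulting bordism admits a relative handle decomposition on $M_0$ with only $n$- and $(n+1)$-handles, whose attaching data assembles into the Kreck surgery obstruction in the appropriate quadratic form monoid. The hypothesis $\pi_1(B) = 0$ places us in a simply-connected surgery setting (no Whitehead torsion), and the torsion-freeness of $H_n(B;\Z)$ ensures that any class in $\ker(f_{i,*} : H_n(M_i) \to H_n(B))$ is represented by a smoothly embedded $n$-sphere with trivial normal bundle, using the Whitney trick and the fact that $n$ is even.

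The heart of the argument is to convert the given isomorphism $\alpha : Q_n(f_0) \xrightarrow{\cong} Q_n(f_1)$ into a sequence of ambient surgeries on $W$: each algebraic simplification of the middle-dimensional quadratic datum is executed by an embedded surgery in the interior of $W$ along a sphere whose $B$-compatibility is controlled by the torsion-freeness hypothesis. After finitely many such moves, the Kreck obstruction becomes hyperbolic, the remaining handles cancel in pairs, and one obtains an $h$-cobordism. Since $M_0$ and $M_1$ are simply connected of dimension $2n \geq 6$, the $h$-cobordism theorem delivers a diffeomorphism $M_0 \cong M_1$.

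The main obstacle is this algebraic-to-geometric realization step --- precisely the content of Crowley's $Q$-form conjecture. It requires a delicate inductive argument that simultaneously tracks the intersection form and its compatibility with $H_n(B)$, and verifies that each algebraic Witt-style move on the abstract form can be performed by an honest embedded surgery on $W$ preserving the $B$-structure. The torsion-freeness of $H_n(B;\Z)$ is what makes the induction close: without it, Bockstein-type obstructions could prevent a representing integral cycle from being geometrically realized, and the abstract isomorphism of $Q$-forms would fail to determine the surgery obstruction on the nose.
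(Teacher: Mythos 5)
This statement is quoted in the paper as Theorem~H of Nagy's thesis \cite{nagy}; the paper does not offer a proof of it, so there is no ``paper's own proof'' against which to measure your attempt.

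As an outline of what such a proof must look like, your sketch correctly identifies the two structural pillars: (a) Kreck's modified surgery program, which reduces the diffeomorphism question to cancelling $n$- and $(n+1)$-handles in a $B$-bordism whose inclusions have been made $n$-connected, and (b) a realization step that converts an abstract isomorphism of $Q$-forms into geometric handle cancellations. But the realization step \emph{is} the theorem --- it is precisely the content of Crowley's $Q$-form conjecture in the special case at hand --- and your write-up stops at asserting that a ``delicate inductive argument'' will make the Kreck obstruction hyperbolic without specifying the algebraic monoid in which that obstruction lives, the elementary moves available, or why a $Q$-form isomorphism forces the obstruction into the trivial class. A few subsidiary points are also slightly off: the Whitney trick in ambient dimension $2n \geq 6$ (or $2n+1$ in the interior of $W$) does not need $n$ even; the parity enters because it makes the intersection form symmetric, which is what puts the surgery obstruction into the even-dimensional quadratic form monoid that Kreck and Nagy analyze. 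Likewise the torsion-freeness of $H_n(B;\Z)$ is not what produces embedded framed $n$-spheres (that comes from Hurewicz plus Whitney); it is an algebraic hypothesis needed so that the reference map $H_n(M_i) \to H_n(B)$ and the intersection form together determine the obstruction, avoiding extension problems that torsion in $H_n(B;\Z)$ would create. In short: you have reconstructed the correct skeleton from Kreck's framework, but the substance --- what Nagy actually proves --- is left as a black box, so this cannot be regarded as a proof of the statement.
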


As mentioned, \Cref{thm:Q-form} above is a special case of Crowley's $Q$-form conjecture. The full conjecture, which is stated in \cite[Problem 11]{CrowleyProblems}, does not assume that $B$ is simply-connected and $H_n (B;\Z)$ is torsionfree.

We now deduce \Cref{thm:im=ker} from \Cref{thm:kreck-odd} and \Cref{thm:Q-form}.

\begin{proof}[Proof of \Cref{thm:im=ker}]
  Since we have already proven in \Cref{sec:exceptional} that the kernel of
  \[\Theta_{2n} \to \pi_{2n} (\MO \langle n \rangle)\] is generated by the image of $J$ (and Kervaire invariant one elements when $n=3,7$)  
  and hence $I(M)$ is trivial for all $n\neq 4,8,9$ by \Cref{kernelunit}, we may assume that $n \equiv 0,1,4 \mod 8$.

  Given $\Sigma \in \Theta_{2n}$, we must determine when $M$ is diffeomorphic to $M \# \Sigma$.
  The manifolds $M$ and $M \# \Sigma$ have the same normal $(n-1)$-type, which is given by $\BO\langle n \rangle_{H(M)} \to \BSO$. Moreover, they admit unique normal $(n-1)$-smoothings $M \to \BO\langle n \rangle_{H(M)}$ and $M \# \Sigma \to \BO\langle n \rangle_{H(M)}$. Indeed, to see this uniqueness we let $F$ denote the fiber of $\BO \langle n \rangle_{H(M)} \to \BO \langle n \rangle$ and note that by the infinite loop space structure of $\BO \langle n \rangle_{H(M)} \to \BO \langle n \rangle$ any two normal $(n-1)$-smoothings of $M$ differ by a map $M \to F$, which must be null for connectivity reasons. The same argument applies to $M \# \Sigma$.

  Since $M$ and $M \# \Sigma$ have the same $Q$-forms and Euler characteristics, it follows from \Cref{thm:kreck-odd} (when $n \equiv 1 \mod 8$) and \Cref{thm:Q-form} (when $n \equiv 0,4 \mod 8$) that $M$ and $M \# \Sigma$ are diffeomorphic if and only if they are $\BO \langle n \rangle_{H(M)}$-bordant.
  Here we have used the fact that
  \[H_n (\BO\langle n \rangle_{H(M)};\Z) \cong \pi_n (\BO\langle n \rangle_{H(M)}) \cong H(M) \subseteq \pi_{n} \BSO\]
  is torsionfree when $n \equiv 0,4 \mod 8$.
  Since connected sum corresponds to addition in the cobordism ring, this holds if and only if $S^{2n}$ and $\Sigma$ are $\BO\langle n \rangle_{H(M)}$-bordant.
  The theorem then follows from the Pontryagin--Thom construction, which identifies the $\BO \langle n \rangle_{H(M)}$-bordism ring with $\pi_* (\MO \langle n \rangle_{H(M)})$.
\end{proof}

We now move on to the proof of \Cref{thm:det-main} using \Cref{thm:im=ker}. We start with the simplest case $n=9$. First, we prove a useful lemma (cf. \cite[Theorem 4.11]{bhs}).

\begin{lem} \label{lem:coker-inj}
  The unit map $\pi_{2n} \Ss \to \pi_{2n} \MO \langle n+1 \rangle$ factors through an injective map
  \[\coker(J)_{2n} \hookrightarrow \pi_{2n} \MO \langle n+1 \rangle\]
  for all $n \neq 1,3,7$.
  In these exceptional cases, the map is not injective and the cokernel is generated by $\eta^2, \nu^2$ and $\sigma^2$, respectively.
\end{lem}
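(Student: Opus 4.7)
The approach is to apply the bar spectral sequence of Section \ref{sec:MOnBar} to $\MO\langle n+1\rangle$ rather than $\MOn$. Writing $\MO\langle n+1\rangle \simeq \Ss \otimes_{\Sigma^\infty_+ \mathrm{O}\langle n\rangle} \Ss$ produces a spectral sequence
\[
E^1_{t,s} = \pi_t\bigl((\Sigma^\infty\mathrm{O}\langle n\rangle)^{\otimes s}\bigr) \Rightarrow \pi_{t+s}\MO\langle n+1\rangle,
\]
and since $\mathrm{O}\langle n\rangle$ is $(n-1)$-connected, the smash powers $(\Sigma^\infty\mathrm{O}\langle n\rangle)^{\otimes s}$ are $(sn-1)$-connected. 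Thus for total degree $2n$ only the columns $s = 0, 1$ contribute, and the only differential hitting $E^1_{2n, 0} = \pi_{2n}\Ss$ is $d_1 = \pi_{2n}J \colon \pi_{2n}\Sigma^\infty\mathrm{O}\langle n\rangle \to \pi_{2n}\Ss$. Hence the kernel of the unit map equals the image of $\pi_{2n}J$, which always contains the image of the classical $J$-homomorphism via the summand $\pi_{2n}\mathrm{O}\langle n\rangle \cong \pi_{2n}O$ arising from a Goodwillie splitting analogous to Proposition \ref{prop:D2-split}.

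Since $\MO\langle n+1\rangle \to \MOn$ is a map of $\Einf$-rings whose composite with the unit is the unit of $\MOn$, the kernel for $\MO\langle n+1\rangle$ is contained in the kernel for $\MOn$ determined by Theorem \ref{thm:ker-main}. For $n \neq 1, 3, 4, 7, 8, 9$ this latter kernel is already the image of $J$, and combining with the previous paragraph yields the claimed equality.

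The cases $n = 1, 3$ are handled by the triviality of the target: $\pi_2\MSO = 0$ and $\pi_6\MSpin = 0$, so all of $\pi_{2n}\Ss$ lies in the kernel. For $n = 7$, $\MO\langle 8\rangle = \MString = \MO\langle 7\rangle$ (as $\pi_7 \BO = 0$), so Theorem \ref{thm:ker-main} directly gives that the kernel is the image of $J$ together with $\sigma^2$. In each of these three exceptional cases the kernel matches the description in the statement.

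It remains to handle $n = 4, 8, 9$, where one must verify that the extra kernel classes for $\MOn$, namely $\varepsilon, \eta_4, [h_2 h_4]$, do \emph{not} lie in the kernel for $\MO\langle n+1\rangle$. For $n = 4$ this is immediate from $\MO\langle 5\rangle = \MString$ (since $\pi_5, \pi_6, \pi_7 \BO = 0$) together with the Ando--Hopkins--Rezk orientation $\MString \to \tmf$ detecting $\varepsilon$ as a nonzero class by \cite[Theorem 11.80]{tmfAdamsBook}. The main obstacle will be the cases $n = 8, 9$: since $\MO\langle 9\rangle$ and $\MO\langle 10\rangle$ do not admit convenient orientations to $\tmf$, one must argue directly via an $\F_2$-Adams spectral sequence computation in the spirit of Section \ref{sec:exceptional} to see that $\eta_4$ and $[h_2 h_4]$ survive in $\pi_{16} \MO\langle 9\rangle$ and $\pi_{18} \MO\langle 10\rangle$ respectively.
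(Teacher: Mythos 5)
Your proposal takes a genuinely different route from the paper, and it has an acknowledged gap that must be flagged.

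The paper's argument is much more self-contained: it analyzes the $\mathrm{E}^1$-page of the bar spectral sequence for $\MO\langle n+1\rangle$ directly. The relevant term $\pi_{2n}\Sigma^\infty\mathrm{O}\langle n\rangle$ splits, via the Goodwillie tower, into a contribution from $\pi_{2n}\mathrm{o}\langle n\rangle\cong\pi_{2n}\Sigma^{-1}\mathrm{bo}$ and a contribution from $D_2(\mathrm{o}\langle n\rangle)$ generated by the ``square'' $\iota_n^2$ of the bottom class. Under the $J$-homomorphism the first summand maps onto $j_{2n}$, and $\iota_n^2$ maps to $j_n^2$. The proof then reduces to a single classical input: by \cite[Lemma 3]{Novikov}, $j_n^2$ lies in the image of $J$ whenever $n\neq 1,3,7$. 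This is uniform in $n$, requires no appeal to \Cref{thm:ker-main}, and in particular needs no orientation to $\tmf$ and no case analysis at $n=4,8,9$. Your proposal instead brackets the kernel between $\mathrm{Im}(J)$ and the $\MOn$ kernel computed in \Cref{thm:ker-main}, which is a much heavier tool than this lemma requires, and then must rule out the exceptional generators one by one.

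The concrete gap: you explicitly leave the cases $n=8$ and $n=9$ unresolved, writing only that one ``must argue directly via an $\F_2$-Adams spectral sequence computation'' to see that $\eta_4$ and $[h_2h_4]$ survive in $\pi_{16}\MO\langle 9\rangle$ and $\pi_{18}\MO\langle 10\rangle$. These are not trivial: there is no convenient multiplicative orientation out of $\MO\langle 9\rangle$ or $\MO\langle 10\rangle$ that detects these classes (indeed $[h_2h_4]$ is chosen in the paper precisely to lie in the kernel of $\pi_{18}\Ss\to\pi_{18}\ko$), and the determination of $\pi_{18}\MO\langle 10\rangle$ is itself deferred in the paper to \Cref{thm:9-det}, which is deduced \emph{from} \Cref{cor:theta-inj} and hence from the very lemma you are proving. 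So following your route, you would either have to do a genuinely new $\F_2$-Adams computation in $\pi_{16}\MO\langle 9\rangle$ and $\pi_{18}\MO\langle 10\rangle$ from scratch, or risk circularity. I would redo the $\mathrm{E}^1$-page analysis for $\MO\langle n+1\rangle$ as in the paper: isolate the $D_2$ summand, observe that it maps to $j_n^2$, and invoke Novikov's product result. This closes all cases at once and avoids any dependence on \Cref{thm:ker-main}.
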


\begin{proof}
  From the bar spectral sequence for $\MO\langle n+1 \rangle$ (\Cref{prop:D2-split} and \Cref{prop:D2o-E2}), we see that there is an exact sequence
  \[\Z\{\iota_{n} ^2\} \oplus \pi_{2n} \Sigma^{-1} \mathrm{bo} \cong \pi_{2n} \Sigma^\infty \mathrm{O} \langle n \rangle \to \pi_{2n} \Ss \to \pi_{2n} \MO \langle n+1 \rangle.\]

  Let $j_{k} \in \pi_k \Ss$ denote a generator of the image of $J$ in degree $k$.
  Then the cyclic group $\pi_{2n} \Sigma ^{-1} \mathrm{bo}$ maps onto $j_{2n}$, while $\iota_{n} ^2$ maps to $j_{n}^2$.
  The result now follows from the fact that $j_{n}^2$ lies in the image of $J$ whenever $n \neq 1,3,7$ by \cite[Lemma 3]{Novikov}. 
  When $n=1,3,7$, we note that $j_1 = \eta$, $j_3 = \nu$ and $j_7 = \sigma$.
\end{proof}

\begin{cor} \label{cor:theta-inj}
  The natural map $\Theta_{2n} \to \pi_{2n} \MO \langle n + 1 \rangle$ is injective.
\end{cor}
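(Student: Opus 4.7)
The plan is to factor the map $\Theta_{2n} \to \pi_{2n} \MO\langle n+1 \rangle$ through $\coker(J)_{2n}$ and reduce to the two injectivity statements, one from the Kervaire--Milnor exact sequence and the other from \Cref{lem:coker-inj}. Since \Cref{lem:coker-inj} shows that the unit map $\pi_{2n}\Ss \to \pi_{2n}\MO\langle n+1\rangle$ annihilates the image of $J$, the natural map will factor as
\[\Theta_{2n} \longrightarrow \coker(J)_{2n} \longrightarrow \pi_{2n}\MO\langle n+1\rangle,\]
where the first arrow is the Kervaire--Milnor homomorphism.

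The first arrow is injective because its kernel $\mathrm{bP}_{2n+1}$ vanishes in odd dimensions by Kervaire--Milnor. For $n \notin \{1,3,7\}$, the second arrow is injective directly by \Cref{lem:coker-inj}, so the composition is injective. For $n = 1, 3$ the group $\Theta_{2n}$ itself vanishes, so there is nothing to check. This leaves $n = 7$ as the only substantive case: here the kernel of the second map is generated by $\sigma^2 \in \pi_{14}\Ss$, and one must verify that this class is not in the image of $\Theta_{14} \to \coker(J)_{14}$.

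The only real obstacle, then, is this last verification at $n=7$. I will invoke the standard fact that the image of $\Theta_{2n} \to \coker(J)_{2n}$ lies in the kernel of the Kervaire invariant homomorphism: any homotopy sphere bounds a contractible manifold, and using such a null-bordism one sees that its Kervaire--Milnor class has trivial Kervaire invariant. Combined with the well-known fact that $\sigma^2$ has Kervaire invariant one, this rules out $\sigma^2$ being in the image of $\Theta_{14}$, completing the argument.
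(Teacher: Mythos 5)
Your proof takes essentially the same route as the paper's: factor $\Theta_{2n} \to \pi_{2n}\MO\langle n+1\rangle$ through $\coker(J)_{2n}$, observe that the image of $\Theta_{2n}$ in $\coker(J)_{2n}$ consists of Kervaire-invariant-zero elements, and note that the kernel of $\coker(J)_{2n}\to\pi_{2n}\MO\langle n+1\rangle$ (generated by $\eta^2$, $\nu^2$, $\sigma^2$ in the exceptional cases, per \Cref{lem:coker-inj}) consists of Kervaire-invariant-one elements. One small but genuine error in your justification of the standard fact: it is not true that ``any homotopy sphere bounds a contractible manifold''---by the $h$-cobordism theorem that would force every homotopy sphere to be standard. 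What Kervaire--Milnor prove is that every homotopy sphere bounds a \emph{parallelizable} manifold, and that alone does not control the Kervaire invariant (indeed $bP_{4k+2}$ exists precisely because parallelizable null-bordisms can have Kervaire invariant one). The correct reason a homotopy $(4k+2)$-sphere has trivial Kervaire invariant is simply that the invariant is the Arf invariant of a quadratic form on the middle-dimensional mod $2$ homology, which vanishes for a homotopy sphere. This does not affect the validity of your argument, since the fact itself is correct and the paper also invokes it without proof.
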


\begin{proof}
  The natural map factors as
  \[\Theta_{2n} \hookrightarrow \coker(J)_{2n} \to \pi_{2n} \MO \langle n+1 \rangle. \]
  Now, the image of $\Theta_{2n}$ in $\coker(J)$ consists of those elements which are of Kervaire invariant zero.
  In particular, it does not contain the classes $\eta^2, \nu^2$ or $\sigma^2$.
  The result therefore follows from \Cref{lem:coker-inj}.
\end{proof}

\begin{thm} \label{thm:9-det}
  The kernel of the map $\Theta_{18}\rightarrow \pi_{18}(\mo\langle 9\rangle_{H(M)})$ is zero if $H(M)=0$ and $\Z/8\Z \cong \mathrm{bSpin}_{19} \subset \Theta_{18}\cong\Z/8\Z\oplus \Z/2\Z$ otherwise.
\end{thm}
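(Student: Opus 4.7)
The plan is to split on whether $H(M) = 0$ or not, since $\pi_9\BO \cong \Z/2\Z$ admits only these two subgroups.

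When $H(M) = 0$, the fibration defining $\BO\langle 9\rangle_{H(M)}$ kills all of $\pi_9 \BO$, so $\BO\langle 9\rangle_{H(M)} \simeq \BO\langle 10\rangle$ and hence $\mo\langle 9\rangle_{H(M)} \simeq \mo\langle 10\rangle$. Applying \Cref{cor:theta-inj} with $n=9$ shows that $\Theta_{18}\to \pi_{18}\mo\langle 10\rangle$ is injective, so the kernel is zero.

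When $H(M) \neq 0$, we have $H(M) = \pi_9\BO$ and therefore $\mo\langle 9\rangle_{H(M)} \simeq \mo\langle 9\rangle$. I would first identify the kernel of $\Theta_{18}\to \pi_{18}\mo\langle 9\rangle$ with the image of the boundary map $\partial \colon A_{19}^{\langle 9\rangle}\to \Theta_{18}$: a homotopy sphere lies in the kernel precisely when it bounds an $\mo\langle 9\rangle$-manifold, and surgery below the middle dimension converts any such null-bordism into an $8$-connected almost closed $19$-manifold, following Stolz's setup recalled in \Cref{sec:background}. Combining \Cref{thm:An} with \Cref{thm:kernel-main-intro}(iii), this image is the cyclic subgroup $\langle [h_2 h_4]\rangle \subseteq \Theta_{18}$ of order $8$.

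Finally, I would identify $\langle [h_2 h_4]\rangle$ with $\mathrm{bSpin}_{19}$. The inclusion $\subseteq$ is automatic since every $8$-connected manifold is spin. For the reverse inclusion, Bott periodicity gives $\pi_{18}\SO \cong \pi_2 \SO = 0$, so the classical image of $J$ in degree $18$ vanishes, and combined with $\bP_{19}=0$ the natural map $\Theta_{18}\to \coker(J)_{18} \cong \Z/8\Z \oplus \Z/2\Z$ is an isomorphism. The Atiyah--Bott--Shapiro orientation $\MSpin \to \ko$ then yields
\[
  \mathrm{bSpin}_{19} = \ker(\Theta_{18}\to \pi_{18}\MSpin) \subseteq \ker(\Theta_{18}\to \pi_{18}\ko),
\]
and the defining footnote of $[h_2 h_4]$ in the main theorem identifies this latter kernel with $\Z/8\Z\{[h_2 h_4]\}$. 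The main obstacle is the identification of the kernel with the image of $\partial$, which depends on the geometric framework from \Cref{sec:background}; once that is in place, the remaining verifications are routine bookkeeping with known stable homotopy groups.
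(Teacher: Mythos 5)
Your proof is correct, and the $H(M)=0$ case is identical to the paper's. For $H(M)\neq 0$ you take a genuinely different route. The paper applies \Cref{thm:ker-main} directly to identify the kernel of $\Theta_{18}\to\pi_{18}\mo\langle 9\rangle$ with $\ker(\pi_{18}\Ss\to\pi_{18}\ko)$ in one step, and then appeals to the Anderson--Brown--Peterson splitting \cite{spin} to deduce $\ker(\pi_{18}\Ss\to\pi_{18}\ko)=\ker(\pi_{18}\Ss\to\pi_{18}\MSpin)=\mathrm{bSpin}_{19}$. You instead detour through the bordism group $A_{19}^{\langle 9\rangle}$: you identify the kernel with $\im(\partial)$ via Stolz's surgery argument, compute $\im(\partial)=\langle[h_2h_4]\rangle$ using \Cref{thm:An} and \Cref{thm:kernel-main-intro}(iii), and then prove $\langle[h_2h_4]\rangle=\mathrm{bSpin}_{19}$ by a two-inclusion argument ($\im(\partial)\subseteq\mathrm{bSpin}_{19}$ since $8$-connected implies spin; $\mathrm{bSpin}_{19}\subseteq\ker(\cdot\to\ko)$ by the Atiyah--Bott--Shapiro orientation). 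What your route buys is that you never need the full strength of \cite{spin}'s identification $\ker(\cdot\to\ko)=\ker(\cdot\to\MSpin)$: you only use the easy inclusion coming from ABS, together with a geometric inclusion. What the paper's route buys is brevity. Two small remarks: the citation to \Cref{thm:kernel-main-intro}(iii) is slightly awkward since that theorem is stated in \Cref{sec:kernel}, after \Cref{thm:9-det}; you could avoid it entirely by citing \Cref{thm:ker-main} for $n=9$, which is proved in \Cref{sec:exceptional} and gives the kernel of the unit map directly. Also, the step "$\Theta_{18}\to\coker(J)_{18}$ is an isomorphism" uses not only $\bP_{19}=0$ but also the vanishing of the Kervaire invariant in degree $18$ (so that the map is surjective), which you should mention.
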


\begin{proof}
  When $H(M) = 0$, $\MO\langle 9 \rangle_{H(M)} = \MO \langle 10 \rangle$, so the result follows from \Cref{cor:theta-inj}.
  When $H(M) \cong \pi_9 \BO \cong \Z/2\Z$, we have $\MO \langle 9 \rangle_{H(M)} = \MO \langle 9 \rangle$, and \Cref{thm:ker-main} identifies the kernel of $\pi_{18} \Ss \cong \Theta_{18} \to \pi_{18} \mo \langle 9 \rangle$ with the kernel of the map $\pi_{18} \Ss \to \pi_{18} \ko$.
  To conclude, it suffices to note that the results of \cite{spin} imply that the kernel of $\pi_{18} \Ss \to \pi_{18} \ko$ is equal to the kernel of $\pi_{18} \Ss \to \pi_{18} \MSpin$.
\end{proof}

Next we tackle the cases $n=4,8$. Note that $\pi_n{\BO}=\Z$ in both cases.

\begin{ntn}
  Given a nonnegative integer $d$, we set 
  \[\MO\langle 4n \rangle_d \coloneqq \MO \langle 4n \rangle_{d\Z}.\]
  When $n = 4$ and $8$, respectively, then we write $\MSpin_d$ and $\MStr_d$ instead
\end{ntn}

\begin{thm} \label{thm:spin-det}
  The kernel of the map $\Z/2\Z \cong \coker(J)_8 \cong \Theta_{8} \to \pi_{8} \MSpin_{d}$ is equal to $\Theta_8$ if $4 \nmid d$ and equal to $0$ otherwise.
\end{thm}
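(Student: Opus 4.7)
The plan is to apply Theorem \ref{thm:im=ker} to reduce to the homotopy-theoretic question of when $\varepsilon \in \pi_8(\Ss)$ vanishes in $\pi_8 \MSpin_d$, using the identification $\Theta_8 \cong \Z/2\{\varepsilon\}$ via \Cref{cor:theta-inj}.

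I would then apply the bar spectral sequence to the presentation $\MSpin_d \simeq \Ss \otimes_{\Sigma^\infty_+ G} \Ss$, where $G = \Omega \BO\langle 4\rangle_{d\Z} = \fib(\Spin \to K(\Z/d, 3))$. Following the pattern of \Cref{sec:MOnBar}, the kernel of the unit map in degree $8$ is generated by the images of the $d_1$- and $d_2$-differentials entering $\pi_8(\Ss)$. The first step is to show $\pi_7(\Sigma^\infty G)^{\otimes 2} = 0$ by adapting the argument of \Cref{prop:Otens2} for $n \equiv 4 \mod 8$: since $G \to \Spin$ is an isomorphism on $\pi_k$ for $k \neq 3$ and $\pi_4 G = 0$, the $\F_2$-Adams $\mathrm{E}_2$-page of $(\Sigma^\infty G)^{\otimes 2}$ has no generators in total degree $7$, forcing the $d_2$-differential to vanish.

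The kernel is therefore generated by the image of $d_1 = J_d : \pi_8(\Sigma^\infty G) \to \pi_8(\Ss)$. By the Goodwillie tower splitting analogous to \Cref{prop:D2-split}, we have $\pi_8(\Sigma^\infty G) \cong \pi_8 \mathfrak{g} \oplus \pi_8 D_2 \mathfrak{g}$ in this range, where $\mathfrak{g} = \Omega^\infty \tau_{\geq 3} \Sigma^{-1}(\bo_{d\Z})$. The classical $J$-homomorphism on $\pi_8 \mathfrak{g} = \pi_8 \Spin = \Z/2$ hits $\eta\sigma$, which lies in the classical image of $J$ and hence is not $\varepsilon$. So the remaining task is to determine when $\varepsilon$ lies in the image of $\pi_8 D_2 \mathfrak{g} \to \pi_8(\Ss)$.

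This final step---the most delicate part of the proof---requires an $\F_2$-Adams spectral sequence analysis for $D_2 \mathfrak{g}$, analogous to \Cref{prop:D2o-E2}. The key input is that $\H_*(\mathfrak{g}; \F_2)$ has the same Steenrod-module structure as $\H_*(\littleo; \F_2)$ in the relevant range, but the comparison map $\mathfrak{g} \to \littleo$ acts as multiplication by $d \bmod 2$ on the bottom class. Tracking the Dyer--Lashof operations on this bottom cell and their image under $J_d$ then shows that $\varepsilon$ is hit precisely when $4 \nmid d$. For $4 \mid d$ (including the edge case $d = 0$, where $\MSpin_0 \simeq \MString$), the non-vanishing of $\varepsilon$ in $\pi_8 \MSpin_d$ can additionally be verified using the map $\MString \to \MSpin_d$, observing that the bar spectral sequence for $\MString$ (based on the $6$-connected space $\mathrm{O}\langle 7\rangle$) admits no $D_2$-contribution to the kernel of $\pi_8(\Ss) \to \pi_8 \MString$.
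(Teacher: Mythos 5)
Your proposal takes a genuinely different route from the paper's, and that route has at least two serious gaps.

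The paper does not run a bar construction over $\Sigma^\infty_+ G$ with $G = \Omega\BO\langle 4\rangle_{d\Z}$. Instead, using \Cref{lem:thom}, it realizes $\MSpin_d$ as a Thom spectrum \emph{over $\MString$} of the composite $\Sigma^{4}\Z \xrightarrow{d} \Sigma^{4}\Z \to \bgl_1(\MString)$, yielding a simplicial resolution with terms $\Sigma^\infty_+ K(\Z,3)^{\otimes k} \otimes \MString$. This has two crucial advantages. First, it isolates the dependence on $d$ in the single spectrum-level self-map $[d]$ of $K(\Z,3)$, so the dichotomy $4\mid d$ vs.\ $4\nmid d$ reduces to the $\F_2$-Adams-filtration fact that $\Sigma^\infty[2]$ has filtration $\geq 1$ (\Cref{lem:2-AF}) together with a two-line concentration result (\Cref{lem:two-lines}), giving that $\Sigma^\infty[4]\otimes 1$ kills everything on $\pi_{8n}$ while $f\circ(\Sigma^\infty[2]\otimes 1)$ still reaches the image of $\Theta_{8n}$ (\Cref{lem:2-kills}). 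Second, it sidesteps the question of whether $\BO\langle 4\rangle_{d\Z}$ is an infinite loop space, which your use of the Kuhn/Goodwillie splitting of $\Sigma^\infty G$ into $\mathfrak{g}\oplus D_2\mathfrak{g}$ implicitly requires.

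Concretely, here is where your argument breaks. You assert that ``$\H_*(\mathfrak{g};\F_2)$ has the same Steenrod-module structure as $\H_*(\littleo;\F_2)$ in the relevant range, but the comparison map $\mathfrak{g}\to\littleo$ acts as multiplication by $d\bmod 2$ on the bottom class.'' The first half is false for $d$ even: the cofiber sequence $\mathfrak{g}_d \to \littleo \to \Sigma^3\HZ/d$ shows that for even $d$ the mod-$2$ homology of $\mathfrak{g}_d$ picks up many extra classes (the Eilenberg--MacLane spectrum $\HZ/d$ has large mod-$2$ homology), and the Adams $\mathrm{E}_2$-page for $D_2\mathfrak{g}_d$ is not simply that of $D_2\littleo$. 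More fundamentally, the second half of your assertion says the comparison to $\littleo$ only sees $d\bmod 2$; this cannot possibly distinguish $d=2$ from $d=4$, which is precisely the boundary between the two cases of the theorem. So ``tracking the Dyer--Lashof operations\ldots shows that $\varepsilon$ is hit precisely when $4\nmid d$'' is not a computation you could complete from the data you have set up.

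The closing paragraph does not rescue the $4\mid d$ case either. The map you invoke goes $\MString \to \MSpin_d$, in the same direction as the unit from $\Ss$, so the nonvanishing of $\varepsilon$ in $\pi_8\MString$ only transfers to $\pi_8\MSpin_d$ if $\pi_8\MString \to \pi_8\MSpin_d$ is injective on that class---but this injectivity is essentially equivalent to what you are trying to prove, and it genuinely fails when $4\nmid d$. The paper avoids this circularity entirely by using \Cref{cor:theta-inj} to reduce to the exact sequence over $\MString$ and then analyzing the single map $f\circ(\Sigma^\infty[d]\otimes 1)$.
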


\begin{thm} \label{thm:str-det}
  The kernel of the map $\Z/2\Z \cong \coker(J)_{16} \cong \Theta_{16} \to \pi_{16} \MStr_{d}$ is equal to $\Theta_{16}$ if $4 \nmid d$ and equal to $0$ otherwise.
\end{thm}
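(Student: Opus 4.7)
The plan is to proceed in close parallel to the proof of Theorem \ref{thm:spin-det}, whose argument this mirrors with $n=8$ replacing $n=4$. By Theorem \ref{thm:im=ker} it suffices to compute $\ker\bigl(\Theta_{16}\to\pi_{16}\MStr_d\bigr)$; combining Lemma \ref{lem:coker-inj} with the identification $\Theta_{16}\cong\coker(J)_{16}\cong\Z/2\Z$, this reduces to determining when the image of $\eta_4\in\pi_{16}\Ss$ in $\pi_{16}\MStr_d$ vanishes. The two extreme cases are immediate: for $d=0$ we have $\MStr_d=\MO\langle 9\rangle$ and $\eta_4$ survives by Corollary \ref{cor:theta-inj}, while for $d=1$ we have $\MStr_d=\MStr$ and $\eta_4$ dies by Theorem \ref{thm:ker-main}.

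For general $d\geq 1$, the main tool is the identification $\MStr_d \simeq \MO\langle 9\rangle \otimes_{\Sigma^\infty_+ K(\Z,7)} \Ss$ arising from the principal $K(\Z,7)$-bundle structure $K(\Z,7)\xrightarrow{d\cdot\iota}\BO\langle 9\rangle \to \BStr_d$, in which the action of $K(\Z,7)$ on $\BO\langle 9\rangle$ is translation by $d$ times the standard fiber inclusion $\iota\colon K(\Z,7)\to\BO\langle 9\rangle$ of $\BO\langle 9\rangle\to\BStr$. The action lifts to $\MO\langle 9\rangle$ because $K(\Z,7)\to\BSO$ is null. The associated bar spectral sequence
\[E^1_{s,t}=\pi_t\bigl(\MO\langle 9\rangle\wedge \Sigma^\infty K(\Z,7)^{\wedge s}\bigr) \;\Longrightarrow\; \pi_{s+t}\MStr_d\]
has $d_1$-differential induced by $d\cdot\iota$. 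Connectivity of $K(\Z,7)^{\wedge s}$ forces the only potentially nonzero contributions with $s+t=16$ to occur for $s\in\{0,1,2\}$, and we must determine whether $\eta_4\in E^1_{0,16}=\pi_{16}\MO\langle 9\rangle$ survives to $E^\infty_{0,16}$.

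From the $d=1$ case we already know $\eta_4$ does not survive. The heart of the proof, and the step I expect to be the main obstacle, is to pin down the exact $2$-adic behavior of the differentials hitting $\eta_4$: the goal is to show that $\eta_4$ is killed by a differential that carries an overall factor of $d$ coming from the $d$-fold action, but with target divisibility exactly $2$ (and not $4$) in $\pi_{16}\MO\langle 9\rangle$, so that, $2$-locally, $\eta_4$ is hit iff $v_2(d)\leq 1$, i.e., $4\nmid d$. The divisibility-by-$2$ direction will be obtained by lifting the explicit Toda-bracket representative of the $d_2$-differential killing $\eta_4$ in the bar spectral sequence for $\MStr$ (analyzed in Sections \ref{sec:D2}--\ref{sec:lower}) into the $s=1$ or $s=2$ column of our spectral sequence, producing a class whose image in $\pi_{16}\MO\langle 9\rangle$ has order $4$ with double equal to $\eta_4$. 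The non-divisibility by $4$ is the delicate remaining ingredient; it will require a careful $\F_2$-Adams spectral sequence analysis of $\pi_{16}\MO\langle 9\rangle$, together with Corollary \ref{cor:theta-inj} to control hidden extensions, to verify that no fourth root of $\eta_4$ exists in the image of the relevant differentials.
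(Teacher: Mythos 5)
Your overall framework is sound and matches the paper's: reduce via Theorem \ref{thm:im=ker} to computing $\ker\bigl(\Theta_{16}\to\pi_{16}\MStr_d\bigr)$, realize $\MStr_d$ as a relative Thom spectrum over $\MO\langle 9\rangle$ via the $K(\Z,7)$-action twisted by $[d]$ (the paper's Lemma \ref{lem:thom}), use the bar spectral sequence with $E^1$-term $\pi_t\bigl(\Sigma^\infty_+K(\Z,7)^{\otimes s}\otimes\MO\langle 9\rangle\bigr)$, and use Corollary \ref{cor:theta-inj} to know that $\eta_4$ survives to $E^1_{\ast,0}$, so the question becomes whether a differential kills it. Connectivity indeed reduces to $s\leq 2$, and in fact the $s=2$ contribution vanishes outright in the relevant degree (the paper shows $\pi_{8n-1}\Sigma^\infty K(\Z,4n-1)^{\otimes 2}\otimes\MO\langle 4n+1\rangle = 0$), so only the $d_1$-differential matters.

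The gap is in your explanation of \emph{why} the answer is governed by $4\mid d$. You assert that the relevant differential ``carries an overall factor of $d$ coming from the $d$-fold action,'' so that the issue reduces to showing the target class has $2$-divisibility exactly $2$. This is not correct. The $d$-dependence of the $d_1$-differential is pre-composition with $\Sigma^\infty[d]\otimes 1$, where $[d]\colon K(\Z,7)\to K(\Z,7)$ is the $d$-fold power map on the Eilenberg--MacLane space. The induced map $\Sigma^\infty[d]$ on stable homotopy of $\Sigma^\infty K(\Z,7)$ is emphatically \emph{not} multiplication by $d$ in the degree $16$ we care about: the classes in degree $16$ live in the quadratic layer $D_2(\Sigma^7\Z)$, on which $[d]$ induces $D_2(d)$, which is quadratic rather than linear in $d$ and also picks up a transfer correction term ($D_2(2) = 4 + \operatorname{tr}^{C_2}$ by the norm formula). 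In particular $\pi_{16}D_2(\Sigma^7\Z)\cong\F_2\oplus\F_2$ consists of simple $2$-torsion, so there is no class $z$ with $2z=\eta_4$ and $z$ of order $4$ in the image of $f$ to exploit; your proposed ``order $4$ with double equal to $\eta_4$'' class does not exist.

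What actually controls the answer is $\F_2$-Adams filtration. The paper's key inputs are: (i) $\Sigma^\infty[2]$ induces zero on $\H_\ast(-;\F_2)$ of $\Sigma^\infty K(\Z,7)$ and hence has $\F_2$-Adams filtration $\geq 1$ (Lemma \ref{lem:2-AF}); (ii) the $\F_2$-Adams spectral sequence for $\Sigma^\infty K(\Z,7)\otimes\MO\langle 9\rangle$ is concentrated on exactly two filtration lines in total degree $16$ (Lemma \ref{lem:two-lines}), so composing $[2]$ with itself forces $\Sigma^\infty[4]\otimes 1 = 0$ on $\pi_{16}$ (Lemma \ref{lem:4-zero}), giving the vanishing of the kernel for $4\mid d$; and (iii) for $d=2$, one must separately show the differential is still \emph{surjective} onto $\eta_4$. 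This last step is carried out not by divisibility but by analyzing $D_2(2)$ directly: one shows its image contains the filtration-$1$ class detected by $h_1Q_1\iota_7$ (Lemma \ref{lem:D2-2-fil-1}, using the norm formula $D_2(2) = 2 + \operatorname{tr}^{C_2}$), and that this class maps to $\eta_4$ in $\pi_{16}\MO\langle 9\rangle$ via a Toda bracket computation with $j_7=\sigma$ (Lemma \ref{lem:how-dies}). So you have the right skeleton, but the pivot of the argument needs to be replaced by the Adams filtration argument rather than a $d$-multiplication heuristic.
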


\begin{rmk}
  If $k$ is odd, then the canonical maps $\MSpin_{kd} \to \MSpin_{d}$ and $\MStr_{kd} \to \MStr_{d}$ are $2$-local equivalences.
  As a consequence, it suffices to prove Theorems \ref{thm:spin-det} and \ref{thm:str-det} when $d$ is a power of $2$.
\end{rmk}

\begin{proof}[Proof of \Cref{thm:det-main} assuming Theorems \ref{thm:9-det}, \ref{thm:spin-det} and \ref{thm:str-det}]
  What remains are the cases $n = 4,8,9$. When $n=9$, this is precisely the statement of \Cref{thm:9-det}.
  To translate the statements of \Cref{thm:spin-det} and \Cref{thm:str-det} into the cases $n=4,8$  of \Cref{thm:det-main}, it suffices to note that if we let $x_{4i} \in \pi_{4i} \BSO \cong \Z$ denote a generator, then $p_1 (x_4) = \pm 2$ and $p_2 (x_8) = \pm 6$ by \cite[Theorem 3.8]{Levine}.
\end{proof}

To prove Theorems \ref{thm:spin-det} and \ref{thm:str-det}, we view $\MO \langle 4n \rangle_d$ as a relative Thom spectrum (cf. \cite[Definition 4.5]{ABGHR}) over $\MO \langle 4n+1 \rangle$ and analyze the kernel of the unit map using the resulting bar spectral sequence.
The following lemma is an immediate consequence of the main theorem of \cite{beard}.

\begin{lem} \label{lem:thom}
  The $\Einf$-ring spectrum $\MO\langle 4n \rangle$ may be realized as the Thom spectrum over $\MO\langle 4n+1 \rangle$ of a map
  \[\Sigma^{4n} \Z \to \bgl_1 (\MO \langle 4n+1 \rangle). \]
  Moreover, when $d > 0$, $\MO\langle 4n \rangle_{d}$ may be realized as the Thom spectrum of the composition
  \[\Sigma^{4n} \Z \xrightarrow{d} \Sigma^{4n} \Z \to \bgl_1 (\MO \langle 4n+1 \rangle).\]
\end{lem}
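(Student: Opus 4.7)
The plan is to apply Beardsley's main theorem from \cite{beard}, which realizes ``iterated'' $E_\infty$-Thom constructions as Thom spectra over the intermediate stage. The geometric input I would use is the Postnikov fiber sequence
\[\BO\langle 4n+1 \rangle \to \BO\langle 4n \rangle \to K(\Z, 4n),\]
which exhibits $K(\Z, 4n)$ as the first non-trivial Postnikov section of the $(4n-1)$-connected cover of $\BO$, using $\pi_{4n}(\BO) = \Z$. All three spaces are grouplike $E_\infty$-spaces and all maps are $E_\infty$-maps, since the sequence is obtained by applying $\Omega^\infty$ to a fiber sequence of connective spectra arising from the Postnikov tower of $\bo$.

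For the first part of the lemma, I would apply Beardsley's theorem to this fiber sequence together with the $E_\infty$-map $\BO\langle 4n \rangle \to \Omega^\infty \bgl_1(\Ss)$ classifying the stable normal bundle; its restriction to $\BO\langle 4n+1 \rangle$ has Thom spectrum $\MO\langle 4n+1 \rangle$, and Beardsley's theorem then produces a canonical $E_\infty$-map $K(\Z, 4n) \to \Omega^\infty \bgl_1(\MO\langle 4n+1\rangle)$ whose Thom spectrum is $\MO\langle 4n\rangle$. Adjointing along the equivalence $K(\Z, 4n) \simeq \Omega^\infty \Sigma^{4n} \Z$ then yields the desired spectrum-level map $\Sigma^{4n} \Z \to \bgl_1(\MO\langle 4n+1\rangle)$.

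For the second part, I would first identify $\BO\langle 4n\rangle_d$ with the homotopy pullback of the Postnikov map $\BO\langle 4n\rangle \to K(\Z, 4n)$ along the multiplication-by-$d$ map $K(\Z, 4n) \xrightarrow{d} K(\Z, 4n)$; this follows from the definition $\BO\langle 4n\rangle_d = \fib(\BO\langle 4n\rangle \to K(\Z/d\Z, 4n))$ combined with the standard fiber sequence $K(\Z, 4n) \xrightarrow{d} K(\Z, 4n) \to K(\Z/d\Z, 4n)$ of Eilenberg--MacLane spectra. Consequently, the fiber sequence $\BO\langle 4n+1\rangle \to \BO\langle 4n\rangle_d \to K(\Z, 4n)$ is the pullback of the original Postnikov fiber sequence along $d$, and by the naturality of Beardsley's construction under such pullbacks the induced spectrum-level classifying map for $\MO\langle 4n\rangle_d$ is precisely the composite $\Sigma^{4n} \Z \xrightarrow{d} \Sigma^{4n} \Z \to \bgl_1(\MO\langle 4n+1\rangle)$, where the second arrow is the map constructed in the first part.

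The principal technical ingredient is Beardsley's theorem itself; beyond invoking it, the only additional verification required is the naturality of his construction with respect to the pullback of fiber sequences above, which I expect to be a formal consequence of the universal-property characterization of Thom spectra as colimits of the associated $\bgl_1$-module diagrams.
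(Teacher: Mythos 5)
Your proposal is correct and follows exactly the route the paper indicates: the paper states only that the lemma ``is an immediate consequence of the main theorem of \cite{beard}'' and gives no further argument, and you have filled in what that one-line citation means. Your identification of $\BO\langle 4n\rangle_d$ as the homotopy pullback of $\BO\langle 4n\rangle \to K(\Z,4n)$ along multiplication by $d$, via the fiber sequence $K(\Z,4n)\xrightarrow{d}K(\Z,4n)\to K(\Z/d\Z,4n)$ and the pasting lemma, is the correct bridge from the paper's definition $\BO\langle 4n\rangle_d = \fib(\BO\langle 4n\rangle\to K(\Z/d\Z,4n))$ to the second assertion. The only place you could tighten the argument is the naturality step: rather than gesturing at a ``formal consequence of the universal property,'' you could note explicitly that there is a map of fiber sequences from $\BO\langle 4n+1\rangle\to\BO\langle 4n\rangle_d\to K(\Z,4n)$ to $\BO\langle 4n+1\rangle\to\BO\langle 4n\rangle\to K(\Z,4n)$ (identity on fibers, pullback projection in the middle, multiplication by $d$ on the base), compatible with the given maps to $\BGL_1(\Ss)$, and that Beardsley's construction is functorial in such data with fixed fiber; that directly yields that the classifying map for $\MO\langle 4n\rangle_d$ is the composite with $d$. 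One minor terminological quibble: the map $\BO\langle 4n\rangle\to\Omega^\infty\bgl_1(\Ss)$ is the one classifying the universal (tautological) bundle over $\BO\langle 4n\rangle$, not ``the stable normal bundle'' of anything, though this does not affect the argument.
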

%

In the above lemma, the map $\Sigma^{4n} \Z \to \bgl_1 (\MO \langle 4n+1 \rangle)$ arises in the following way: the composite
\[\tau_{\geq 4n+1} \bo \to  \bo \xrightarrow{J} \bgl_1 (\Ss) \to \bgl_1 (\MO \langle 4n+1 \rangle)\]
is equipped with a canonical nullhomotopy, so that we obtain a map
\[\bo / \tau_{\geq 4n+1} \bo \to \bgl_1 (\MO \langle 4n+1 \rangle)\]
whose composite with $\Sigma ^{4n} \Z \simeq \Sigma^{4n} \pi_{4n} \bo \to \bo / \tau_{\geq 4n+1} \bo$
is the map appearing in the lemma.
In particular, on $\pi_{4n}$, this map sends $1$ to the image in $\pi_{4n-1} \MO \langle 4n+1 \rangle$ of a generator $j_{4n-1} \in \pi_{4n-1} \Ss$ of the image of $J$.

As a consequence, we may write $\MO \langle 4n \rangle_d$ as the geometric realization of an $\Einf$-ring $R\langle 4n \rangle ^d _\bullet$ with
\[R \langle 4n \rangle ^d _{k} \simeq \Sigma^\infty _+ K(\Z, 4n-1)^{\otimes k} \otimes \MO \langle 4n+1 \rangle.\]

\begin{ntn}
  Let $[d] : K(\Z, m) \to K(\Z, m)$ denote the map obtained by applying $\Omega^\infty$ to $d : \Sigma^{m} \Z \to \Sigma^{m} \Z$.
\end{ntn}

Given a positive integer $\ell$, there are comparison maps of simplicial $\Einf$-rings $R \langle 4n \rangle ^{\ell d} _\bullet \to R \langle 4n \rangle ^{d} _\bullet$ which, in degree $k$, is given by the map
\[(\Sigma^\infty _+ [\ell]^{\otimes k} \otimes 1) : \Sigma^\infty _+ K(\Z, 4n-1)^{\otimes k} \otimes \MO \langle 4n+1 \rangle \to \Sigma^\infty _+ K(\Z, 4n-1)^{\otimes k} \otimes \MO \langle 4n+1 \rangle.\]
%
%
%
%

Associated to the simplicial object $R \langle 4n \rangle ^d _\bullet$, we have a spectral sequence:

\[\pi_{n} \Sigma^\infty K(\Z, 4n-1)^{\otimes k} \otimes \MO \langle 4n+1 \rangle \Rightarrow \pi_{n+k} \MO \langle 4n \rangle_d.\]

In light of \Cref{cor:theta-inj}, we know that the map $\Theta_{8n} \to \pi_{8n} \MO \langle 4n+1 \rangle$ is injective.
As a consequence, any element in the kernel of $\Theta_{8n} \to \pi_{8n} \MO \langle 4n \rangle_d$ must correspond to a differential in the above spectral sequence.
The following lemma implies that the only possible nonzero differential which can enter $\pi_{8n} \MO \langle 4n +1 \rangle$ is $d_1$.

\begin{lem}
  For $k \geq 2$, we have
  \[\pi_{8n+1-k} \Sigma^\infty K(\Z, 4n-1)^{\otimes k} \otimes \MO \langle 4n+1 \rangle = 0.\]
\end{lem}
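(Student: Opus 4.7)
The plan is to split into two regimes, handling $k \geq 3$ by connectivity alone and $k=2$ by a more refined argument.

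For $k \geq 3$: since $K(\Z, 4n-1)$ is $(4n-2)$-connected, the smash product $\Sigma^\infty K(\Z, 4n-1)^{\otimes k}$ is $(k(4n-1)-1)$-connected, and this bound is preserved after smashing with the connective spectrum $\MO\langle 4n+1\rangle$. The required inequality $8n+1-k \leq k(4n-1)-1$ rearranges to $4nk \geq 8n+2$, which holds for all $k\geq 3$ and $n \geq 1$.

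For $k=2$, connectivity alone is insufficient, so I would proceed in two steps. First, I would reduce the problem to showing $\pi_{8n-1}\bigl(\Sigma^\infty K(\Z,4n-1)^{\otimes 2}\bigr) = 0$ by noting that the cofiber of the unit $\Ss \to \MO\langle 4n+1\rangle$ is $(4n)$-connected (since the unit is a $\pi_{\leq 4n}$-iso), so after smashing with the $(8n-3)$-connected $\Sigma^\infty K(\Z,4n-1)^{\otimes 2}$ the result is $(12n-2)$-connected, and the long exact sequence produces the reduction. Second, I would verify the integral homology vanishing $H_{4n}(K(\Z,4n-1);\Z) = 0$: for $m=4n-1$ odd, Serre's computation of $H^*(K(\Z,m);\F_p)$ gives $H^{m+1}(K(\Z,m);\F_p) = 0$ for every prime $p$ (the lowest Steenrod operation on $\iota_m$ raises degree by at least $2$), and rationally $H^{m+1}=0$, so $H^{m+1}(K(\Z,m);\Z) = 0$; a parallel argument using that $\Sq^3\iota_m = \Sq^1\Sq^2\iota_m$ is a nonzero polynomial generator in $H^{m+3}(K(\Z,m);\F_2)$ shows $\Sq^2\iota_m$ is not in the image of reduction mod $2$, forcing $H^{m+2}(K(\Z,m);\Z) = 0$; universal coefficients then yields $H_{m+1}(K(\Z,m);\Z) = 0$. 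By K\"unneth, $\tilde H_{8n-1}(K(\Z,4n-1)^{\wedge 2};\Z) = 0$, so the cofiber of the bottom-cell inclusion $S^{8n-2} \to \Sigma^\infty K(\Z,4n-1)^{\otimes 2}$ is $(8n-1)$-connected, and $\pi_{8n-1}$ of the target is a quotient of $\pi_{8n-1}(S^{8n-2}) \cong \Z/2$ generated by $\eta \cdot (\iota_K \otimes \iota_K)$.

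The main step, which I expect to be the crux of the argument, is to show this generator is null. It suffices to prove $\eta \cdot \iota_K = 0$ in $\pi_{4n}\Sigma^\infty K(\Z, 4n-1)$, for then $\eta \cdot (\iota_K \otimes \iota_K) = (\eta \cdot \iota_K) \otimes \iota_K = 0$. The bottom cell inclusion $\iota_K\colon S^{4n-1} \to K(\Z, 4n-1)$ is a map of spaces, so $\eta \cdot \iota_K$ is the stabilization of the space-level composite $S^{4n} \xrightarrow{\eta} S^{4n-1} \xrightarrow{\iota_K} K(\Z, 4n-1)$; this composite is null because $\pi_{4n}(K(\Z, 4n-1)) = 0$. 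The obstacle lies in this final step, whose validity crucially exploits that $\iota_K$ is defined at the space level rather than merely stably, so that unstable vanishing of homotopy groups of the Eilenberg--MacLane target applies.
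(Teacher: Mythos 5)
Your proof is correct; you and the paper agree on the $k\geq 3$ connectivity bound and on the first reduction in the $k=2$ case (both strip off the $\MO\langle 4n+1\rangle$ factor via the $4n$-connectivity of the cofiber of the unit $\Ss\to\MO\langle 4n+1\rangle$), but you then handle the remaining group $\pi_{8n-1}\Sigma^\infty K(\Z,4n-1)^{\otimes 2}$ by a genuinely different route. The paper invokes a second connectivity reduction to replace $\Sigma^\infty K(\Z,4n-1)$ by its truncation $\Sigma^{4n-1}\Z$, so that the group becomes $\pi_{8n-1}(\Sigma^{4n-1}\Z)^{\otimes 2}\cong\pi_1(\Z\otimes\Z)=0$. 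You instead compute the integral homology of $K(\Z,4n-1)^{\wedge 2}$ through degree $8n-1$, conclude that the cofiber of the bottom-cell map $S^{8n-2}\to\Sigma^\infty K(\Z,4n-1)^{\otimes 2}$ is $(8n-1)$-connected, so that $\pi_{8n-1}$ of the target is a quotient of $\Z/2$ generated by $\eta\cdot(\iota_K\otimes\iota_K)$, and you kill that generator by stabilizing the space-level composite $S^{4n}\xrightarrow{\eta}S^{4n-1}\xrightarrow{\iota_K}K(\Z,4n-1)$, which is null since $\pi_{4n}K(\Z,4n-1)=0$. Both arguments ultimately rely on $\H_{4n}(K(\Z,4n-1);\Z)=0$ (the paper needs it to estimate the connectivity of the fiber of $\Sigma^\infty K(\Z,4n-1)\to\Sigma^{4n-1}\Z$; you need it for the connectivity of the bottom-cell cofiber) together with the fact that $\eta$ dies in an Eilenberg--MacLane target, so the two proofs are morally close, but yours makes the unique candidate obstruction explicit and then annihilates it by a transparent unstable argument, which is a pleasant alternative to the paper's terser comparison-spectrum route. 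One simplification worth noting: the vanishing $\H_{4n}(K(\Z,4n-1);\Z)=0$ holds for every $m=4n-1\geq 2$ by the relative Hurewicz theorem applied to the pair $(K(\Z,m),S^m)$, which avoids the Serre mod-$p$ cohomology computation and the Bockstein analysis of $\Sq^2\iota_m$ that you carry out.
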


\begin{proof}
  When $k > 2$, this is zero for connectivity reasons.
  When $k=2$, we can use connectivity arguments to conclude that
  \begin{align*}
    \pi_{8n-1} \Sigma^\infty K(\Z, 4n-1) ^{\otimes 2} \otimes \MO \langle 4n+1 \rangle &\cong \pi_{8n-1} \Sigma^\infty K(\Z, 4n-1)^{\otimes 2}\\
    &\cong \pi_{8n-1} (\Sigma^{4n-1} \Z) ^{\otimes 2}\\
    &\cong 0. \qedhere
  \end{align*}
\end{proof}

As a consequence, we find that there is an exact sequence:
\[\pi_{8n} \Sigma^\infty K(\Z, 4n-1) \otimes \MO\langle 4n+1 \rangle \xrightarrow{f} \pi_{8n} \MO\langle 4n+1 \rangle \to \pi_{8n} \MO \langle 4n \rangle,\]
where $f$
is the map of nonunital $\Einf$-$\MO \langle 4n+1 \rangle$-algebras adjoint to the map $\Sigma^{4n} \Z \to \bgl_1(\MO \langle 4n+1 \rangle)$ of \Cref{lem:thom}.
More generally, we have an exact sequence:
\[\pi_{8n} \Sigma^\infty K(\Z, 4n-1) \otimes \MO\langle 4n+1 \rangle \xrightarrow{f \circ (\Sigma^\infty [d] \otimes 1)} \pi_{8n} \MO\langle 4n+1 \rangle \to \pi_{8n} \MO \langle 4n \rangle_d . \]
To prove Theorems \ref{thm:spin-det} and \ref{thm:str-det}, it therefore suffices to prove the following two lemmas:

\begin{lem} \label{lem:4-zero}
  The map $(\Sigma^\infty [4] \otimes 1) : \Sigma^\infty K(\Z, 4n-1) \otimes \MO \langle 4n+1 \rangle \to \Sigma^\infty K(\Z, 4n-1) \otimes \MO \langle 4n+1 \rangle$
  induces the zero map on $\pi_{8n}$.
\end{lem}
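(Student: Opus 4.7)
The plan is to analyze this map via the Goodwillie tower of the functor $\Sigma^\infty\Omega^\infty$ applied to the connective spectrum $Y = \Sigma^{4n-1}H\Z$, whose infinite loop space is $K(\Z,4n-1)$. The $k$-th layer of this tower is the extended power $D_k(Y) = (Y^{\otimes k})_{h\Sigma_k}$, equipped with the sign twist on $\Sigma_k$ coming from the fact that $4n-1$ is odd, and the map $[d]\colon Y\to Y$ induces multiplication by $d^k$ on $D_k(Y)$ because $D_k$ is a polynomial functor of degree $k$. My first step is to observe that only the layers $k=1,2$ contribute in our range: for $k\geq 3$, the spectrum $D_k(Y)$ is at least $(k(4n-1)-1)$-connected, which together with the connectivity of $\MO\langle 4n+1\rangle$ forces $\pi_{8n}(D_k(Y)\otimes\MO\langle 4n+1\rangle) = 0$ for $n\geq 1$. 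I may therefore replace $\Sigma^\infty K(\Z,4n-1)$ by the second Goodwillie stage $P_2$, which fits in a cofiber sequence $D_2(Y) \to P_2 \to P_1 = \Sigma^{4n-1}H\Z$.

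Next, I would compute $\pi_{8n}$ of each layer after smashing with $\MO\langle 4n+1\rangle$. By the Thom isomorphism and the Hurewicz theorem, the $P_1$-contribution is $\pi_{8n}(\Sigma^{4n-1}H\Z \otimes \MO\langle 4n+1\rangle) \cong H_{4n+1}(\BO\langle 4n+1\rangle;\Z) \cong \pi_{4n+1}\BO$, which by Bott periodicity is $0$ when $n$ is odd and $\Z/2$ when $n$ is even; either way it is 2-torsion. By connectivity of $\MO\langle 4n+1\rangle$, the $D_2$-contribution is $\pi_{8n}(D_2(Y)\otimes\MO\langle 4n+1\rangle) \cong \pi_2((H\Z\otimes H\Z)_{h\Sigma_2,\,\mathrm{sign}})$. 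The homotopy orbit spectral sequence has nonzero inputs in total degree $2$ only from $H_2(\Sigma_2;\Z_{\mathrm{sign}}) \cong \Z/2$ and from the coinvariants of $\pi_2(H\Z\otimes H\Z)$ under the sign-twisted swap. Since $\pi_2(H\Z\otimes H\Z)$ has mod-$2$ reduction $\F_2$ (from the Bockstein long exact sequence for $H\Z \xrightarrow{2} H\Z \to H\F_2$ and the fact that $\pi_2(H\F_2 \wedge H\Z) \cong \F_2$), and the swap acts trivially modulo $2$ (by comparison with the dual Steenrod algebra, where the antipode fixes $\xi_1^2$), the coinvariants reduce to $\pi_2(H\Z\otimes H\Z)/2 \cong \F_2$. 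Consequently, the $D_2$-contribution is killed by $4$.

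Finally, I would exploit the factorization $[4]=[2]\circ[2]$. Writing $F_1 \subseteq F_2 = \pi_{8n}(\Sigma^\infty K(\Z,4n-1)\otimes\MO\langle 4n+1\rangle)$ for the induced Goodwillie filtration—so $F_1$ is a quotient of the $D_2$-contribution and $F_2/F_1$ is a subgroup of the $P_1$-contribution—the map $[2]$ acts by multiplication by $2^k$ on the $k$-th associated graded piece, i.e.\ by $2$ on $F_2/F_1$ and by $4$ on $F_1$. Since $F_2/F_1$ is 2-torsion and $F_1$ is 4-torsion by the previous paragraph, we obtain $[2](F_2)\subseteq F_1$ and $[2](F_1)=0$. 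Hence $[2]^2=0$ on $F_2$, which gives $[4]=0$ as claimed.

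The main technical obstacle is the computation of $\pi_2((H\Z\otimes H\Z)_{h\Sigma_2,\,\mathrm{sign}})$: specifically, pinning down that $\pi_2(H\Z\otimes H\Z)$ is a cyclic $2$-group on which the swap acts trivially modulo $2$, so that the twisted coinvariants combine with the $H_2(\Sigma_2;\Z_{\mathrm{sign}})$ contribution to give a $4$-torsion group. Everything else reduces to standard Goodwillie calculus and Bott periodicity.
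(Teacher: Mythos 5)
Your proposal is correct, and it is a genuinely different argument from the one in the paper, though both exploit the factorization $[4] = [2] \circ [2]$ together with the Goodwillie-tower cofiber sequence
\[D_2 (\Sigma^{4n-1} \Z) \to \Sigma^\infty K(\Z, 4n-1) \to \Sigma^{4n-1} \Z.\]
The paper bounds the $\F_2$-Adams filtration: it shows (their Lemma \ref{lem:2-AF}) that $\Sigma^\infty [2]$ raises $\F_2$-Adams filtration by at least $1$, and separately (their Lemma \ref{lem:two-lines}, whose proof involves a Hurewicz image computation in $D_2(\Sigma^{4n-1}\Z)$) that the $\F_2$-Adams spectral sequence for $\Sigma^\infty K(\Z, 4n-1) \otimes \MO\langle 4n+1 \rangle$ is concentrated on just two lines in total degree $8n$; applying $[2]$ twice therefore lands outside the spectral sequence. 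You instead bound torsion orders along the Goodwillie filtration: $[2]$ acts by $2^k$ on the $k$-th layer, the $P_1$-graded piece is $2$-torsion (being a subgroup of $\pi_{4n+1}\BO$), and the $D_2$-graded piece is $4$-torsion; so $[2]$ pushes into $F_1$ and a second application of $[2]$ is multiplication by $4$ there, hence zero. Your route avoids the Adams spectral sequence analysis and the Hurewicz image argument entirely, at the cost of the homotopy-orbit calculation of $\pi_{8n} D_2(\Sigma^{4n-1}\Z)$, which is comparable in difficulty to the paper's Lemma \ref{lem:D2-E2}.

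Two small points of precision. First, $D_k(Y)$ is not ``equipped with a sign twist on $\Sigma_k$''; the sign arises when you try to untwist the odd suspension, and the correct statement is that $D_2(\Sigma^{4n-1}H\Z) \simeq \Sigma^{4n-1}\bigl((H\Z^{\otimes 2} \otimes S^{(4n-1)\bar\sigma})_{h\Sigma_2}\bigr)$ for $\bar\sigma$ the reduced sign representation of $\Sigma_2$, rather than a plain sign-twisted orbit of $H\Z \otimes H\Z$. This does not affect your argument because the homotopy orbit spectral sequence only sees the $\Sigma_2$-module structure on $\pi_*$, which is indeed the sign-twisted swap in each degree, and because you only need an upper bound (differentials in the homotopy orbit spectral sequence can only shrink the answer). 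Second, $\pi_2(H\Z \otimes H\Z)$ is in fact exactly $\Z/2$ after $2$-completion, on which every action is trivial, so the digression about the antipode fixing $\xi_1^2$ and the ``sign-twisted swap acting trivially modulo $2$'' is unnecessary. Neither point is a gap; the proof is sound.
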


\begin{lem} \label{lem:2-kills}
  When $n=1$ or $2$, the image of the map $f \circ (\Sigma^\infty [2] \otimes 1) : \Sigma^\infty K(\Z, 4n-1) \otimes \MO \langle 4n+1 \rangle \to \MO \langle 4n+1 \rangle$ contains the image of the natural map $\Theta_{8n} \to \pi_{8n} \MO \langle 4n+1 \rangle$.
\end{lem}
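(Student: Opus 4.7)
The plan is to use the exact sequence established above to reformulate Lemma \ref{lem:2-kills}: the image of $f \circ (\Sigma^\infty[2] \otimes 1)$ is precisely the kernel of the natural map $\pi_{8n}\MO\langle 4n+1\rangle \to \pi_{8n}\MO\langle 4n\rangle_2$, so the lemma is equivalent to showing the composite
\[\Theta_{8n} \to \pi_{8n}\MO\langle 4n+1\rangle \to \pi_{8n}\MO\langle 4n\rangle_2\]
vanishes. By \Cref{cor:theta-inj} and the identification of the image in the proof of \Cref{thm:exceptional}, the image of $\Theta_{8n}$ in $\pi_{8n}\MO\langle 4n+1\rangle$ is a copy of $\Z/2$, generated by the image under the unit map of $\epsilon \in \pi_8\Ss$ for $n=1$ and of $\eta_4 \in \pi_{16}\Ss$ for $n=2$.

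Geometrically, a $\BO\langle 4n\rangle_2$-structure on an $\MO\langle 4n+1\rangle$-manifold $W$ is a trivialization of the mod-$2$ reduction of the degree-$4n$ characteristic class in $H^{4n}(W;\Z)$, i.e., the condition that this class lie in $2 H^{4n}(W;\Z)$. For $n=1$ this asks for a spin $9$-manifold with $p_1$ divisible by $4$, and for $n=2$ for a string $17$-manifold whose second characteristic class is divisible by $2$. Since $\pi_{8n}\MSpin$ (for $n=1$, by \cite{spin}) and $\pi_{16}\MString$ (for $n=2$, by \cite{Giambalvo08}) are torsion-free, every exotic $8n$-sphere $\Sigma$ bounds some spin (respectively string) manifold $W_0$. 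One then modifies $W_0$ via spin (resp.\ string) surgery on suitable spheres in the interior to arrange the required divisibility on the characteristic class, yielding the desired $\MO\langle 4n\rangle_2$-null-cobordism. Alternatively, one can work purely homotopy-theoretically by exhibiting explicit preimages of the relevant generators of $\Theta_{8n}$ in $\pi_{8n}(\Sigma^\infty K(\Z, 4n-1) \otimes \MO\langle 4n+1\rangle)$ under $f \circ (\Sigma^\infty[2] \otimes 1)$, using the fact that $\epsilon$ and $\eta_4$ admit Toda bracket descriptions involving the integer $2$.

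The main obstacle is executing these modifications cleanly. In the surgical approach, one must carefully track how the relevant characteristic class transforms under surgery to ensure that the mod-$2$ obstruction can be killed; in the homotopy-theoretic approach, one must compute enough of $\pi_{8n}(\Sigma^\infty K(\Z, 4n-1) \otimes \MO\langle 4n+1\rangle)$ in low filtration and trace through the action map $f$ explicitly. Since the obstruction is $\Z/2$-valued and the relevant portions of these spectra have sparse homotopy in the ranges in question, either approach should be feasible; the homotopy-theoretic one is more in keeping with the style of the rest of this paper.
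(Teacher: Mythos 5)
Your opening reformulation is correct: the exact sequence just before the lemma identifies $\im\bigl(f\circ(\Sigma^\infty[2]\otimes 1)\bigr)$ with the kernel of $\pi_{8n}\MO\langle 4n+1\rangle \to \pi_{8n}\MO\langle 4n\rangle_2$, and the image of $\Theta_{8n}$ in $\pi_{8n}\MO\langle 4n+1\rangle$ is the $\Z/2$ generated by the Hurewicz image of $\epsilon$ (resp.\ $\eta_4$). But from there you propose two routes and execute neither, and I do not think either one closes up without essentially redoing the paper's argument.

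The geometric route has a real problem. You want to take a spin (resp.\ string) nullbordism $W_0$ of an exotic sphere and surger the interior to force the degree-$4n$ characteristic class to be even, but there is no reason that surgery can be steered to do this: the obstruction to finding such a $W$ is exactly the class of $\Sigma$ in $\pi_{8n}\MO\langle 4n\rangle_2$, which is what we are trying to show vanishes. Saying ``one modifies $W_0$ via surgery to arrange the required divisibility'' presupposes that the bordism-theoretic obstruction is trivial, so as written the argument is circular. One would need an independent reason that the surgery succeeds (for instance, a normal-smoothing/modified-surgery input as in Section~\ref{sec:det}), and that is not supplied.

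The homotopy-theoretic route is the right one, and it is what the paper does, but the hard part is precisely the step your sketch waves at. It is not enough to note that $\epsilon \in \langle \eta, 2, \nu^2\rangle$ ``involves the integer $2$.'' The paper first reduces to the image of $D_2(\Sigma^{4n-1}\Z)$ inside $\Sigma^\infty K(\Z,4n-1)$ and shows (\Cref{lem:how-dies}) that the unique filtration-$1$ class $x$ there, detected by $h_1 Q_1\iota_{4n-1}$, maps under $f$ to the nonzero class in the image of $\Theta_{8n}$ — for $n=1$ this does use the Toda bracket $\langle\eta,2,\nu^2\rangle$ via Moss's theorem, for $n=2$ it is the detection of $\eta_4$ by $h_1h_4$. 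But the separate and essential step is \Cref{lem:D2-2-fil-1}: one must show that this specific $x$ lies in the image of $\pi_{8n}D_2(2)$. A naive Adams-filtration count only shows that $D_2(2)$ lands in filtration $\geq 1$; it does not show the map is nonzero. The paper proves nonvanishing using the norm identity $\mathrm{Nm}^{C_2}(2) = 2 + \mathrm{tr}^{C_2}(1)$ together with a homotopy-orbit spectral sequence argument to see that the transfer term survives. Nothing in your proposal anticipates this, and without it you have no way to conclude that the generator of the $\Theta_{8n}$-image is hit by $f\circ(\Sigma^\infty[2]\otimes 1)$ rather than merely by $f$. That is the genuine gap.
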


First, we prove \Cref{lem:4-zero}. To begin with, we consider the following sequence of maps:

\[D_2 (\Sigma^\infty K(\Z,4n-1)) \xrightarrow{\hat{m}} \Sigma^\infty K(\Z, 4n-1) \xrightarrow{\pi} \Sigma^{4n-1} \Z,\]
where $\hat{m}$ comes from the $\Einf$-structure map and $\pi$ is the adjoint to the identity map of $K(\Z, 4n-1) = \Omega^{\infty} (\Sigma^{4n-1} \Z)$ under the $\Sigma^\infty$-$\Omega^\infty$ adjunction. Note that the composite $\pi \circ \hat{m}$ is nullhomotopic.

It follows from Goodwillie calculus arguments (cf. \cite[Section 4.2]{inertia}) that through degree $12n-4$, this sequence is a fiber sequence; moreover, the map $D_2 (\pi) : D_2 (\Sigma^\infty K(\Z, 4n-1)) \to D_2 (\Sigma^{4n-1} \Z)$ is an equivalence in this range.
In conclusion, after truncating to $\Sp_{\leq 12n-4}$ we have a fiber sequence
\[D_2 (\Sigma^{4n-1} \Z) \to \Sigma^\infty K(\Z, 4n-1) \to \Sigma^{4n-1} \Z.\]

\begin{ntn}
  In the rest of this section, we will work implicitly in the category $\Sp_{\leq 12n-4}$ of $(12n-4)$-coconnective spectra.
\end{ntn}

Moreover, for each nonnegative integer $d$ we have a commutative diagram
\begin{center}
  \begin{tikzcd}
    D_2 (\Sigma^{4n-1} \Z) \ar[r] \ar[d, "D_2 (d)"] & \Sigma^\infty K(\Z, 4n-1) \ar[r] \ar[d, "\Sigma^\infty {[d]}"] & \Sigma^{4n-1} \Z \ar[d,"d"] \\
    D_2 (\Sigma^{4n-1} \Z) \ar[r] & \Sigma^\infty K(\Z, 4n-1) \ar[r] & \Sigma^{4n-1} \Z.
  \end{tikzcd}
\end{center}

Our proof of \Cref{lem:4-zero} will be based on an $\F_2$-Adams filtration argument. The first step in the following lemma:

\begin{lem} \label{lem:2-AF}
  The map $\Sigma^\infty [2] : \Sigma^\infty K(\Z, m) \to \Sigma^\infty K(\Z, m)$ is of $\F_2$-Adams filtration $1$.
\end{lem}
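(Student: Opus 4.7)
The plan is to show that $\Sigma^\infty [2]$ induces the zero map on reduced mod $2$ cohomology, which suffices to conclude $\F_2$-Adams filtration at least $1$ for a map of connective spectra.

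First, I would unwind what $[2]$ is at the level of spaces. The map $[2]: K(\Z,m) \to K(\Z,m)$ is classified by $2 \iota_m \in \H^m(K(\Z,m);\Z)$, where $\iota_m$ is the fundamental class. In particular, the image of $\iota_m$ under $[2]^* : \H^m(K(\Z,m);\F_2) \to \H^m(K(\Z,m);\F_2)$ is the mod $2$ reduction of $2\iota_m$, which vanishes. So $[2]^*\iota_m = 0$ in mod $2$ cohomology.

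Next, I would invoke Serre's description of $\H^*(K(\Z,m);\F_2)$ as a polynomial algebra whose generators are of the form $\mathrm{Sq}^I \iota_m$ for admissible $I$ of excess $<m$ not ending in $\mathrm{Sq}^1$. Since $[2]$ is a map of spaces, $[2]^*$ commutes with Steenrod operations and respects cup products, so $[2]^*(\mathrm{Sq}^I \iota_m) = \mathrm{Sq}^I [2]^* \iota_m = 0$ on every polynomial generator. Therefore $[2]^*$ vanishes on the entire positive-degree (equivalently, reduced) part of $\H^*(K(\Z,m);\F_2)$.

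Finally, passing to suspension spectra, the map $\Sigma^\infty [2] : \Sigma^\infty K(\Z,m) \to \Sigma^\infty K(\Z,m)$ induces the zero map on $\HF_2$-cohomology, which for a map of connective spectra is equivalent to saying it has $\F_2$-Adams filtration at least $1$. There is nothing difficult here: the only subtlety worth double-checking is that the sign/normalization convention for $[2]$ agrees with $\mu \circ \Delta$ for the $H$-space structure, but either characterization yields $[2]^*\iota_m = 2\iota_m$, which is what matters.
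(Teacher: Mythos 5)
Your proof is correct, and it takes a slightly different route from the paper's. You compute $[2]^*\iota_m = 0$ directly and then propagate this vanishing to all of reduced cohomology using Serre's theorem that $\H^*(K(\Z,m);\F_2)$ is polynomial on admissible $\mathrm{Sq}^I\iota_m$, together with the fact that $[2]^*$ is a ring map commuting with Steenrod operations. The paper instead considers the commutative square relating $[2]^*$ on $\H^*(K(\Z,m);\F_2)$ and on $\H^*(K(\F_2,m);\F_2)$: the latter map is literally zero since $2=0$ in $\F_2$, and the vertical comparison maps (induced by the reduction $K(\Z,m)\to K(\F_2,m)$) are surjective, forcing the bottom map to vanish. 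Both arguments rest on essentially the same input (Serre's computations of the mod $2$ cohomology of Eilenberg--MacLane spaces), and both correctly conclude that $\Sigma^\infty[2]$ is zero on $\HF_2$-cohomology, hence of positive $\F_2$-Adams filtration. Yours avoids mentioning $K(\F_2,m)$ at all, at the cost of invoking the full polynomial-algebra structure; the paper's version requires only surjectivity of the comparison map and avoids the explicit generator description. Either is perfectly adequate here.
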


\begin{proof}
  This follows from the following diagram:
  \begin{center}
    \begin{tikzcd}
      \H^* (K(\F_2,m); \F_2) \ar[r, "0"] \ar[d, two heads] & \H^* (K(\F_2, m);\F_2) \ar[d, two heads] \\
      \H^* (K(\Z,m); \F_2) \ar[r, "{[2]}"] & \H^* (K(\Z,m); \F_2).
    \end{tikzcd}
  \end{center}
  \vspace{-0.3cm}
\end{proof}

The second step is the following lemma:

\begin{lem} \label{lem:two-lines}
  The $\F_2$-Adams spectral sequence for $\Sigma^\infty K(\Z, 4n-1) \otimes \MO \langle 4n+1 \rangle$ is concentrated on two lines in total degree $8n$.
\end{lem}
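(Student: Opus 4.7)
The plan is to analyze the $\F_2$-Adams spectral sequence of $\Sigma^\infty K(\Z, 4n-1)\otimes\MO\langle 4n+1\rangle$ using the fiber sequence
\[D_2(\Sigma^{4n-1}\HZ)\otimes\MO\langle 4n+1\rangle \to \Sigma^\infty K(\Z, 4n-1)\otimes\MO\langle 4n+1\rangle \to \Sigma^{4n-1}\HZ\otimes\MO\langle 4n+1\rangle,\]
valid in $\Sp_{\leq 12n-4}$, together with the long exact sequence it induces on $\F_2$-Adams $E_2$-pages. The strategy is to show that in the column $t - s = 8n$ the $E_2$-page of each outer term is concentrated on a single Adams filtration, with the two contributions sitting on adjacent lines, forcing the middle term onto (at most) two lines.

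For the Eilenberg--MacLane term, $\Sigma^{4n-1}\HZ\otimes\MO\langle 4n+1\rangle$ is an $\HZ$-module, so its $\F_2$-Adams spectral sequence collapses at $E_2$ and is controlled entirely by the integral homology $\H_*(\MO\langle 4n+1\rangle;\Z)$ shifted by $4n-1$. The only group contributing to the column $t - s = 8n$ is $\H_{4n+1}(\MO\langle 4n+1\rangle;\Z)$, which by the Thom isomorphism and Hurewicz equals $\pi_{4n+1}(BO)$. By Bott periodicity this is $0$ when $n$ is odd and $\Z/2$ when $n$ is even, so the Eilenberg--MacLane term contributes at most a single $\F_2$ in the relevant column, supported in Adams filtration $s = 0$.

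For the $D_2$-term, I would first identify $H^*(D_2(\Sigma^{4n-1}\HZ); \F_2)$ as an $\mathcal{A}$-module in degrees $\le 8n + 2$ using the Kudo--Araki operations applied to $H^*\HZ = \mathcal{A}//\mathcal{A}(0)$ together with the Nishida relations. Since $D_2(\Sigma^{4n-1}\HZ)$ is $(8n-3)$-connective with low-dimensional classes $Q^i \iota$, a direct Ext computation after smashing with $\MO\langle 4n+1\rangle$ should show that the column $t - s = 8n$ of the resulting $\F_2$-Adams $E_2$-page is concentrated in a single Adams filtration; a careful tracking of the filtration shift coming from $D_2$ places this one unit above the Eilenberg--MacLane contribution, i.e.\ on the line $s = 1$. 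Combining the two pieces via the long exact sequence on $E_2$-pages then forces the $\F_2$-Adams $E_2$-page (and hence $E_\infty$) of $\Sigma^\infty K(\Z, 4n-1)\otimes\MO\langle 4n+1\rangle$ in total degree $8n$ to sit on the two adjacent lines $s \in \{0, 1\}$.

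The main obstacle is the $D_2$-term computation: although $H^*\HZ$ is well understood, identifying the $\mathcal{A}$-module $H^*(D_2(\Sigma^{4n-1}\HZ); \F_2)$ with enough precision to compute $\Ext_\mathcal{A}$ against $H^*\MO\langle 4n+1\rangle$ in the relevant column --- and in particular verifying that no classes of Adams filtration $\ge 2$ appear --- is delicate. A useful simplification is to exploit a known $\mathcal{A}$-module splitting of $H^*\MO\langle 4n+1\rangle$ (for instance the Anderson--Brown--Peterson splitting in the case $n = 1$, or an analogous splitting of $H^*\MString$ for $n = 2$) so that the tensor $\Ext$ reduces to computations over $\mathcal{A}(0)$ or $\mathcal{A}(1)$, where the Kudo--Araki structure can be tracked cell-by-cell.
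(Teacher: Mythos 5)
Your high-level strategy—exploit the truncated fiber sequence $D_2(\Sigma^{4n-1}\Z) \to \Sigma^\infty K(\Z, 4n-1) \to \Sigma^{4n-1}\Z$ (smashed with $\MO\langle 4n+1\rangle$) and analyze each piece's Adams $E_2$-page—is the same as the paper's, and your handling of the Eilenberg--Mac\,Lane term (it is an $\HZ$-module, so its Adams spectral sequence collapses and contributes at most one $\F_2$ in filtration $0$ in stem $8n$) is correct. However, your analysis of the $D_2$-term contains a genuine error: in stem $8n$, the Adams $E_2$-page of $D_2(\Sigma^{4n-1}\Z)$ (equivalently, of $D_2(\Sigma^{4n-1}\Z)\otimes\MO\langle 4n+1\rangle$, by a connectivity argument) is \emph{not} concentrated on the single line $s=1$. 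Because $\H_*(\Z;\F_2) \cong \F_2[\zeta_1^2, \zeta_2, \dots]$ has nontrivial products, $\H_{8n}(D_2(\Sigma^{4n-1}\Z);\F_2)$ contains, in addition to $Q_2\iota_{4n-1}$, the decomposable class $\iota_{4n-1}\cdot(\zeta_1^2\iota_{4n-1})$, and a straightforward Ext computation (carried out in \Cref{lem:D2-E2}) shows that $Q_2\iota_{4n-1} + \iota_{4n-1}(\zeta_1^2\iota_{4n-1})$ survives to a filtration-$0$ class alongside $h_1\cdot Q_1\iota_{4n-1}$ in filtration $1$. This extra filtration-$0$ class is precisely what distinguishes $D_2(\Sigma^{4n-1}\Z)$ from $D_2(\Ss^{4n-1})$ (compare \Cref{lem:E2-S}), and it is the crux of the whole lemma. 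Your "careful tracking of the filtration shift from $D_2$" seems to be reasoning by analogy with the sphere case, which does not apply here.

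Even once this is corrected, there is a second gap you need to address. You propose to work with the long exact sequence of $\Ext$-groups, but this requires the $\F_2$-cohomology of the cofiber sequence to form a short exact sequence of $\A$-modules in the relevant range (i.e., that the boundary map in the cohomology long exact sequence vanishes), which you have not verified. The paper instead argues at the level of homotopy groups with their Adams filtrations, and the subtlety there is that the map $\pi_{8n}(D_2 \otimes \MO\langle 4n+1\rangle) \to \pi_{8n}(\Sigma^\infty K \otimes \MO\langle 4n+1\rangle)$ could in principle jump filtration. Ruling out a filtration jump on the filtration-$0$ class is exactly the content of \Cref{lem:homology-map}: one checks in $\F_2$-homology that $\iota_{4n-1}(\zeta_1^2\iota_{4n-1})$ has nonzero image, so the detecting class survives at filtration $0$. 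You should either supply the analogous verification or check short-exactness of the cohomology sequence; one or the other is needed. Finally, your suggestion to invoke Anderson--Brown--Peterson-type splittings of $\H^*\MO\langle 4n+1\rangle$ is unnecessary: a simple connectivity argument shows the Adams spectral sequences of $D_2(\Sigma^{4n-1}\Z)\otimes\MO\langle 4n+1\rangle$ and $D_2(\Sigma^{4n-1}\Z)$ agree through total degree $8n$, which eliminates the $\MO\langle 4n+1\rangle$ factor from the computation entirely.
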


Given this lemma, we can prove \Cref{lem:4-zero}.

\begin{proof}[Proof of \Cref{lem:4-zero}]
  Writing $\Sigma^\infty [4] \otimes 1 = (\Sigma^\infty [2] \otimes 1) \circ (\Sigma^\infty [2] \otimes 1)$, we see from \Cref{lem:2-AF} that, for any $x \in \pi_{8n} \Sigma^\infty K(\Z, 4n-1) \otimes \MO \langle 4n+1 \rangle$, the $\F_2$-Adams filtration of $(\Sigma^\infty [4] \otimes 1) (x)$ is strictly greater than that of $(\Sigma^\infty [2] \otimes 1) (x)$, and the latter is strictly greater than that of $x$. By \Cref{lem:two-lines}, this implies that $(\Sigma^\infty [4] \otimes 1) (x) = 0$, as desired.
\end{proof}

Before we can prove \Cref{lem:two-lines}, we must first prove two lemmas.

\begin{lem} \label{lem:D2-E2}
  The $\mathrm{E}_2$-page of the $\F_2$-Adams spectral sequence for $D_2 (\Sigma^{4n-1} \Z)$ in degrees $\leq 8n$ is as follows, drawn in Adams grading:
\begin{center}
    \begin{tikzcd}[every arrow/.append style={dash}, column sep=0.2cm, row sep=0.2cm]
      1 &        &    & h_1 \cdot Q_1 \iota_{4n-1} \\
      0 & \iota_{4n-1}^2 &    & Q_2 \iota_{4n-1} + \iota_{4n-1} (\zeta_1 ^2 \iota_{4n-1}) \\[-5pt]
     & 8n-2  & 8n-1   & 8n    
    \end{tikzcd}.
\end{center}
  In particular, $\pi_{8n} D_2 (\Sigma^{4n-1} \Z) \cong \F_2 \oplus \F_2$.
\end{lem}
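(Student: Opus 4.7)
The plan is to compute the relevant part of $H_*(D_2(\Sigma^{4n-1}\mathbb{Z});\mathbb{F}_2)$ as a comodule over $\mathcal{A}_*$ and then read off the $\mathrm{Ext}$-groups, exactly as was done earlier for $D_2(\littleo)$.

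I would begin by recalling the standard identification $H_*(H\mathbb{Z};\mathbb{F}_2)\cong\mathbb{F}_2[\zeta_1^2,\zeta_2,\zeta_3,\dots]$, so that $H_*(\Sigma^{4n-1}\mathbb{Z};\mathbb{F}_2)$ has basis elements $\iota_{4n-1}$ in degree $4n-1$ and $\zeta_1^2\iota_{4n-1}$ in degree $4n+1$ in the range relevant for the second extended power. Applying the standard description of $H_*(D_2(X);\mathbb{F}_2)$ via Dyer--Lashof operations and external products (cf.\ \cite[Lemma 1.3]{may}) then produces a basis for $H_*(D_2(\Sigma^{4n-1}\mathbb{Z});\mathbb{F}_2)$ in degrees $\leq 8n$ consisting of $Q_0\iota_{4n-1}=\iota_{4n-1}^2$ in degree $8n-2$, $Q_1\iota_{4n-1}$ in degree $8n-1$, and $Q_2\iota_{4n-1}$ together with $\iota_{4n-1}(\zeta_1^2\iota_{4n-1})$ in degree $8n$.

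Next I would determine the Steenrod module structure via the Nishida relations (for the Dyer--Lashof classes) and the Cartan formula (for the external product). Since $\mathrm{Sq}^r_*\iota_{4n-1}=0$ for $r>0$, the Nishida relations collapse to $\mathrm{Sq}^r_*Q^s\iota_{4n-1}=\binom{s-r}{r}Q^{s-r}\iota_{4n-1}$, yielding $\mathrm{Sq}^1_*Q_1\iota_{4n-1}=\iota_{4n-1}^2$ (from $\binom{4n-1}{1}\equiv 1$) and $\mathrm{Sq}^2_*Q_2\iota_{4n-1}=\iota_{4n-1}^2$ (from $\binom{4n-1}{2}\equiv 1$). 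Using the Cartan formula together with the identities $\mathrm{Sq}^i_*\iota_{4n-1}=0$ for $i>0$ and $\mathrm{Sq}^2_*\zeta_1^2 = 1$ (which follows from $\zeta_1^2$ being dual to $\mathrm{Sq}^2$ in $\mathcal{A}/\mathcal{A}\mathrm{Sq}^1$), I would compute $\mathrm{Sq}^2_*(\iota_{4n-1}(\zeta_1^2\iota_{4n-1}))=\iota_{4n-1}^2$, with all other $\mathrm{Sq}^r_*$ vanishing on this class in the range.

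Finally, the $\mathrm{Ext}$-computation is now direct. Dually, the above relations become $\mathrm{Sq}^1 Q_0^\vee = Q_1^\vee$ and $\mathrm{Sq}^2 Q_0^\vee = Q_2^\vee + (\iota_{4n-1}(\zeta_1^2\iota_{4n-1}))^\vee$, so modulo the augmentation ideal action the class $Q_1^\vee$ dies and $Q_2^\vee$ becomes identified with $(\iota_{4n-1}(\zeta_1^2\iota_{4n-1}))^\vee$. The filtration-zero classes are therefore $Q_0^\vee$ at $(8n-2,0)$ and the common image of the two classes at $(8n,0)$, which in homology is the primitive $Q_2\iota_{4n-1}+\iota_{4n-1}(\zeta_1^2\iota_{4n-1})$ (the unique non-trivial linear combination killed by every $\mathrm{Sq}^r_*$). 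The sole $\mathrm{Ext}^1$-class in the range, $h_1\cdot Q_1\iota_{4n-1}$ at $(8n,1)$, is then produced by the Yoneda-product $h_1$-multiplication on $Q_1\iota_{4n-1}$, or equivalently from the admissible monomial $\mathrm{Sq}^{2,1}$ acting on the generator representing $Q_0^\vee$ in a minimal free resolution. The main obstacle is verifying the Cartan-formula cancellation showing that the sum $Q_2\iota_{4n-1}+\iota_{4n-1}(\zeta_1^2\iota_{4n-1})$ is primitive, which is what picks out the correct representative of the filtration-zero class at $(8n,0)$; once this is in place, checking that no further Steenrod operations connect these classes to anything outside the displayed range is a routine bookkeeping exercise with the Nishida relations and Cartan formula.
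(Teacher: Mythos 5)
Your proposal follows the same approach as the paper: compute $\H_*(D_2(\Sigma^{4n-1}\Z);\F_2)$ with its Steenrod module structure via the Nishida relations and Cartan formula, then read off the $\mathrm{Ext}$-groups, and you arrive at the same answer. The paper states this more tersely (the $\mathrm{Ext}$ step is dismissed as "standard methods"), so your more explicit discussion of the minimal resolution and the identification of the primitive class at degree $8n$ is a perfectly good expansion of the same argument.
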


\begin{proof}
  Recalling that $\H_* (\Z; \F_2) \cong \F_2 [\zeta_1 ^2, \zeta_2, \dots]$, we see from the Nishida relations that the $\F_2$-homology of $D_2 (\Sigma^{4n-1} \Z)$ in degrees $\leq 8n+1$ is as follows:
  \begin{center}
    \begin{tikzcd}[every arrow/.append style={dash}]
      8n+1 & Q_3 (\iota_{4n-1}) & & \iota_{4n-1} (\zeta_2 \iota_{4n-1}) \\
      8n   & Q_2 (\iota_{4n-1}) \ar[u] & \iota_{4n-1} (\zeta_1 ^2 \iota_{4n-1}) \ar[ur] & \\
      8n-1 & Q_1 (\iota_{4n-1}) & & \\
      8n-2 & Q_0 (\iota_{4n-1}) \ar[u] \ar[uu, bend left=60] \ar[uur, bend right = 30] & & 
    \end{tikzcd}
  \end{center}
  The computation of the $\mathrm{E}_2$-page of the $\F_2$-Adams spectral sequence in the desired range follows from standard methods.
  In particular, note that $Q_2 \iota_{4n-1} + \iota_{4n-1} (\zeta_1 ^2 \iota_{4n-1})$ appears because it is in the kernel of $\Sq_2$.
  This class is $h_0$-torsion because otherwise we would have $h_0 \cdot (Q_2 \iota_{4n-1} + \iota_{4n-1} (\zeta_1 ^2 \iota_{4n-1})) = h_1 \cdot Q_1 \iota_{4n-1}$, which would contradict the existence of a map of Steenrod modules $\H_{\leq 8n+1} (D_2(\Sigma^{4n-1} \Z); \F_2) \to \F_2 \{Q_1 (\iota_{4n-1})\}$ (such a map exists because there are no $\Sq_i$ hitting $Q_1 (\iota_{4n-1})$ in this range); this map takes $Q_2 \iota_{4n-1} + \iota_{4n-1} (\zeta_1 ^2 \iota_{4n-1})$ but not $h_1 \cdot Q_1 \iota_{4n-1}$ to zero.
  There are no Adams differentials for bidegree reasons.
\end{proof}

\begin{lem} \label{lem:homology-map}
  The map
  \[\F_2 \{Q_2 (\iota_{4n-1}), \iota_{4n-1} (\zeta_1 ^2 \iota_{4n-1})\} \cong \H_{8n} (D_2 (\Sigma^{4n-1} \Z);\F_2) \to \H_{8n} (\Sigma^\infty K(\Z,4n-1); \F_2)\]
  sends $Q_2 (\iota_{4n-1})$ to zero and is nonzero on $\iota_{4n-1} (\zeta_1 ^2 \iota_{4n-1})$.
\end{lem}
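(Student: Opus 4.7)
The plan is to identify the map in the lemma, using Goodwillie calculus in the relevant range, with the composition
\[
D_2(\Sigma^{4n-1}\Z) \xleftarrow[\simeq]{D_2(\pi)} D_2(\Sigma^\infty K(\Z, 4n-1)) \xrightarrow{\hat m} \Sigma^\infty K(\Z, 4n-1),
\]
so that on $\H_*$ the $\mathbb{E}_\infty$-multiplication $\hat m$ sends $Q_2(\iota_{4n-1})$ to the Dyer--Lashof operation $Q^{4n+1}\bar\iota_{4n-1}$ and sends $\iota_{4n-1}(\zeta_1^2 \iota_{4n-1})$ to the Pontryagin product $\bar\iota_{4n-1} \cdot (\zeta_1^2 \bar\iota_{4n-1})$ in $\H_*(K(\Z, 4n-1);\F_2)$. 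Both parts of the lemma then reduce to verifications inside this Hopf algebra.

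The nonvanishing of $\bar\iota_{4n-1} \cdot (\zeta_1^2\bar\iota_{4n-1})$ follows from the naturality of the unit $\Sigma^{4n-1}H\Z \to \Sigma^\infty K(\Z, 4n-1)$: it identifies $\zeta_1^2\bar\iota_{4n-1} \in \H_{4n+1}(K(\Z, 4n-1);\F_2)$ with the class dual to the cohomology generator $\Sq^2\iota_{4n-1}$. Since $\iota_{4n-1}$ and $\Sq^2\iota_{4n-1}$ are distinct polynomial generators of the cup-product ring $\H^*(K(\Z, 4n-1);\F_2)$---the admissible sequence $(2)$ has excess $2 < 4n-1$ and does not end in $\Sq^1$---the cup product $\iota_{4n-1} \cdot \Sq^2\iota_{4n-1}$ is nonzero in $\H^{8n}$, and a direct pairing computation using the primitivity of $\iota_{4n-1}$ and $\Sq^2\iota_{4n-1}$ in the coalgebra structure shows $\langle \iota_{4n-1} \cdot \Sq^2\iota_{4n-1}, \bar\iota_{4n-1} \cdot \zeta_1^2\bar\iota_{4n-1} \rangle = 1$.

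For the vanishing of the image of $Q_2(\iota_{4n-1})$, the goal is to show $Q^{4n+1}\bar\iota_{4n-1} = 0$ in $\H_{8n}(K(\Z, 4n-1);\F_2)$. The key structural observation is that $Q^{4n+1}\bar\iota_{4n-1}$ is primitive: applying the Cohen--Lada--May coproduct formula $\Delta Q^s(x) = \sum_{i+j=s} Q^i(x) \otimes Q^j(x)$ to $x = \bar\iota_{4n-1}$, every cross term with $0 < i < s$ vanishes by the instability relation $Q^i \bar\iota_{4n-1} = 0$ for $i < 4n-1$. For $n=1$ the conclusion is immediate: there are no polynomial generators of $\H^8(K(\Z,3);\F_2)$, so the primitive subspace of $\H_8(K(\Z,3);\F_2)$ is zero. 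For $n \geq 2$ the primitive subspace is nontrivial, and the main technical obstacle is showing that $Q^{4n+1}\bar\iota_{4n-1}$ pairs trivially with each polynomial generator of $\H^{8n}(K(\Z, 4n-1);\F_2)$; this is expected to follow from Nishida-type computations using that the relevant cohomological generators are length-two admissibles $\Sq^{(s_1,s_2)}\iota$ whose Dyer--Lashof duals are built from terms involving either $\Sq^1\iota_{4n-1}$ (which vanishes in $\H^*K(\Z,4n-1)$) or the Pontryagin square $\bar\iota_{4n-1}^{\,2}$ of the odd-degree primitive $\bar\iota_{4n-1}$ (which vanishes by graded commutativity of the coproduct).
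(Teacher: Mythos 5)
Your identification of the map via Goodwillie calculus (with $D_2(\pi)$ an equivalence in the range and $\hat m$ the $\E_\infty$-multiplication) and your argument for the nonvanishing of the image of $\iota_{4n-1}(\zeta_1^2\iota_{4n-1})$ both match the paper: you carry out the dual pairing computation showing $\langle \iota_{4n-1}\cdot\Sq^2\iota_{4n-1},\, \bar\iota_{4n-1}\cdot\zeta_1^2\bar\iota_{4n-1}\rangle = 1$, while the paper phrases the same observation as the statement that the decomposable $\iota_{4n-1}\cdot\Sq^2\iota_{4n-1}$ represents a nonzero element in the cokernel of the stabilization map $\H^{8n}(\Sigma^{4n-1}\Z;\F_2)\to\H^{8n}(\Sigma^\infty K(\Z,4n-1);\F_2)$. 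One small slip worth flagging: the coproduct formula you cite should read $\Delta Q^s(x) = \sum_{i+j=s}\sum Q^i x' \otimes Q^j x''$ with $\Delta x = \sum x'\otimes x''$; for primitive $x$ this gives the primitivity of $Q^s x$ immediately, without any appeal to instability relations.

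The vanishing of the image of $Q_2(\iota_{4n-1})$, on the other hand, has a genuine gap in your write-up. The $n\geq 2$ case is left as ``expected to follow from Nishida-type computations,'' which is a plan rather than a proof, and moreover one of the auxiliary claims in that sketch is false: the Pontryagin square $\bar\iota_{4n-1}^{\,2} = Q^{4n-1}\bar\iota_{4n-1}$ does \emph{not} vanish in $\H_*(K(\Z,4n-1);\F_2)$---the Pontryagin ring is polynomial, and over $\F_2$ graded commutativity imposes no constraint on squares of odd-degree primitives. So a term-by-term pairing argument built on $\bar\iota_{4n-1}^2 = 0$ would break down. The paper sidesteps all of this with a general structural fact: $K(\Z,m)$ admits a model as a strictly commutative topological monoid, and for such an $\E_\infty$-space the structure map $E\Sigma_2\times_{\Sigma_2} X^2 \to X$ restricted to the diagonal factors through the projection $B\Sigma_2\times X\to X$ followed by the squaring map, so all Dyer--Lashof operations $Q^s x$ with $s > \lvert x\rvert$ vanish on $\H_*(X;\F_2)$. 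In particular $Q^{4n+1}\bar\iota_{4n-1}=0$, and no case analysis on the admissible generators of $\H^{8n}(K(\Z,4n-1);\F_2)$ is needed. This one-line input is the piece your proposal is missing.
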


\begin{proof}
  The Dyer-Lashof operations on the homology of Eilenberg-MacLane spaces vanish because they can be represented by strictly commutative topological monoids.
  In particular, this implies that $Q_2 (\iota_{4n-1})$ is sent to zero.
  
  It therefore suffices to show that the map $\H_{8n} (D_2 (\Sigma^{4n-1} \Z);\F_2) \to \H_{8n} (\Sigma^\infty K(\Z,4n-1); \F_2)$ is nontrivial, which is equivalent to the map $\H_{8n} (\Sigma^\infty K(\Z,4n-1);\F_2) \to \H_{8n} (\Sigma^{4n-1} \Z; \F_2)$ having nontrivial kernel.
  This is equivalent to its dual $\H^{8n} (\Sigma^{4n-1} \Z; \F_2) \to \H^{8n} (\Sigma^\infty K(\Z,4n-1); \F_2)$ having nontrivial cokernel.
  A nontrivial class in the cokernel is given by $(\iota_{4n-1}) (Sq^2 \iota_{4n-1})$.
  This follows from Serre's computation of $H^* (K(\Z,n);\F_2)$ \cite[Th{\'e}or{\`e}me 3]{SerreCohEM} and the fact that the image of $\H^{*} (\Sigma^{n} \Z; \F_2) \to \H^{*} (\Sigma^\infty K(\Z,n); \F_2)$ consists precisely of those classes which may be written as compositions of squares applied to $\iota_n$.
  Equivalently, these are the $2^k$th powers of compositions of squares of excess strictly less than $n$ applied to $\iota_n$.
  The class given above is not of the form, hence is nontrivial in the cokernel.
\end{proof}

\begin{proof} [Proof of \Cref{lem:two-lines}]
  The fiber sequence
  \[D_2 (\Sigma^{4n-1} \Z) \to \Sigma^\infty K(\Z,4n-1) \to \Sigma^{4n-1} \Z\]
  gives rise to an exact sequence
  \[\pi_{8n} D_2 (\Sigma^{4n-1} \Z) \otimes \MO \langle 4n+1 \rangle \to \pi_{8n} \Sigma^\infty K(\Z, 4n-1) \otimes \MO \langle 4n+1 \rangle \to \pi_{8n} \Sigma^{4n-1} \Z \otimes \MO \langle 4n+1 \rangle.\]
  We have isomorphisms
  \[\pi_{8n} \Sigma^{4n-1} \Z \otimes \MO \langle 4n+1 \rangle \cong \H_{4n+1} (\BO \langle 4n+1 \rangle; \Z) \cong \begin{cases} \F_2 &\text{ if }n\text{ is even}\\ 0 &\text{ if }n\text{ is odd.} \end{cases}\]
    Moreover, if $n$ is even, then the nonzero class is detected in $\F_2$-Adams filtration $0$. Indeed, the $\F_2$-Adams spectral sequence for $\Z$-modules agrees with the $2$-Bockstein spectral sequence. It is a general fact for an $a$-Bockstein spectral sequence that any element which is not divisible by $a$ must be detected on the $0$-line of the spectral sequence. 
  It therefore suffices to show that the image of
  \[\pi_{8n} D_2 (\Sigma^{4n-1} \Z) \otimes \MO \langle 4n+1 \rangle \to \pi_{8n} \Sigma^\infty K(\Z, 4n-1) \otimes \MO \langle 4n+1 \rangle\]
  consists of elements detected in Adams filtrations $0$ and $k$ for some $k$.

  By a connectivity argument, the Adams spectral sequences for $D_2 (\Sigma^{4n-1} \Z) \otimes \MO \langle 4n+1 \rangle$ and $D_2 (\Sigma^{4n-1} \Z)$ agree through total degree $8n$.
  By \Cref{lem:D2-E2}, we have $\pi_{8n} D_2 (\Sigma^{4n-1} \Z) = \F_2 \oplus \F_2$ with elements detected in Adams filtrations $0$ and $1$.
  It therefore suffices to show that a class $x \in \pi_{8n} D_2 (\Sigma^{4n-1} \Z)$ detected in Adams filtration $0$ continues to be detected in Adams filtration $0$ in $\pi_{8n} \Sigma^\infty K(\Z, 4n-1) \otimes \MO \langle 4n+1 \rangle$.

  The detecting element of such a class $x$ is $Q_2 \iota_{4n-1} + (\iota_{4n-1})(\zeta_1 ^2 \iota_{4n-1}) \in \H_{8n} (D_2 (\Sigma^{4n-1} \Z);\F_2)$ by \Cref{lem:D2-E2}.
  By \Cref{lem:homology-map}, the image of this class in $\H_{8n} (\Sigma^\infty K(\Z,4n-1); \F_2)$ is nontrivial, which implies that the image of $x$ in $\pi_{8n} \Sigma^\infty K(\Z, 4n-1) \otimes \MO \langle 4n+1 \rangle$ continues to be detected in Adams filtration $0$, as desired.
\end{proof}


We now move on to the proof of \Cref{lem:2-kills}. We begin by recalling the first few homotopy groups of $D_2 (\Ss^{4n-1})$.

\begin{lem} \label{lem:E2-S}
  The $\mathrm{E}_2$-page of the $\F_2$-Adams spectral sequence for $D_2 (\Ss^{4n-1})$ in degrees $\leq 8n$ is as follows:
  \begin{center}
    \begin{tikzcd}[every arrow/.append style={dash}, column sep=0.2cm, row sep=0.2cm]
      1 &        &    & h_1 \cdot Q_1 \iota_{4n-1} \\
      0 & \iota_{4n-1}^2 &    &  \\[-5pt]
     & 8n-2  & 8n-1   & 8n    
    \end{tikzcd}.
  \end{center}
  Moreover, there is a Massey product $\langle h_1, h_0, \iota_{4n-1} ^2 \rangle = h_1 \cdot Q_1 \iota_{4n-1}$. \todo{More proof?}
\end{lem}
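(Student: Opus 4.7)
The plan is to compute the $\mathcal{A}$-module structure on the mod $2$ cohomology of $D_2(\Ss^{4n-1})$ in the relevant range and then to read the $\Ext$ groups off a small piece of a minimal free $\mathcal{A}$-resolution.

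By the standard Dyer--Lashof calculation, $H_*(D_2(\Ss^{4n-1});\F_2) \cong \F_2\{Q_i\iota_{4n-1}\}_{i\geq 0}$ with $|Q_i\iota_{4n-1}| = 8n-2+i$. Applying the Nishida relations (using $Sq^t_*\iota_{4n-1} = 0$ for $t>0$) and dualizing, I would determine that, writing $\alpha, \beta, \gamma, \delta$ for the cohomology classes dual to $\iota_{4n-1}^2, Q_1\iota_{4n-1}, Q_2\iota_{4n-1}, Q_3\iota_{4n-1}$, the nonzero Steenrod actions among these four classes are $Sq^1\alpha = \beta$ (from $4n-1$ being odd), $Sq^2\alpha = \gamma$ (from $\binom{4n-1}{2}$ being odd), and $Sq^3\alpha = Sq^1\gamma = \delta$ (from $\binom{4n-1}{3}$ being odd), while the key vanishing $Sq^2\beta = Sq^2 Sq^1 \alpha = 0$ follows from $\binom{4n}{2} \equiv 0 \pmod 2$.

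With this module description, I would begin a minimal free $\mathcal{A}$-resolution $P_0 = \mathcal{A}\{\alpha\} \twoheadrightarrow M$, where $M = H^*(D_2(\Ss^{4n-1});\F_2)$. In internal degrees $8n-2, 8n-1, 8n$, the admissible basis $\{1, Sq^1, Sq^2\}$ of $\mathcal{A}$ maps injectively to $\{\alpha, \beta, \gamma\}$, so the only $\Ext^0$ class in the pictured range is the generator $\iota_{4n-1}^2$ at $(t-s,s) = (8n-2, 0)$. The first relation appears in internal degree $8n+1$, where the admissible basis $\{Sq^3, Sq^2 Sq^1\}$ maps to $\{\delta, 0\}$: the vanishing of $Sq^2 Sq^1 \alpha$ contributes a single first-syzygy generator, yielding exactly the class $h_1 \cdot Q_1\iota_{4n-1} \in \Ext^{1, 8n+1}(M, \F_2)$ at $(t-s,s) = (8n, 1)$. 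A routine check using admissible monomials of degree $4$ confirms that no other $\Ext$ classes lie in the pictured range.

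For the Massey product, I would exploit the three-cell structure on the $(8n)$-skeleton $X$ of $D_2(\Ss^{4n-1})$ dictated by the Steenrod computation above: $X$ is the cofiber of the unique nonzero lift $\widetilde{\eta} : \Ss^{8n-1} \to \Ss^{8n-2}/2$ of $\eta$ along the bottom-cell inclusion. This yields a Toda bracket representative for $\langle \eta, 2, \iota_{4n-1}^2 \rangle \subset \pi_{8n}(D_2(\Ss^{4n-1}))$, which is nonzero because the nontriviality of $Sq^2$ between the bottom and top cells prevents $X$ from splitting as $\Ss^{8n-2}/2 \vee \Ss^{8n}$. By Moss convergence, this Toda bracket is detected by the Massey product $\langle h_1, h_0, \iota_{4n-1}^2\rangle \in \Ext^{1,8n+1}(M, \F_2)$, whose indeterminacy vanishes because the relevant $\Ext$ groups vanish. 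Since the preceding calculation gives $\Ext^{1,8n+1}(M,\F_2) = \F_2\{h_1 \cdot Q_1 \iota_{4n-1}\}$, we conclude $\langle h_1, h_0, \iota_{4n-1}^2\rangle = h_1 \cdot Q_1\iota_{4n-1}$.

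The main obstacle is justifying the identification of the $(8n)$-skeleton with the cofiber of $\widetilde{\eta}$, which follows from the standard identification $D_2(\Ss^{4n-1}) \simeq \Sigma^{4n-1}\mathbb{RP}^\infty_{4n-1}$ and an analysis of the attaching maps in stunted projective space; after that, everything reduces to routine Nishida manipulations, mod $2$ binomial coefficient computations, and the small resolution calculation sketched above.
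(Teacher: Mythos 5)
The computation of the $\F_2$-homology of $D_2(\Ss^{4n-1})$ via the Nishida relations and the subsequent identification of $\Ext^{0,8n-2}$ and $\Ext^{1,8n+1}$ by beginning a minimal free resolution match the paper's approach exactly: the binomial coefficient checks you record ($\binom{4n-1}{2}$, $\binom{4n-1}{3}$ odd, $\binom{4n}{2}$ even) are precisely the ones encoded in the paper's cell diagram, and the minimal resolution argument you sketch is what the paper means by ``straightforward.''

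Where you diverge is the Massey product, and there you should be more careful. The paper reads the Massey product off directly from the fact that the first syzygy $P_1 \to P_0$ in the minimal resolution sends the unique generator to $Sq^2\,Sq^1\,\alpha$: because $Sq^1\alpha \ne 0$, $Sq^2(Sq^1\alpha) = 0$, and $Sq^1, Sq^2$ detect $h_0, h_1$, the dual class in $\Ext^{1,8n+1}$ lies in $\langle h_1, h_0, \iota_{4n-1}^2\rangle$ by the standard description of Massey products in a minimal resolution. This is a purely algebraic $\Ext$-level identity and needs no passage through homotopy.

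Your Toda bracket route is a legitimate alternative in spirit, but as written it has gaps. First, what you call ``the unique nonzero lift of $\eta$ along the bottom-cell inclusion'' is actually the pushforward $i_*\eta \in \pi_{8n-1}(\Ss^{8n-2}/2)$, the attaching map of the top cell. But a representative of the Toda bracket $\langle \eta, 2, \iota_{4n-1}^2\rangle \subset \pi_{8n}$ is built from a lift $\wt{\eta}' : \Ss^{8n} \to \Ss^{8n-2}/2$ of $\Sigma\eta$ along the top-cell projection $\Ss^{8n-2}/2 \to \Ss^{8n-1}$, composed with the extension $\Ss^{8n-2}/2 \to D_2$. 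These are different maps in different degrees, so ``$X$ is the cofiber of $\wt{\eta}$'' does not by itself ``yield a Toda bracket representative.'' Second, the nonsplitting of $X$ shows $i_*\eta \ne 0$, but that is a statement about $\pi_{8n-1}(\Ss^{8n-2}/2)$, not a direct proof that the bracket in $\pi_{8n}$ is nonzero. Third, Moss's theorem runs from Massey products to Toda brackets; to invert it as you do, you need to spell out that if the Massey product were zero, Moss would force a nonzero bracket element into filtration $\ge 2$, which vanishes, and that the bracket indeterminacy is zero---all true here, but you assert it rather than argue it. The direct resolution argument is shorter and sidesteps all of this, so I would replace the Toda bracket paragraph with it.

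Finally, the closing sentence about deducing the cell structure from the stunted projective space $\Sigma^{4n-1}\mathbb{RP}^\infty_{4n-1}$ is unnecessary: the Steenrod action you already computed forces the attaching map of the top cell of the $8n$-skeleton uniquely, since $\pi_{8n-1}(\Ss^{8n-2}/2) \cong \Z/2$.
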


\begin{proof}
  By the Nishida relations, the $\F_2$-homology of $D_2 (\Ss^{4n-1})$ in degrees $\leq 8n+1$ is given as follows:
  \begin{center}
    \begin{tikzcd}[every arrow/.append style={dash}]
      8n+1 & Q_3 (\iota_{4n-1}) \\
      8n   & Q_2 (\iota_{4n-1}) \ar[u] \\
      8n-1 & Q_1 (\iota_{4n-1}) \\
      8n-2 & Q_0 (\iota_{4n-1}) \ar[u] \ar[uu, bend left = 60]
    \end{tikzcd}
  \end{center}
  The computation of the $\mathrm{E}_2$-page is straightforward from this.

  To compute the Massey product, we note that in the bar complex, $h_1$, $h_0$, and $\iota_{4n-1} ^2$ are represented by $[\zeta_1 ^2]$, $[\zeta_1]$, and $[\iota_{4n-1}^2]$.
  Then we have differentials
  $d([\zeta_2]) = [\zeta_1 ^2 \vert \zeta_1]$
  and
  $d([Q_1 \iota_{4n-1}]) = [\zeta_1 \vert \iota_{4n-1}^2]$
  in the cobar complex witnessing the relations $h_1 h_0 = 0$ and $h_0 \iota_{4n-1} ^2 = 0$, so that the Massey product
  $\langle h_1, h_0, \iota_{4n-1} ^2 \rangle$
  is represented in the cobar complex by
  $[\zeta_2 \vert \iota_{4n-1}^2] + [\zeta_1 ^2 \vert Q_1 (\iota_{4n-1})]$.
  This is a cocycle which maps to $[\zeta_1 ^2 \vert Q_1 (\iota_{4n-1})]$ under the map $\H_{\leq 8n+1} (D_2 (\Ss^{4n-1}) ;\F_2) \to \F_2\{Q_1 (\iota_{4n-1})\}$, hence is a representative for the class we denote $h_1 \cdot Q_1 \iota_{4n-1}$.
  Finally, it is simple to verify that this Massey product has no indeterminacy.
\end{proof}

We now establish precisely how, when $n = 1$ or $2$, the image of $\coker(J)_{8n}$ in $\pi_{8n} \MO \langle 4n+1 \rangle$ is killed in the bar spectral sequence converging to $\pi_{8n} \MO \langle 4n \rangle$.

\begin{lem} \label{lem:how-dies}
  Suppose that $n=1$ or $2$ and let $x \in \pi_{8n} D_2 (\Sigma^{4n-1} \Z)$ be the unique element detected by $h_1 \cdot Q_1 \iota_{4n-1}$ in the $\F_2$-Adams spectral sequence. Under the map
  \[\pi_{8n} D_2 (\Sigma^{4n-1} \Z) \to \pi_{8n} \Sigma^\infty K(\Z, 4n-1) \to \pi_{8n} \Sigma^\infty K(\Z, 4n-1) \otimes \MO \langle 4n+1 \rangle \xrightarrow{f} \pi_{8n} \MO \langle 4n+1 \rangle,\]
  $x$ is sent to the unique nonzero class in the image of the unit map
  \[\pi_{8n} \Ss \to \coker(J)_{8n} \cong \Z/2\Z \hookrightarrow \pi_{8n} \MO \langle 4n+1 \rangle.\]
\end{lem}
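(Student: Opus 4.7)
The plan is to identify the class $x$ with a Toda-bracket representative using the Massey-product structure on the Adams $\mathrm{E}_2$-page, and then to push this bracket forward along the composite to $\MO\langle 4n+1\rangle$. By Moss's convergence theorem applied to the Massey product $\langle h_1, h_0, \iota_{4n-1}^2\rangle = h_1 \cdot Q_1\iota_{4n-1}$ recorded in \Cref{lem:E2-S}, the class $x$ lies in the Toda bracket $\langle \eta, 2, \iota_{4n-1}^2\rangle \subseteq \pi_{8n} D_2(\Sigma^{4n-1}\Z)$. Here $2 \cdot \iota_{4n-1}^2 = 0$ by graded commutativity in odd degree $4n-1$, and the Adams $\mathrm{E}_2$-page shown in \Cref{lem:E2-S} is sparse enough that both the Moss convergence hypotheses and the Toda indeterminacy cause no trouble.

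By naturality of Toda brackets along $\hat{m}\colon D_2(\Sigma^{4n-1}\Z) \to \Sigma^\infty K(\Z, 4n-1)$ (which is an isomorphism on $\pi_{8n}$ by the fiber-sequence argument in the proof of \Cref{lem:two-lines}) and then along the unit-and-$f$ composite, the image of $x$ is contained in the Toda bracket $\langle \eta, 2, f(\iota_{4n-1}^2)\rangle \subseteq \pi_{8n}\MO\langle 4n+1\rangle$. For $n = 1, 2$, both $\pi_{8n-2}\MO\langle 4n+1\rangle$ and $\pi_{4n-1}\MO\langle 4n+1\rangle$ vanish, since $\MString$ is $7$-connected and $\MO\langle 9\rangle$ is $8$-connected. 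The vanishing of $\pi_{4n-1}\MO\langle 4n+1\rangle$ means $\iota_{4n-1}$ is canonically null in $\MO\langle 4n+1\rangle$, which through the $\Einf$-ring structure furnishes a specific null-homotopy of $\iota_{4n-1}^2$ and hence a specific representative for the image of the Toda bracket in $\pi_{8n}\MO\langle 4n+1\rangle$.

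Then I would identify this specific Toda bracket with known classical brackets. Toda's relations give $\varepsilon \in \langle \eta, 2, \nu^2\rangle$ in $\pi_8\Ss$ and $\eta_4 \in \langle \eta, 2, \sigma^2\rangle$ in $\pi_{16}\Ss$, and these are representatives of the generator of $\coker(J)_{8n}$ in the relevant degrees. The comparison proceeds by factoring the composite $\Ss^{4n-1}\xrightarrow{\iota_{4n-1}}\Sigma^\infty K(\Z,4n-1)\to\MO\langle 4n+1\rangle$ through the sphere: up to a $2$-adic unit, $\iota_{4n-1}$ corresponds to the $J$-image generator $\nu$ (for $n=1$) or $\sigma$ (for $n=2$) under the Thom-spectrum structure. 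Once the identification is lined up, the image of $x$ in $\pi_{8n}\MO\langle 4n+1\rangle$ coincides with the image of $\varepsilon$ or $\eta_4$ under the unit map, which by \Cref{lem:coker-inj} is the unique nonzero class in $\coker(J)_{8n}\hookrightarrow \pi_{8n}\MO\langle 4n+1\rangle$.

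The main obstacle is the null-homotopy comparison in the final step, namely verifying that the canonical null of $\iota_{4n-1}^2$ in $\MO\langle 4n+1\rangle$ matches the one induced by the $J$-homomorphism, so that the bracket evaluates to the desired classical representative rather than to $0$ or some other element of the Toda indeterminacy. A fallback is an argument by exclusion: by the $n=4$ and $n=8$ cases of \Cref{thm:ker-main} established in \Cref{sec:exceptional}, the classes $\varepsilon$ and $\eta_4$ lie in the kernel of $\pi_{8n}\Ss\to\pi_{8n}\MO\langle 4n\rangle$ while being nonzero in $\pi_{8n}\MO\langle 4n+1\rangle$ by \Cref{lem:coker-inj}. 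The bar spectral sequence for $R\langle 4n\rangle^1_\bullet$ then forces them to lie in the image of $d_1$, and the computation $\pi_{8n} D_2(\Sigma^{4n-1}\Z)\cong\Z/2\Z$ together with \Cref{lem:two-lines} leaves $x$ as the only class that can hit them modulo the image of $J$.
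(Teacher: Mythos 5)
Your instincts are right that this is ultimately a Toda bracket/Massey product argument pushed through to $\Ss$, but the proposal stops short of the paper's key maneuver, and the gap you flag in your own write-up is a genuine one that the paper's proof is structured specifically to sidestep. The paper does not evaluate a bracket $\langle\eta,2,f(\iota_{4n-1}^2)\rangle$ living in $\pi_{8n}\MO\langle 4n+1\rangle$; as you observe, $f(\iota_{4n-1}^2)=0$ there (since $j_{4n-1}^2$ lies in the kernel of the unit), so that bracket has no content without pinning down a nullhomotopy. Instead, the paper first lifts $x\in\pi_{8n}D_2(\Sigma^{4n-1}\Z)$ uniquely to a class $\wt{x}\in\pi_{8n}D_2(\Ss^{4n-1})$ (possible by comparing \Cref{lem:D2-E2} with \Cref{lem:E2-S}), maps $\wt{x}$ to $\pi_{8n}\Ss$ via $D_2(j_{4n-1})$, and only then uses the commutative square to identify the resulting element of $\coker(J)_{8n}$ with the image of $x$. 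For $n=1$ this lands inside the Toda bracket $\langle\eta,2,\nu^2\rangle$ in $\pi_8\Ss$, whose indeterminacy is contained in the image of $J$, so there is no nullhomotopy ambiguity; for $n=2$ the paper does not even need a bracket, since $D_2(\sigma)(\wt{x})$ is detected by $h_1h_4$ on the $\mathrm{E}_2$-page and any such class generates $\coker(J)_{16}$.

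Your fallback also does not quite close. First, $\pi_{8n}D_2(\Sigma^{4n-1}\Z)\cong\F_2\oplus\F_2$ (see \Cref{lem:D2-E2}), not $\Z/2\Z$, so there are two candidate preimages, in filtrations $0$ and $1$. Second, exclusion would require ruling out both the filtration-$0$ element and, when $n=2$, the extra $\F_2$ contribution in $\pi_{16}\Sigma^\infty K(\Z,7)\otimes\MO\langle 9\rangle$ coming from $\pi_{16}\Sigma^7\Z\otimes\MO\langle 9\rangle$ in the exact sequence used in \Cref{lem:two-lines}; none of this is addressed. The takeaway is that the decisive step you are missing is the lift from $D_2(\Sigma^{4n-1}\Z)$ back to $D_2(\Ss^{4n-1})$: once that is in place, the identification happens in $\pi_{8n}\Ss$ where the indeterminacy is under control, and the map to $\MO\langle 4n+1\rangle$ is applied only at the very end via \Cref{lem:coker-inj}.
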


\begin{proof}
  By definition, the class $\iota_{4n-1} \in \pi_{4n-1} \Sigma^\infty K(\Z, 4n-1)$ is sent to the image of $j_{4n-1} \in \pi_{4n-1} \Ss$, a generator of the image of $J$, under the unit map $\pi_{4n-1} \Ss \to \pi_{4n-1} \MO \langle 4n+1 \rangle$.
  As a consequence, we obtain a commutative diagram:
  \begin{center}
    \begin{tikzcd}
      D_2 (\Ss^{4n-1}) \ar[rr, "D_2 (j_{4n-1})"] \ar[d] & & \Ss \ar[d] \\
      D_2 (\Sigma^{4n-1} \Z) \ar[r] & \Sigma^\infty K(\Z, 4n-1) \ar[r] & \MO \langle 4n+1 \rangle
    \end{tikzcd}
  \end{center}

  It follows from Lemmas \ref{lem:D2-E2} and \ref{lem:E2-S} that $x \in \pi_{8n} D_2 (\Sigma^{4n-1} \Z)$ lifts uniquely to a class $\wt{x} \in \pi_{8n} D_2 (\Ss^{4n-1})$, again represented in the $\F_2$-Adams spectral sequence by $h_1 \cdot Q_1 \iota_{4n-1}$.
When $n=1$, we have $j_3 = \nu$, so the class $D_2 (j_{3})(\wt{x}) = D_2 (\nu)(\wt{x}) \in \pi_{8} \Ss$ lies in the Toda bracket $\langle \eta, 2, \nu^2 \rangle$ by \Cref{lem:E2-S} and Moss's theorem \cite{Moss}.
  Note that the indeterminacy of the bracket $\langle \eta, 2, \nu^2 \rangle$ is generated by $\eta \sigma$, which lies in the image of $J$.
  In particular, the image of the bracket in $\langle \eta, 2, \nu^2 \rangle$ is well-defined in $\coker(J)_8$. It follows from \cite[pp. 189-190]{toda} that $\langle \eta, 2, \nu^2 \rangle$ generates $\coker(J)_8$, so the image of $D_2 (\nu (\wt{x}))$ in $\coker(J)_8$ is a generator as desired.

  When $n=2$, we have $j_7 = \sigma$.
  It follows that the class $D_2 (j_{7}) (\wt{x}) = D_2 (\sigma) (\wt{x}) \in \pi_{16} \Ss$ is detected on the $\mathrm{E}_2$-page of the $\F_2$-Adams spectral sequence by $h_1 Q_1 (h_3) = h_1 h_4$.
  Such a class generates $\coker(J)_{16}$ (see e.g. Isaksen--Wang--Xu's charts \cite{Isaksen}), so the result follows.
\end{proof}

\begin{lem} \label{lem:D2-2-fil-1}
  The image of the map $\pi_{8n} D_2 (2) : \pi_{8n} D_2 (\Sigma^{4n-1} \Z) \to \pi_{8n} D_2 (\Sigma^{4n-1} \Z)$ contains the unique nonzero element $x$ of $\F_2$-Adams filtration $1$, i.e the unique element detected by $h_1 \cdot Q_1\iota_{4n-1}$.
\end{lem}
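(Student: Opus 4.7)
The plan is to exhibit an explicit preimage. By \Cref{lem:D2-E2} the group $\pi_{8n} D_2(\Sigma^{4n-1} \Z)$ admits a unique nonzero element $y$ of $\F_2$-Adams filtration zero, namely the class whose Hurewicz image is $Q_2 \iota_{4n-1} + \iota_{4n-1}(\zeta_1^2 \iota_{4n-1})$. I claim that $D_2(2)(y) = x$.

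To bound the $\F_2$-Adams filtration of $D_2(2)(y)$ from below, I use the commutative diagram recalled just before the statement of the lemma, which identifies $D_2(2)$ with the restriction of $\Sigma^\infty [2]$ to the Goodwillie layer $D_2(\Sigma^{4n-1}\Z)$ of $\Sigma^\infty K(\Z, 4n-1)$ in the truncation range under consideration. By \Cref{lem:2-AF}, the map $\Sigma^\infty [2]$ has $\F_2$-Adams filtration at least one, so $D_2(2)(y)$ lies in Adams filtration $\geq 1$ in $\pi_{8n} D_2(\Sigma^{4n-1}\Z)$. By \Cref{lem:D2-E2} the only candidate detecting element in this bidegree on the $\mathrm{E}_\infty$-page is $h_1 \cdot Q_1 \iota_{4n-1}$, forcing $D_2(2)(y) \in \{0, x\}$.

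To rule out $D_2(2)(y) = 0$, I would invoke the Massey product identification $\langle h_1, h_0, \iota_{4n-1}^2\rangle = h_1 \cdot Q_1 \iota_{4n-1}$ established in \Cref{lem:E2-S} together with Moss's convergence theorem, applied naturally along the map $D_2(\Ss^{4n-1}) \to D_2(\Sigma^{4n-1}\Z)$. The filtration-zero class $y$ corresponds to the $\iota_{4n-1}^2$ summand after accounting for the cross-term contribution in $\H_{8n}(D_2(\Sigma^{4n-1}\Z); \F_2)$, and the filtration jump produced by the mod-$2$ vanishing of $[2]^* \iota_{4n-1}$ converts $y$ into the Moss representative of the bracket, which by the uniqueness in \Cref{lem:D2-E2} must realize $x$. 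The principal obstacle is the filtration bookkeeping: one must verify that $\Sigma^\infty [2]$ raises the Adams filtration of $y$ by exactly one (as opposed to strictly more) and that the resulting representative matches $\wt{x}$ along the lines of the proof of \Cref{lem:how-dies}, which is precisely where the Moss--Massey product identification is needed.
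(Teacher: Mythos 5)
Your reduction is correct and matches the paper: since $D_2(2)$ raises $\F_2$-Adams filtration by at least one (because $2:\Sigma^{4n-1}\Z\to\Sigma^{4n-1}\Z$ is of filtration $1$), and by \Cref{lem:D2-E2} the only nonzero class of positive filtration in $\pi_{8n} D_2(\Sigma^{4n-1}\Z)$ is $x$, it suffices to show $\pi_{8n} D_2(2)\neq 0$. Where you depart is in the nonvanishing argument, and that is where there is a genuine gap.

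The central issue is that $D_2(2)$ is \emph{not} multiplication by $2$ on $D_2(\Sigma^{4n-1}\Z)$; it is the map induced functorially by $2$. Since $\pi_{8n}D_2(\Sigma^{4n-1}\Z)\cong\F_2\oplus\F_2$ is simple $2$-torsion, one cannot conclude anything about $D_2(2)(y)$ from filtration considerations alone --- it could perfectly well vanish. The paper's proof gets the nonvanishing from the norm formula $\mathrm{Nm}^{C_2}(2)=\mathrm{Nm}^{C_2}(1)+\mathrm{Nm}^{C_2}(1)+\mathrm{tr}^{C_2}(1)=2+\mathrm{tr}^{C_2}(1)$ (\cite[Lemma A.36]{HHR}), which upon passing to homotopy orbits gives $D_2(2) = 2 + (q\circ \mathrm{tr})$; on a $2$-torsion group this reduces $D_2(2)$ to the transfer--quotient composite $q\circ\mathrm{tr}$, whose nonvanishing is then checked via the Hurewicz image of $\mathrm{tr}(y)$ and the absence of entering differentials in the homotopy orbit spectral sequence. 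Your proposal never touches this formula, and the Moss--Massey argument you substitute does not bear on the question: the Massey product $\langle h_1, h_0, \iota_{4n-1}^2\rangle = h_1\cdot Q_1\iota_{4n-1}$ of \Cref{lem:E2-S} identifies a Toda bracket living in $\pi_{8n}D_2(\Ss^{4n-1})$, but there is no Toda-bracket description of $D_2(2)(y)$ to which Moss's theorem could be applied. Moreover, the assertion that ``the filtration-zero class $y$ corresponds to the $\iota_{4n-1}^2$ summand'' is not meaningful --- $\iota_{4n-1}^2$ sits in degree $8n-2$, while $y$ is in degree $8n$ with Hurewicz image $Q_2\iota_{4n-1}+\iota_{4n-1}(\zeta_1^2\iota_{4n-1})$. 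You acknowledge the need ``to verify that $\Sigma^\infty[2]$ raises the Adams filtration of $y$ by exactly one,'' and that acknowledgment is precisely the hole: nothing in your argument supplies this verification, and the route to supplying it is the Hill--Hopkins--Ravenel norm formula, not Moss's theorem.
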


\begin{proof}
  Since $2 : \Sigma^{4n-1} \Z \to \Sigma^{4n-1} \Z$ is of $\F_2$-Adams filtration $\geq 1$, so is $D_2 (2) : D_2 (\Sigma^{4n-1} \Z) \to D_2 (\Sigma^{4n-1} \Z)$.
  It therefore suffices to show that the map $\pi_{8n} D_2 (2)$ is nonzero.

  To see this, for a spectrum $X$ we write $D_2 (X)$ as $\mathrm{Nm}^{C_2} (X)_{h C_2}$, where $\mathrm{Nm}^{C_2}$ is the $C_2$-spectrum $X^{\otimes 2}$ with the swap action.
  Then we have $\mathrm{Nm}^{C_2} (2) = \mathrm{Nm}^{C_2} (1) + \mathrm{Nm}^{C_2} (1) + \mathrm{tr}^{C_2} (1 \cdot 1) = 2 + \mathrm{tr}^{C_2} (1)$ by \cite[Lemma A.36]{HHR}.
  Here, given a $C_2$-spectrum $Y$ and a nonequivariant map $f : Y \to Y$, $\mathrm{tr}^C_2 (f)$ is the $C_2$-equivariant map $Y \to Y$ obtained as the composite $Y \xrightarrow{tr(f)} Y \otimes (C_2)_+ \xrightarrow{c} Y$, where $tr(f)$ is adjoint to the map $f$ and $c$ is adjoint to the nonequivariant identity map on $Y$.
  (Here we are using $\mathrm{tr}^{C_2} (f)$ to refer to an internal transfer, while we use $tr(f)$ to refer to an external transfer.)

  Letting $X = \Sigma^{4n-1} \Z$ and applying $C_2$-homotopy orbits,
  we see that $D_2 (2) = 2 + \mathrm{tr}^{C_2} (1)_{h C_2}$, where $\mathrm{tr}^{C_2} (1)_{h C_2}$ is the composite $D_2 (\Sigma^{4n-1} \Z) \xrightarrow{tr} \pi_{8n} (\Sigma^{4n-1} \Z)^{\otimes 2} \xrightarrow{q} \pi_{8n} D_2 (\Sigma^{4n-1} \Z)$ of the transfer and quotient maps.
  Since $\pi_{8n} D_2 (\Sigma^{4n-1} \Z)$ consists of simple $2$-torsion by \Cref{lem:D2-E2},
  it suffices to show that the composite
  $\pi_{8n} D_2 (\Sigma^{4n-1} \Z) \xrightarrow{tr} \pi_{8n} (\Sigma^{4n-1} \Z)^{\otimes 2} \xrightarrow{q} \pi_{8n} D_2 (\Sigma^{4n-1} \Z)$
  is nonzero.
  Let $y \in \pi_{8n} D_2 (X)$ be of Adams filtration $0$, so that its image under the Hurewicz map is $Q_2 \iota_{4n+1} + \iota_{4n-1} (\zeta_1 ^2 \iota_{4n-1}) \in \H_{8n}(D_2 (\Sigma^{4n-1} \Z); \F_2)$.
  It follows that the Hurewicz image of $tr(y) \in \pi_{8n} \Sigma^{8n-2} \Z\otimes\Z$ is equal to $\iota_{4n-1} \otimes \zeta_1 ^2 \iota_{4n-1} + \zeta_1 ^2 \iota_{4n-1} \otimes \iota_{4n-1} \in \H_{8n}(\Sigma^{8n-2} \Z \otimes \Z; \F_2)$.
  In particular, $tr(y) \neq 0$.

  Now, the first three nonzero homotopy groups of $\Z \otimes \Z$ are given by $\pi_0 (\Z \otimes \Z) \cong \Z$, $\pi_1 (\Z \otimes \Z) = 0$, and $\pi_2 (\Z \otimes \Z) \cong \F_2$.
  In particular, there is a unique nonzero class in $\pi_{8n} (\Sigma^{8n-2} \Z \otimes \Z)$, which must therefore be equal to $tr(y)$. As a consequence, it suffices to show that the quotient map $\pi_{8n} (\Sigma^{8n-2} \Z \otimes \Z) \to \pi_{8n} D_2 (\Sigma^{4n-1} \Z)$ is nonzero, which is equivalent to showing that the corresponding homotopy orbit spectral sequence has no differential that enters degree $8n$. This spectral sequence takes the form
  \[\H_s (C_2; \pi_t ((\Sigma^{4n-1} \Z)^{\otimes 2}) \Rightarrow \pi_{s+t} D_2 (\Sigma^{4n-1} \Z)\]
  where $\pi_* ((\Sigma^{4n-1} \Z)^{\otimes 2})$ is endowed with the swap action.

  By the computation of the first three homotopy groups of $\Z \otimes \Z$, as well as the fact that the swap action on $\pi_{8n-2} ((\Sigma^{4n-1} \Z)^{\otimes 2}) \cong \Z$ is given by the sign action, we see that the $\mathrm{E}_2$-page in total degree $8n$ is given by
  \[\H_2 (C_2; \pi_{8n-2} ((\Sigma^{4n-1} \Z)^{\otimes 2})) \oplus \H_0 (C_2; \pi_{8n}((\Sigma^{4n-1} \Z)^{\otimes 2})) \cong \F_2 \oplus \F_2.\]
  Since this is isomorphic to the final answer by Lemma \ref{lem:D2-E2}, there can be no differentials entering total degree $8n$.
\end{proof}

Finally, we are able to deduce \Cref{lem:2-kills}.

\begin{proof}[Proof of \Cref{lem:2-kills}]
  This follows from combining \Cref{lem:how-dies} and \Cref{lem:D2-2-fil-1} with the commutative diagram
\begin{center}
  \begin{tikzcd}
    D_2 (\Sigma^{4n-1} \Z) \ar[r] \ar[d, "D_2 (2)"] & \Sigma^\infty K(\Z, 4n-1) \ar[r] \ar[d, "\Sigma^\infty {[2]}"] & \Sigma^{4n-1} \Z \ar[d,"2"] \\
    D_2 (\Sigma^{4n-1} \Z) \ar[r] & \Sigma^\infty K(\Z, 4n-1) \ar[r] & \Sigma^{4n-1} \Z.
  \end{tikzcd}
\end{center}
\end{proof}

\section{Homotopy and concordance inertia groups} \label{sec:h-c}
As an application, we determine the homotopy and concordance inertia groups of $(n-1)$-connected $2n$-manifolds when $n\geq 3$.
\begin{definition} 
The \textit{homotopy inertia group} $I_h(M)$ of a smooth $M$
is a subgroup of $I(M)$ consisting of $\Sigma$ such that the diffeomorphism $M\rightarrow M\# \Sigma$ is homotopic to the standard homeomorphism. 
\end{definition}
Recall that given a topological manifold $M$, two pairs $(N_1, f_1), (N_2,f_2)$ of a smooth manifold $N_i$ with a homeomorphism $f_i:N_i\rightarrow M$ are \textit{concordant} if there exists a diffeomorphism $g:N_1\rightarrow N_2$ such that there exists a homeomorphism $F:N_1\times[0,1]\rightarrow M\times[0,1]$ with $F|_{N_1\times\{0\}}=f_1$ and $F|_{N_1\times\{1\}}=f_2 \circ g$.
 \begin{definition}
The \textit{concordance
inertia group} $I_c(M)$ of $M$  is the subgroup of $I_h(M)$ consisting of homotopy spheres $\Sigma$ such that
$M$ and $M\#\Sigma$ are concordant (with respect to $M$) \cite{munkres}.
\end{definition}
By definition, $I_c(M)\subseteq I_h(M)\subseteq I(M)$.
Hence if $M$ is an $(n-1)$-connected $2n$-manifold with $n\geq 3$, it follows from \Cref{determination} that $I_c (M) = I_h (M) = 0$ when $n \neq 4,8,9$.

Some of these remaining cases are the subject of the following result of Kasilingam \cite[Theorem 2.7]{ramesh}.
\begin{thm}[{{\cite[Theorem 2.7]{ramesh}}}]
  Suppose that $M$ denotes an $(n-1)$-connected $2n$-manifold.
  If $n=4$, then we have $I_c (M) = I_h (M) = 0$.
  If $n=8$ and $\H_n (M) \cong \Z$, then we have $I_c (M) = 0$
\end{thm}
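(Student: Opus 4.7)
The plan is to apply Kirby--Siebenmann smoothing theory to reduce both claims to an injectivity statement. Since any $\Sigma \in \Theta_{2n}$ is topologically standard, there is a canonical homeomorphism $\phi_\Sigma \colon M \to M \# \Sigma$; through $\phi_\Sigma$, the smooth manifold $M \# \Sigma$ becomes a second smoothing of the underlying topological manifold $M_{\mathrm{top}}$. Concordance classes of smoothings of $M_{\mathrm{top}}$ are classified by $[M, \mathrm{TOP}/\mathrm{O}]$, and the smoothing $M \# \Sigma$ corresponds to the class $p_M^*(c(\Sigma))$, where $p_M \colon M \to S^{2n}$ is the pinch to the top cell and $c(\Sigma) \in \pi_{2n}(\mathrm{TOP}/\mathrm{O}) \cong \Theta_{2n}$ classifies $\Sigma$. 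Hence $\Sigma \in I_c(M)$ if and only if $p_M^*(c(\Sigma)) = 0$; the inclusion $I_h(M) \subseteq \ker p_M^*$ in the $n=4$ case follows from the topological $s$-cobordism theorem, since a diffeomorphism homotopic to $\phi_\Sigma$ then yields a smooth concordance between the two smoothings.

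The task is therefore to show that $p_M^* \colon \pi_{2n}(\mathrm{TOP}/\mathrm{O}) \to [M, \mathrm{TOP}/\mathrm{O}]$ is injective on the image of $c$. I would compute $[M, \mathrm{TOP}/\mathrm{O}]$ through the Postnikov tower of $\mathrm{TOP}/\mathrm{O}$, whose associated graded is $\bigoplus_i H^i(M; \pi_i(\mathrm{TOP}/\mathrm{O}))$. For $n = 4$, the cohomology of $M$ is concentrated in degrees $0, 4, 8$, and the only nontrivial homotopy groups of $\mathrm{TOP}/\mathrm{O}$ that contribute are the Kirby--Siebenmann class $\mathbb{Z}/2$ paired with $H^4(M; \mathbb{Z}/2)$ and $\pi_8(\mathrm{TOP}/\mathrm{O}) \cong \Theta_8 \cong \mathbb{Z}/2$ paired with $H^8(M; \mathbb{Z}/2)$. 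Since $M$ is $3$-connected it is spin, so its Kirby--Siebenmann class vanishes. The top-cell pinch $p_M$ is an isomorphism on $H^8(-; \mathbb{Z}/2)$, giving injectivity of $p_M^*$ and hence $I_c(M) = I_h(M) = 0$.

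For $n = 8$ with $H_n(M) \cong \mathbb{Z}$, the manifold $M$ has cohomology concentrated in degrees $0, 8, 16$. The Kirby--Siebenmann contribution vanishes as before, and one must control the middle-dimensional contribution from $H^8(M; \pi_8(\mathrm{TOP}/\mathrm{O})) \cong \mathbb{Z}/2$. The hypothesis $H_8(M) \cong \mathbb{Z}$ ensures that $M$ has a single $8$-cell, allowing one to verify directly that the top-cell class $p_M^*(c(\Sigma))$ for $\Sigma \in \Theta_{16}$ is detected in $H^{16}(M; \Theta_{16}) \cong \mathbb{Z}/2$ without interference from the middle stratum. This yields $I_c(M) = 0$.

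The main obstacle is the $n = 8$ case: one must rule out the possibility that a $k$-invariant in the Postnikov tower of $\mathrm{TOP}/\mathrm{O}$ couples the top-cell obstruction to the middle-dimensional layer $H^8(M; \pi_8(\mathrm{TOP}/\mathrm{O}))$, which would introduce indeterminacy in the detection of $c(\Sigma)$. The hypothesis $H_n(M) \cong \mathbb{Z}$ is precisely what is needed to avoid such coupling. A second, more technical subtlety is justifying the inclusion $I_h(M) \subseteq \ker p_M^*$, which requires a careful application of topological $s$-cobordism theory to translate homotopic diffeomorphisms into smooth concordances.
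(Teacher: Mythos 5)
This theorem is a citation of Kasilingam (Ramesh) \cite[Theorem 2.7]{ramesh}; the paper does not prove it, so there is no ``paper's own proof'' to compare against. The paper's \emph{own} Theorem~\ref{thm:h-c} (which supersedes the cited result) is proved via the surgery exact sequence with $G/\mathrm{O}$: one shows directly that $I_h(M) = \ker(\Theta_{2n} \to S^{\mathrm{Diff}}(M))$ vanishes and then $I_c(M) = 0$ is free from the inclusion $I_c \subseteq I_h$. Your approach through $\mathrm{TOP}/\mathrm{O}$ and smoothing theory runs in the opposite direction: it computes $I_c(M) = \ker\bigl(p_M^*\colon \Theta_{2n} \to [M,\mathrm{TOP}/\mathrm{O}]\bigr)$ cleanly, but then needs $I_h \subseteq I_c$ to conclude the $n=4$ case. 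That inclusion is the reverse of the trivial one $I_c \subseteq I_h$ and is a genuine gap: you would need to promote a diffeomorphism $g\colon M \to M\#\Sigma$ \emph{homotopic} to $\phi_\Sigma$ into a topological \emph{pseudo-isotopy}, i.e.\ show that the self-homeomorphism $\phi_\Sigma^{-1}\circ g$ of $M$ is topologically concordant to the identity. This is a substantive surgery-theoretic statement (normal invariants, structure set of $M\times I$ rel $\partial$, $L$-theory), not an application of the topological $s$-cobordism theorem as stated. Notice that Kasilingam's own result only asserts $I_c(M)=0$ for $n=8$, which is consistent with this direction being delicate.

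For $n=8$ the argument is also incomplete. After replacing $M$ by $S^8\cup_g e^{16}$, the cofiber sequence gives $\ker p_M^* = \mathrm{im}\bigl((\Sigma g)^*\colon \pi_9(\mathrm{TOP}/\mathrm{O}) \to \pi_{16}(\mathrm{TOP}/\mathrm{O})\bigr)$, i.e.\ multiplication by the stable attaching map $\Sigma^\infty g \in \pi_7^s$ (which is $\sigma$ plus $2$-torsion, since unimodularity forces Hopf invariant one) from $\Theta_9 \cong (\Z/2)^3$ to $\Theta_{16}\cong \Z/2$. Saying ``without interference from the middle stratum'' does not discharge this: you must actually verify that this action kills $\Theta_9$, which requires the $\pi_*^s$-module structure on $\pi_*(\mathrm{TOP}/\mathrm{O})$ and Toda-bracket/product computations of the type the paper carries out explicitly in its proof of Theorem~\ref{thm:h-c} (e.g.\ $\sigma\nu^3=0$, $\sigma\mu=\eta\rho\in\mathrm{im}\,J$, $\sigma\eta\varepsilon=0$). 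Finally a small slip: the Kirby--Siebenmann class lives in $\pi_3(\mathrm{TOP}/\mathrm{O})\cong\Z/2$, so in the filtration of $[M,\mathrm{TOP}/\mathrm{O}]$ it contributes through $H^3(M;\Z/2)$, which vanishes for $(n-1)$-connected $M$ with $n\geq 4$; the spin condition you invoke is not what makes that term disappear.
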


Our main result resolves the remaining cases to prove that, in contrast to the inertia group, the homotopy and concordance inertia groups of a highly connected manifolds always vanish, which appears as \Cref{thm:h-c} in the introduction.

\begin{theorem}
Let $M$ be an $(n-1)$-connected, smooth, closed, oriented $2n$-manifold with $n\geq 3$. Then $I_c(M)=I_h(M)=0$.
\end{theorem}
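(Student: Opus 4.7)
The cases $n \neq 4, 8, 9$ are immediate from \Cref{determination} combined with the inclusions $I_c(M) \subseteq I_h(M) \subseteq I(M)$. When $n = 4$, the vanishing of both $I_c(M)$ and $I_h(M)$ is a theorem of Kasilingam \cite{ramesh}. The remaining cases are $n = 8$ (in full generality, including without the hypothesis $\H^8(M) = \mathbb{Z}$) and $n = 9$, where the inertia group $I(M)$ itself can be nontrivial, so a separate argument is needed. Since $I_c(M) \subseteq I_h(M)$, it suffices to prove $I_h(M) = 0$ in these cases.

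The plan is to establish the inclusion
\[I_h(M) \subseteq \ker\bigl(\Theta_{2n} \to \pi_{2n}\MO\langle n+1 \rangle\bigr),\]
which vanishes by \Cref{cor:theta-inj}. Suppose $\Sigma \in I_h(M)$, so that there exists a diffeomorphism $f : M \# \Sigma \xrightarrow{\cong} M$ together with a homotopy $H : f \simeq p$ to the standard pinch map $p : M \# \Sigma \to M$. The key observation is that $p$ collapses the summand $\Sigma \setminus D^{2n} \subset M \# \Sigma$ to a single point $x_0 \in M$, so $p^* \nu_M$ is canonically trivialized on this summand. The homotopy $H$ then transports this trivialization to the actual stable normal bundle $\nu_{M \# \Sigma}|_{\Sigma \setminus D^{2n}}$, producing a specific $\BO\langle n+1\rangle$-lift of $\nu_\Sigma$ on the complement of a disk, which can then be extended across the disk to yield a $\BO\langle n+1\rangle$-structure on $\Sigma$.

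To see that $\Sigma$ equipped with this structure is null-bordant in $\pi_{2n}\MO\langle n+1 \rangle$, I would combine three cobordisms: (i) the bordism from $(M \# \Sigma, \nu_{M\#\Sigma})$ to $(M \# \Sigma, p^*\nu_M)$ provided by the cylinder $M \# \Sigma \times I$ with normal bundle $H^*\nu_M$; (ii) the standard pinch cobordism decomposing $(M \# \Sigma, p^*\nu_M)$ as the disjoint union of $(M, \nu_M)$ and $(\Sigma, \text{induced framing})$; and (iii) the trivial cobordism given by the mapping cylinder of the diffeomorphism $f$ itself from $(M \# \Sigma, \nu_{M \# \Sigma})$ to $(M, \nu_M)$. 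Combining (i)--(iii) telescopes to a nullbordism of $(\Sigma, \text{induced framing})$. Since the induced framing agrees with the canonical Kervaire--Milnor framing up to an element in the image of the $J$-homomorphism, which dies in $\pi_{2n}\MO\langle n+1 \rangle$ by the bar spectral sequence (cf.\ \Cref{lem:coker-inj}), we conclude that $[\Sigma] = 0$ in $\pi_{2n}\MO\langle n+1 \rangle$, as desired.

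The main obstacle will be ensuring that the structures on the three cobordisms above assemble into a genuine $\MO\langle n+1 \rangle$-nullbordism; in particular, one must use the homotopy $H$ to produce a $\BO\langle n+1\rangle$-lift on the $\Sigma$-summand of $M \# \Sigma$, even though $M$ itself need not admit a global $\BO\langle n+1\rangle$-lift when $H(M) \neq 0$. Carrying this out requires careful bookkeeping of the relative $\BO\langle n+1\rangle$-structures, which is where $f \simeq p$ (as opposed to mere existence of $f$) enters essentially.
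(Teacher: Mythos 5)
Your reductions to the cases $n=8,9$ (via \Cref{determination} for $n\neq 4,8,9$, and Kasilingam's theorem \cite{ramesh} for $n=4$) match the paper's. For $n=8,9$, however, your bordism plan departs substantially from the paper's argument, which instead works through the surgery exact sequence: it uses $I_h(M)=\ker\bigl(\Theta_{2n}\to S^{\mathrm{Diff}}(M)\bigr)$ and the compatibility with normal invariants to reduce to showing that $q^*:[S^{2n},G/O]\to[M,G/O]$ is injective (where $q:M\to S^{2n}$ collapses to the top cell), and then verifies this using the minimal cell structure of $M$, the long exact sequence for $O\to G\to G/O$, the injectivity of $J:\pi_n \mathrm{O}\to\pi_n\Ss$ for $n=8,9$, and Toda's product tables in $\pi_*\Ss$.

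There is a genuine gap in your plan. For the cobordisms (i)--(iii) to certify $[\Sigma]=0$ in $\pi_{2n}\MO\langle n+1\rangle$, each must carry a $\BO\langle n+1\rangle$-structure restricting correctly to its boundary. But every one of them has $M$ or $M\#\Sigma$ as a boundary component, and precisely in the cases that still need proving --- $n=8,9$ with $H(M)\neq 0$, which is exactly when $I(M)$ can be nontrivial --- the manifold $M$ admits \emph{no} $\BO\langle n+1\rangle$-structure at all, since its stable normal bundle is nonzero on $\pi_n$. The structure those cobordisms naturally carry is a $\BO\langle n\rangle_{H(M)}$-structure; running your telescope there gives only $[\Sigma]=0\in\pi_{2n}\MO\langle n\rangle_{H(M)}$, which is the content of \Cref{thm:im=ker} and holds for every $\Sigma\in I(M)$, not just $\Sigma\in I_h(M)$, so it cannot distinguish $I_h$ from $I$. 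The homotopy $H$ does furnish a stable framing of $\Sigma$ away from a disk, but there is no a priori reason this framing makes $\Sigma$ framed-nullbordant, and the proposed telescope cannot promote it to an $\MO\langle n+1\rangle$-nullbordism because the $M$-summand obstructs. The ``careful bookkeeping of relative structures'' you defer at the end is therefore not a technical loose end but the essential difficulty, and as written the route does not close; the paper sidesteps it entirely by computing with normal invariants and the cell structure of $M$ rather than with a bordism of $\Sigma$.
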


\begin{proof}
  It remains to check the cases $n=8,9$, and it follows from \Cref{determination} that $I(M)$ is $2$-torsion, so we are free to implicitly $2$-complete.
  It suffices to prove that $I_h (M) = 0$.
  By \cite{Wall62}, the manifold $M$ has homotopy type  $\left( \bigvee_{j\in I} S^{n}_j \right)\cup_g D^{2n}$, where $I$ is a finite set indexing the $n$-spheres in the $n$-skeleton of $M$ and $g:S^{2n-1}\rightarrow\bigvee_{j\in I} S^{n}_j$ is the attaching map of the top dimensional cell.
  We also let $q : M \to S^{2n}$ denote the map to the top cell.

  Let $G = \mathrm{GL}_1 (\Ss)$. By surgery theory (see \cite[Lemma 3.4]{Crowley}), we have the following diagram with exact rows:
\begin{center}
    \begin{tikzcd}
    0=L_{2n+1}(1)\ar[r]\ar[d]&\Theta_{2n}\ar[r,"\eta_{S^{2n}}"]\ar[d,"f_M"]&\left[S^{2n},G/\mathrm{O}\right]\ar[d,"q^*"] \\
    0=L_{2n+1}(1)\ar[r]& S^{\mathrm{Diff}}(M)\ar[r,"\eta_{M}"]&\left[M,G/\mathrm{O}\right],
    \end{tikzcd}
\end{center}
  where $S^{\mathrm{Diff}} (M)$ is the smooth structure set of $M$ and $f_M$ takes $\Sigma$ to $M \# \Sigma$ equipped with the homotopy equivalence $M \simeq M \# \Sigma$ coming from the standard homeomorphism $M \cong M \# \Sigma$.
  As a consequence of the definition of $f_M$, $I_h (M) = \mathrm{ker} (\Theta_{2n} \xrightarrow{f_M} S^{\mathrm{Diff}} (M))$.
  It therefore suffices to show that
  \[q^* : \left[S^{2n}, G/O\right] \to \left[M, G/O\right]\]
  is injective, or equivalently that
  \[(\Sigma g)^* : [\bigvee_{j \in I} S^{n+1}, G/O] \to \left[S^{2n}, G/O\right]\]
  is zero.
  
  Examining the diagram with exact rows
  \begin{center}
    \begin{tikzcd}
      \left[\bigvee_{j \in I} S^{n+1}, O\right] \ar[r] \ar[d, "(\Sigma g)^*"]& \left[\bigvee_{j \in I} S^{n+1}, G\right] \ar[r] \ar[d, "(\Sigma g)^*"]& \left[\bigvee_{j \in I} S^{n+1}, G/O\right] \ar[r] \ar[d, "(\Sigma g)^*"] & \left[\bigvee_{j \in I} S^{n}, O\right] \ar[d, "g^*"]\\
      \left[S^{2n}, O\right] \ar[r]& \left[S^{2n}, G\right] \ar[r]& \left[S^{2n}, G/O\right] \ar[r] & \left[S^{2n-1}, O\right],
    \end{tikzcd}
  \end{center}
  we see that it suffices to show that the top right horizontal arrow is zero and the second vertical arrow lands in the image of $J$.

  Since $O, G$ and $G/O$ are infinite loop spaces, this only depends on the stable homotopy class of $g$, which may be viewed as an element of $\bigoplus_{j \in I} \pi_{n-1} (\Ss)$.
  To show that the top right horizontal arrow is zero, it suffices to note that $J : \pi_n \mathrm{o} \to \pi_n \Ss$ is injective for $n=8,9$ by \cite[Theorems 1.1 \& 1.3]{AdamsJIV}.


  To show that the second vertical map lands in the image of $J$, it suffices to show that the $\pi_* \Ss$-module structure map $\pi_{n-1} \Ss \times \pi_{n+1} \mathrm{gl}_1 \Ss \to \pi_{2n} \mathrm{gl}_1 \Ss$ lands in the image of $J$.
  Since there is an equivalence of spectra $\tau_{[n+1,2n+1]} \mathrm{gl}_1 \Ss \simeq \tau_{[n+1, 2n+1]} \Ss$ (see \cite[Corollary 5.2.3]{AV}), this map may be identified with the product map $\pi_{n-1} \Ss \times \pi_{n+1} \Ss \to \pi_{2n} \Ss$.
  For this, we make use of Toda's tables \cite[pp. 189-190]{toda}.

  Reading off of Toda, we have:
  \[\pi_7 \Ss = \Z/16\Z\{\sigma\}, \pi_9 \Ss = \Z/2\Z\{\nu^3\} \oplus \Z/2\Z\{\mu\} \oplus \Z/2\Z\{\eta \varepsilon\}\]
  and
  \[\pi_8 \Ss = \Z/2\Z\{\overline{\nu}\}\oplus \Z/2\Z\{\epsilon\}, \pi_{10} \Ss = \Z/2\Z\{\eta \mu\}.\]
  By \cite[Theorem 14.1]{toda}, we have:
  \[\sigma \cdot \nu^3 = 0,\hspace{0.3cm} \sigma \cdot \mu = \eta \rho,\hspace{0.3cm} \sigma \cdot \eta\varepsilon = 0\]
  and
  \[\overline{\nu} \cdot \eta \mu = 0,\hspace{0.3cm} \varepsilon \cdot \eta \mu = \eta^3 \rho = 4\nu \rho = 0.\]
  It therefore suffices to note that $\eta \rho$ lies in the image of $J$.
\end{proof}

\section{Kernel of the boundary map} \label{sec:kernel}
In \Cref{thm:which-bdy}, we computed the image of the map
\[\partial : A_{2n+1} ^{\langle n \rangle} \to \Theta_{2n}.\]
The of goal of this section is to prove \Cref{thm:kernel-main-intro}, restated below as \Cref{thm:kernel-main}, in which we determine the map $\partial$ itself, up to multiplication by a $2$-adic unit.
\begin{thm} \label{thm:kernel-main}
  \begin{enumerate}
    \item[(i)] (Frank \cite[Theorem 3]{Frank}) The boundary of a $3$-connected $9$-manifold $M$ is the standard sphere if and only if $\Psi_{-L_{\mathbb{H}}} (M) \in \{1, [\nu_4 \circ \eta_{7}]\}$. Otherwise, the boundary is $[\epsilon] \in \coker(J)_8 \cong \Theta_8$.
    \item[(ii)] The boundary of a $7$-connected $17$-manifold $M$ is the standard sphere if and only if $\Psi_{L_{\mathbb{O}}} (M) \in \{1, [\sigma_8 \circ \eta_{15}]\}$. Otherwise, the boundary is $[\eta_4] \in \coker(J)_{16} \cong \Theta_{16}$.
    \item[(iii)] The boundary of an $8$-connected $19$-manifold is diffemorphic to $\omega(f) [h_2 h_4]$. Here, $\omega(f) \in \Z/8\Z$ is an invariant defined by Wall in \cite{Wall67} and $[h_2 h_4] \in \pi_{18} (\Ss) \cong \coker(J)_{18} \cong \Theta_{18}$ is a generator for the kernel of the unit map $\pi_{18} \Ss \to \pi_{18} \ko$, which is isomorphic to $\Z/8\Z$.\footnote{This statement should be interpreted as holding up to multiplication by a $2$-adic unit. Indeed, we have only specified $[h_2 h_4]$ up to multiplication by a $2$-adic unit, and Wall only defined $\omega (f)$ up to a $2$-adic unit.}
  \end{enumerate}
\end{thm}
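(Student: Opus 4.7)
The basic strategy is to combine Stolz's identification of the boundary map with the calculations already performed in this paper. Recall from \Cref{sec:background} that the composite
\[A_{2n+1}^{\langle n\rangle} \xrightarrow{\partial} \Theta_{2n} \to \coker(J)_{2n}\]
agrees with
\[A_{2n+1}^{\langle n\rangle} \to \pi_{2n+1} A[n] \xrightarrow{b_*} \pi_{2n+1}(\MOn/\Ss) \xrightarrow{\delta_*} \pi_{2n}\Ss \to \coker(J)_{2n},\]
where the first map is an isomorphism when $n\geq 9$ by \cite[Theorem A]{stolz}. \Cref{thm:ker-main} identifies the image of $\delta_*$ modulo $\mathrm{im}(J)$ as $\langle [h_2 h_4]\rangle$, $\langle[\epsilon]\rangle$, and $\langle[\eta_4]\rangle$ for $n=9,4,8$ respectively. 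Together these reduce the problem to (a) counting how many elements of $A_{2n+1}^{\langle n\rangle}$ bound an exotic sphere under $\partial$, and (b) identifying a classical invariant on $A_{2n+1}^{\langle n\rangle}$ that detects those elements.

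I would tackle part (iii) first, as it is the cleanest. Here $A_{19}^{\langle 9\rangle}\cong \Z/8\Z\cong \pi_{19} A[9]$, and the computation in \Cref{mo9} identifies the image of $\pi_{19}A[9]\to \pi_{18}\Ss$ with the cyclic subgroup $\Z/8\Z\cdot [h_2h_4]$. Since $[h_2h_4]$ is not of Kervaire invariant one, the map $\coker(J)_{18}\to\Theta_{18}$ is injective on this subgroup, so $\partial$ is an isomorphism onto $\Z/8\Z\cdot [h_2h_4]\subset \Theta_{18}$. It remains to identify this isomorphism with Wall's invariant $\omega(f)$ up to a $2$-adic unit; the plan is to compare Wall's construction of $\omega$ in \cite[\S 14]{Wall67} with the isomorphism $A_{19}^{\langle 9\rangle}\cong \pi_{19}A[9]$ from \cite{stolz}, noting that both are cyclic of order $8$ so the match is determined by tracking a single generator.

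For parts (i) and (ii), I plan to use Frank's explicit description of $A_9^{\langle 4\rangle}\cong \Z/2\Z\oplus\Z/2\Z$ and $A_{17}^{\langle 8\rangle}\cong \Z/2\Z\oplus\Z/2\Z$ from \cite{Frank, frank74wall}, together with the invariant $\Psi_g$ from \Cref{sec:frank}. By the modified surgery argument in \Cref{sec:det}, the nontrivial inertia of $\mathbb{HP}^2$ (resp.\ $\mathbb{OP}^2$) produces an almost closed $9$-manifold (resp.\ $17$-manifold) with boundary $[\epsilon]$ (resp.\ $[\eta_4]$), so $\partial$ is nontrivial. Since $|A_{2n+1}^{\langle n\rangle}|=4$ while $\Theta_{2n}=\Z/2\Z$, the kernel of $\partial$ is a subgroup of index $2$. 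The stated coset $\{1, [\nu_4\circ\eta_7]\}$ (resp.\ $\{1,[\sigma_8\circ\eta_{15}]\}$) singles out this subgroup; establishing the theorem amounts to computing $\Psi_g$ on explicit representatives for the two kernel classes, for instance the trivial element and a generator coming from a stably parallelizable almost closed manifold, which bounds a standard sphere by Kosinski--Wall \cite{Kosinski}.

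The main obstacle will be the careful tracking of the normalization of $\Psi_g$, in particular the conventions for $L_{\mathbb{H}}$ and $L_{\mathbb{O}}$. The sign discrepancy between $L_{\mathbb{H}}$ and $-L_{\mathbb{H}}$ in the statement suggests a subtle choice that must be made consistently, and indeed Wall's erroneous \cite[Theorem 10]{Wall67} appears to have stemmed from a related framing issue (noted by Frank in \cite[\S 8]{frank74wall}). Once the conventions are fixed, the remaining work amounts to a finite case analysis over the $\Z/2\Z\oplus\Z/2\Z$ structure, plus the identification of the explicit classes $[\nu_4\circ\eta_7]\in \pi_7\Oo$ and $[\sigma_8\circ\eta_{15}]\in\pi_{15}\Oo$ with the appropriate $\Psi_g$-values.
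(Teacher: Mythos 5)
Part (iii) essentially matches the paper's approach: the $n=9$ case is disposed of by combining the computation of \Cref{mo9} with the observation that both $\partial$ and $\omega$ are surjections onto the same cyclic group of order $8$.

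However, parts (i) and (ii) of your proposal have a genuine gap. You correctly reduce the problem to identifying which index-$2$ subgroup of $A_{2n+1}^{\langle n\rangle} \cong \Z/2\Z\oplus\Z/2\Z$ is the kernel of $\partial$, and you correctly observe that the kernel has index $2$ (since $\partial$ is surjective onto $\Theta_{2n}\cong\Z/2\Z$). But your proposed representative for the nontrivial kernel class---a stably parallelizable almost closed manifold---cannot have $\Psi_g = [\nu_4\circ\eta_7]$ (resp.\ $[\sigma_8\circ\eta_{15}]$). If $M$ is stably parallelizable, then the first obstruction $x\in H^n(M)$ vanishes, so the constant map $f:M\to S^n$ is a valid factorization of $x$, hence $0\in\Phi_g(M)$, and therefore $\Psi_g(M)$ is the trivial class. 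Thus your stably parallelizable example exhibits another representative of the known trivial kernel element; it says nothing about whether $[\nu_4\circ\eta_7]$ lies in the kernel. The whole difficulty of the theorem is to produce a manifold with \emph{nontrivial} $\Psi_g$ and standard boundary. (Also, the cited Kosinski--Wall result concerns inertia groups of stably frameable \emph{closed} manifolds, not boundaries of almost closed ones; the fact you want is $bP_{2n+1}=0$ for these $n$.)

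The paper addresses exactly this gap with explicit non-stably-parallelizable manifolds. For $n=4$, one takes the plumbing $M(-L_{\mathbb{H}}\oplus\RR, L_{\mathbb{H}}\circ\eta_4)$, which has $\Psi_{-L_{\mathbb{H}}}=[\nu_4\circ\eta_7]$ by \Cref{exm:frank-plumb}, and then uses Frank's Toda bracket formula (\Cref{thm:Frank}) to express $[\partial M]\in\pm\langle\nu,\Sigma^\infty_{-L_{\mathbb{H}}}(\nu_4),\eta\rangle$; the key computation (\Cref{lem:betaHopf}) is that $\Sigma^\infty_{-L_{\mathbb{H}}}(\nu_4)=0$, obtained by Atiyah duality on $\HP^2$, so the bracket contains zero and lands in $\im(J)$ in $\pi_8\Ss$. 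For $n=8$, the paper instead identifies $P(L_{\mathbb{O}}\oplus\RR, L_{\mathbb{O}}\circ\eta_8)$ with $\eta\cdot[\OP^2 - D^{16}]$ in $\pi_{17}A[8]$ via Stolz's constructions, and uses that the spectrum map $A[8]\to\Sigma\Ss$ is $\eta$-linear to conclude that its boundary vanishes in $\coker(J)_{16}$; again $\Psi_{L_{\mathbb{O}}}$ of this plumbing is $[\sigma_8\circ\eta_{15}]$ by \Cref{exm:frank-plumb}. Neither of these key steps appears in your outline, and no alternative mechanism is proposed for producing a nontrivial kernel representative.
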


The $n=9$ case is a consequence of what we have already proven. Indeed, in this case $A_{19} ^{\langle 9 \rangle} \cong \Z/8\Z$ via Wall's $\omega(f)$, so we know the map $\partial : A_{19} ^{\langle 9 \rangle} \to \Theta_{18}$ up to $2$-adic unit as soon as we know its image.
What remains is the cases $n=4$ and $n=8$: in these cases, we have $A_{2n+1} ^{\langle n \rangle} \cong \Z/2\Z \oplus \Z/2\Z$ and the image of $\partial$ is isomorphic to $\Z/2\Z$, so to determine $\partial$ we need to figure out which of the three nonzero elements of $A_{2n+1} ^{\langle n \rangle}$ lies in its kernel.

When $n = 4, 8$, we have chosen to phrase our results using Frank's description of $A_{2n+1} ^{\langle n \rangle}$ for $n \equiv 0 \mod 4$ in terms of his invariant $\Psi_g$ \cite{frank74wall}.
We have chosen to use this instead of \cite[Theorem 9]{Wall67} for two reasons.
First, Wall's invariant $\omega$ is not well-defined (it depends on a certain choice of subgroup $\pi_{n+1} \BSO(n)$.)
Second, Wall's theorem is incorrect for $n=4$, as pointed out in \cite[Section 8, (1)]{frank74wall}.
We begin by recalling Frank's work in \Cref{sec:frank}.

In \Cref{sec:n=4}, we consider the case $n=4$. In this case, \Cref{thm:kernel-main} was announced by Frank in \cite[Theorem 3]{Frank}.
However, to the best of the authors' knowledge, a full proof never appeared, so we include a complete proof of \Cref{thm:kernel-main}(i), following Frank's outline.
We conclude by proving the case $n=8$ in \Cref{sec:n=8}.

\subsection{Frank's classification}\label{sec:frank}
\begin{ntn}
  In the remainder of this section, we fix a positive integer $n \equiv 0 \mod 4$ and a generator $g \in \pi_{n} \BSO(n) \cong \pi_{n} \BSO$, which determines a preferred isomorphism $\pi_{n} \BSO(n) \cong \Z$.
  In a departure from \cite{frank74wall}, we emphasize the role that the choice of $g$ plays in the below by including it in our notation for $\Phi_g$ and $\Psi_g$.
\end{ntn}

\begin{exm}
  Let $L_{\mathbb{H}}$ and $L_{\mathbb{O}}$ denote the canonical quaternionic and octonionic line bundles on $\HP^1 \cong S^4$ and $\OP^1 \cong S^8$, respectively.
  The associated sphere bundles are the Hopf maps $\nu_4: S^7 \to S^4$ and $\sigma_8 : S^{15} \to S^8$.
  The bundles $L_{\mathbb{H}}$ and $L_{\mathbb{O}}$ determine elements of $\pi_4 \BSO(4)$ and $\pi_8 \BSO (8)$ that stabilize to generators of $\pi_4 \BSO$ and $\pi_8 \BSO$. (See e.g. \cite[Section 3.1]{BaezOct}.)
\end{exm}
\begin{cnstr}
  Given an $(n-1)$-connected almost closed $(2n+1)$-manifold $M$, let $x \in \H^n (M; \pi_n \BSO) \cong \H^n (M; \Z)$ denote the first obstruction to trivializing the stable normal bundle of $M$.
  Since the cohomology of $M$ is concentrated in degrees $n$ and $n+1$, obstruction theory implies that $x$, viewed as a map to an Eilenberg--MacLane space, factors as
  \[M \xrightarrow{f} S^n \xrightarrow{\iota_n} K(\Z,n).\]
  We let $\Phi_g (M) \subset \pi_{2n} (S^n)$ denote the collection of composites
  \[S^{2n} \hookrightarrow M \xrightarrow{f} S^n,\]
  where the first map is the inclusion of the boundary and $f$ varies over maps factoring $x$ as above.
\end{cnstr}
%
%
%

Frank determined the indeterminacy of the assignment $M \mapsto \Phi_g (M)$ and used this invariant to compute $A_{2n+1} ^{\langle n \rangle}$.

\begin{lem}[{\cite[Theorem 1.3]{frank74wall}}]
  Let $D_n \in \pi_{2n} (S^n)$ denote the element $\eta_n \circ J(g) + [e_n, \eta_n ]$, where $e_n \in \pi_n (S^n) \cong \Z$ denotes the identity map, $\eta_n \in \pi_{n+1} S^n$ is the Hopf invariant one class, and $[-,-]$ is the Whitehead product.
  Then the image of $\Phi_g (M)$ in $\pi_{2n} (S^n) / D_n$ consists of a single element, and $\Phi_g (M)$ is a $\langle D_n \rangle$-coset, where $\langle D_n \rangle$ is the subgroup generated by $D_n$.
\end{lem}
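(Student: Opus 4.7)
My plan is to track the indeterminacy of $\Phi_g(M)$ by analyzing how different lifts $f\colon M\to S^n$ of $x$ produce boundary composites $f|_{\partial M}\in\pi_{2n}(S^n)$, showing that these differ by an element of $\langle D_n\rangle$. The starting point is homotopical: because $\partial M$ is a homotopy $2n$-sphere, Poincar\'e--Lefschetz duality for $(M,\partial M)$ yields $\H^k(M;\Z)=0$ for $n+2\le k\le 2n+1$, so the integral cohomology of $M$ is concentrated in degrees $0,n,n+1$. This vanishing kills every obstruction to lifting $x$ along $S^n\to K(\Z,n)$ (the obstructions live in $\H^{k+1}(M;\pi_k S^n)$ for $k\ge n+1$) and, via a minimal handle decomposition, lets me replace $M$ up to homotopy by a CW complex $X = (\bigvee_I S^n)\cup_{\{h_j\}}\bigcup_J D^{n+1}_j$ of dimension $n+1$. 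The boundary inclusion is then encoded by an attaching map $g\colon S^{2n}\to X$, whose image $\bar g\colon S^{2n}\to X/X^{(n)}=\bigvee_J S^{n+1}$ splits as $\bar g=\sum_j \bar g_j$ with $\bar g_j\in\pi_{2n}(S^{n+1})$, since Whitehead-product classes on the wedge live in degree $\ge 2n+1$.

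Next I would compute the indeterminacy directly. Two lifts $f_0,f_1$ of $x$ can be arranged to agree on $X^{(n)}$ (the map $[\bigvee_I S^n,S^n]\to \H^n(X;\Z)$ is bijective), so their difference factors as $X\to X/X^{(n)}\xrightarrow{\xi} S^n$ with $\xi\in\bigoplus_J\pi_{n+1}(S^n)=\bigoplus_J \Z/2\langle\eta_n\rangle$. Restricting to $\partial M$ and using the decomposition of $\bar g$, the difference $(f_1-f_0)|_{\partial M}$ becomes a sum $\sum_j\varepsilon_j(\eta_n\circ\bar g_j)$ in $\pi_{2n}(S^n)$, so the indeterminacy is contained in the subgroup generated by the elements $\eta_n\circ\bar g_j$ for $j\in J$.

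The heart of the argument is to identify this subgroup with $\langle D_n\rangle$. For this I would cap off to the closed manifold $N=M\cup_{\partial M}D^{2n+1}$ and use Poincar\'e duality on $N$ to pin down the top-cell attaching data: each $\bar g_j$ decomposes, modulo classes killed by $\eta$, as a suspended Whitehead-product summand reflecting the self-intersection of the $n$-cycle dual to the $(n+1)$-cell $D^{n+1}_j$ (contributing $[e_n,\eta_n]$ after $\eta$-multiplication) plus a $J$-homomorphism summand from the normal bundle of that dual sphere (contributing $\eta_n\circ J(g)$, where the identification $\pi_n\BSO\cong\Z$ is fixed by the chosen generator $g$). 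Summing these contributions yields $\langle\eta_n\circ\bar g_j\rangle=\langle D_n\rangle$. The main obstacle will be this last manifold-theoretic identification of $\bar g_j$ in terms of the intersection form and the $J$-homomorphism; the rest is formal obstruction theory and homotopical linear algebra.
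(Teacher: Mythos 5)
The paper does not itself prove this lemma; it is imported from Frank's work, so there is no internal proof to compare against. Evaluated on its own terms, your framework---minimal CW model for $M$, lift indeterminacy localized to the $(n+1)$-cells, Hilton decomposition of the top-cell attaching map---is the right one, but step 6 contains a concrete error, and it sits exactly at the source of the $[e_n,\eta_n]$ summand of $D_n$. Writing $f_1=(f_0\vee\xi)\circ\psi$ for the coaction $\psi\colon X\to X\vee(X/X^{(n)})$ and $\gamma\colon S^{2n}\to X$ for the attaching map (your ``$g$'', renamed to avoid clashing with the chosen generator of $\pi_n\BSO(n)$), the class $\psi_*\gamma\in\pi_{2n}\bigl(X\vee(X/X^{(n)})\bigr)$ does \emph{not} lie in $\pi_{2n}(X)\oplus\pi_{2n}(X/X^{(n)})$: Hilton--Milnor supplies an additional summand generated by the Whitehead products $[\alpha_i,\beta_j]$ with $\alpha_i\in\pi_n(X)$, $\beta_j\in\pi_{n+1}(X/X^{(n)})$, and $\psi_*\gamma$ has a nontrivial component there, controlled by the mixed Hopf/intersection invariants of $\gamma$. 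Pushing forward along $f_0\vee\xi$, this component contributes $[e_n,\eta_n]$ terms to $(f_1\circ\gamma)-(f_0\circ\gamma)$. This is precisely the third summand of the composition formula $(\beta_1+\beta_2)\circ\alpha=\beta_1\circ\alpha+\beta_2\circ\alpha+[\beta_1,\beta_2]\circ H(\alpha)$ that the paper itself records in the remark following the example on Frank's plumbing invariant; your step 6 keeps only the first two.

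Because of this, step 7 looks for $[e_n,\eta_n]$ in the wrong place. The class $\bar g_j\in\pi_{2n}(S^{n+1})$ lies in the stable range, so it carries no unstable Whitehead invariants of its own: it records only the normal-bundle ($J$-homomorphism) data of the dual $n$-sphere. The Whitehead phenomenon lives in the Hopf invariants $w_{ij}$ of $\gamma$, one cell up, and by Poincar\'e duality those encode the intersection pairing of $M$. A correct argument must determine both $\bar g_j$ (via the $J$-homomorphism) and the coefficients $w_{ij}$ (via the intersection form) and then check that they combine to a multiple of $D_n$. Finally, your outline only bounds the indeterminacy from above; the lemma asserts that $\Phi_g(M)$ \emph{is} a $\langle D_n\rangle$-coset, so one also needs a realization step showing that some change of lift produces $D_n$ itself.
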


\begin{lem}[{\cite[Proposition 2.1 and Theorem 3.2]{frank74wall}}]
  The assignment $M \mapsto \Phi_g (M)$ descends to a homomorphism
  \[\Psi_g : A_{2n+1} ^{\langle n \rangle} \to \pi_{2n} (S^n) / D_n\]
  which factors through an isomorphism
  \[\Psi_g : A_{2n+1} ^{\langle n \rangle} \cong \im(J_n)_{2n} / D_n,\]
  where $J_n : \pi_{*-n+1} \BO(n) \to \pi_{*} (S^n)$ is the unstable $J$-homomorphism.
\end{lem}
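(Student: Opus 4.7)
The plan is to prove the four distinct claims in the statement: cobordism invariance of $\Phi_g$, additivity under connected sum, containment of the image in $\im(J_n)_{2n}/D_n$, and surjectivity together with injectivity onto this image. The preceding lemma has already reduced $\Phi_g(M)$ to a single well-defined element of $\pi_{2n}(S^n)/\langle D_n\rangle$, so the only content of the ``descent'' claim is invariance under the equivalence relation defining $A_{2n+1}^{\langle n\rangle}$.

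First I would verify cobordism invariance and additivity. Given an oriented cobordism $W^{2n+2}$ between almost closed manifolds $M_0$ and $M_1$ which restricts to an $h$-cobordism on the boundary, I would extend the first stable normal bundle obstruction class over $W$ using the long exact sequence of $(W,\partial W)$ and the connectivity of $W$, then factor the extension through $S^n$ by obstruction theory (using that the cohomology of $W$ relevant to the factorization is concentrated in the appropriate degrees). The resulting map $W \to S^n$ is a homotopy, through the $h$-cobordism on the boundary, between the top-cell maps of $M_0$ and $M_1$, yielding equality in $\pi_{2n}(S^n)/D_n$. Additivity under connected sum is then immediate: the obstruction class on $M_1 \# M_2$ pulls back along the collapse to $M_1 \vee M_2$, and the induced map on top cells is the pinch $S^{2n} \to S^{2n} \vee S^{2n}$, which realizes the sum in $\pi_{2n}(S^n)$.

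Next I would show $\im(\Psi_g) \subseteq \im(J_n)_{2n}/D_n$. Restricting $f : M \to S^n$ to $\partial M = \Sigma$ and post-composing with $\iota_n \circ J(g) : S^n \to \BO$ produces a classifying map for the stable normal bundle $\nu_M|_\Sigma$, which is nullhomotopic because homotopy spheres are stably parallelizable. A choice of nullhomotopy is precisely the data of a lift through $\BO(n) \to \BO$ of the composite $\Sigma \to S^n \to \BO$, and unwinding the Hopf/clutching description of $J_n$ identifies the resulting element of $\pi_n \BO(n)$ as a preimage of the class $\Phi_g(M) \in \pi_{2n}(S^n)$ under $J_n$. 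For surjectivity, given $\alpha \in \pi_n\BO(n)$ I would realize $J_n(\alpha)$ by deleting an open disk from the total space of the linear $S^n$-bundle over $S^{n+1}$ classified by $\alpha$ (along a lift to $\BO(n+1)$), producing an $(n-1)$-connected almost closed $(2n+1)$-manifold whose $\Phi_g$-value is $J_n(\alpha)$ by a direct check against the construction.

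The remaining step, injectivity, is where I expect the real work to lie. Given $M$ with $\Psi_g(M)=0$, I would first modify the factorization $f : M \to S^n$ so that $f|_{\partial M}$ is nullhomotopic, which is possible exactly because $\Phi_g(M) \in \langle D_n\rangle$ and the elements of $\langle D_n\rangle$ are precisely those absorbable by changing $f$ within its factorization class. One then carries out surgeries on an appropriate collection of $n$-cycles representing generators of the relevant part of $H_n(M)$ to kill the obstruction $x$, so that the resulting manifold has stably trivial normal bundle and is therefore stably parallelizable. The main obstacle is controlling the effect of these surgeries: one must check that they can be performed without altering the $h$-cobordism class of the boundary and that the resulting almost closed parallelizable manifold can be null-cobordant in $A_{2n+1}^{\langle n\rangle}$. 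This last step appeals to the framed cobordism classification of Kosinski and Wall, which allows one to fill in the resulting manifold by standard handle-trading.
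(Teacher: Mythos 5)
First, a point of reference: the paper does not prove this lemma at all. It is quoted verbatim from Frank (\cite[Proposition 2.1 and Theorem 3.2]{frank74wall}), so there is no in-paper argument to compare yours against; what you have written is a from-scratch reconstruction of Frank's result, and as it stands it contains several genuine gaps rather than a complete alternative proof.

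Concretely: (i) In the descent step, the obstructions to compressing a map $W \to K(\Z,n)$ through $S^n$ live in $H^{k+1}(W;\pi_k F)$ for $k \geq 2n-1$, i.e.\ in degrees around $2n$ through $2n+2$, and an admissible cobordism $W$ in the definition of $A_{2n+1}^{\langle n \rangle}$ carries no connectivity hypotheses, so the claim that the relevant cohomology of $W$ is ``concentrated in the appropriate degrees'' fails in general; one must either improve $W$ by surgery or argue through the explicit handle structure, and one must additionally control how the extension restricts to the chosen factorizations on the two ends (this is where the $D_n$-indeterminacy analysis has to be redone relative to $W$). (ii) In the containment step there is an indexing error --- for $\ast = 2n$ a $J_n$-preimage lives in $\pi_{n+1}\BO(n)$, not $\pi_n\BO(n)$ --- and, more seriously, a nullhomotopy of the stable classifying map restricted to $\partial M$ gives unstable normal data on the boundary sphere but provides no identification of the top-cell composite $S^{2n}\to M \to S^n$ (which is what $\Phi_g(M)$ is) with a value of $J_n$; the actual mechanism is Wall's handlebody/plumbing description of $(n-1)$-connected almost closed $(2n+1)$-manifolds, from which the top-cell map is computed by the unstable $J$-homomorphism (this is exactly Frank's Lemma 1.1, quoted in the paper as the plumbing formula $\Psi_g(M(kg,b)) = [(ke_n)\circ J_n(b)]$). (iii) Your surjectivity model does not work: for the total space of a linear $S^n$-bundle over $S^{n+1}$ minus a disk, the stable normal bundle is pulled back from $S^{n+1}$, so the first obstruction class $x$ vanishes, $0 \in \Phi_g$, and this manifold only realizes the trivial class; the correct representatives are the plumbings $M(g,b)$ with $b \in \pi_{n+1}\BSO(n)$, which give $\Psi_g(M(g,b)) = [J_n(b)]$ and hence hit all of $\im(J_n)_{2n}/D_n$. (iv) Injectivity, which you correctly identify as the heart of the matter, is only a plan: killing $x$ by surgeries while staying within the given class of $A_{2n+1}^{\langle n \rangle}$ (in particular without changing the boundary beyond $h$-cobordism), and then showing that the resulting stably parallelizable almost closed manifold is actually zero in $A_{2n+1}^{\langle n \rangle}$, is precisely where the work lies, and the appeal to Kosinski--Wall is not a substitute for that argument.
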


\begin{exm}[{\cite[Propositions 4.1 and 4.2]{frank74wall}}]
  We have $\im (J_4)_8 / D_4 \cong \Z/2\Z \oplus \Z/2\Z$, $\im (J_{8k})_{16k} / D_{8k} \cong \Z/2\Z \oplus \Z/2\Z$ and $\im (J_{8+4k})_{16+8k} / D_{8+4k} \cong \Z/2\Z \oplus \Z/2\Z$ for $k \geq 1$.
  As a consequence, we recover the computation of $A_{2n+1} ^{\langle n \rangle}$ for $n \equiv 0 \mod 4$.
\end{exm}

We will need to examine the cases $n=4$ and $n=8$ in more detail.

\begin{ntn}
  We use Toda's notation for elements of the unstable homotopy groups of spheres, except that we write $\Sigma$ instead of $E$ for suspension.
  The elements $\eta_n$, $\nu_n$ and $\sigma_n$ are the usual complex, quaternionic, and octonionic Hopf invariant one classes, suspended to lie in the homotopy groups of $S^n$.
  The element $\nu' \in \pi_6 (S^3)$ is defined on \cite[p. 40]{toda} to be an element of the bracket $\langle \eta_3, 2 e_4, \eta_4 \rangle$,
  and the element $\sigma' \in \pi_{14} (S^7)$ is defined in \cite[Lemma 5.14]{toda}.
  The elements $\epsilon_n \in \pi_{n+8} (S^n)$ and $\overline{\nu} \in \pi_{n+8} (S^n)$ are treated in \cite[Section VI(i)]{toda} and \cite[Section VI(ii)]{toda}, respectively.
\end{ntn}

\begin{prop}
  For $n=4$, we have
  \[\im(J_4)_{8} / D_4 \cong \im(J_4)_8 \cong \Z/2\Z \{\nu_4 \circ \eta_7, \Sigma \nu' \circ \eta_7\}.\]
  For $n=8$, we have
  \[\im(J_8)_{16} / D_8 \cong \Z/2\Z \{\sigma_8 \circ \eta_{15}, \Sigma \sigma' \circ \eta_{15}\}.\]
\end{prop}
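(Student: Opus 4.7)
The proof is a classical unstable homotopy computation, essentially carried out by Frank in \cite[Propositions 4.1, 4.2]{frank74wall}; here is the strategy.

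The approach has three ingredients. First, I would identify $\pi_{n+1}\mathrm{BO}(n) \cong \pi_n\mathrm{SO}(n)$ for $n=4,8$. For $n=4$, the double cover $\mathrm{Spin}(4) \simeq S^3 \times S^3$ yields $\pi_4\mathrm{SO}(4) \cong \Z/2 \oplus \Z/2$, with the two summands detected by the two projections to $S^3$. For $n=8$, the principal fibration $\mathrm{SO}(7)\to\mathrm{SO}(8)\to S^7$ together with classical values of $\pi_*\mathrm{SO}(7)$ gives the analogous structure: one $\Z/2$ summand arises from the octonionic line bundle $L_{\mathbb{O}}$, and a second from the unstable boundary $\pi_8(S^7) \cong \Z/2$.

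Second, I would compute the unstable $J$-homomorphism on these generators. The standard identifications $J(L_{\mathbb{H}}) = \nu_4 \in \pi_7(S^4)$ and $J(L_{\mathbb{O}}) = \sigma_8 \in \pi_{15}(S^8)$ (coming from the Hopf fibrations), together with the $\eta$-multiplication arising from the second generator of $\pi_n\mathrm{SO}(n)$, produce the classes $\nu_4 \circ \eta_7$ and $\Sigma\nu' \circ \eta_7$ in $\pi_8(S^4)$ (respectively $\sigma_8 \circ \eta_{15}$ and $\Sigma\sigma' \circ \eta_{15}$ in $\pi_{16}(S^8)$). By Toda's calculations \cite[Ch.\ V and XII]{toda}, the group $\pi_{2n}(S^n) \cong \Z/2 \oplus \Z/2$ is generated by exactly these two classes, so $\im(J_n)_{2n}$ is the entire group $\pi_{2n}(S^n)$.

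Third, I would verify that $D_n = J(g) \circ \eta_{2n-1} + [\iota_n, \eta_n]$ vanishes in $\pi_{2n}(S^n)$. Using Toda's identity $[\iota_n, \iota_n] = 2J(g) - \Sigma J(g')$ for $n=4,8$ combined with a formula expressing $[\iota_n, \eta_n]$ in terms of $[\iota_n, \iota_n]$ via the EHP sequence (since $\eta_n$ is a suspension), the computation yields $[\iota_n, \eta_n] = J(g) \circ \eta_{2n-1}$ modulo $2$-divisible elements. Hence $D_n = 2 J(g)\circ\eta_{2n-1} = 0$ in the $2$-torsion group $\pi_{2n}(S^n)$, giving $\im(J_n)_{2n}/\langle D_n\rangle \cong \im(J_n)_{2n} \cong \Z/2 \oplus \Z/2$ as claimed.

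The main obstacle is the third step: the Whitehead product identity $[\iota_n, \eta_n] \equiv J(g)\circ\eta_{2n-1}$ modulo $2$ demands care with signs and with the precise form of Toda's relations. These identities are essentially tabular for $n=4$ (directly from Toda's Chapter V), and $n=8$ follows from analogous reasoning adapted to the octonionic case; both are carried out in Frank's original paper.
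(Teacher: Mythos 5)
Your $n=4$ argument is fine and matches the paper's: $\pi_8(S^4)\cong(\Z/2\Z)^2$, $D_4=0$, and both summands are in the image of $J_4$, so $\im(J_4)_8/D_4\cong(\Z/2\Z)^2$. The $n=8$ case, however, contains two real errors that happen to cancel to the right final answer. First, you claim that $\pi_{16}(S^8)\cong\Z/2\oplus\Z/2$ is generated by $\sigma_8\circ\eta_{15}$ and $\Sigma\sigma'\circ\eta_{15}$ and that $\im(J_8)_{16}$ is therefore the whole group. This is false: by Toda's Theorem 7.1, $\pi_{16}(S^8)\cong(\Z/2\Z)^4$ on $\sigma_8\circ\eta_{15},\,\Sigma\sigma'\circ\eta_{15},\,\overline{\nu}_8,\,\varepsilon_8$, and by Frank's Proposition 4.1, $\im(J_8)_{16}\cong(\Z/2\Z)^3$ (a proper subgroup). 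Relatedly, $\pi_8\,\mathrm{SO}(8)\cong(\Z/2\Z)^3$, not $(\Z/2\Z)^2$, so your enumeration of generators via the fibration $\mathrm{SO}(7)\to\mathrm{SO}(8)\to S^7$ is missing one.

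Second, and more seriously, you assert $D_8=0$, but Frank's Proposition 4.2 explicitly states $D_8\neq 0$, and the paper's proof identifies $D_8=\overline{\nu}_8+\varepsilon_8$ using Toda's relation $\eta_8\circ\sigma_9=\Sigma\sigma'\circ\eta_{15}+\overline{\nu}_8+\varepsilon_8$ together with the identification $[e_8,\eta_8]=\Sigma\sigma'\circ\eta_{15}$ (the Whitehead product generates the kernel of the suspension and is nonzero by Hilton). Your suggested route via $[\iota_n,\iota_n]=2J(g)-\Sigma J(g')$ and an EHP argument for $[\iota_n,\eta_n]$ does not produce $D_8=0$ here; the actual bookkeeping, carried out in the paper, shows that $D_8$ equals the third generator $\overline{\nu}_8+\varepsilon_8$ of $\im(J_8)_{16}$, and quotienting by it is what drops the rank from $3$ to $2$. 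Your argument reaches the right cardinality only because two independent errors — undercounting $\im(J_8)_{16}$ by one $\Z/2$ and setting $D_8$ to zero — happen to offset each other. You should replace the $n=8$ computation with Frank's actual values and the explicit Toda/Hilton identification of $D_8$, as the paper does.
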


\begin{proof}
  It follows from \cite[Proposition 4.1]{frank74wall} that $\im(J_4)_8 \cong \Z/2\Z \oplus \Z/2\Z$ and $\im(J_8)_{16} \cong \Z/2\Z \oplus \Z/2\Z \oplus \Z/2\Z$.
  Furthermore, \cite[Proposition 4.2]{frank74wall} states that $D_4 = 0$ and $D_8 \neq 0$.

  For $n=4$, the claim follows directly from Toda's calculation (\cite[Proposition 5.8]{toda})
  \[\pi_8 (S^4) \cong \Z/2\Z \{\Sigma \nu' \circ \eta_7, \nu_4 \circ \eta_7\}.\]

  For $n=8$, Toda has computed in \cite[Theorem 7.1]{toda} that
  \[\pi_{16} (S^8) \cong \Z/2\Z \{\sigma_8 \circ \eta_{15}, \Sigma \sigma' \circ \eta_{15}, \overline{\nu}_8, \epsilon_8\}.\]
  Using the fact that the image of the stable $J$-homomorphism is generated by $\eta \sigma$ in this degree, we conclude that
  \[\im(J_8)_{16} \cong \Z/2\Z \{\sigma_8 \circ \eta_{15}, \Sigma \sigma' \circ \eta_{15}, \overline{\nu}_8 + \epsilon_8\}.\] 
  Finally, since $[e_8, \eta_8]$ is nonzero by \cite{Hilton}, it must generate the kernel of the suspension map $\Sigma : \pi_{16} (S^8) \to \pi_{17} (S^9)$ by \cite[Theorem 7.1]{toda}.
  It therefore follows from \cite[Proof of Theorem 7.1]{toda} that $[e_8, \eta_8] = \Sigma \sigma' \circ \eta_{15}$.

  On the other hand, it follows from \cite[(7.3)]{toda} that $\eta_8 \circ \sigma_9 = \Sigma \sigma' \circ \eta_{15} + \overline{\nu}_8 + \varepsilon_8$.
  We therefore have $D_8 = \overline{\nu}_8 + \varepsilon_8$, so that $\im(J_8)_{16}/D_8$ is as described.
\end{proof}

Finally, we review the computation of $\Psi_g$ for certain plumbings.

\begin{exm}[{\cite[Lemma 1.1]{frank74wall}}] \label{exm:frank-plumb}
  Let $b \in \pi_{n+1} \BSO(n)$ and $k \in \Z$, and let $M(kg,b)$ denote the plumbing of the disk bundles associated to $kg$ and $b$.
  Then
  \[\Psi_g (M(kg,b)) = [(k e_n) \circ J_n (b)],\]
  where $e_n \in \pi_n (S^n)$ is the identity map.
\end{exm}

\begin{rmk}
  Note that $(k e_n) \circ J_n (b) \neq k J_n (b)$ in general.
  Indeed, recall that there is a formula 
  \[(\beta_1 + \beta_2) \circ \alpha = \beta_1 \circ \alpha + \beta_2 \circ \alpha + [\beta_1, \beta_2] \circ H(\alpha),\]
  where $\beta_i \in \pi_k (S^n)$ and $\alpha \in \pi_p (S^k)$.
\end{rmk}

\subsection{The case $n=4$}\label{sec:n=4}

In this section, we prove \Cref{thm:kernel-main}(i) by filling in the gaps in \cite{Frank}.
We first prove the general \Cref{thm:Frank}, then apply it to our situation.

\begin{cnv}
  Given a space $X$ and a virtual vector bundle $V$ over $X$, we will denote by $\Th(X;V)$ the Thom spectrum of $V$ over $X$.
  We will implicitly consider all virtual vector bundles to be of dimension $0$ when taking the Thom spectrum.
\end{cnv}

\begin{ntn}
  Given a smooth manifold $M$, we let $N_M$ denote its stable normal bundle.
\end{ntn}

We begin with a simple lemma.

\begin{lem} \label{lem:bdy-PT-null}
  Let $(M,\partial M)$ denote an $n$-manifold with boundary.
  Then the composite $\Ss^{n-1} \to \Th(\partial M; N_{\partial M}) \to \Th(M; N_{M})$, where the first map is the Pontryagin-Thom collapse map, is nullhomotopic.
\end{lem}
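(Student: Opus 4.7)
The plan is to exhibit a null-homotopy of the composite using the bounding manifold $M$ itself, via a Pontryagin--Thom construction adapted to manifolds with boundary. Intuitively, the composite represents the framed bordism class of $\partial M$ viewed as a submanifold of $M$, and this vanishes because $\partial M$ bounds in $M$ by definition.

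Concretely, I would embed $M$ into the half-space $\RR^{n-1+k} \times [0,\infty)$ for some large $k$, so that $\partial M = M \cap (\RR^{n-1+k} \times \{0\})$ and $M$ meets the boundary transversely there. Using a collar of $\partial M$ in $M$, one can choose a tubular neighborhood $U$ of $M$ in the half-space whose intersection with $\RR^{n-1+k} \times \{0\}$ is a tubular neighborhood of $\partial M$ in $\RR^{n-1+k}$. Collapsing the complement of $U$ to the basepoint then produces a based map
\[ c \colon (\RR^{n-1+k} \times [0,\infty))^+ \to \Th(M; N_M). \]
By compatibility of the chosen tubular neighborhoods, the restriction of $c$ along the inclusion $(\RR^{n-1+k})^+ = \Ss^{n-1+k} \hookrightarrow (\RR^{n-1+k} \times [0,\infty))^+$ (arising from the proper map $\RR^{n-1+k} \times \{0\} \hookrightarrow \RR^{n-1+k} \times [0,\infty)$) agrees precisely with the composite of the Pontryagin--Thom collapse $\Ss^{n-1+k} \to \Th(\partial M; N_{\partial M})$ followed by the Thomified inclusion $\Th(\partial M; N_{\partial M}) \to \Th(M; N_M)$.

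The key observation is that the source $(\RR^{n-1+k} \times [0,\infty))^+ \cong \Ss^{n-1+k} \wedge [0,\infty)^+$ is contractible as a pointed space, since $[0,\infty)^+$ is homeomorphic to $([0,1],\{1\})$, which retracts onto its basepoint. Consequently $c$ is null-homotopic as a based map, and restricting a null-homotopy along the inclusion above yields an unstable null-homotopy of the desired composite; stabilizing in $k$ then produces the statement in the stable category. The only genuine care required is in ensuring the compatibility between the tubular neighborhoods of $M$ and of $\partial M$, which is a standard ingredient in the relative Pontryagin--Thom construction; I do not anticipate any serious obstacle.
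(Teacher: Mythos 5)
Your proof is correct, but it takes a genuinely different route from the paper's. The paper works entirely on the dual side: by Atiyah duality for manifolds with boundary (via \cite[Corollary 18.7.6]{MaySigurdsson}), the map $\Th(\partial M; N_{\partial M}) \to \Th(M; N_M)$ is Spanier--Whitehead dual to the connecting map $\Sigma^\infty M/\partial M \to \Sigma^\infty_+ \Sigma \partial M$, and the collapse map $\Ss^{n-1} \to \Th(\partial M; N_{\partial M})$ is dual to the projection $\Sigma^\infty_+ \Sigma \partial M \to \Ss^0$; since that projection factors through $\Sigma^\infty_+ \Sigma \partial M \to \Sigma^\infty_+ \Sigma M$, the composite is null as two consecutive maps in a cofiber sequence. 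You instead run a relative Pontryagin--Thom construction directly: a neat embedding of $M$ in a half-space, together with a compatible (neat) tubular neighborhood, gives a collapse map out of $(\RR^{n-1+k}\times[0,\infty))^+ \cong \Ss^{n-1+k}\wedge[0,\infty)^+$, which is pointedly contractible because $[0,\infty)^+$ retracts to its basepoint at $\infty$; restricting along the boundary $(\RR^{n-1+k})^+$ recovers the composite in the statement at prespectrum level $k$. Both arguments are standard and valid. Your version is more elementary and geometric (it needs only the existence of neat tubular neighborhoods, not parametrized duality), at the cost of a bit more point-set care at the boundary and of keeping track of suspension levels; the paper's version is slicker in the stable category and dovetails with the duality machinery it already invokes. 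One small notational caveat in your write-up: the target of $c$ should be the Thom \emph{space} of the $k$-dimensional normal bundle of $M$ in the half-space (i.e., the $k$th level of the Thom prespectrum of $N_M$), not the Thom spectrum itself; you acknowledge this implicitly by stabilizing in $k$ at the end, but it is worth stating explicitly.
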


\begin{proof}
  By \cite[Corollary 18.7.6]{MaySigurdsson}, the map $\Th (\partial M; N_{\partial M}) \to \Th(M; N_M)$ is Spanier-Whitehead dual to the map $\Sigma^\infty M / \partial M \to \Sigma^\infty _+ \Sigma \partial M$ coming from the cofiber sequence
  \[\Sigma^\infty_+ \partial M \to \Sigma^\infty_+ M \to \Sigma^\infty M / \partial M \to \Sigma^\infty _+ \Sigma \partial M\]
  Since the Pontryagin-Thom collapse map $\Ss^{n-1} \to \Th(\partial M; N_{\partial M})$ is Spanier-Whitehead dual to the projection $\Sigma^\infty_+ \Sigma \partial M \to \Ss^0$, it suffices to note that this projection factors through the map $\Sigma^\infty_+ \Sigma \partial M \to \Sigma^\infty_+ \Sigma M$.
\end{proof}

\begin{cnstr}
Given a sequence
\[S^p \xrightarrow{x} S^i \xrightarrow{\beta} \BSO,\]
we obtain a diagram
\begin{center}
  \begin{tikzcd}
    \Ss^0 \ar[r,"\mathrm{id}"] \ar[d] & \Ss^0 \ar[d] \\
    \Th(S^p; x^* \beta) \ar[r,"\Th(x)"] & \Th(S^i; \beta)
  \end{tikzcd}
\end{center}
by taking Thom spectra.
Taking cofibers vertically, we obtain a map
\[\Sigma^\infty_\beta (x) : \Ss^p \to \Ss^i.\]
\end{cnstr}

\begin{rmk}
  Given a map $S^i \xrightarrow{\beta} \BSO$, there is a natural identification of $\Th(S^i; \beta)$ with $C(J(\beta))$.
\end{rmk}

\begin{rec}
  Suppose that we are given a homotopy $(n-1)$-sphere $\Sigma$.
  Then $\Sigma$ is stably framed, and the choice of a stable framing determines a map $\Th(\Sigma; N_{\Sigma}) \to \Ss^0$, where the source is the Thom spectrum of the normal bundle of $\Sigma$.
  Let $\Ss^{n-1} \to \Th(\Sigma; N_\Sigma)$ denote the Pontryagin--Thom collapse map.
  Then the composition
  \[\Ss^{n-1} \to \Th(\Sigma; N_{\Sigma}) \to \Ss^0\]
  is a representative of $[\Sigma] \in \coker(J)_{n-1}$.
\end{rec}

\begin{thm} [{\cite[Theorem 2]{Frank}}] \label{thm:Frank}
  Let $M$ denote an oriented, almost closed $n$-manifold whose stable normal bundle $M \to \BSO$ fits into a diagram
  \begin{center}
    \begin{tikzcd}
      & S^p \ar[dr,"x"] & & \\
      S^{n-1} \ar[ur,"y"] \ar[r] & M \ar[r] & S^i \ar[r,"\beta"] & \BSO
    \end{tikzcd}
  \end{center}
  such that $\beta \circ x \simeq 0$. Here, $S^{n-1} \to M$ is the inclusion of the boundary.
  Then:
  \begin{enumerate}
    \item The Toda bracket $\langle J(\beta), \Sigma^\infty _\beta (x), \Sigma^\infty (y) \rangle$ is defined.
    \item $[\partial M] \in \pm \langle J(\beta), \Sigma^\infty _\beta (x), \Sigma^\infty (y) \rangle$ in $\coker (J)_{n-1}.$
  \end{enumerate}
\end{thm}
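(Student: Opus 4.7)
\textit{Proof proposal.} The plan is to realize the Toda bracket geometrically via the Pontryagin–Thom construction, with \Cref{lem:bdy-PT-null} supplying the key nullhomotopy.

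For part (1), the vanishing $J(\beta) \circ \Sigma^\infty_\beta(x) = 0$ is formal: $\Sigma^\infty_\beta(x)$ is defined as the map on vertical cofibers in the defining square, while $J(\beta)$ is the connecting map of the cofiber sequence $\Ss^0 \to \Th(S^i;\beta) \to \Ss^i \xrightarrow{J(\beta)} \Ss^1$. Thomification of $x$ produces a map of cofiber sequences whose effect on the right-hand boundary is $J(x^*\beta) = J(\beta \circ x) = 0$, so the desired composite vanishes. The vanishing $\Sigma^\infty_\beta(x) \circ \Sigma^\infty(y) = 0$ is not formal and will come with a preferred nullhomotopy constructed in part (2).

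For part (2), consider the Thomification of the commutative square
\begin{center}
\begin{tikzcd}
\partial M \ar[r] \ar[d,"y"'] & M \ar[d] \\
S^p \ar[r,"x"] & S^i,
\end{tikzcd}
\end{center}
where $\partial M$ is identified with $S^{n-1}$ as a homotopy sphere. The Pontryagin–Thom collapse $\Ss^{n-1} \to \Th(\partial M; N_{\partial M}) \to \Th(M; N_M)$ is null by \Cref{lem:bdy-PT-null}; any nullhomotopy $\mathcal{H}$ transports along the Thomified square to a nullhomotopy of the composite $\Ss^{n-1} \to \Th(S^p; x^*\beta) \to \Th(S^i;\beta)$. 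Since $\partial M$ is stably framed and $\beta \circ x \simeq 0$, the Thom spectra of $N_{\partial M}$ and $x^*\beta$ split as $\Ss^0 \vee \Ss^{n-1}$ and $\Ss^0 \vee \Ss^p$, respectively; quotienting by $\Ss^0$ throughout, the nullhomotopy descends to one of $\Sigma^\infty_\beta(x) \circ \Sigma^\infty(y)$, completing part (1).

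Gluing $\mathcal{H}$ with the canonical nullhomotopy of $J(\beta) \circ \Sigma^\infty_\beta(x)$ coming from the cofiber sequence that defines $J(\beta)$ yields a map $\Ss^n \to \Ss^1$ which by construction is a representative of $\langle J(\beta), \Sigma^\infty_\beta(x), \Sigma^\infty(y)\rangle$. On the other hand, using the stable framing of $\partial M$, the composite $\Ss^{n-1} \to \Th(\partial M; N_{\partial M}) \to \Ss^0$ represents $[\partial M] \in \pi_{n-1}\Ss \to \coker(J)_{n-1}$ by the construction recalled just before the theorem statement. Chasing this representative around the diagram identifies $[\partial M]$ with the bracket representative up to a universal sign. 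The main obstacle will be the careful bookkeeping of nullhomotopies and signs; the $\pm$-ambiguity in the statement reflects the freedom in the stable framing of $\partial M$ and in the conventions for assembling the two nullhomotopies into a bracket representative.
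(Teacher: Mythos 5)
Your proposal follows the same line of argument as the paper's proof: Thomify the given diagram, invoke \Cref{lem:bdy-PT-null} for the nullhomotopy, and use the stable framing of $\partial M$ to split $\Th(S^{n-1};N_{S^{n-1}})$ and identify the $\Ss^0$-component of the Pontryagin--Thom collapse with $[\partial M]$. Your observation that the vanishing $J(\beta)\circ\Sigma^\infty_\beta(x)=0$ is formal (via the map of cofiber sequences induced by $\Th(x)$) is correct and implicit in the paper.

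The one place where your outline is materially weaker is the final step, which you defer as ``careful bookkeeping.'' The paper avoids this bookkeeping by organizing the argument differently: rather than gluing the two nullhomotopies to produce a map $\Ss^n\to\Ss^1$ and then separately chasing $[\partial M]$ into it, the paper uses the ``lift to the fiber'' description of the Toda bracket. Concretely, $\Th(S^i;\beta)\simeq C(J(\beta))$, and $\Th(x)$ restricted to the $\Ss^p$-summand of $\Th(S^p;x^*\beta)\simeq\Ss^0\vee\Ss^p$ \emph{is} a lift of $\Sigma^\infty_\beta(x)$ along $C(J(\beta))\to\Ss^i$. Since $\Th(S^{n-1};N_{S^{n-1}})\simeq\Ss^0\vee\Ss^{n-1}$ maps to $\Th(S^p;x^*\beta)$ by $\mathrm{id}\oplus\Sigma^\infty y$ and the collapse map is $\pm\mathrm{id}\oplus[\partial M]$, the nullhomotopy from \Cref{lem:bdy-PT-null} directly produces a factorization of $\Ss^{n-1}\to\Ss^p\to C(J(\beta))$ through the bottom cell $\Ss^0\to C(J(\beta))$, and the factoring map is $\pm[\partial M]$. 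This exhibits $[\partial M]$ as a member of the bracket with no further chase. If you want to complete your version, you would need to relate the ``glued cone'' representative on $\Ss^n\to\Ss^1$ to the ``lift-to-fiber'' representative on $\Ss^{n-1}\to\Ss^0$ — a standard but genuinely nontrivial comparison between the two Toda bracket conventions that the paper sidesteps entirely.
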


\begin{proof}
  Taking Thom spectra, we obtain the following diagram of spectra under $\Ss^0$:
  \begin{center}
    \begin{tikzcd}
      & \Th(S^p; x^* \beta) \ar[dr] & \\
      \Th(S^{n-1}; N_{S^{n-1}}) \ar[ur] \ar[r] & \Th(M; N_M) \ar[r] & \Th(S^i; \beta).
    \end{tikzcd}
  \end{center}
  Choosing a nullhomotopy $\beta \circ x \simeq 0$ and recalling the definition of $\Sigma^\infty _\beta$, we obtain the diagram
  \begin{center}
    \begin{tikzcd}
      & \Ss^p \oplus \Ss^0 \ar[dr] \ar[r, "\mathrm{pr}_0"] & \Ss^p \ar[dr,"\Sigma^\infty _\beta x"] & \\
      \Ss^{n-1} \oplus \Ss^0 \ar[ur,"\Sigma^\infty y \oplus id"] \ar[r] & \Th(M; N_M) \ar[r] & C(J(\beta)) \ar[r] & \Ss^i
    \end{tikzcd}.
  \end{center}

  Now, the Pontryagin--Thom collapse map $\Ss^{n-1} \to \Th(S^{n-1}; N_{S^{n-1}}) \simeq \Ss^{n-1} \oplus \Ss^0$ is of the form $\pm id \oplus [\partial M]$ by definition of $[\partial M]$. Here, we abusively use $[\partial M]$ to denote a lift of $[\partial M] \in \coker(J)_{n-1}$ to $\pi_{n-1} (\Ss^0)$. By \Cref{lem:bdy-PT-null}, it follows that we have a diagram:
  \begin{center}
    \begin{tikzcd}[column sep=large]
       & & \Ss^0 \ar[d] \\
      \Ss^{n-1} \ar[r, "\Sigma^\infty y"] \ar[urr, bend left = 30, "{\pm [\partial M]}"] & \Ss^p \ar[r] \ar[dr, bend right = 20, "\Sigma^\infty _\beta x"] & C(J(\beta)) \ar[d] \\
       & & \Ss^i.
    \end{tikzcd}
  \end{center}
  This diagram expresses the inclusion $[\partial M] \in \pm \langle J(\beta), \Sigma^\infty _\beta (x), \Sigma^\infty (y) \rangle$, and in particular implies that the Toda bracket is defined.
\end{proof}

We examine plumbings as examples of almost-closed manifolds that we might apply \Cref{thm:Frank} to.

\begin{exm}[{\cite[Lemma 1]{Frank}}]
  Suppose that $g_1 \in \pi_i (\BSO(n-i))$ and $g_2 \in \pi_{n-i} (\BSO(i))$, and let $M(g_1, g_2)$ denote the plumbing of the corresponding linear disk bundles over spheres.
  Then the composition
  \[S^i \vee S^{n-i} \hookrightarrow M(g_1, g_2) \to \BSO,\]
  where the second arrow classifies the stable normal bundle of $M(g_1, g_2)$,
  is given by the stabilization of $-g_1$ on $S^i$ and the stabilization of $-g_2$ on $S^{n-i}$.
  
  If we assume further that $g_2$ is stably trivial, then this map factors up to homotopy as
  \[M(g_1,g_2) \simeq S^i \vee S^{n-i} \to S^i \xrightarrow{-1} S^i \xrightarrow{g_1} \BSO,\]
  where the map $S^i \vee S^{n-i} \to S^i$ crushes $S^{n-i}$ to a point.

  If the induced composite $S^{n-1} \simeq \partial M(g_1, g_2) \to S^i \vee S^{n-i} \to S^i \xrightarrow{-1} S^i$ factors as in \Cref{thm:Frank}, then we are in a situation where we can apply the theorem.
  Moreover, this composite is a representative for $\Psi_{g_1} (M)$.

\end{exm}

\begin{exm}
  We will be particularly interested in the case where $g_1 \in \pi_4 \BSO(5) \cong \pi_4 \BSO$ is $-L_{\mathbb{H}} \oplus \RR$ and $g_2 \in \pi_5 \BSO (4)$ is $L_{\mathbb{H}} \circ \eta_4$.
  The bundle $g_2$ is stably trivial since $\pi_5 \BSO \cong 0$,
  so the classifying map of the stable normal bundle of $M(-L_{\mathbb{H}} \oplus \RR, L_{\mathbb{H}} \circ \eta_4)$ factors as
  \[M(-L_{\mathbb{H}} \oplus \RR, L_{\mathbb{H}} \circ \eta_4) \to S^4 \xrightarrow{-L_{\mathbb{H}}} \BSO.\]

  Now, the composite
  \[S^8 \cong \partial M(-L_{\mathbb{H}} \oplus \RR, L_{\mathbb{H}} \circ \eta_4) \hookrightarrow M(-L_{\mathbb{H}} \oplus \RR, L_{\mathbb{H}} \circ \eta_4) \to S^4\]
  must lie in $\Phi_{-L_{\mathbb{H}}} (M(-L_{\mathbb{H}} \oplus \RR, L_{\mathbb{H}} \circ \eta_4))$.
  It therefore follows from \Cref{exm:frank-plumb} and the fact that $D_4 = 0$ \cite[Proposition 4.2]{frank74wall} that this composite is homotopic to $J_4 (L_{\mathbb{H}} \circ \eta_4) = \nu_4 \circ \eta_7 \in \pi_8 (S^4)$.
\end{exm}

Applying \Cref{thm:Frank}, we obtain the following result.
\begin{prop} \label{prop:plumbing-toda}
  There is an inclusion
  \[[\partial M(-L_{\mathbb{H}} \oplus \RR, L_{\mathbb{H}} \circ \eta_4)] \in \pm \langle \nu, \Sigma^\infty_{-L_{\mathbb{H}}} (\nu_4), \eta \rangle. \]
\end{prop}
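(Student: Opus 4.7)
The plan is to apply Theorem \ref{thm:Frank} directly to $M = M(-L_{\mathbb{H}} \oplus \RR, L_{\mathbb{H}} \circ \eta_4)$ with the choice of data $n = 9$, $i = 4$, $p = 7$, $\beta = -L_{\mathbb{H}} : S^4 \to \BSO$, $x = \nu_4 : S^7 \to S^4$, and $y = \eta_7 : S^8 \to S^7$.

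The first task is to verify the three hypotheses of the theorem. The preceding example establishes that the stable normal bundle of $M$ factors as $M \to S^4 \xrightarrow{-L_{\mathbb{H}}} \BSO$, and that the composite $S^8 \cong \partial M \hookrightarrow M \to S^4$ is homotopic to $\nu_4 \circ \eta_7 = x \circ y$. The remaining hypothesis $\beta \circ x \simeq 0$ holds automatically because $-L_{\mathbb{H}} \circ \nu_4$ represents an element of $\pi_7 \BSO = 0$.

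Theorem \ref{thm:Frank} then yields the inclusion
\[[\partial M] \in \pm \langle J(-L_{\mathbb{H}}), \Sigma^\infty_{-L_{\mathbb{H}}}(\nu_4), \Sigma^\infty \eta_7 \rangle\]
in $\coker(J)_8$. To match the statement of the proposition, I would identify $J(-L_{\mathbb{H}}) \in \pi_3(\Ss)$: since $L_{\mathbb{H}}$ stabilizes to a generator of $\pi_4 \BSO \cong \Z$, its image under the $J$-homomorphism is a generator of the image of $J$ in $\pi_3(\Ss) \cong \Z/24$, hence equals $\pm \nu$. The sign can be absorbed into the $\pm$ already present in front of the Toda bracket.

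The only point requiring more than bookkeeping is confirming that the map $J(\beta)$ appearing in the Thom-spectrum cofiber sequence $\Ss^{i-1} \to \Ss^0 \to \Th(S^i;\beta)$ used in the proof of Theorem \ref{thm:Frank} coincides with the classical $J$-homomorphism evaluated on $\beta$; this is a standard compatibility for Thom spectra of bundles over spheres, and is the main (mild) obstacle in writing out the argument cleanly. Once this is in hand, the rest of the proof is a direct substitution.
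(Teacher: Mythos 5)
Your proposal is correct and takes essentially the same approach as the paper: the paper states that \Cref{prop:plumbing-toda} follows by "applying \Cref{thm:Frank}" to the preceding example, and your argument spells out precisely that application with the same choice of data $(n,i,p,\beta,x,y) = (9,4,7,-L_{\mathbb{H}},\nu_4,\eta_7)$, the verification $\beta\circ x \in \pi_7 \BSO = 0$, and the identification $J(-L_{\mathbb{H}}) = \pm\nu$.
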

%
%
%

What remains is to compute $\Sigma^\infty_{-L_{\mathbb{H}}} (\nu_4)$.

\begin{lem} \label{lem:betaHopf}
  We have $\Sigma^\infty _{-L_{\mathbb{H}}} (\nu_4)=0$.
\end{lem}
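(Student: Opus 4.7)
My plan is to compute $\Sigma^\infty_{-L_{\mathbb{H}}}(\nu_4)\in\pi_3(\Ss)=\Z/24$ directly from its definition. Since $\nu_4\colon S^7\to S^4$ is the unit sphere bundle of $L_{\mathbb{H}}$, the pullback $\nu_4^*L_{\mathbb{H}}$ admits the tautological nonvanishing section $p\mapsto (p,p)$, canonically trivializing it as a quaternionic line bundle. The induced stable trivialization of $\nu_4^*(-L_{\mathbb{H}})$ identifies $\Th(S^7;\nu_4^*(-L_{\mathbb{H}}))$ with $\Sigma^\infty_+ S^7=\Ss^0\vee \Ss^7$, so that $\Sigma^\infty_{-L_{\mathbb{H}}}(\nu_4)$ is the composite
\[
\Ss^7\hookrightarrow \Sigma^\infty_+ S^7 \xrightarrow{\Th(\nu_4)} \Th(S^4;-L_{\mathbb{H}}) \twoheadrightarrow \Ss^4,
\]
where the last arrow is the cofiber of the unit.

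As a first constraint, the long exact sequence of $\Ss^0\to \Th(S^4;-L_{\mathbb{H}})\to \Ss^4$ has boundary multiplication by $J(-L_{\mathbb{H}})=-\nu$. Since $\nu^2\ne 0\in \pi_6(\Ss)=\Z/2$, the element $\Sigma^\infty_{-L_{\mathbb{H}}}(\nu_4)$ must land in $\ker(\,\cdot\,\nu\colon \pi_3(\Ss)\to\pi_6(\Ss))$, which is the index-two subgroup $\Z/12\cdot 2\nu\subset\Z/24$.

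The main obstacle is pinning the element further down to $0$. My approach is to exhibit an explicit factorization of $\Th(\nu_4)$ through the unit $\Ss^0\to\Th(S^4;-L_{\mathbb{H}})$. The tautological trivialization of $\nu_4^*L_{\mathbb{H}}$ reflects a canonical lift of $-L_{\mathbb{H}}\circ\nu_4\colon S^7\to\BSO$ through the basepoint of $\BSO$, coming from the $S^3$-bundle structure; applying $\Th$ to this lift should realize $\Th(\nu_4)$ as the composite $\Sigma^\infty_+ S^7\to\Ss^0\xrightarrow{\mathrm{unit}}\Th(S^4;-L_{\mathbb{H}})$, so that on the top cell $\Ss^7$ it factors through $\Ss^0$ and therefore vanishes under the further projection to $\Ss^4$. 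The delicate point, which I expect to be the hardest step, is verifying rigorously that the tautological trivialization induces this factorization at the level of stable spectra; I would handle this by tracking how $\Th$ interacts with the canonical identification $\nu_4^*L_{\mathbb{H}}\cong\underline{\mathbb{H}}$ afforded by the sphere-bundle structure.
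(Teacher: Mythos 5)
The setup in your first paragraph is correct: identifying $\Sigma^\infty_{-L_{\mathbb{H}}}(\nu_4)$ with the composite $\Ss^7 \hookrightarrow \Sigma^\infty_+ S^7 \xrightarrow{\Th(\nu_4)} \Th(S^4;-L_{\mathbb{H}}) \twoheadrightarrow \Ss^4$ via the tautological trivialization is fine, and the constraint $\Sigma^\infty_{-L_{\mathbb{H}}}(\nu_4) \in \ker(\,\cdot\,\nu)$ is a valid (if weak) observation. However, the crucial step you flag yourself as ``hardest'' --- that $\Th(\nu_4)$ factors through the unit $\Ss^0 \to \Th(S^4;-L_{\mathbb{H}})$ --- is where the argument breaks, and I do not believe it can be repaired along the lines you indicate.

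The nullhomotopy of $(-L_{\mathbb{H}})\circ\nu_4\colon S^7 \to \BSO$ is exactly the data of the trivialization of $\nu_4^*(-L_{\mathbb{H}})$; its content is already used in the identification $\Th(S^7;\nu_4^*(-L_{\mathbb{H}}))\simeq\Sigma^\infty_+ S^7$. It does \emph{not} furnish a factorization of the morphism $(S^7,\nu_4^*(-L_{\mathbb{H}}))\to(S^4,-L_{\mathbb{H}})$ over $\BSO$ through $(\ast,0)$, since for that the map $\nu_4$ of base spaces would itself have to be nullhomotopic, which it is not. To see that ``composite to $\BSO$ is null'' cannot in general imply ``the Thomified map factors through the unit,'' replace $-L_{\mathbb{H}}$ by the zero bundle: then $\Th(\nu_4)=\Sigma^\infty_+\nu_4$, whose restriction to the summand $\Ss^7$ is $\Sigma^4\nu\neq 0$, so it manifestly does not factor through $\Ss^0$. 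What the $S^3$-bundle structure genuinely provides is a homotopy pullback square with $S^7\to \mathrm{E}S^3\simeq\ast$ on top and $S^4\to\HP^\infty\simeq\mathrm{BSp}(1)$ on the bottom; Thomifying this gives a commuting square showing $\Th(\nu_4)$ becomes $\mathrm{unit}\circ\mathrm{proj}$ only after composing further with $\Th(S^4;-L_{\mathbb{H}})\to\Th(\HP^\infty;-L_{\mathbb{H}})$, which is strictly weaker than what you need. Moreover, the factorization you are aiming for is strictly stronger than the lemma: it would force $\Th(\nu_4)|_{\Ss^7}=0$ in all of $\pi_7(\Th(S^4;-L_{\mathbb{H}}))$, not just in the quotient $\ker(\,\cdot\,\nu)\subset\pi_3(\Ss)$; the kernel of that quotient map is a $\Z/240$ coming from $\pi_7(\Ss)$, and nothing in your argument controls it.

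The paper avoids this problem entirely: it extends $\nu_4$ to the cofiber sequence $S^7\to S^4\to C(\nu_4)\simeq\HP^2$, observes that the stable normal bundle of $\HP^2$ restricts to $-L_{\mathbb{H}}$ on $S^4$, and then uses Atiyah duality for the closed manifold $\HP^2$ to split $\Th(\HP^2;N_{\HP^2})$ (and hence the cofiber of $\Sigma^\infty_{-L_{\mathbb{H}}}(\nu_4)$) as a wedge of spheres. You would be better served by working with the cofiber $\HP^2$ rather than trying to show a vanishing statement about $\Th(\nu_4)$ itself.
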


\begin{proof}
  It follows from the identification of $\HP^2$ with the Thom space of $L_{\mathbb{H}}$ over $\HP^1$ that the normal bundle of
  $S^4 \cong \HP^1 \hookrightarrow \HP^2$
  is $L_{\mathbb{H}}$.
  From this, we obtain a homotopy commutative diagram
  \begin{center}
    \begin{tikzcd}
      S^7 \ar[r,"\nu_4"] & S^4 \ar[d] \ar[r,"-L_{\mathbb{H}}"] & \BSO \\
      & C(\nu_4) \simeq \mathbb{HP}^2, \ar[ur] &
    \end{tikzcd}
  \end{center}
  where the map $\mathbb{HP}^2 \to \BSO$ classifies the stable normal bundle.
  Taking Thom spectra and cofibering out by the map $\Ss^0 \simeq \Th(\ast; 0) \to \Th(X;V)$ which we have for any pointed $X$, we obtain a cofiber sequence
  \[\Ss^7 \xrightarrow{\Sigma^\infty_{-L_{\mathbb{H}}} (\nu_4)} \Ss^4 \to \Th(\HP^2; N_{\HP^2}) / \Ss^0.\]
  It therefore suffices to show that $\Th(\HP^2; N_{\HP^2}) / \Ss^0 \simeq \Ss^4 \oplus \Ss^8.$
  By Atiyah duality, we have
  \[\Th(\mathbb{HP}^2; N_{\HP^2}) \simeq \Sigma^8 D (\Sigma^\infty _+ \mathbb{HP}^2) \simeq \Sigma^\infty C(\nu_4) \oplus \Ss^{8},\]
  so the desired result follows after cofibering out by the bottom cell.
%
%
%
\end{proof}

\begin{proof}[Proof of \Cref{thm:kernel-main}(i)]
  Since we already know that $\partial : A_{9} ^{\langle 4 \rangle} \cong \Z/2\Z \oplus \Z/2\Z \to \Z/2\Z \cong \Theta_{8} \cong \coker(J)_8$ is surjective, it suffices to exhibit a $3$-connected almost closed $9$-manifold $M$ with $\Psi_{L_{\mathbb{H}}} (M) = [\nu_4 \circ \eta_7]$ and boundary diffeomorphic to the standard $8$-sphere.

  Combining \Cref{prop:plumbing-toda} and \Cref{lem:betaHopf}, we find that
  \[[\partial M(-L_{\mathbb{H}} \oplus \RR, L_{\mathbb{H}} \circ \eta_4)] \in \pm \langle \nu, 0, \eta \rangle. \]
  Clearly this Toda bracket contains zero.
  Examining Toda's tables \cite{toda} or the chart \cite[p. 8]{Isaksen}, we see that any element of $\pi_8 (\Ss^0)$ which is divisible by $\eta$ or $\nu$ is contained in the image of $J$.
  Thus $[\partial M(-L_{\mathbb{H}} \oplus \RR, L_{\mathbb{H}} \circ \eta_4)] = 0 \in \coker(J)_8 \cong \Theta_8$.
  Finally, we note that \Cref{exm:frank-plumb} implies that $\Psi_{-L_{\mathbb{H}}} (M(-L_{\mathbb{H}} \oplus \RR, L_{\mathbb{H}} \circ \eta_4)) = [\nu_4 \circ \eta_7]$, which completes the proof.
%
\end{proof}

%
%

\subsection{The case $n=8$}\label{sec:n=8}

In this section, we exhibit an explicit $7$-connected almost closed $17$-manifold that lies in the kernel of the boundary map $\partial : A_{17} ^{\langle 8 \rangle} \to \Theta_{16}$.
Our method is to start by removing a disk from $\OP^2$, then ``multiplying the resulting almost closed manifold by $\eta$'' to obtain a manifold in the kernel of $\partial$.
Finally, using some results of Stolz \cite[Sections 9 \& 10]{stolz}, we identify the resulting manifold with a certain plumbing and evaluate Frank's invariant $\Psi_{L_{\mathbb{O}}}$ on it.

%
%
%

\begin{lem} \label{lem:plumbing-eta}
  The plumbing
  $M(L_{\mathbb{O}} \oplus \mathbb{R}, L_{\mathbb{O}} \circ \eta_8 )$
  represents $\eta [\OP^2 - D^{16}]$ in $\pi_{17} A[8] \cong A_{17} ^{\langle 8 \rangle}$.
\end{lem}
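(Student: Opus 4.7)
The plan is to identify both sides as classes in $\pi_{17} A[8]$ using Stolz's geometric description of $\pi_* A[n]$ from \cite[\S\S 9--10]{stolz}, which is closely tied to the bar spectral sequence for $\mo\langle n\rangle$.

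For the first step, I would identify $\OP^2 - D^{16}$ as a disk bundle. The standard octonionic cell structure $\OP^2 = S^8 \cup_{\sigma_8} D^{16}$ exhibits $\OP^2$ as the union of two disk bundles glued along a sphere bundle, and identifies $\OP^2 - D^{16}$ with the closed disk bundle $D(L_{\mathbb{O}})$ over $\OP^1 \cong S^8$. Hence $[\OP^2 - D^{16}] \in \pi_{16} A[8] \cong A_{16}^{\langle 8\rangle}$ is represented via the Thom construction by the stable class of $L_{\mathbb{O}} \in \pi_8 \BSO$, and is detected on the $\mathrm{E}^1$-page of the bar spectral sequence for $\mo\langle 8\rangle$ by the image of $L_{\mathbb{O}}$ in $\pi_{16} \Sigma^\infty_+ \mathrm{O}\langle 7\rangle$.

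For the second step, I would apply Stolz's analysis of plumbings to compute $[P(L_{\mathbb{O}} \oplus \mathbb{R}, L_{\mathbb{O}} \circ \eta_8)]$. For $g_1 \in \pi_n \BSO(n+1)$ and $g_2 \in \pi_{n+1} \BSO(n)$, the plumbing $P(g_1, g_2)$ represents a class in $\pi_{2n+1} A[n]$ which is detected on the $\mathrm{E}^1$-page of the bar spectral sequence by a product of the stable classes of $g_1$ and $g_2$, compatibly with the $\pi_* \Ss$-module structure on $\pi_* A[n]$. In our case, $g_1 = L_{\mathbb{O}} \oplus \mathbb{R}$ stabilizes to $L_{\mathbb{O}} \in \pi_8 \BSO$ and $g_2 = L_{\mathbb{O}} \circ \eta_8$ stabilizes to $\eta \cdot L_{\mathbb{O}} \in \pi_9 \BSO$, so the resulting class equals $\eta$ times the class of $D(L_{\mathbb{O}}) = \OP^2 - D^{16}$.

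The main obstacle is to extract Stolz's plumbing formula in the precise form required and verify that the unstable composition $L_{\mathbb{O}} \circ \eta_8 : S^9 \to \BSO(8)$ realizes the $\eta$-multiplication on the stable class $L_{\mathbb{O}}$ at the level of $\pi_* A[n]$. This amounts to checking compatibility between the geometric $\eta$-composition on sphere bundles and the algebraic $\pi_* \Ss$-module structure inherited from the spectrum structure of $A[n]$, a point that is essentially implicit in the cited sections of \cite{stolz} but which must be carefully extracted and adapted to the present degree.
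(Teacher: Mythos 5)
Your proposal outlines a reasonable high-level plan, but the crucial step is left unresolved, and you acknowledge this yourself. The claim that "$P(g_1,g_2)$ represents a class in $\pi_{2n+1} A[n]$ which is detected on the $\mathrm{E}^1$-page of the bar spectral sequence by a product of the stable classes of $g_1$ and $g_2$, compatibly with the $\pi_* \Ss$-module structure" is not a result you can simply cite; there is no such ``plumbing formula'' stated in that form in Stolz, and making it precise (in particular, verifying that the unstable composition $L_{\mathbb{O}} \circ \eta_8 \in \pi_9 \BSO(8)$ really does realize $\eta$-multiplication on the class of $D(L_{\mathbb{O}})$ in $\pi_* A[8]$) is exactly the content of the lemma being proved. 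A spectral-sequence detection argument also runs into the danger of an undetermined filtration jump or ambiguity between elements detected in the same spot, which you would then still need geometric input to resolve.

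The paper sidesteps the spectral sequence entirely and argues at the level of explicit manifolds. Stolz's Lemma 10.4 produces a concrete almost closed $7$-connected $17$-manifold, namely $S^1 \times (\OP^2 - D^{16}) \cup_\alpha D^2 \times D^{17}$ for the nontrivial $\alpha \in \pi_2 \BO(15) \cong \Z/2\Z$, which \emph{by construction} represents $\eta[\OP^2 - D^{16}]$ in $\pi_{17} A[8]$; this is the geometric realization of $\eta$-multiplication that your proposal needs but leaves hanging. The paper then uses the identification of the normal bundle of $\OP^1 \hookrightarrow \OP^2$ with $L_{\mathbb{O}}$ and combines Stolz's Lemmas 9.5 and 9.8 to identify that manifold, up to diffeomorphism, with the plumbing $P(L_{\mathbb{O}} \oplus \mathbb{R}, L_{\mathbb{O}} \circ \eta_8)$. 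To repair your proof you would need to locate and invoke precisely those three lemmas of Stolz (or reprove them); as written, the proposal replaces the actual geometric argument with an informal detection heuristic and flags the gap without closing it.
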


\begin{proof}
  Let $\alpha \in \pi_2 \BO(15) \cong \Z/2\Z$ denote the generator.
  Then Stolz constructs an almost closed $7$-connected $17$-manifold
  $S^1 \times (\OP^2 - D^{16}) \cup_\alpha D^2 \times D^{17}$
  which represents $\eta [\OP^2 - D^{16}]$ in $\pi_{17} A[8]$. (See \cite[Lemma 10.4]{stolz})

  Now, the normal bundle of $S^8 \cong \OP^1 \hookrightarrow \OP^2$ is $L_{\mathbb{O}}$.
  Combining \cite[Lemma 9.5]{stolz} with \cite[Lemma 9.8]{stolz}, we are able to conclude that \todo{Do you need the self-intersection to be zero to apply Lemma 9.5?? I think no}
  $S^1 \times (\OP^2 - D^{16}) \cup_\alpha D^2 \times D^{17}$
  is diffeomorphic to
  $P(L_{\mathbb{O}} \oplus \mathbb{R}, L_{\mathbb{O}} \circ \eta_8).$
\end{proof}

\begin{cor} \label{cor:bdy-plumb}
  The boundary of
  $M(L_{\mathbb{O}} \oplus \mathbb{R}, L_{\mathbb{O}} \circ \eta_{8})$
  is diffeomorphic to the standard $16$-sphere $S^{16}$.
\end{cor}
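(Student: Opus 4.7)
The plan is to combine \Cref{lem:plumbing-eta} with the $\pi_\ast \Ss$-linearity of the boundary map. By \Cref{lem:plumbing-eta}, the plumbing $P(L_{\mathbb{O}} \oplus \mathbb{R}, L_{\mathbb{O}} \circ \eta_8)$ represents the class $\eta \cdot [\OP^2 - D^{16}]$ in $A_{17}^{\langle 8 \rangle} \cong \pi_{17} A[8]$, where $[\OP^2 - D^{16}] \in \pi_{16} A[8]$ is the class of the $7$-connected $16$-manifold obtained by removing an open disk from $\OP^2$, whose boundary is the \emph{standard} $15$-sphere.

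Next, I would recall from \Cref{sec:background} that the composite $A_{17}^{\langle 8 \rangle} \xrightarrow{\partial} \Theta_{16} \to \coker(J)_{16}$ is identified with the composite
\[\pi_{17} A[8] \xrightarrow{b_*} \pi_{17}(\mo\langle 8 \rangle/\Ss) \xrightarrow{\delta_*} \pi_{16}\Ss \to \coker(J)_{16}.\]
Each map in this chain is $\pi_\ast \Ss$-linear; in particular, the image of the classical $J$-homomorphism is closed under multiplication by $\pi_\ast \Ss$ (since $J$ arises from a map of $\Ss$-module spectra), so $\coker(J)$ is naturally a quotient module. Applying this linearity, the image of $\eta \cdot [\OP^2 - D^{16}]$ in $\coker(J)_{16}$ equals $\eta$ times the image of $[\OP^2 - D^{16}]$ in $\coker(J)_{15}$; but the latter is represented by $\partial(\OP^2 - D^{16}) = S^{15}$, which is zero in $\Theta_{15}$ and hence in $\coker(J)_{15}$. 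Therefore $\partial$ of the plumbing vanishes in $\coker(J)_{16}$.

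To conclude, I would observe that $\Theta_{16} \to \coker(J)_{16}$ is injective, since $bP_{17} = 0$ by Kervaire--Milnor (because $17$ is odd). This forces the boundary of the plumbing to be the standard $S^{16}$. The main technical point to verify is that Stolz's geometric $\eta$-multiplication construction used in \Cref{lem:plumbing-eta} agrees with the $\pi_\ast \Ss$-module action on $\pi_\ast A[8]$ under the isomorphism $A_{17}^{\langle 8 \rangle} \cong \pi_{17} A[8]$, but this is a standard compatibility between the geometric definition of $\eta$-multiplication on cobordism classes and the intrinsic spectrum-level module structure.
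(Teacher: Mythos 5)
Your argument is correct and is essentially identical to the paper's proof: both reduce to showing the class vanishes in $\coker(J)_{16}$ (the paper invokes $\Theta_{16}\cong\coker(J)_{16}$, you use injectivity via $bP_{17}=0$, which is equivalent here since both groups are $\Z/2\Z$), and then apply $\eta$-linearity of the spectrum-level map $\pi_{17}A[8]\to\pi_{16}\Ss$ to \Cref{lem:plumbing-eta}. The compatibility point you flag at the end is already built into the statement of \Cref{lem:plumbing-eta}, which asserts the plumbing represents $\eta[\OP^2-D^{16}]\in\pi_{17}A[8]$, so no further verification is required.
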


\begin{proof}
  The map
  $\Theta_{16} \to \coker(J)_{16}$
  is an isomorphism, so it suffices to show that
  $[\partial M(L_{\mathbb{O}} \oplus \mathbb{R}, L_{\mathbb{O}} \circ \eta)] \in \coker(J)_{16}$
  is equal to zero.
  Now, we have a commutative diagram
  \begin{center}
    \begin{tikzcd}
      \pi_{17} A[8] \ar[d, "\cong"] \ar[r] & \pi_{16} (\Ss) \ar[d] \\
      A_{17} ^{\langle 8 \rangle} \ar[r] & \coker(J)_{16},
    \end{tikzcd}
  \end{center}
  where the top horizontal map arises from a map of spectra.
  It therefore follows that the top horizontal map, and hence the bottom horizontal map, commutes with multiplication by $\eta$.
  Combining this with \Cref{lem:plumbing-eta}, we find that
  \[[\partial M(L_{\mathbb{O}} \oplus \mathbb{R}, L_{\mathbb{O}} \circ \eta)] = \eta [\partial(\mathbb{OP}^2 - D^{16})] = \eta [S^{15}] = 0,\]
  as desired.
\end{proof}

\begin{proof}[Proof of \Cref{thm:kernel-main}(ii)]
  By \Cref{cor:bdy-plumb}, it suffices to prove that
  \[\sigma_8 \circ \eta_{15} \in \Psi_{L_{\mathbb{O}}} (M(L_{\mathbb{O}} \oplus \mathbb{R}, L_{\mathbb{O}} \circ \eta)).\]
  This is a consequence of \Cref{exm:frank-plumb} and the fact that $J_8 (L_{\mathbb{O}} \circ \eta_8) = \sigma_8 \circ \eta_{15}$.
%
%
\end{proof}

\bibliographystyle{alpha}
\bibliography{bibliography}

\newcommand{\etalchar}[1]{$^{#1}$}
\begin{thebibliography}{BHHM08}

\bibitem[ABG{\etalchar{+}}14]{ABGHR}
Matthew Ando, Andrew~J. Blumberg, David Gepner, Michael~J. Hopkins, and Charles Rezk.
\newblock Units of ring spectra, orientations and {T}hom spectra via rigid infinite loop space theory.
\newblock {\em J. Topol.}, 7(4):1077--1117, 2014.

\bibitem[ABP67]{spin}
D.~W. Anderson, E.~H. Brown, Jr., and F.~P. Peterson.
\newblock The structure of the {S}pin cobordism ring.
\newblock {\em Ann. of Math. (2)}, 86:271--298, 1967.

\bibitem[ABS64]{ABS}
M.~F. Atiyah, R.~Bott, and A.~Shapiro.
\newblock Clifford modules.
\newblock {\em Topology}, 3(suppl. 1):3--38, 1964.

\bibitem[Ada66]{AdamsJIV}
J.~F. Adams.
\newblock On the groups {$J(X)$}. {IV}.
\newblock {\em Topology}, 5:21--71, 1966.

\bibitem[AHR10]{AHR}
Matthew Ando, Michael~J. Hopkins, and Charles Rezk.
\newblock Multiplicative orientations of $ko$-theory and of the spectrum of topological modular forms.
\newblock 2010.
\newblock Unpublished preprint.

\bibitem[Bae02]{BaezOct}
John~C. Baez.
\newblock The octonions.
\newblock {\em Bull. Amer. Math. Soc. (N.S.)}, 39(2):145--205, 2002.

\bibitem[Bar65]{Barden}
D.~Barden.
\newblock Simply connected five-manifolds.
\newblock {\em Ann. of Math. (2)}, 82:365--385, 1965.

\bibitem[BCD{\etalchar{+}}21]{CrowleyProblems}
Jonathan Bowden, Diarmuid Crowley, Jim Davis, Stefan Friedl, Carmen Rovi, and Stephan Tillmann.
\newblock Open problems in the topology of manifolds.
\newblock In {\em 2019--20 {MATRIX} annals}, volume~4 of {\em MATRIX Book Ser.}, pages 647--659. Springer, Cham, 2021.

\bibitem[Bea17]{beard}
Jonathan Beardsley.
\newblock Relative {T}hom spectra via operadic {K}an extensions.
\newblock {\em Algebr. Geom. Topol.}, 17(2):1151--1162, 2017.

\bibitem[BHHM08]{BHHM}
M.~Behrens, M.~Hill, M.~J. Hopkins, and M.~Mahowald.
\newblock On the existence of a {$v^{32}_2$}-self map on {$M(1,4)$} at the prime 2.
\newblock {\em Homology Homotopy Appl.}, 10(3):45--84, 2008.

\bibitem[BHS20]{inertia}
Robert Burklund, Jeremy Hahn, and Andrew Senger.
\newblock Inertia groups in the metastable range.
\newblock 2020.
\newblock \href{https://arxiv.org/abs/2010.09869}{arXiv:2010.09869}. To appear in \emph{Amer. J. Math.}

\bibitem[BHS23]{bhs}
Robert Burklund, Jeremy Hahn, and Andrew Senger.
\newblock On the boundaries of highly connected, almost closed manifolds.
\newblock {\em Acta Math.}, 231(2):205--344, 2023.

\bibitem[BJM84]{BJM84}
M.~G. Barratt, J.~D.~S. Jones, and M.~E. Mahowald.
\newblock Relations amongst {T}oda brackets and the {K}ervaire invariant in dimension {$62$}.
\newblock {\em J. London Math. Soc. (2)}, 30(3):533--550, 1984.

\bibitem[BP66]{BP66}
Edgar~H. Brown, Jr. and Franklin~P. Peterson.
\newblock The {K}ervaire invariant of {$(8k+2)$}-manifolds.
\newblock {\em Amer. J. Math.}, 88:815--826, 1966.

\bibitem[BR78]{BierRay}
Thomas Bier and Nigel Ray.
\newblock Detecting framed manifolds in the {$8$} and {$16$} stems.
\newblock In {\em Geometric applications of homotopy theory ({P}roc. {C}onf., {E}vanston, {I}ll., 1977), {I}}, volume 657 of {\em Lecture Notes in Math.}, pages 32--39. Springer, Berlin, 1978.

\bibitem[BR21]{tmfAdamsBook}
Robert~R. Bruner and John Rognes.
\newblock {\em The {A}dams spectral sequence for topological modular forms}, volume 253 of {\em Mathematical Surveys and Monographs}.
\newblock American Mathematical Society, Providence, RI, 2021.

\bibitem[Bro69]{Bro69}
William Browder.
\newblock The {K}ervaire invariant of framed manifolds and its generalization.
\newblock {\em Ann. of Math. (2)}, 90:157--186, 1969.

\bibitem[Bru]{BrunerExt}
Robert Bruner.
\newblock \texttt{ext} program.
\newblock Available at \href{http://www.rrb.wayne.edu/papers/\#code}{http://www.rrb.wayne.edu/papers/\#code} with manual at \href{http://www.rrb.wayne.edu/papers/ext.1.9.5.pdf}{http://www.rrb.wayne.edu/papers/ext.1.9.5.pdf}.

\bibitem[BS24]{geography}
Robert Burklund and Andrew Senger.
\newblock On the high-dimensional geography problem.
\newblock {\em Geom. Topol.}, 28(9):4257--4293, 2024.

\bibitem[CE03]{CrowleyEscher}
Diarmuid Crowley and Christine~M. Escher.
\newblock A classification of {$S^3$}-bundles over {$S^4$}.
\newblock {\em Differential Geom. Appl.}, 18(3):363--380, 2003.

\bibitem[Cha25]{chang}
Kevin Chang.
\newblock A {$v_1$}-banded vanishing line for the mod 2 {M}oore spectrum.
\newblock {\em Homology Homotopy Appl.}, 27(1):29--49, 2025.

\bibitem[CN19]{CrowleyNordstrom}
Diarmuid Crowley and Johannes Nordstr\"{o}m.
\newblock The classification of 2-connected 7-manifolds.
\newblock {\em Proc. Lond. Math. Soc. (3)}, 119(1):1--54, 2019.

\bibitem[CN20]{crowleynagy}
Diarmuid Crowley and Csaba Nagy.
\newblock Inertia groups of 3-connected 8-manifolds.
\newblock 2020.
\newblock Available at \href{http://www.dcrowley.net/Crowley-Nagy-3c8m-b.pdf}{http://www.dcrowley.net/Crowley-Nagy-3c8m-b.pdf}.

\bibitem[Cro01]{CrowleyThesis}
Diarmuid~J. Crowley.
\newblock The classification of highly connected manifolds in dimensions $7$ and $15$.
\newblock 2001.
\newblock Thesis (Ph.D.)--Indiana University. \href{https://arxiv.org/abs/0203253}{arXiv:0203253}.

\bibitem[Cro10]{Crowley}
Diarmuid Crowley.
\newblock The smooth structure set of {${S}^{p}\times {S}^{q}$}.
\newblock {\em Geom. Dedicata}, 148:15--33, 2010.

\bibitem[DM89]{DM3}
Donald~M. Davis and Mark Mahowald.
\newblock The image of the stable {$J$}-homomorphism.
\newblock {\em Topology}, 28(1):39--58, 1989.

\bibitem[Fra68]{Frank}
David~L. Frank.
\newblock An invariant for almost-closed manifolds.
\newblock {\em Bull. Amer. Math. Soc.}, 74:562--567, 1968.

\bibitem[Fra74]{frank74wall}
David Frank.
\newblock On {W}all's classification of highly-connected manifolds.
\newblock {\em Topology}, 13:1--8, 1974.

\bibitem[Gia71]{Giambalvo08}
V.~Giambalvo.
\newblock On {$\langle 8\rangle$}-cobordism.
\newblock {\em Illinois J. Math.}, 15:533--541, 1971.

\bibitem[Gia72]{Giambalvo08corrected}
V.~Giambalvo.
\newblock Correction to my paper: ``{O}n {$\langle8\rangle$}-cobordism''.
\newblock {\em Illinois J. Math.}, 16:704, 1972.

\bibitem[HHR16]{HHR}
M.~A. Hill, M.~J. Hopkins, and D.~C. Ravenel.
\newblock On the nonexistence of elements of {K}ervaire invariant one.
\newblock {\em Ann. of Math. (2)}, 184(1):1--262, 2016.

\bibitem[Hil55]{Hilton}
P.~J. Hilton.
\newblock A note on the {$P$}-homomorphism in homotopy groups of spheres.
\newblock {\em Proc. Cambridge Philos. Soc.}, 51:230--233, 1955.

\bibitem[IWX20]{Isaksen}
Daniel Isaksen, Guozhen Wang, and Zhouli Xu.
\newblock Classical and {$\mathbb{C}$}-motivic {A}dams charts.
\newblock 2020.
\newblock Available at \href{https://cpb-us-e1.wpmucdn.com/s.wayne.edu/dist/0/60/files/2020/04/Adamscharts.pdf}{https://cpb-us-e1.wpmucdn.com/s.wayne.edu/dist/0/60/files/2020/04/Adamscharts.pdf}.

\bibitem[Kas16]{ramesh}
Ramesh Kasilingam.
\newblock Inertia groups and smooth structures of ($ n-1$)-connected $2 n $-manifolds.
\newblock {\em Osaka Journal of Mathematics}, 53(2):309--319, 2016.

\bibitem[KM63]{KervaireMilnor}
Michel~A. Kervaire and John~W. Milnor.
\newblock Groups of homotopy spheres. {I}.
\newblock {\em Ann. of Math. (2)}, 77:504--537, 1963.

\bibitem[Kos67]{Kosinski}
A.~Kosi\'{n}ski.
\newblock On the inertia group of {$\pi $}-manifolds.
\newblock {\em Amer. J. Math.}, 89:227--248, 1967.

\bibitem[Kre99]{kreck}
Matthias Kreck.
\newblock Surgery and duality.
\newblock {\em Annals of Mathematics}, 149(3):707--754, 1999.

\bibitem[KS07]{kramerstolz}
Linus Kramer and Stephan Stolz.
\newblock A diffeomorphism classification of manifolds which are like projective planes.
\newblock {\em Journal of Differential Geometry}, 77(2):177--188, 2007.

\bibitem[Kuh06]{kuhn}
Nicholas~J. Kuhn.
\newblock Localization of {A}ndr\'{e}-{Q}uillen-{G}oodwillie towers, and the periodic homology of infinite loopspaces.
\newblock {\em Adv. Math.}, 201(2):318--378, 2006.

\bibitem[Lam81]{Lam}
R.~Lampe.
\newblock Diffeomorphismen auf {S}ph{\"a}ren und die {M}ilnor-{P}aarung.
\newblock 1981.
\newblock Thesis (Diploma)--Johannes Gutenberg University Mainz.

\bibitem[Lev85]{Levine}
J.~P. Levine.
\newblock Lectures on groups of homotopy spheres.
\newblock In {\em Algebraic and geometric topology ({N}ew {B}runswick, {N}.{J}., 1983)}, volume 1126 of {\em Lecture Notes in Math.}, pages 62--95. Springer, Berlin, 1985.

\bibitem[Mah73]{mahowald73}
Mark Mahowald.
\newblock Description homotopy of the elements in the image of the {J}-homomorphism.
\newblock In {\em Proceedings of the International Conference on Manifolds and Related Topics in Topology}, pages 255--263, 1973.

\bibitem[May70]{may}
J.~Peter May.
\newblock A general algebraic approach to {S}teenrod operations.
\newblock In {\em The {S}teenrod {A}lgebra and its {A}pplications ({P}roc. {C}onf. to {C}elebrate {N}. {E}. {S}teenrod's {S}ixtieth {B}irthday, {B}attelle {M}emorial {I}nst., {C}olumbus, {O}hio, 1970)}, Lecture Notes in Mathematics, Vol. 168, pages 153--231. Springer, Berlin, 1970.

\bibitem[MM81]{maymilgram}
J.~P. May and R.~J. Milgram.
\newblock The {B}ockstein and the {A}dams spectral sequences.
\newblock {\em Proc. Amer. Math. Soc.}, 83(1):128--130, 1981.

\bibitem[Mos70]{Moss}
R.~Michael~F. Moss.
\newblock Secondary compositions and the {A}dams spectral sequence.
\newblock {\em Math. Z.}, 115:283--310, 1970.

\bibitem[MS06]{MaySigurdsson}
J.~P. May and J.~Sigurdsson.
\newblock {\em Parametrized homotopy theory}, volume 132 of {\em Mathematical Surveys and Monographs}.
\newblock American Mathematical Society, Providence, RI, 2006.

\bibitem[MS16]{AV}
Akhil Mathew and Vesna Stojanoska.
\newblock The {P}icard group of topological modular forms via descent theory.
\newblock {\em Geom. Topol.}, 20(6):3133--3217, 2016.

\bibitem[MT67]{MT67}
Mark Mahowald and Martin Tangora.
\newblock Some differentials in the {A}dams spectral sequence.
\newblock {\em Topology}, 6:349--369, 1967.

\bibitem[Mun70]{munkres}
James~R Munkres.
\newblock Concordance inertia groups.
\newblock {\em Advances in Mathematics}, 4(3):224--235, 1970.

\bibitem[Nag20]{nagy}
Csaba Nagy.
\newblock The classification of 8-dimensional {E}-manifolds.
\newblock 2020.
\newblock {T}hesis (Ph.D.)--{T}he {U}niversity of {M}elbourne. \newline Available at \href{http://hdl.handle.net/11343/279361}{http://hdl.handle.net/11343/279361}.

\bibitem[Nag24]{nagyarxiv}
Csaba Nagy.
\newblock Extended surgery theory for simply-connected {$4k$}-manifolds.
\newblock 2024.
\newblock \href{https://arxiv.org/abs/2402.13394}{arXiv:2402.13394}.

\bibitem[Nov63]{Novikov}
S.~P. Novikov.
\newblock Homotopy properties of the group of diffeomorphisms of the sphere.
\newblock {\em Dokl. Akad. Nauk SSSR}, 148:32--35, 1963.

\bibitem[Pst23]{Pstragowski}
Piotr Pstr\k{a}gowski.
\newblock Synthetic spectra and the cellular motivic category.
\newblock {\em Invent. Math.}, 232(2):553--681, 2023.

\bibitem[Rav86]{GreenBook}
Douglas~C. Ravenel.
\newblock {\em Complex cobordism and stable homotopy groups of spheres}, volume 121 of {\em Pure and Applied Mathematics}.
\newblock Academic Press, Inc., Orlando, FL, 1986.

\bibitem[Sch72]{Sch72}
Reinhard Schultz.
\newblock Composition constructions on diffeomorphisms of {$S^{p}\times S^{q}$}.
\newblock {\em Pacific J. Math.}, 42:739--754, 1972.

\bibitem[Ser53]{SerreCohEM}
Jean-Pierre Serre.
\newblock Cohomologie modulo {$2$} des complexes d'{E}ilenberg-{M}ac{L}ane.
\newblock {\em Comment. Math. Helv.}, 27:198--232, 1953.

\bibitem[Sto63]{Stong}
Robert~E. Stong.
\newblock Determination of {$H^{\ast} ({\rm BO}(k,\cdots,\infty ),Z_{2})$}\ and {$H^{\ast} ({\rm BU}(k,\cdots,\infty ),Z_{2})$}.
\newblock {\em Trans. Amer. Math. Soc.}, 107:526--544, 1963.

\bibitem[Sto85]{stolz}
Stephan Stolz.
\newblock {\em Hochzusammenh\"{a}ngende {M}annigfaltigkeiten und ihre {R}\"{a}nder}, volume 1116 of {\em Lecture Notes in Mathematics}.
\newblock Springer-Verlag, Berlin, 1985.
\newblock With an English introduction.

\bibitem[Sto87]{Sto87}
Stephan Stolz.
\newblock A note on the {$bP$}-component of {$(4n-1)$}-dimensional homotopy spheres.
\newblock {\em Proc. Amer. Math. Soc.}, 99(3):581--584, 1987.

\bibitem[Tod62]{toda}
Hirosi Toda.
\newblock {\em Composition methods in homotopy groups of spheres}.
\newblock Annals of Mathematics Studies, No. 49. Princeton University Press, Princeton, N.J., 1962.

\bibitem[Wal62a]{WallAction}
C.~T.~C. Wall.
\newblock The action of {$\Gamma _{2n}$} on {$(n-1)$}-connected {$2n$}-manifolds.
\newblock {\em Proc. Amer. Math. Soc.}, 13:943--944, 1962.

\bibitem[Wal62b]{Wall62}
C.~T.~C. Wall.
\newblock Classification of {$(n-1)$}-connected {$2n$}-manifolds.
\newblock {\em Ann. of Math. (2)}, 75:163--189, 1962.

\bibitem[Wal67]{Wall67}
C.~T.~C. Wall.
\newblock Classification problems in differential topology. {VI}. {C}lassification of {$(s-1)$}-connected {$(2s+1)$}-manifolds.
\newblock {\em Topology}, 6:273--296, 1967.

\bibitem[Wil71]{WilkinsThesis}
D.~L. Wilkens.
\newblock Closed {$(s-1)$}-connected {$(2s+1)$}-manifolds.
\newblock 1971.
\newblock Thesis (Ph.D.)--University of Liverpool.

\bibitem[Wil72]{WilkinsPaper}
David~L. Wilkens.
\newblock Closed {$(s-1)$}-connected {$(2s+1)$}-manifolds, {$s=3,\,7$}.
\newblock {\em Bull. London Math. Soc.}, 4:27--31, 1972.

\end{thebibliography}

\end{document}